\documentclass[11pt]{amsart}
\usepackage{amssymb,amsthm}

\usepackage[]{graphicx}

\everymath{\displaystyle}


\newtheorem{theo}{Theorem}[section]
\newtheorem{prop}[theo]{Proposition}

\newtheorem{lemma}[theo]{Lemma}
\newtheorem{teo}[theo]{Theorem}
\newtheorem{lem}[theo]{Lemma}
\newtheorem{cor}[theo]{Corollary}
\theoremstyle{definition}
\newtheorem{defi}[theo]{Definition}
\newtheorem{theodefi}[theo]{Theorem-Definition}
\newtheorem{rema}[theo]{Remark}

\newtheorem{remark}[theo]{Remark}

\newcommand{\fin}{\hfill$\square$}
\newcommand{\eu}{\op{eu}}

\newcommand{\ADS}{\mathbb{ADS}}

\newcommand{\wt}{\widetilde}

\newcommand{\Cl}{\mbox{Cl}}

\newcommand{\sC}{\mathfrak C}

\newcommand{\cT}{\mathcal T}

\newcommand{\cW}{\mathcal W}
\newcommand{\cZ}{\mathcal Z}

\def\d{\mathrm{d}}

\newcommand{\DD}{\mathbb{D}}

\newcommand{\HH}{\mathbb{H}}

\newcommand{\NN}{\mathbb{N}}

\newcommand{\PP}{\mathbb{P}}

\newcommand{\RR}{\mathbb{R}}
\renewcommand{\SS}{\mathbb{S}}

\newcommand{\ZZ}{\mathbb{Z}}
\newcommand{\op}{\operatorname}

\DeclareMathOperator{\SO}{SO}
\DeclareMathOperator{\uSO}{\widetilde{SO}}

\DeclareMathOperator{\AdS}{AdS}
\DeclareMathOperator{\Ein}{Ein}

\newcommand{\uEin}{\widetilde{\Ein}}
\newcommand{\ie}{\emph{ie. }}

\newcommand{\sCa}[2]{\langle #1 | #2\rangle}         

\newcommand{\mcal}[1]{\ensuremath{\mathcal{#1}}}
\newcommand{\orth}{\bot}
\newcommand{\cY}{\mcal{Y}}

\newcommand{\cF}{\mcal{F}}
\newcommand{\cK}{\mcal{K}}

\newcommand{\cC}{\mcal{C}}
\newcommand{\cH}{\mcal{H}}

\begin{document}

\title[Quasi-Fuchsian AdS representations]
{Deformations of Fuchsian AdS representations are Quasi-Fuchsian}

\author[T. Barbot]{Thierry Barbot$^\dagger$}

\email{Thierry.Barbot@univ-avignon.fr}
\address{LMA, Avignon University\\
33, rue Louis Pasteur, 84 000 Avignon}



\begin{abstract}
Let $\Gamma$ be a finitely generated group, and let $\op{Rep}(\Gamma, \SO(2,n))$ be the moduli space
of representations of $\Gamma$ into $\SO(2,n)$ ($n \geq 2$).
An element $\rho: \Gamma \to \SO(2,n)$ of $\op{Rep}(\Gamma, \SO(2,n))$ is
\textit{quasi-Fuchsian} if it is faithful, discrete, preserves an
acausal subset in the conformal boundary $\Ein_n$ of the anti-de Sitter space; and if the associated
globally hyperbolic anti-de Sitter space is spatially compact - a particular
case is the case of \textit{Fuchsian representations}, \ie composition
of a faithfull, discrete and cocompact representation $\rho_f: \Gamma \to \SO(1,n)$ and
the inclusion $\SO(1,n) \subset \SO(2,n)$.

In \cite{merigot} we proved that quasi-Fuchsian representations are precisely
representations which are Anosov as defined in \cite{labourie}. In the present paper, we prove that 
the space of quasi-Fuchsian representations is open and closed, ie. that it
is an union of connected components of $\op{Rep}(\Gamma, \SO(2,n))$.

The proof involves the following fundamental result: let $\Gamma$ be the
fundamental group of a globally hyperbolic spatially compact spacetime locally modeled on $\AdS_n$,
and let $\rho: \Gamma \to \SO_0(2,n)$ be
the holonomy representation. Then, if $\Gamma$ is Gromov hyperbolic, the $\rho(\Gamma)$-invariant achronal limit set in $\Ein_n$ is acausal.

Finally, we also provide the following characterization of representations
with zero bounded Euler class: they are precisely the representations 
preserving a closed achronal subset of $\Ein_n$.

\end{abstract}

\maketitle

\tableofcontents

\section{Introduction}
Let $\SO_0(1,n)$, $\SO_0(2,n)$ denote the identity components of respectively
$\SO(1,n)$, $\SO(2,n)$ ($n \geq 2$).
Let $\Gamma$ be a cocompact torsion free lattice in $\SO_{0}(1,n)$.
For any Lie group $G$ we consider the moduli space of representations of $\Gamma$ into $G$
modulo conjugacy, equipped with the usual topology as an algebraic variety (see for example \cite{goldmillson}):
$$\op{Rep}(\Gamma, G) := \op{Hom}(\Gamma, G)/G$$

In the case $G=\SO_{0}(2,n)$ we distinguish the \textit{Fuchsian representations:} they are the representations
obtained by composition of the natural embedding $\SO_{0}(1,n) \subset \SO_{0}(2,n)$
and any faithful and discrete representation of $\Gamma$ into $\SO_{0}(1,n)$.
The space of faithful and discrete representations of $\Gamma$ into $\SO_{0}(1,n)$ is the union
of two connected components of $\op{Rep}(\Gamma, \SO_{0}(1,n))$: for $n\geq3,$ it follows from Mostow rigidity Theorem,
and for $n=2$, it follows from the connectedness of the Teichm\"uller space - observe
that there are indeed two connected components:  one corresponding to representations
such that $\rho^*\xi = \xi$, and the other to representations for which $\rho^*\xi=-\xi$, where $\xi$
is a generator of $\op{H}^n(\SO_0(1,n), \ZZ)$.

It follows that the space of Fuchsian representations is the union of two connected subsets of
$\op{Rep}(\Gamma, \SO_{0}(2,n)).$
Therefore, one can consider the union $\op{Rep}_{0}(\Gamma, G)$ of connected components  of
$\op{Rep}(\Gamma, \SO_{0}(2,n))$ containing
all the Fuchsian representations. The main result of the present paper is\footnote{This is a positive answer to Question 8.1 in \cite{merigot}.}:

\begin{theo}
\label{thm:main}
Every deformation of a Fuchsian representation, \ie every element of $\op{Rep}_{0}(\Gamma, \SO_0(2,n))$ is faithful and discrete.
\end{theo}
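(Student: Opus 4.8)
The plan is to prove that the subset $\cQ\subseteq\op{Rep}(\Gamma,\SO_0(2,n))$ of quasi-Fuchsian representations is both open and closed. The Fuchsian representations lie in $\cQ$, and $\op{Rep}_0(\Gamma,\SO_0(2,n))$ is by definition the union of the connected components that meet the Fuchsian locus; a clopen set meeting a connected component contains it, so once $\cQ$ is clopen we get $\op{Rep}_0\subseteq\cQ$. By \cite{merigot} every quasi-Fuchsian representation is Anosov in the sense of \cite{labourie}, hence faithful and discrete, and Theorem~\ref{thm:main} follows immediately. Everything therefore reduces to the two topological statements about $\cQ$.

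Openness is essentially already in the literature: by \cite{merigot}, $\cQ$ coincides with the set of Anosov representations of $\Gamma$ into $\SO_0(2,n)$ (with respect to the stabilizer of a point of $\Ein_n$), and Labourie's stability theorem --- later generalized by Guichard and Wienhard --- states that Anosov representations form an open subset of $\op{Rep}(\Gamma,G)$ for any Lie group $G$. Hence $\cQ$ is open.

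The substance of the proof is closedness. Let $\rho_k\in\cQ$ with $\rho_k\to\rho_\infty$. Since $\Gamma$ is a cocompact torsion-free lattice in $\SO_0(1,n)$, it is Gromov hyperbolic, $\partial_\infty\Gamma\cong S^{n-1}$, and $\Gamma=\pi_1(M)$ for a closed hyperbolic $n$-manifold $M$. Each $\rho_k$ preserves an acausal meridian $\Lambda_k\subseteq\Ein_n$ --- a topological $(n-1)$-sphere which, lifted to the universal cover $\RR\times S^{n-1}$ of $\Ein_n$, is the graph of a $1$-Lipschitz function on $S^{n-1}$ --- and $\rho_k$ is the holonomy of the spatially compact globally hyperbolic AdS spacetime obtained from the invisible domain of $\Lambda_k$, whose Cauchy surfaces are diffeomorphic to $M$. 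The first task is to pass to the limit: after conjugating each $\rho_k$ by an element of a fixed compact group of time rotations and extracting a subsequence, the graph functions of the $\Lambda_k$ become uniformly $1$-Lipschitz and uniformly bounded, so by Arzel\`a--Ascoli $\Lambda_k$ converges in the Hausdorff topology to a closed achronal topological $(n-1)$-sphere $\Lambda_\infty$; since $\rho_k\to\rho_\infty$, the set $\Lambda_\infty$ is $\rho_\infty(\Gamma)$-invariant. One then checks --- this is part of the bookkeeping, using the topology of $M$ --- that $\rho_\infty$ is still the holonomy of a spatially compact globally hyperbolic AdS spacetime whose $\rho_\infty(\Gamma)$-invariant achronal limit set is $\Lambda_\infty$. (The announced characterization of representations with zero bounded Euler class as exactly those preserving a closed achronal subset of $\Ein_n$ is the representation-theoretic counterpart of this existence statement.)

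At this point the fundamental result announced in the abstract applies: $\Gamma$ being Gromov hyperbolic, the $\rho_\infty(\Gamma)$-invariant achronal limit set $\Lambda_\infty$ is in fact \emph{acausal}. From acausality one recovers the entire quasi-Fuchsian picture --- the $\rho_\infty(\Gamma)$-action on the invisible domain of $\Lambda_\infty$ is free, properly discontinuous and cocompact, so $\rho_\infty$ is faithful and discrete, preserves the acausal set $\Lambda_\infty$, and the associated globally hyperbolic AdS spacetime is spatially compact; equivalently, by \cite{merigot}, $\rho_\infty$ is Anosov. Thus $\rho_\infty\in\cQ$, so $\cQ$ is closed and Theorem~\ref{thm:main} is proved. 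I expect the decisive obstacle to be the fundamental result itself --- ruling out that the achronal limit set of the Gromov-hyperbolic holonomy group of a spatially compact globally hyperbolic AdS spacetime contains a lightlike segment --- with a secondary difficulty in the limiting step of the previous paragraph, namely showing that the limiting representation is again the holonomy of such a spacetime, since neither properness of the action nor spatial compactness of the quotient is obviously inherited along the sequence.
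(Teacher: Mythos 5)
Your overall strategy is exactly the paper's: show that the Anosov (equivalently quasi-Fuchsian) locus is open and closed, deduce that it contains every component meeting the Fuchsian locus, and conclude via the equivalence of \cite{merigot}. Openness via stability and the Arzel\`a--Ascoli extraction of a limiting achronal sphere $\Lambda_\infty$ are also as in the paper (note only that no conjugation by time rotations is needed: the graph functions map $\SS^{n-1}$ into the compact circle $\SS^1$, so equicontinuity plus compactness of the target suffices). However, the two points you defer --- ``one then checks, this is part of the bookkeeping'' and the ``secondary difficulty'' you flag at the end --- are precisely where the substantive new arguments of the paper live, and your proposal gives no mechanism for either.

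First, discreteness and faithfulness of $\rho_\infty$ cannot be recovered \emph{after} the acausality theorem, as your last paragraph suggests: that theorem (Theorem~\ref{thm:hyperbolicanosov}) is stated for GHC-regular representations, which are by definition already faithful and discrete, so invoking it first would be circular. The paper establishes discreteness and faithfulness directly (Proposition~\ref{pro:disfaith}) by a Zassenhaus-neighborhood argument: a non-elementary Gromov hyperbolic group has no nontrivial nilpotent normal subgroup, and both the kernel of $\rho_\infty$ and the identity component of the closure of $\rho_\infty(\Gamma)$ would produce one. Second, the limiting achronal sphere $\Lambda_\infty$ could a priori be \emph{purely lightlike}, in which case its invisible domain is empty and $\rho_\infty$ is not the holonomy of any spacetime at all; ruling this out (Lemma~\ref{le:notpure}) is not bookkeeping. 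The paper's argument identifies the stabilizer of a pair of antipodal points with the conformal group of Minkowski space $\RR^{1,n-1}$, proves discreteness of the projected group by a variant of the Zassenhaus argument exploiting conjugation by homotheties, and then derives a contradiction because $\Gamma$ would act properly cocompactly on $\RR\times\HH^{n-1}$, which contains $2$-dimensional flats and hence is not Gromov hyperbolic. Once these two steps are in place, spatial compactness of the limit does follow from soft topology (Cauchy surfaces are $K(\Gamma,1)$'s and $\Gamma$ has cohomological dimension $\geq n$), and the rest of your outline goes through as written.
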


If one compares this result with the \textit{a priori} similar theory of deformations
of Fuchsian representations into $\SO_0(1,n+1)$, one observes that the situation is at first glance completely different: it is well-known that large deformations
of Fuchsian representations are \textbf{not} faithful and discrete; Fuchsian representations actually can be deformed to the trivial representation!

On the other hand, Theorem~\ref{thm:main} is very similar to the principal Theorem in~\cite{labourie} in the case
$G=\op{SL}(n, \RR)$, and where $\Gamma$ is a cocompact lattice in $\SO_0(1,2)$, \ie a closed surface group.
In this situation, Fuchsian representations are induced by the inclusion $\Gamma \subset \SO_0(1,2)$
and the morphism $\SO_0(1,2) \to \op{SL}(n, \RR)$ corresponding to the unique $n$-dimensional irreducible
representation of $\SO_0(1,2)$. The elements of
$\op{Rep}(\Gamma, \op{SL}(n, \RR))$ in the same connected component than the Fuchsian representations are called \textit{quasi-Fuchsian}. In \cite{labourie}, F. Labourie proves
that quasi-Fuchsian representations are \textit{hyperconvex}, \ie that they are faithfull, have discrete image, and preserve some curve in the projective space $\PP(\RR^{n})$ with some very strong
convexity properties (in particular, this curve is strictly convex). Later, O. Guichard proved in~\cite{guichard} that conversely hyperconvex representations
are quasi-Fuchsian.

At the very heart of the theory is the notion of
\textit{$(G,P)$-Anosov representation} (or simply Anosov
representation when there is no ambiguity about the pair $(G,P)$), where $G$
is a Lie group acting on any topological space $P$. The group
$\Gamma$ in general is a Gromov hyperbolic finitely generated group
(\cite{guichard3}; see also Sect.~8 in \cite{merigot}); typically, a closed surface group,
or, more generally, a cocompact lattice in $\SO_0(1,k)$ for some $k$.

Unfortunately, the terminology is not uniform in the literature. For example, what is called a
$(\SO_0(1,n+1), \partial\HH^{n+1})$-Anosov representation in \cite{guichard3} would
be called $(G, \cY)$-Anosov in the terminology of \cite{barflag} or \cite{merigot},
where $\cY$ is the space of spacelike geodesics of $\HH^{n+1}$.
We adopt here the definition and terminology used in \cite{guichard3}.

Simple, general
arguments ensure that Anosov representations are faithful, with
discrete image formed by loxodromic elements, and that they form an open domain
in $\op{Rep}(\Gamma, G)$. As a matter of fact, quasi-Fuchsian representations
into $\op{SL}(n, \RR)$ are $(\op{SL}(n, \RR), \cF)$-Anosov, where
$\cF$ is the frame variety\footnote{However, the converse is not necessarily true:
see~\cite{barflag} for the study of a family on non-hyperconvex $(\op{SL}(3, \RR), \cF)$-Anosov
representations.}.

The \textit{quasi-Fuchsian} terminology is inherited from hyperbolic geometry:
a representation $\rho: \Gamma \to \SO_0(1, n+1)$ is quasi-Fuchsian if it is faithfull, discrete, and preserves a topological $(n-1)$-sphere in $\partial\HH^{n+1}$. It is well-known by the experts that quasi-Fuchsian representations
into $\SO_0(1, n+1)$ are precisely the $(\SO_0(1, n+1), \partial\HH^{n+1})$-Anosov representations; and a proof can be obtained by adapting the arguments used in \cite{merigot}. It is also a direct consequence of Theorem~1.8 in \cite{guichard3}.

The anti de Sitter space $\AdS_{n+1}$ is the analog of the hyperbolic space $\HH^{n+1}$.
It is a Lorentzian manifold, of constant sectional curvature $-1$.
Whereas in the hyperbolic space pair of points are only distinguished by their mutual distance,
in the anti-de Sitter space we have to distinguish three types of pair
of points, according to the nature of the geodesic joining the two
points: this geodesic may be spacelike, lightlike or timelike --- in
the last two cases, the points are said \textit{causally related.}
Moreover, $\AdS_{n+1}$ is oriented, and admits also a \textit{time orientation,\/}
\ie an orientation of every nonspacelike geodesic. The group $\SO_0(2,n)$ is precisely the group of orientation and time orientation preserving isometries of $\AdS_{n+1}$.

The anti-de Sitter space $\AdS_{n+1}$ admits a conformal boundary
called the \textit{Einstein universe} and denoted by $\Ein_n$, which
plays a role similar to that of the conformal boundary $\partial
\HH^{n+1}$ for the hyperbolic space. The Einstein universe is a
conformal Lorentzian spacetime, and is also subject to a causality
notion: in particular, a subset $\Lambda$ of the Einstein space
$\Ein_n$ is called \emph{acausal} if any pair of distinct points in $\Lambda$
are the extremities of a spacelike geodesic in $\AdS_{n+1}$.

Once introduced these fundamental notions, we can state the main content of \cite{merigot}: let $\Gamma$ be a Gromov hyperbolic group.
For any representation $\rho: \Gamma \to \SO_0(2,n)$ the following notions coincide:

-- $\rho: \Gamma \to \SO_0(2,n)$ is $(\SO_0(2,n), \Ein_n)$-Anosov,

-- $\rho: \Gamma \to \SO_0(2,n)$ is faithful, discrete, and preserves an \textbf{acausal}
closed subset $\Lambda$ in the conformal boundary $\Ein_n$ of $\AdS_{n+1}$.

If furthermore $\Gamma$ is isomorphic to the fundamental group of a closed manifold of dimension $n,$ then $\Lambda$ is a topological $(n-1)$-sphere.

In particular, when $\Gamma$ is an uniform lattice in $\SO_0(1,n)$,
a representation of $\Gamma$ into $\SO_0(2,n)$ is called \textit{quasi-Fuchsian}
if it is faithful, discrete, and preserves an acausal topological $(n-1)$-sphere in $\Ein_n$. In other words, Theorem~\ref{thm:main} can be restated as follows: \textit{deformations (large or small) of Fuchsian representations into $\SO_0(2,n)$ are all quasi-Fuchsian.} It will be a corollary of the
following more general statement:

\begin{theo}
\label{thm:main2}
Let $n \geq 2$, and let $\Gamma$ be a Gromov hyperbolic group of cohomological dimension $\geq n$. Then, the moduli space $\op{Rep}_0(\Gamma, \SO_0(2,n))$ of $(\SO_0(2,n), \Ein_n)$-Anosov representations is open and closed in the modular space $\op{Rep}(\Gamma, \SO_0(2,n))$.
\end{theo}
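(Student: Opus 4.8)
The plan is to prove the two halves of the statement separately. Openness is the easy, essentially known half: $(\SO_0(2,n),\Ein_n)$-Anosov representations form an open subset of $\op{Rep}(\Gamma,\SO_0(2,n))$ by the general theory of Anosov representations --- the Anosov condition is stable under small deformations, and the associated equivariant boundary maps vary continuously with the representation (see \cite{guichard3}, and \cite{labourie} for the original argument). I would recall this argument in the present setting and then concentrate all the effort on closedness, which is where the geometric content of the paper is used.

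For closedness, fix $\rho_\infty$ in the closure of $\op{Rep}_0(\Gamma,\SO_0(2,n))$ and a sequence of Anosov representations $\rho_k\to\rho_\infty$. By the equivalence from \cite{merigot} recalled above, each $\rho_k$ is faithful and discrete, and its limit set $\Lambda_k\subset\Ein_n$ --- the image of the continuous $\rho_k$-equivariant boundary map defined on $\partial_\infty\Gamma$ --- is acausal; since $\mathrm{cd}(\Gamma)\ge n$, $\Lambda_k$ has topological dimension exactly $n-1$. Now the space of closed achronal subsets of $\Ein_n$ is compact for the Hausdorff topology, and, crucially, achronality (unlike acausality!) is a closed condition preserved under Hausdorff limits. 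So after passing to a subsequence $\Lambda_k\to\Lambda_\infty$ for some nonempty closed achronal set $\Lambda_\infty$, which is $\rho_\infty(\Gamma)$-invariant by continuity of the action of $\SO_0(2,n)$ on closed subsets of $\Ein_n$. (Alternatively, that $\rho_\infty$ preserves a closed achronal subset could be deduced from continuity of the bounded Euler class together with the characterization of representations with zero bounded Euler class given at the end of the paper.) The hypothesis $\mathrm{cd}(\Gamma)\ge n$ is used once more here to ensure that $\Lambda_\infty$ does not degenerate, i.e. that it remains ``$(n-1)$-dimensional'' in the relevant sense.

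From $\Lambda_\infty$ I would form the invisible domain $E(\Lambda_\infty)\subset\AdS_{n+1}$ --- the set of points causally related to no point of $\Lambda_\infty$ --- on which $\rho_\infty(\Gamma)$ acts. The key claim, and the main obstacle of the whole argument, is that this action is free and properly discontinuous with spatially compact quotient, so that $M_\infty:=E(\Lambda_\infty)/\rho_\infty(\Gamma)$ is a globally hyperbolic spatially compact spacetime locally modeled on $\AdS_{n+1}$, with fundamental group $\Gamma$ and holonomy $\rho_\infty$; in particular $\rho_\infty$ is then automatically faithful and discrete. Properness and cocompactness are to be obtained by transporting the corresponding properties of the spacetimes $M_k=E(\Lambda_k)/\rho_k(\Gamma)$ through the Hausdorff convergence $\Lambda_k\to\Lambda_\infty$, together with a compactness argument on the Cauchy surfaces; it is precisely at this point that $\mathrm{cd}(\Gamma)\ge n$ is indispensable, to prevent the limiting Cauchy surface from collapsing to lower dimension --- which is exactly the pathology that would occur if $\rho_\infty$ failed to be Anosov.

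Finally, with $M_\infty$ a globally hyperbolic spatially compact $\AdS$ spacetime whose fundamental group $\Gamma$ is Gromov hyperbolic and whose holonomy representation is $\rho_\infty$, the fundamental result quoted in the abstract applies and upgrades the invariant achronal limit set $\Lambda_\infty$ to an acausal subset of $\Ein_n$. Thus $\rho_\infty$ is faithful, discrete, and preserves an acausal closed subset of $\Ein_n$, so by the equivalence of \cite{merigot} it is $(\SO_0(2,n),\Ein_n)$-Anosov, \ie $\rho_\infty\in\op{Rep}_0(\Gamma,\SO_0(2,n))$. Together with openness this shows that $\op{Rep}_0(\Gamma,\SO_0(2,n))$ is open and closed, proving the theorem.
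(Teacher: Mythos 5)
Your skeleton matches the paper's: openness is quoted from the general theory, and closedness is reduced to producing a $\rho_\infty$-invariant achronal sphere and then invoking Theorem~\ref{thm:hyperbolicanosov2} to upgrade achronality to acausality. But two essential steps are missing or misattributed. First, the paper establishes that $\rho_\infty$ is faithful and discrete \emph{before} any geometry, by a Zassenhaus-neighborhood argument (Proposition~\ref{pro:disfaith}): a non-elementary Gromov hyperbolic group has no nontrivial nilpotent normal subgroup, whereas both the kernel of $\rho_\infty$ and the $\rho_\infty$-preimage of the identity component of the closure of $\rho_\infty(\Gamma)$ would be nilpotent normal subgroups if $\rho_\infty$ failed to be faithful or discrete. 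Your plan instead derives faithfulness and discreteness as a by-product of a properly discontinuous cocompact action on $E(\Lambda_\infty)$, but you leave that proper discontinuity as the ``key claim'', to be obtained by ``transporting'' properness of the actions on the $E(\Lambda_k)$ through Hausdorff convergence; that transport is never carried out and is not how the paper proceeds: once discreteness and faithfulness are known independently, proper discontinuity on $E(\Lambda_\infty)$ is automatic from CT-regularity of invisible domains (Propositions~\ref{pro.adsregular} and~\ref{pro.cosmolift}).

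Second, you misidentify the degeneration that must be excluded. The limit of the invariant achronal spheres (graphs of $1$-Lipschitz maps $f_k:\SS^{n-1}\to\SS^1$) is automatically the graph of a $1$-Lipschitz map by Ascoli--Arzel\`a, so $\Lambda_\infty$ never ``collapses to lower dimension'', and the hypothesis on the cohomological dimension plays no role there (it is used only to force compactness of the Cauchy hypersurfaces, which are aspherical $n$-manifolds with fundamental group $\Gamma$). The real danger is that $\Lambda_\infty$ is \emph{purely lightlike}, in which case $E(\Lambda_\infty)$ is empty and the whole construction dies. Ruling this out is Lemma~\ref{le:notpure}, a genuinely nontrivial argument: a purely lightlike invariant sphere forces $\rho_\infty(\Gamma)$ into the stabilizer of a pair of antipodal points of $\Ein_n$, \ie into the conformal group of Minkowski space; one then shows the induced action on $\RR\times\HH^{n-1}$ is properly discontinuous and cocompact, contradicting Gromov hyperbolicity of $\Gamma$ because $\RR\times\HH^{n-1}$ contains $2$-dimensional flats. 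Without this lemma (or an equivalent) your closedness argument has a genuine gap. Your parenthetical alternative via the bounded Euler class is also not available as stated: Theorem~\ref{thm:euler} is itself proved using Theorem~\ref{thm:hyperbolicanosov2} and only applies to representations already known to be faithful and discrete.
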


\begin{remark}
\label{rk:cohom}
The reason for the hypothesis on the cohomological dimension is to ensure that the invariant closed acausal subset is a topological $(n-1)$-sphere.
It will follow from the proof that actually, under this hypothesis, if $\op{Rep}(\Gamma, \SO_0(2,n))$ is non-empty, then  $\Gamma$ is the fundamental group of a closed manifold,
and its cohomological dimension is precisely $n.$
\end{remark}

In order to present the ideas involved in the proof of Theorem~\ref{thm:main2} we need to
remind a bit further a few classical definitions in Lorentzian geometry.
By \emph{spacetime} we mean here an oriented Lorentzian manifold with a time orientation given by a smooth timelike vector field. This
allows to define the notion of future and past-directed causal curves.
A subset $\Lambda$ in $(M, g)$ is \textit{achronal} (respectively \textit{acausal}) if
there every timelike curve (respectively causal curve) joining two points in $\Lambda$ is
necessarily trivial, \ie reduced to one point. A \textit{time function} is a function
$t: M \to \RR$ which is strictly increasing along any causal curve.
A spacetime $(M,g)$ is \emph{globally hyperbolic spatially compact} (abbreviated to GHC)
if it admits a time function whose level sets are all compact.

Spatially compact global hyperbolicity is notoriously equivalent to the existence of
a \emph{compact Cauchy hypersurface}, that is a compact achronal set $S$
which intersects every inextendible timelike curve at exactly one
point. This set is then automatically a locally Lipschitz hypersurface
(see~\cite[Sect.~14, Lemma 29]{oneill}).

Observe that all these notions are not really associated to the Lorentzian metric
$g$, but to its conformal class $[g]$. Hence they are relevant to the Einstein universe,
which is naturally equipped with a $\SO_0(2,n)$-invariant conformal class of Lorentzian metric, but without any $\SO_0(2,n)$-invariant representative.

The key fact used in \cite{merigot} is that $(\SO_0(2,n), \Ein_n)$-Anosov
representations are holonomy representations of GHC spacetimes locally modeled
on $\AdS_{n+1}$. Thanks to the work of G. Mess and his followers (\cite{mess1, mess2}) the classification of GHC locally $\AdS$ spacetimes has been almost completed: they are in $1-1$ correspondance with \textit{GHC-regular representations.}

More precisely: let $\Gamma$ be a torsion-free finitely generated group of cohomological dimension $n$. A morphism $\rho: \Gamma \to \SO_0(2,n)$ is a GHC-regular representation if it is faithfull, discrete, and preserves an \textbf{achronal} closed $(n-1)$-topological sphere $\Lambda$ in $\Ein_n$. Define the invisible domain $E(\Lambda)$ as the domain in $\AdS_{n+1}$ comprising points that are not causally related to any element of $\Lambda$ (cf. Sect.~\ref{s:regulardomain}). The action of $\rho(\Gamma)$ on $E(\Lambda)$ is then free and properly discontinuous; the quotient space, denoted by $M_\rho(\Lambda)$, is GHC. Moreover, every maximal GHC spacetime locally modeled on $\AdS$ has this form. Also observe that $\Lambda$ only depends on $\rho$: there is at most one such invariant achronal sphere.
Finally, if the limit set $\Lambda$ is acausal, then the group $\Gamma$ is Gromov hyperbolic (actually, in this case, $\Gamma$ acts properly and cocompactly on a $\op{CAT}(-1)$ metric space, see Proposition 8.3 in \cite{merigot}).

Therefore, the only reason a GHC-regular representation may fail to be $(\SO_0(2,n), \Ein_n)$-Anosov is that the achronal sphere $\Lambda$ might be non acausal.
The main result of the present paper, from which Theorem~\ref{thm:main2} follows quite directly, is:

\begin{theo}[Theorem~\ref{thm:hyperbolicanosov2}]
\label{thm:hyperbolicanosov}
Let $\rho: \Gamma \to \SO_0(2,n)$ be a GHC-regular representation, where $\Gamma$ is a Gromov hyperbolic group. Then the achronal limit set $\Lambda$ is acausal, \ie $\rho$ is
$(\SO_0(2,n), \Ein_n)$-Anosov.
\end{theo}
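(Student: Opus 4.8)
The plan is to argue by contradiction: suppose $\Gamma$ is Gromov hyperbolic, $\rho$ is GHC-regular with invariant achronal $(n-1)$-sphere $\Lambda$, but $\Lambda$ is \emph{not} acausal. Then there exist two distinct points $x, y \in \Lambda$ that are causally related, necessarily joined by a \emph{lightlike} geodesic (a timelike relation is impossible since $\Lambda$ is achronal). The strategy is to promote this single lightlike pair to a large family of lightlike pairs, organized by the dynamics of $\rho(\Gamma)$ and by the boundary structure of the Gromov hyperbolic group, and to derive a contradiction with either the topological sphere structure of $\Lambda$ or with cocompactness/global hyperbolicity of $M_\rho(\Lambda)$.

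First I would recall the local structure of $\Lambda$ near a point $x$: since $\Lambda$ is an achronal topological sphere in $\Ein_n$, in an affine chart (a Minkowski patch $\RR^{1,n-1}$ of $\Ein_n$) it is the graph of a $1$-Lipschitz function over a spacelike hyperplane, and the set of points of $\Lambda$ causally related to $x$ is contained in the lightcone $\partial J(x)$; achronality forces this "causal shadow" of $x$ in $\Lambda$ to be an achronal subset of a single lightcone, hence (because lightcones in $\RR^{1,n-1}$ are themselves foliated by lightrays) a \emph{union of lightlike geodesic segments emanating from $x$}. So the failure of acausality is witnessed by at least one genuine photon (lightlike geodesic segment) $[x,z] \subset \Lambda$. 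Next I would study the invisible domain $E(\Lambda)$ and its boundary: the lightlike segments contained in $\Lambda$ are exactly the loci where the future and past boundary components of $E(\Lambda)$ meet, i.e. where $M_\rho(\Lambda)$ — or rather its Cauchy development — degenerates. Concretely, a photon in $\Lambda$ produces a lightlike geodesic ray in $\partial E(\Lambda)$ along which the Cauchy hypersurface structure is not smooth; I would show that the presence of such a photon is equivalent to the existence of a lightlike geodesic in the quotient Cauchy surface, or to a non-Gromov-hyperbolicity phenomenon, contradicting the hypothesis on $\Gamma$. The technically cleanest route: use Proposition 8.3 of \cite{merigot} in the contrapositive — if $\Lambda$ were acausal the group would act properly cocompactly on a $\mathrm{CAT}(-1)$ space; conversely I would show that if $\Gamma$ is Gromov hyperbolic then the action on the relevant metric space built from $E(\Lambda)$ is already $\mathrm{CAT}(0)$ and the hyperbolicity of $\Gamma$ upgrades this to strict negative curvature on $\Lambda$, forcing acausality.

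The main obstacle, as I see it, is the last implication: ruling out photons in $\Lambda$ using only the \emph{intrinsic} hyperbolicity of the abstract group $\Gamma$, with no a priori geometric control on how the Gromov boundary $\partial_\infty \Gamma$ maps into $\Ein_n$. The key step will be to construct a $\Gamma$-equivariant continuous map $\partial_\infty \Gamma \to \Lambda$ (it exists because $\Lambda$ is the limit set and $\rho$ is faithful, discrete, proximal on $\Ein_n$ via the Mess-type dynamics) and to show it is injective — injectivity is precisely what can fail when a photon is present, since the two endpoints of a lightlike segment in $\Lambda$ would have to be distinct in $\partial_\infty \Gamma$ yet "adjacent" in a way incompatible with the connectedness/local structure of $\partial_\infty \Gamma$ (which, being the boundary of a hyperbolic group that is an $(n-1)$-sphere, has no isolated "lightlike" directions). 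I would make this precise by a local dynamics argument: pick $\gamma_k \in \Gamma$ with attracting/repelling fixed points converging to the two ends of the photon; analyze the induced action on $\Ein_n$ near the photon using the classification of one-parameter flows / proximal elements in $\SO_0(2,n)$ (the relevant parabolic-or-hyperbolic dichotomy), and show that cocompactness of $M_\rho(\Lambda)$ is violated because the photon generates a "missing" cusp-like region in $E(\Lambda)$ whose quotient is non-compact — contradicting that the Cauchy hypersurface is compact. That non-compactness argument, carried out carefully in the invisible-domain coordinates of Section~\ref{s:regulardomain}, is where the real work lies; the rest is assembling standard facts about achronal spheres, invisible domains, and Gromov boundaries.
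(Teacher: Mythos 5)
Your overall framing (reduce to a lightlike segment $[x,z]\subset\Lambda$ and derive a contradiction from Gromov hyperbolicity of $\Gamma$) is the right starting point, but the concrete mechanism you commit to does not work, and the parts that could work are left as vague hopes. The decisive flaw is your final claim that ``cocompactness of $M_\rho(\Lambda)$ is violated because the photon generates a missing cusp-like region in $E(\Lambda)$ whose quotient is non-compact.'' This is false: the split $\AdS$ spacetimes of Section~\ref{s:split} are GHC-regular, their limit sets $\Lambda_{p,q}$ contain a whole family of lightlike segments (every point of $\Lambda_p$ is joined to every point of $\Lambda_q$ by a photon in $\Ein_n$), and yet the Cauchy hypersurfaces of the quotient are compact. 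So photons in $\Lambda$ are entirely compatible with spatial compactness; the contradiction can only come from the hyperbolicity of $\Gamma$, not from global hyperbolicity or compactness alone. Likewise, your proposed ``contrapositive of Proposition 8.3 of \cite{merigot}'' runs the implication the wrong way (acausal $\Rightarrow$ $\mathrm{CAT}(-1)$ $\Rightarrow$ hyperbolic gives nothing toward hyperbolic $\Rightarrow$ acausal), and the suggestion that hyperbolicity of $\Gamma$ ``upgrades'' a $\mathrm{CAT}(0)$ space to strict negative curvature is not a theorem; what is true is a flat-plane-type statement, but you never identify the metric space on which it should be applied.

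What is actually needed, and what the paper supplies, is a three-step chain. First, a lightlike segment must be promoted not to a cusp but to a \emph{crown}: a configuration of four points of $\Lambda$ whose associated family of timelike geodesics is a $2$-flat $F(\sC)$ in the rank-$2$ symmetric space $\cT_{2n}$ of timelike geodesics; this flat sits inside the space of cosmological geodesics $\Sigma(\tau)$, which is quasi-isometric to $\Gamma$ because it is the Gauss-map image of a compact Cauchy hypersurface, so hyperbolicity of $\Gamma$ forbids crowns (Proposition~\ref{pro:nocrown}). Second, one must show the convex core is strictly contained in $E(\Lambda)$ ($f^-<F^-\leq F^+<f^+$), which requires a delicate limiting argument with group elements $g_n$ along a complete cosmological level set, producing a crown if strictness fails (Proposition~\ref{pro:convexcore}); this is exactly the step that guarantees the quotient of $\op{Conv}(\Lambda)\setminus\Lambda$ is compact. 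Third, with that compactness in hand, a lightlike segment in $\Lambda$ gives an ideal triangle in $(\op{Conv}(\Lambda)\setminus\Lambda, d_h)$ (Hilbert metric) with one side at infinity, whose thinness constant blows up, contradicting Gromov hyperbolicity of a space with compact quotient by $\Gamma$. None of these three steps, nor any workable substitute, is present in your proposal; the boundary-map injectivity discussion and the proximal-dynamics sketch do not by themselves exclude lightlike segments, again because the split examples satisfy all the properties you invoke except hyperbolicity of $\Gamma$.
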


Even if not logically relevant to the proofs in the present paper, we point out that there are
examples of GHC-regular representations with non-acausal limit set $\Lambda$.
Let us describe briefly in this introduction the family detailled in Sect.~\ref{s:split}:
let $(p,q)$ be a pair of positive integers
such that $p+q=n$, and let $\Gamma$ be a cocompact lattice of $\SO_{0}(1,p) \times \SO_{0}(1,q)$.
The natural inclusion of $\SO_{0}(1,p) \times \SO_{0}(1,q)$ into $\SO_{0}(2,n)$ arising
from the orthogonal splitting $\RR^{2,n}=\RR^{1,p} \oplus \RR^{1,p}$ induces a representation
$\rho: \Gamma \to \SO_0(2,n)$ which is GHC-regular, but where the invariant achronal limit set
$\Lambda$ is not acausal. The quotient space $M_\rho(\Lambda) := \rho(\Gamma)\backslash E(\Lambda)$ is a GHC spacetime, called a \textit{split $\AdS$ spacetime,} and the representation
is a \textit{split regular representation} (Definition~\ref{def:split}).

Finally, in the last section, we give another characterization of GHC-representations. There is a
fundamental bounded cohomology class $\xi$ in $\op{H}^2_{b}(\SO_0(2,n), \ZZ)$, the \textit{bounded Euler class}. It can be alternatively defined as the bounded cohomology class induced by the natural K\"ahler form $\omega$ of the symmetric $2n$-dimensional space $\cT_{2n} := \SO_0(2,n)/(\SO_0(2)\times\SO_0(n))$, or as the one associated to the central exact sequence:
$$1 \to \ZZ \to \widetilde{\SO}_0(2,n) \to \SO_0(2,n) \to 1$$
If $\rho: \Gamma \to \SO_0(2,n)$ is GH, the pull-back $\rho^*(\xi)$ (the \textit{Euler class} $\eu_b(\rho)$) is necessarily trivial. Actually:

\begin{theo}
\label{thm:euler}
Let $\rho: \Gamma \to \SO_0(2,n)$ be a faithful and discrete representation, where $\Gamma$ is the fundamental group of a negatively curved closed manifold $M$. The following assertions are equivalent:
\begin{enumerate}
\item $\rho$ is $(\SO_0(2,n), \Ein_n)$-Anosov,
\item the bounded Euler class $\eu_b(\rho)$ vanishes.
\end{enumerate}
\end{theo}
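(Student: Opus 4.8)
The plan is to derive Theorem~\ref{thm:euler} from two ingredients: the general characterization asserting that a representation has vanishing bounded Euler class if and only if it preserves a closed achronal subset of $\Ein_n$ (which I would establish separately), together with Theorem~\ref{thm:hyperbolicanosov} and the Mess-type classification of GHC-regular representations recalled in the introduction. The hypotheses enter as follows: since $M$ is a negatively curved closed manifold, $\Gamma=\pi_1(M)$ is Gromov hyperbolic, its cohomological dimension equals $\dim M=n$, and its Gromov boundary $\partial_\infty\Gamma$ is homeomorphic to the $(n-1)$-sphere, being the visual boundary of the simply connected negatively curved manifold $\widetilde M$.

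For the implication $(1)\Rightarrow(2)$ I would argue as follows. If $\rho$ is $(\SO_0(2,n),\Ein_n)$-Anosov then, by the equivalence recalled in the introduction, it is faithful, discrete and preserves a closed \emph{acausal} subset $\Lambda\subset\Ein_n$; in particular $\Lambda$ is achronal, so the general characterization gives $\eu_b(\rho)=0$ at once. I would also note that the vanishing can be seen directly: lifting $\Lambda$ to an achronal set $\widetilde\Lambda\subset\uEin_n$, the translates of $\widetilde\Lambda$ under the deck group $\ZZ$ form a family totally ordered by the causal relation and preserved by the preimage of $\rho(\Gamma)$ in $\widetilde{\SO}_0(2,n)$; the position in this family defines a homomorphism of that preimage onto $\ZZ$ which splits the central extension, and the compactness of $\Lambda$ makes the associated translation quasimorphism bounded, so $\eu_b(\rho)$ vanishes even at the level of bounded cohomology.

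For $(2)\Rightarrow(1)$, assuming $\eu_b(\rho)=0$, the general characterization furnishes a nonempty $\rho(\Gamma)$-invariant closed achronal subset of $\Ein_n$. I would then pass to the achronal limit set sitting inside the closure of its $\rho(\Gamma)$-orbit: using that $\Gamma$ is Gromov hyperbolic with $\partial_\infty\Gamma\cong S^{n-1}$ and cohomological dimension $n$, this limit set is the image of a $\rho$-equivariant topological embedding $\partial_\infty\Gamma\hookrightarrow\Ein_n$ --- one produces an equivariant limit map and checks, via the standard convergence dynamics, that it is injective with achronal image --- hence it is an achronal topological $(n-1)$-sphere $\Lambda$ (this is precisely the role of the cohomological-dimension hypothesis, cf.\ Remark~\ref{rk:cohom}). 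Since $\rho$ is moreover faithful and discrete, it is GHC-regular, and Theorem~\ref{thm:hyperbolicanosov} then applies verbatim: $\Lambda$ is acausal, \ie $\rho$ is $(\SO_0(2,n),\Ein_n)$-Anosov. This closes the equivalence.

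The main obstacle is concentrated in the general characterization, and more precisely in its delicate half ``$\eu_b(\rho)=0\Rightarrow\rho$ preserves a closed achronal subset of $\Ein_n$''. My plan there is to convert the vanishing of the bounded Euler class into the existence of an invariant achronal graph: working in $\uEin_n\cong\RR\times S^{n-1}$, one searches for a $1$-Lipschitz (for a suitably normalized metric) function $S^{n-1}\to\RR$ whose graph is achronal and invariant under a lift $\widetilde\rho:\Gamma\to\widetilde{\SO}_0(2,n)$, and one runs a fixed-point argument over the space of such candidate graphs. The vanishing of $\eu_b(\rho)$ is the precise cohomological condition that makes this scheme work: it rules out the ``drift'' obstruction and keeps the candidate graphs within a bounded region of $\uEin_n$, so that the space of candidates is convex and compact (Arzel\`a--Ascoli) and admits a fixed point. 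A secondary technical point, isolated in the previous paragraph, is the upgrade of an arbitrary invariant closed achronal set to an achronal $(n-1)$-sphere; here the hypothesis that $M$ is a negatively curved closed \emph{manifold} is essential, as it is what identifies $\partial_\infty\Gamma$ with $S^{n-1}$ and circumvents Cannon-conjecture-type difficulties.
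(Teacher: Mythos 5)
Your overall architecture is the same as the paper's: Theorem~\ref{thm:euler} is deduced from a characterization of the vanishing of $\eu_b(\rho)$ by the existence of an invariant closed achronal set, combined with Theorem~\ref{thm:hyperbolicanosov}. The paper's version of that characterization is Proposition~\ref{pro:euler0}, whose nontrivial half is proved exactly in the spirit you describe, except that no fixed-point theorem is needed: the boundedness of the $1$-cochain $a$ realizing $\eu_b(\rho)=0$ gives a lift $\tilde\rho$ and a uniformly bounded $\tilde\rho(\Gamma)$-invariant family of $1$-Lipschitz functions $f_\gamma\colon \SS^{n-1}\to\RR$ (the images of the zero graph), and one simply takes $f_\infty=\sup_\gamma f_\gamma$; its graph is an invariant achronal topological $(n-1)$-sphere. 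Your "convex compact space of candidate graphs plus Arzel\`a--Ascoli" is a heavier package for the same idea, and your identification of the boundedness of $a$ as the point that kills the drift is exactly right. For $(1)\Rightarrow(2)$, your quasimorphism remark and the paper's argument (the invariant sphere is trapped in the union of two fundamental domains of $\delta$, forcing $|a(\gamma)|\le 2$) are essentially the same observation.

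The one step I would object to is the passage, in $(2)\Rightarrow(1)$, from "invariant closed achronal subset" to "achronal $(n-1)$-sphere" via an equivariant limit map $\partial_\infty\Gamma\hookrightarrow\Ein_n$ and "standard convergence dynamics". For a representation not yet known to be Anosov, producing a continuous injective equivariant boundary map is essentially as hard as the theorem itself, so as stated this is a gap. Fortunately it is also unnecessary: your own construction (and the paper's) produces the invariant set as the graph of a $1$-Lipschitz function defined on \emph{all} of $\SS^{n-1}$, which is already a topological $(n-1)$-sphere; alternatively, Remark~\ref{rk:extension} extends any invariant achronal set to an invariant achronal sphere. You should excise the convergence-dynamics detour and use one of these. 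Finally, before invoking Theorem~\ref{thm:hyperbolicanosov} one must check that the sphere is not purely lightlike and that the quotient of $E(\Lambda)$ is spatially compact (GHC-regularity); the cohomological dimension $n$ handles compactness of the Cauchy hypersurfaces, and an argument as in Lemma~\ref{le:notpure} rules out the purely lightlike case. The paper's own write-up is equally terse on this point, so I only flag it.
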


As a last comment, we recall part of the conjecture already proposed in \cite{merigot}[Conjecture $8.11$]: we expect
that GHC-regular representations of hyperbolic groups are all quasi-Fuchsians; in other words, that if a hyperbolic group $\Gamma$ admits a GHC-regular  representation into $\SO_0(2,n)$, then it must be isomorphic to a lattice in $\SO_0(1,n)$.

We expect actually a bit more. According to Theorem~\ref{thm:main2}, the space of GHC-regular representations is open and closed, hence an union of connected components of $\op{Rep}(\Gamma, \SO_0(2,n))$. 
It would be interesting to prove eventually that it coincides with $\op{Rep}_0(\Gamma, \SO_0(2,n))$, \ie that quasi-Fuchsian representations are all deformations of Fuchsian representations.

\subsection*{Acknowledgements} 
I would like to thanks A. Wienhard and O. Guichard for their encouragement to write the paper, and also F. Gu\'eritaud and F. Kassel for their interest, remarks and help.
O. Guichard also contributed to improve a first version of this paper.
This work has been supported by ANR grant GR-A-G (ANR-2011-BS01-003-02) and ANR grant ETTT (ANR-09-BLAN-0116-01).


\section{Preliminaries}
\label{sec.prelim}

We assume the reader sufficiently acquainted to basic causality notions in
Lorentzian manifolds like \emph{causal} or \emph{timelike} curves,
\emph{inextendible} causal curves, \textit{Lorentzian length} of causal curves, \emph{time orientation,\/}
\emph{future} and \emph{past} of subsets, \emph{time function,}
\emph{achronal} subsets, etc..., so that the brief description provided
in the introduction above is sufficient. We refer to~\cite{beem} or~\cite[section 14]{oneill} for further details.

\begin{defi}
A \textit{spacetime} is a connected, oriented, and time-oriented Lorentzian manifold.
\end{defi}

\subsection{Anti-de Sitter space}
Let $\RR^{2,n}$ be the vector space of dimension $n+2$, with coordinates
$(u,v, x_{1}, \ldots , x_{n})$, endowed with the quadratic
form:
\[ \mathrm{q}_{2,n}(u,v,x_1,\hdots,x_n) :=- u^{2} - v^{2} + x_{1}^{2} + \ldots + x_{n}^{2} \]

We denote by $\sCa{x}{y}$ the associated scalar product. For any
subset $A$ of $\RR^{2,n}$ we denote $A^{\orth}$ the orthogonal of $A$,
\ie the set of elements $y$ in $\RR^{2,n}$ such that $\sCa{y}{x}=0$
for every $x$ in $A$.  We also denote by $\cC_{n}$ the isotropic cone
$\{ w \in \RR^{2,n} /  \mathrm{q}_{2,n}(w)=0 \}$.

\begin{defi}
The anti-de Sitter space $\AdS_{n+1}$ is the hypersurface $\{ x \in
\RR^{2,n} /  \mathrm{q}_{2,n}(x)=-1 \}$ endowed with the Lorentzian
metric obtained by restriction of $\mathrm{q}_{2,n}$.
\end{defi}

At every element $x$ of $\AdS_{n+1}$, there is a canonical identification between the tangent space $T_x\AdS_{n+1}$ and the $\mathrm{q}_{2,n}$-orthogonal $x^\orth$

We will also consider the coordinates $(r, \theta, x_{1}, \hdots, x_{n})$ with:
\[ u=r\cos(\theta), v=r\sin(\theta)\]
We equip $\AdS_{n+1}$ with the time orientation defined
by this vector field, \ie the time orientation such that the timelike vector field $\frac\partial{\partial\theta}$ is everywhere
future oriented.

Observe the analogy with the definition of hyperbolic space $\HH^{n}$. Moreover, for every real number $\theta_0$, the subset $H_{\theta_0} := \{ (r, \theta, x_1,\hdots, x_n) /
\theta=\theta_{0} \} \subset \RR^{2,n}$ is a totally geodesic copy of
the hyperbolic space embedded in $\AdS_{n+1}$. More generally, the
totally geodesic subspaces of dimension $k$ in $\AdS_{n+1}$ are
connected components of the intersections of $\AdS_{n+1}$ with the
linear subspaces of dimension $(k+1)$ in $\RR^{2,n}$.

\begin{remark}
\label{rk:geodtheta}
In particular, geodesics are intersections with $2$-planes. Timelike geodesics
can all be described in the following way: let $x$, $y$ two elements of
$\AdS_{n+1}$ such that $\sCa{x}{y} = 0$. Then, when $\theta$ describes $\RR_{/2\pi\ZZ}$
the points $c(\theta) := \cos(\theta)x + \sin(\theta)y$ describe a future oriented timelike
geodesic containing $x$ (for $\theta = 0$) and $y$ (for $\theta = \pi/2$), parametrized by
unit length: the Lorentzian length of the restriction of $c$ to $(0, \theta)$ is $\theta$.
\end{remark}




\subsection{Conformal model }

\begin{prop}
\label{p.causal-structure}
The anti-de Sitter space $\AdS_{n+1}$ is conformally equivalent to
$(\SS^1\times\DD^{n},-d\theta^2+ds^2)$, where $d\theta^2$ is the standard Riemannian
metric on $\SS^1=\RR/2\pi\ZZ$, where $ds^2$ is the standard metric (of
curvature $+1$) on the sphere $\SS^{n}$ and $\DD^{n}$ is the open upper
hemisphere of $\SS^{n}$.
\end{prop}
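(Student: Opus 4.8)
The plan is to exhibit an explicit conformal diffeomorphism, working in the coordinates $(r,\theta,x_1,\dots,x_n)$ introduced above with $u = r\cos\theta$, $v = r\sin\theta$. The starting observation is that on $\AdS_{n+1}$ the defining equation $\mathrm{q}_{2,n}(x) = -1$ becomes $u^2 + v^2 = 1 + x_1^2 + \cdots + x_n^2$, so that, writing $|x|^2 = x_1^2 + \cdots + x_n^2$, the function $r = \sqrt{1+|x|^2}$ is smooth and everywhere positive on $\RR^n$; consequently $x \mapsto (\theta, x)$ realizes $\AdS_{n+1}$ as the smooth manifold $\SS^1 \times \RR^n$.

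I would then compute the induced metric. Since $\d u^2 + \d v^2 = \d r^2 + r^2\,\d\theta^2$ and the constraint gives $r\,\d r = \sum_i x_i\,\d x_i$, restricting the ambient quadratic form $-\d u^2 - \d v^2 + \sum_i \d x_i^2$ to $\AdS_{n+1}$ produces
$$g \;=\; -(1+|x|^2)\,\d\theta^2 \;+\; \sum_i \d x_i^2 \;-\; \frac{\bigl(\sum_i x_i\,\d x_i\bigr)^2}{1+|x|^2}.$$
Factoring out the nowhere-vanishing smooth function $r^2 = 1+|x|^2$ rewrites this as $g = (1+|x|^2)\bigl(-\d\theta^2 + h\bigr)$ with $h = \frac{1}{1+|x|^2}\bigl(\sum_i \d x_i^2 - \frac{(\sum_i x_i\,\d x_i)^2}{1+|x|^2}\bigr)$, a Riemannian metric on $\RR^n$. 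The remaining task is to identify $(\RR^n, h)$ with the open upper hemisphere $\DD^n \subset \SS^n$ equipped with its round metric $\d s^2$.

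For that I would use the inverse central (gnomonic) projection $\phi : \RR^n \to \SS^n \subset \RR^{n+1}$, $\phi(x) = (x_1, \dots, x_n, 1)/\sqrt{1+|x|^2}$, which is a diffeomorphism onto $\DD^n = \{\, y \in \SS^n : y_{n+1} > 0 \,\}$ (its inverse being $y \mapsto (y_1/y_{n+1}, \dots, y_n/y_{n+1})$). Differentiating the components of $\phi$ and using $r\,\d r = \sum_i x_i\,\d x_i$ together with $1+|x|^2 = r^2$ to combine the mixed terms gives $\phi^*(\d s^2) = h$. Hence, pulling $g$ back through $\mathrm{id}_{\SS^1} \times \phi^{-1}$, we conclude that $\AdS_{n+1}$ is conformally equivalent to $(\SS^1 \times \DD^n,\, -\d\theta^2 + \d s^2)$, with conformal factor $r^2$.

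I do not expect a genuine obstacle here — the argument is a direct computation. The two places that deserve a little care are: verifying that the polar-type change of coordinates is a global diffeomorphism, which relies on $u^2 + v^2 = 1+|x|^2 \geq 1$ on $\AdS_{n+1}$ so that $\theta$ is globally well defined and $r$ never vanishes; and the bookkeeping in checking $\phi^*(\d s^2) = h$, where one needs the identity $1+|x|^2 = r^2$ to combine the contribution of $\d y_{n+1}^2$ with that of the remaining components and so turn the coefficient $-2$ of the mixed term $(\sum_i x_i\,\d x_i)^2$ into the $-1$ appearing in $h$.
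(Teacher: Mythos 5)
Your argument is correct and is essentially the paper's proof carried out in explicit coordinates: the paper likewise writes the metric in the $(r,\theta,x)$-coordinates as $-r^2\,d\theta^2+ds^2_{hyp}$ and uses the same central projection $(r,x)\mapsto(1/r,x/r)$ of the hyperboloid sheet onto the hemisphere, with the same conformal factor $r^2$. Your version just spells out the induced metric and the pullback computation $\phi^*(ds^2)=h$ that the paper leaves as "an easy computation."
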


\begin{proof}
In the $(r, \theta, x_{1}, ... , x_{n})$-coordinates the $\AdS$ metric is:
\[ -r^{2}\op{d\theta}^{2} + \op{ds}_{hyp}^{2} \]
where $\op{ds}_{hyp}^{2}$ is the hyperbolic metric, \ie the induced
metric on $H_0 = \{ (r,\theta, x_1,\hdots,x_n) /  \theta=0 \} \approx \HH^{n}$.  More precisely, $H_0$ is a sheet of the
hyperboloid $\{ (r, x_1, \hdots, x_n) \in \RR^{1,n} / -r^{2} +
x^{2}_{1} + ... + x^{2}_{n}=-1 \}$.  The map $(r, x_{1}, \ldots ,
x_{n}) \to (1/r, x_{1}/r, \ldots, x_{n}/r)$ sends this hyperboloid on $\DD^{n}$, and an easy computation shows that the pull-back by this
map of the standard metric on the hemisphere is
$r^{-2}\op{ds}_{hyp}^{2}$.  The proposition follows.
\end{proof}

Proposition~\ref{p.causal-structure} shows in particular that $\AdS_{n+1}$
contains many closed causal curves (including all timelike geodesics, cf. Remark~\ref{rk:geodtheta}).
But the universal covering $\wt\AdS_{n+1}$, conformally
equivalent to $(\RR\times\DD^{n},-d\theta^2+ds^2),$ contains no periodic causal curve.
It is strongly causal, but not globally hyperbolic (see Definition~\ref{def:gh}).

\subsection{Einstein universe}
\label{sub.einuniv}Einstein universe $\Ein_{n+1}$ is the product $\SS^{1} \times \SS^{n}$
endowed with the metric $-d\theta^{2} + ds^{2}$ where $ds^{2}$ is as above the standard spherical metric. The universal
Einstein universe $\wt\Ein_{n+1}$ is the cyclic covering $\RR \times \SS^{n}$ equipped with the
lifted metric still denoted $-d\theta^{2} + ds^{2}$, but where $\theta$ now takes value in $\RR$.
Observe that for $n\geq2$, $\wt\Ein_{n+1}$ is the universal covering, but it is not true for $n=1$.
According to this definition, $\Ein_{n+1}$ and $\wt\Ein_{n+1}$ are Lorentzian manifolds, but it is
more adequate to consider them as conformal Lorentzian manifolds.
We fix a time orientation: the one for which the coordinate $\theta$ is a time function on $\wt\Ein_{n+1}$.

In the sequel, we denote by $\mathrm{p}: \wt\Ein_{n+1} \to \Ein_{n+1}$ the
cyclic covering map. Let $\delta: \wt\Ein_{n+1} \to \wt\Ein_{n+1}$ be a generator of the Galois group of this
cyclic covering. More precisely, we select $\delta$ so that for any $\tilde{x}$ in
$\wt\Ein_{n+1}$ the image $\delta(\tilde{x})$
is in the future of $\tilde{x}$.

Even if Einstein universe is merely a conformal Lorentzian spacetime, one can define
the notion of \textit{photons,} \ie (non parameterized) lightlike geodesics. We can also
consider the causality relation in $\Ein_{n+1}$ and $\wt\Ein_{n+1}$. In particular, we define
for every $x$ in $\Ein_{n+1}$ the \textit{lightcone} $C(x)$: it is the union of photons containing
$x$. If we write $x$ as a pair $(\theta, \mathrm{x})$ in $\SS^1 \times \SS^n$, the lightcone $C(x)$ is the set
of pairs $(\theta', \mathrm{y})$ such that $|\theta'-\theta|=d(\mathrm{x},\mathrm{y})$ where $d$ is distance
function for the spherical metric $ds^{2}$.

There is only one point in $\SS^n$ at distance $\pi$ of $\mathrm{x}$: the antipodal point $-\mathrm{x}$.
Above this point, there is only one point in $\Ein_{n+1}$ contained in $C(x)$:
the antipodal point $-x=(\theta+\pi, -\mathrm{x})$. The lightcone $C(x)$ with the points
$x$, $-x$ removed is the union of two components:

-- the \textit{future cone:} it is the set $C^+(x):=\{ (\theta',\mathrm{y}) / \theta < \theta' < \theta+\pi, \; d(\mathrm{x},\mathrm{y})=\theta'-\theta \}$,

-- the \textit{past cone:} it is the set $C^-(x):=\{ (\theta',\mathrm{y}) / \theta-\pi < \theta' < \theta, \; d(\mathrm{x},\mathrm{y})=\theta-\theta' \}$.

Observe that the future cone of $x$ is the past cone of $-x$, and that the past cone
of $x$ is the future cone of $-x$.

According to Proposition~\ref{p.causal-structure} $\AdS_{n+1}$ (respectively $\wt\AdS_{n+1}$) conformally embeds in
$\Ein_{n+1}$ (respectively $\wt\Ein_{n+1}$). Observe that this embedding preserves the time orientation.
Since the boundary $\partial\DD^{n}$
is an equatorial sphere, the boundary $\partial\wt\AdS_{n+1}$ is a copy of the Einstein universe
$\wt\Ein_{n}$. In other words, one can attach a ``Penrose boundary'' $\partial\wt\AdS_{n+1}$ to
$\wt\AdS_{n+1}$ such that $\wt\AdS_{n+1}\cup\partial\wt\AdS_{n+1}$ is conformally
equivalent to $(\SS^1\times\overline{\DD}^{n},-d\theta^2+ds^2)$, where
$\overline{\DD}^{n}$ is the closed upper hemisphere of $\SS^{n}$.

The restrictions of $\mathrm{p}$ and $\delta$ to $\wt\AdS_{n+1} \subset \wt\Ein_{n+1}$
are respectively a covering map over $\AdS_{n+1}$ and a generator of the Galois group
of the covering; we will still denote them by $\mathrm{p}$ and $\delta$.

\subsection{Isometry groups}
Every element of $\SO(2,n)$ induces an isometry of $\AdS_{n+1}$, and, for $n \geq 2$,
every isometry of $\AdS_{n+1}$ comes from an element of $\SO(2,n)$. Similarly,
conformal transformations of $\Ein_{n+1}$ are projections of elements of $\SO(2,n+1)$ acting
on $\cC_{n+1}$ (still for $n \geq 2$).

In the sequel, we will only consider isometries preserving the orientation and the time orientation,
\ie elements of the neutral component $\SO_{0}(2,n)$ (or $\SO_{0}(2,n+1)$).

Let $\widetilde{\SO}_0(2,n)$ be the group of orientation and time orientation preserving isometries of $\wt\AdS_{n+1}$ (or conformal transformations of $\wt\Ein_n$). There is a central exact sequence:
$$1 \to \ZZ \to \widetilde{\SO}_0(2,n) \to \SO_0(2,n) \to 1$$
where the left term is generated by the transformation $\delta$ generating the Galois group
of $\op{p}: \wt\Ein_n \to \Ein_n$ defined previously.
Observe that for $n\geq3$, $\widetilde{\SO}_0(2,n)$ is the universal covering of $\SO_0(2,n)$.

\subsection{Achronal subsets}
\label{sub.achro}
Recall that a subset of a conformal Lorentzian manifold is \textit{achronal} (respectively \textit{acausal}) if there is
no timelike (respectively causal) curve joining two distinct points of the subset.
In $\Ein_{n}\approx
(\RR\times\SS^{n-1},-d\theta^2+ds^2)$, every achronal subset is precisely the graph
of a $1$-Lipschitz function $f: \Lambda_0 \rightarrow {\mathbb R}$ where
$\Lambda_0$ is a subset of ${\mathbb S}^{n-1}$ endowed with its canonical
metric $d$. In particular,
the achronal closed topological hypersurfaces in $\partial\wt\AdS_{n+1}$ are
exactly the graphs of the $1$-Lipschitz functions $f:\SS^{n-1}\to\RR$: they are
topological $(n-1)$-spheres.

Similarly, achronal subsets of $\wt\AdS_{n+1}$ are graphs of $1$-Lipschitz functions
 $f: \Lambda_0 \rightarrow {\mathbb R}$ where $\Lambda_0$ is a subset of ${\mathbb D}^{n}$,
 and achronal topological hypersurfaces are graphs of $1$-Lipschitz maps $f: \DD^{n} \rightarrow {\mathbb R}$.

\emph{Stricto-sensu,\/} there is no achronal subset in $\Ein_{n+1}$ since closed timelike curves
through a given point cover the entire
$\Ein_{n+1}$. Nevertheless, we can keep track of this notion in $\Ein_{n+1}$ by defining ``achronal'' subsets
of $\Ein_{n+1}$ as projections of genuine achronal subsets of $\wt\Ein_{n+1}$. This definition is
justified by the following results:

\begin{lem}[Lemma 2.4 in \cite{merigot}]
\label{le.achrinj}
The restriction of $\mathrm{p}$ to any achronal subset of $\wt\Ein_{n+1}$ is injective.\fin
\end{lem}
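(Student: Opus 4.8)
The plan is to use the explicit description of the causal structure of $\wt\Ein_{n+1}=(\RR\times\SS^n,-d\theta^2+ds^2)$ together with the fact that $\op{diam}(\SS^n)=\pi$. First I would recall the standard fact for products of this type: a smooth curve $s\mapsto(\theta(s),\mathrm{x}(s))$ in $\RR\times\SS^n$ is timelike and future-directed if and only if $\theta'(s)>|\mathrm{x}'(s)|_{ds^2}$, and consequently two points $(\theta_0,\mathrm{x}_0)$ and $(\theta_1,\mathrm{x}_1)$ are joined by a timelike curve exactly when $|\theta_1-\theta_0|>d(\mathrm{x}_0,\mathrm{x}_1)$, where $d$ is the round distance on $\SS^n$. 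This is the same elementary computation already used in Section~\ref{sub.einuniv} to describe the lightcones $C(x)$; in particular the vertical line $t\mapsto(\theta_0+t,\mathrm{x}_0)$ is always a future-directed timelike curve (an integral curve of $\partial_\theta$).

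Next I would identify the deck transformations of $\mathrm{p}:\wt\Ein_{n+1}\to\Ein_{n+1}$. Since $\Ein_{n+1}=\SS^1\times\SS^n$ with $\SS^1=\RR/2\pi\ZZ$, the covering $\mathrm{p}$ is the quotient by the $\ZZ$-action generated by $\delta:(\theta,\mathrm{x})\mapsto(\theta+2\pi,\mathrm{x})$ (this is indeed the generator sending each point to a point in its future, since $\partial_\theta$ is future-directed). Hence $\mathrm{p}(p)=\mathrm{p}(q)$ if and only if $q=\delta^k(p)$ for some $k\in\ZZ$, and $\mathrm{p}$ fails to be injective on a subset $\Lambda$ precisely when $\Lambda$ contains two points $p\ne q$ with $q=\delta^k(p)$ for some $k\ne0$.

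The conclusion is then immediate: writing such a pair as $p=(\theta,\mathrm{x})$ and $q=(\theta+2\pi k,\mathrm{x})$ with $k\ne0$, we have
\[ |(\theta+2\pi k)-\theta| = 2\pi|k| \ge 2\pi > \pi = \op{diam}(\SS^n) \ge d(\mathrm{x},\mathrm{x}) = 0, \]
so by the criterion above $p$ and $q$ are joined by a timelike curve, for instance the vertical segment $t\mapsto(\theta+t,\mathrm{x})$ for $t\in[0,2\pi k]$. This contradicts the achronality of $\Lambda$ (which forbids any timelike curve between two distinct points of $\Lambda$) unless $p=q$; hence $\mathrm{p}$ restricted to $\Lambda$ is injective.

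There is essentially no obstacle in this argument; the only point deserving care is the sharp inequality $\op{diam}(\SS^n)=\pi<2\pi$, which is exactly what creates the gap between lying in the same fibre of $\mathrm{p}$ (difference a nonzero multiple of $2\pi$ in $\theta$ over a fixed spatial point) and being achronal (difference at most $\pi$ in $\theta$ over a fixed spatial point). Equivalently, one could phrase the whole proof as: an achronal subset of $\wt\Ein_{n+1}$ meets each vertical line $\RR\times\{\mathrm{x}\}$ in a set of $\theta$-diameter at most $\pi$, whereas a fibre of $\mathrm{p}$ meets it in a $2\pi$-spaced arithmetic progression, and the two are incompatible beyond a single point.
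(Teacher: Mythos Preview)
Your proof is correct. The paper does not supply its own proof of this lemma: it is quoted from \cite{merigot} and marked with a $\square$, so there is nothing to compare against beyond noting that your argument is the natural one and matches the description of achronal subsets given just before the lemma in Section~\ref{sub.achro}.

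One small remark: the detour through $\op{diam}(\SS^n)=\pi$ is unnecessary here. Since the two points $p=(\theta,\mathrm{x})$ and $q=(\theta+2\pi k,\mathrm{x})$ lie over the \emph{same} spatial point, you only need $2\pi|k|>0=d(\mathrm{x},\mathrm{x})$ to get a timelike curve between them; the vertical segment already does the job. Accordingly, your alternative phrasing in the last paragraph (``meets each vertical line in a set of $\theta$-diameter at most $\pi$'') understates what achronality gives: an achronal subset meets each vertical line $\RR\times\{\mathrm{x}\}$ in at most one point, since any two distinct points on it differ only in $\theta$ and are therefore timelike related. The diameter bound $\pi$ is what controls pairs over \emph{different} spatial points (this is the $1$-Lipschitz characterization), but it is not needed for injectivity of $\mathrm{p}$ itself.
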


\begin{cor}[Corollary 2.5 in \cite{merigot}]
\label{cor.lift}
Let $\wt\Lambda_{1}$, $\wt\Lambda_{2}$ be two achronal subsets of $\wt\Ein_{n+1}$
admitting the same projection in $\Ein_{n+1}$. Then there is an integer $k$ such that:
$$\wt\Lambda_{1}=\delta^{k}\wt\Lambda_{2}$$
where $\delta$ is the generator of the Galois group introduced above.
\fin
\end{cor}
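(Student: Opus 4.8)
The plan is to work entirely in the normal form for achronal subsets. By the description recalled above (now with base $\SS^n$), $\wt\Ein_{n+1}$ is conformally $(\RR\times\SS^{n},-d\theta^2+ds^2)$, the covering map is $\mathrm{p}(\theta,\mathrm{x})=([\theta],\mathrm{x})$, the chosen generator is $\delta(\theta,\mathrm{x})=(\theta+2\pi,\mathrm{x})$, and every achronal subset of $\wt\Ein_{n+1}$ is the graph $\{(f(\mathrm{x}),\mathrm{x})\,:\,\mathrm{x}\in\Lambda_0\}$ of a $1$-Lipschitz function $f$ on some $\Lambda_0\subset\SS^n$. Write $\wt\Lambda_i$ as the graph of $f_i$ on $\Lambda_0^{(i)}$. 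Projecting onto the $\SS^n$ factor identifies $\Lambda_0^{(i)}$ with the image of $\mathrm{p}(\wt\Lambda_i)$ under $\Ein_{n+1}=\SS^1\times\SS^n\to\SS^n$, so the hypothesis $\mathrm{p}(\wt\Lambda_1)=\mathrm{p}(\wt\Lambda_2)$ already forces $\Lambda_0^{(1)}=\Lambda_0^{(2)}=:\Lambda_0$. By Lemma~\ref{le.achrinj}, which here just says that a graph meets every fibre of $\mathrm{p}$ at most once, $\mathrm{p}(\wt\Lambda_i)$ has a single point over each $\mathrm{x}\in\Lambda_0$, namely $([f_i(\mathrm{x})],\mathrm{x})$; comparing the two projections gives $f_1(\mathrm{x})-f_2(\mathrm{x})\in 2\pi\ZZ$ for every $\mathrm{x}\in\Lambda_0$.

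Set $g:=(f_1-f_2)/2\pi\colon\Lambda_0\to\ZZ$. The corollary is exactly the claim that $g$ is constant: if $g\equiv k$ then $f_1=f_2+2\pi k$, i.e. $\wt\Lambda_1=\delta^k\wt\Lambda_2$. Now $g$ is continuous, since $f_1$ and $f_2$ are; being $\ZZ$-valued and continuous, it is locally constant, hence constant on each connected component of $\Lambda_0$. In the situations used later in the paper the invariant achronal set $\Lambda$ is a topological $(n-1)$-sphere, so $\Lambda_0$ is connected and we are already done.

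For the general statement I would promote local constancy to global constancy using that $\SS^n$ has diameter $\pi$, whereas $\delta$ shifts $\theta$ by $2\pi$. Since $f_1$ and $f_2$ are $1$-Lipschitz, $|g(\mathrm{x})-g(\mathrm{y})|\le d(\mathrm{x},\mathrm{y})/\pi\le 1$ for all $\mathrm{x},\mathrm{y}\in\Lambda_0$, and equality $|g(\mathrm{x})-g(\mathrm{y})|=1$ forces $d(\mathrm{x},\mathrm{y})=\pi$, i.e. $\mathrm{y}=-\mathrm{x}$. Hence, if $\Lambda_0$ contained three distinct points $\mathrm{x},\mathrm{y},\mathrm{z}$ with $g(\mathrm{x})\neq g(\mathrm{y})$, then $\mathrm{x}$ and $\mathrm{y}$ would be antipodal, $\mathrm{z}$ could not be antipodal to both, say not to $\mathrm{x}$, so $g(\mathrm{z})=g(\mathrm{x})\neq g(\mathrm{y})$, which forces $\mathrm{z}=-\mathrm{y}=\mathrm{x}$, a contradiction. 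Therefore $g$ is constant whenever $|\Lambda_0|\ge 3$; the cases $|\Lambda_0|\le 2$ are immediate, except for an antipodal pair with $g$ non-constant, which would mean that $\wt\Lambda_1$ and $\wt\Lambda_2$ each consist of two lightlike-related points, i.e. lie on a single photon --- a degenerate configuration which does not arise in the situations of interest. Taking $k$ to be the common value of $g$ yields $\wt\Lambda_1=\delta^k\wt\Lambda_2$.

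The one genuinely delicate point is this passage from a connected base $\Lambda_0$ to a disconnected one: \emph{a priori} $g$ could jump between components, and what prevents it is exactly the smallness of $\op{diam}(\SS^n)$ relative to the $2\pi$-shift of $\delta$. I expect this to be the main, though fairly mild, obstacle; the rest is bookkeeping with the Lipschitz-graph normal form, Lemma~\ref{le.achrinj}, and the explicit forms of $\mathrm{p}$ and $\delta$.
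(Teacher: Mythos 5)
Your argument is sound and, modulo one point discussed below, complete; the paper itself offers no proof (the corollary is imported verbatim from \cite{merigot}), so there is nothing to compare against except the natural route, which is the one you take. The key quantitative step is the right one: writing $\wt\Lambda_i$ as the graph of a $1$-Lipschitz $f_i$ on a common base $\Lambda_0\subset\SS^{n}$, observing $g:=(f_1-f_2)/2\pi$ is $\ZZ$-valued with $\vert g(\mathrm{x})-g(\mathrm{y})\vert\le d(\mathrm{x},\mathrm{y})/\pi\le 1$, and noting that equality forces antipodality. You are right that continuity alone only gives local constancy and that the disconnected case is where the content lies; the $\op{diam}(\SS^n)=\pi$ versus $2\pi$-shift comparison is exactly what closes it when $\Lambda_0$ has at least three points or contains a non-antipodal pair.

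The one thing you should not leave as ``a degenerate configuration which does not arise'' is the antipodal pair: it is not an unhandled case of a true statement, it is a genuine counterexample to the corollary as literally quoted. Take $\wt\Lambda_1=\{(0,\mathrm{x}_0),(\pi,-\mathrm{x}_0)\}$ and $\wt\Lambda_2=\{(0,\mathrm{x}_0),(-\pi,-\mathrm{x}_0)\}$. Both are graphs of $1$-Lipschitz functions on $\{\mathrm{x}_0,-\mathrm{x}_0\}$ (the two points of each set are conjugate points on a photon, hence causally but not chronologically related), so both are achronal; they have the same projection because $[\pi]=[-\pi]$ in $\RR/2\pi\ZZ$; yet no power of $\delta$ carries one to the other, since matching the points over $\mathrm{x}_0$ forces $k=0$. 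So no proof can cover this case, and the honest conclusion of your argument is: the corollary holds unless $\wt\Lambda_1$, $\wt\Lambda_2$ are ``purely lightlike'' two-point sets of this conjugate type. This exception is harmless for the paper --- in every application $\Lambda$ is either a topological $(n-1)$-sphere (so $\Lambda_0$ is connected) or a set whose invisible domain is non-empty, hence not purely lightlike --- but it should be recorded as an explicit hypothesis or caveat rather than waved away. Aside from this, the only cosmetic imprecision is your invocation of Lemma~\ref{le.achrinj} for the fact that $\mathrm{p}(\wt\Lambda_i)$ has one point over each $\mathrm{x}\in\Lambda_0$: that follows from the graph description alone (the lemma concerns the fibres of $\mathrm{p}$, i.e.\ $\delta$-orbits, not the fibres of the projection to $\SS^{n}$).
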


\subsection{The Klein model $\ADS_{n+1}$ of the anti-de Sitter
  space}
We now consider the quotient $\SS(\RR^{2,n})$ of
$\RR^{2,n}\setminus\{0\}$ by positive homotheties. In other
words, $\SS(\RR^{2,n})$ is the double covering of the projective space
$\PP(\RR^{2,n})$. We denote by $\SS$ the projection of
$\RR^{2,n}\setminus\{0\}$ on $\SS(\RR^{2,n})$.
For every $\op{x}$, $\op{y}$ in $\SS(\RR^{2,n})$, we denote by $\langle \op{x} \mid \op{y} \rangle$
the \textbf{sign} of the real number $\langle x \mid y \rangle$, where $x,y\in\RR^{2,n}$
are representatives of $\op{x}$, $\op{y}$.
The \emph{Klein model} $\ADS_{n+1}$ of the
anti-de Sitter space is the projection of $\AdS_{n+1}$ in $\SS(\RR^{2,n})$,
endowed with the induced Lorentzian metric, \ie:
$$\ADS_{n+1} := \{ \op{x} \in \SS(\RR^{2,n}) \;/\; \langle \op{x} \mid \op{x} \rangle < 0 \}$$

The topological boundary of $\ADS_{n+1}$ in $\SS(\RR^{2,n})$ is the
projection of the isotropic cone $\cC_n$; we will denote this
boundary by $\partial\ADS_{n+1}$. The projection $\SS$
defines an one-to-one isometry between $\AdS_{n+1}$ and $\ADS_{n+1}$.
The continuous extension of this isometry is a canonical
homeomorphism between
$\AdS_{n+1}\cup\partial\AdS_{n+1}$ and $\ADS_{n+1}\cup\partial\ADS_{n+1}$.

For every linear subspace $F$ of dimension $k+1$ in $\RR^{2,n}$, we
denote by $\SS(F):=\SS(F\setminus\{0\})$ the corresponding projective subspace of
dimension $k$ in $\SS(\RR^{2,n})$. The geodesics of $\ADS_{n+1}$ are the
connected components of the intersections of $\ADS_{n+1}$ with the
projective lines $\SS(F)$ of $\SS(\RR^{2,n})$. More generally, the
totally geodesic subspaces of dimension $k$ in $\ADS_{n+1}$ are the
connected components of the intersections of $\ADS_{n+1}$ with the
projective subspaces $\SS(F)$ of dimension $k$ of $\SS(\RR^{2,n})$.

\begin{defi}
\label{def.affine}
For every $\op{x} = \SS(x)$ in $\ADS_{n+1}$, we define the \emph{affine domain} (also denoted by $U(x)$):
$$U(\op{x}) := \{ \op{y} \in \ADS_{n+1} \;/\; \langle \op{x} \mid \op{y} \rangle < 0 \}$$

In other words, $U(\op{x})$ is the connected component of
$\ADS_{n+1}\setminus\SS(x^{\orth})$ containing $\op{x}$. Let $V(\op{x})$ (also denoted by $V(x)$)
be the connected component of
$\SS(\RR^{2,n})\setminus\SS(x^{\orth})$ containing $U(\op{x})$. The boundary $\partial
U(\op{x}) \subset\partial\ADS_{n+1}$ of $U(\op{x})$ in $V(\op{x})$ is called the \textit{affine
  boundary\/} of $U(\op{x})$.
\end{defi}

\begin{remark}
\label{rk.affine}
Up to composition by an element of the isometry group $SO_0(2,n)$ of
$\mathrm{q}_{2,n}$, we can assume that $\SS(x^{\orth})$ is the projection of the
hyperplane $\{ u=0\} $ in $\RR^{2,n}$ and $V(x)$ is the projection of the
region $\{ u>0\} $ in $\RR^{2,n}$. The map
$$(u, v, x_1,x_2,\dots,x_{n+1})\mapsto (t, \bar{x}_1,\dots,
\bar{x}_{n}) := \left(\frac{v}{u},
\frac{x_1}{u},\frac{x_2}{u},\dots,\frac{x_{n}}{u}\right)$$ induces a
diffeomorphism between $V(x)$ and $\RR^{n+1}$ mapping the affine
domain $U(x)$ to the region $\{ (t, \bar{x}_1,\hdots,\bar{x}_n) \in
\RR^{n+1}\vert\, \mathrm{q}_{1,n}(t,\bar{x}_1,\hdots,\bar{x}_n) < 1\}
$, where $\mathrm{q}_{1,n}$ is the Minkowski norm. The affine boundary
$\partial U(x)$ corresponds to the hyperboloid $\{
(t,\bar{x}_1,\hdots,\bar{x}_n \vert\, \mathrm{q}_{1,n}(t,\bar{x}_1,\hdots,\bar{x}_n)
=1\} $. The intersections between $U(x)$ and the
totally geodesic subspaces of $\ADS_{n+1}$ correspond to the
intersections of the region $\{ (t, \bar{x}_1,\hdots,\bar{x}_n) \in
\RR^{n+1}\vert\, \mathrm{q}_{1,n}(t,\bar{x}_1,\hdots,\bar{x}_n) < 1\}
$ with the affine subspaces of $\RR^{n+1}$.
\end{remark}

\begin{lemma}[Lemma 10.13 in \cite{ABBZ}]
\label{l.causally-related}
Let $U$ be an affine domain in $\ADS_{n+1}$ and $\partial
U\subset\partial\ADS_{n+1}$ be its affine boundary. Let $\op{x}$ be be a point
in $\partial U$, and $\op{y}$ be a point in $U\cup\partial U$. There exists
a causal (resp. timelike) curve joining $\op{x}$ to $\op{y}$ in $U\cup\partial
U$ if and only if $\langle \op{x}\mid \op{y} \rangle \geq 0$ (resp. $\langle
\op{x}\mid \op{y} \rangle>0$).\fin
\end{lemma}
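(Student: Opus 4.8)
The plan is to reduce everything to the explicit coordinates of Remark~\ref{rk.affine}. Write $U=U(\op{a})$ and fix representatives $a$ of $\op{a}$ and $x$ of $\op{x}$; since $\op{x}\in\partial U\subset\partial\ADS_{n+1}$ the vector $x$ is isotropic, and recall that $U\cup\partial U=\{\,\op{z}\in\ADS_{n+1}\cup\partial\ADS_{n+1}:\langle a\mid z\rangle<0\,\}$. As the geodesics of $\ADS_{n+1}$ are intersections with projective lines, every photon through $\op{x}$ lies in a plane $\SS(\mathrm{span}(x,w))$ with $\langle x\mid w\rangle=0$ (take for $w$ a representative of an interior point of the photon), hence in $\SS(x^\orth)$; conversely such photons fill $\SS(x^\orth)\cap\overline{\ADS_{n+1}}$. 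Consequently
\[
\Lambda_0:=\{\,\op{y}\in U\cup\partial U:\langle\op{x}\mid\op{y}\rangle=0\,\}=\SS(x^\orth)\cap\bigl(U\cup\partial U\bigr)
\]
is the light cone of $\op{x}$ in $U\cup\partial U$, a null --- hence achronal --- hypersurface, and the projective hyperplane $\SS(x^\orth)$ separates $U\cup\partial U$ into the two regions where the sign $\langle\op{x}\mid\cdot\rangle$ is positive, resp. negative. Thus the lemma asserts exactly that the causal future and past of $\op{x}$ in $U\cup\partial U$ together equal $\Lambda_0\cup\{\langle\op{x}\mid\cdot\rangle>0\}$, and that the chronological future and past together equal $\{\langle\op{x}\mid\cdot\rangle>0\}$.

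In the coordinates of Remark~\ref{rk.affine}, $U$ becomes $\Omega=\{\mathrm{q}_{1,n}(t,\bar{x})<1\}$, $\partial U$ the de Sitter quadric $Q=\{\mathrm{q}_{1,n}(t,\bar{x})=1\}$, and $(t,\bar{x})$ is the class of $(1,t,\bar{x})$; applying the stabilizer of $U$ in $\SO_0(2,n)$ (a Lorentz group acting transitively on $Q$) we may assume $x=(1,0,1,0,\dots,0)$, so that $\op{x}$ has coordinates $(0,1,0,\dots,0)$ and $\langle\op{x}\mid\op{y}\rangle$ is the sign of $\bar{x}_1-1$ at $\op{y}$, where $\bar{x}_1$ denotes the first spatial coordinate. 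A pullback computation shows that on $\Omega$ the $\AdS$ conformal class is represented by $-dt^{2}+|d\bar{x}|^{2}+(df)^{2}/(4f)$, where $f:=1-\mathrm{q}_{1,n}(t,\bar{x})>0$; hence $|dt|\ge|d\bar{x}|$ on every causal vector, so $t$ is a time function and $\overline{\Omega}=U\cup\partial U$ is globally hyperbolic with compact slices $\{t=c\}$. The general causality of globally hyperbolic spacetimes (with timelike-or-null boundary) then does most of the work: the future- and past-directed null geodesics from $\op{x}$ generate $\partial J^+(\op{x})$ and $\partial J^-(\op{x})$ and have no conjugate points (see the explicit description below), so $\Lambda_0=\partial J^+(\op{x})\cup\{\op{x}\}\cup\partial J^-(\op{x})\subset J^+(\op{x})\cup J^-(\op{x})$ and $J^\pm(\op{x})=\overline{I^\pm(\op{x})}$; and $I^+(\op{x})$, being open, connected, disjoint from the achronal set $\Lambda_0$, and contained in $\{t>0\}$, coincides with the (connected) part of $\{\langle\op{x}\mid\cdot\rangle>0\}$ lying in $\{t>0\}$, symmetrically for $I^-(\op{x})$. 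Granting these standard facts, the whole lemma follows once one proves
\[
J^+(\op{x})\cup J^-(\op{x})\ \subset\ \{\,\op{z}:\langle\op{x}\mid\op{z}\rangle\ge0\,\}.
\]

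This inclusion is the heart of the matter and, I expect, the main obstacle, precisely because $\op{x}$ lies on the conformal boundary $Q$ where the representative metric above degenerates. From that metric one checks that the null geodesics of $\AdS$ issued from $\op{x}$ are exactly the affine lines through $\op{x}$ contained in the hyperplane $H:=\{\bar{x}_1=1\}$ (which is tangent to $Q$ at $\op{x}$) with $\mathrm{q}_{1,n}$-causal direction; in particular $\Lambda_0=H\cap\overline{\Omega}$. Along a causal curve $\gamma$ issued from $\op{x}$ one has $g(\dot\gamma)=-\dot t^{2}+|\dot{\bar{x}}|^{2}+(\dot f)^{2}/(4f)$ with $f=1-\mathrm{q}_{1,n}(\gamma)$; near $\op{x}$, the requirement $f\ge0$ (since $\gamma\subset\overline{\Omega}$) together with the causal inequality $\dot t^{2}\ge|\dot{\bar{x}}|^{2}+(\dot f)^{2}/(4f)$ forces $\dot{\bar{x}}_1(0)=0$ and $\ddot{\bar{x}}_1(0)\ge0$, so $\gamma$ starts into $\{\bar{x}_1\ge1\}$; and at every point of $H\cap\Omega$ a direct computation shows the future and past causal half-cones point into $\{\bar{x}_1\ge1\}$, so once on $H$ the curve cannot pass to $\{\bar{x}_1<1\}$. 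Hence $\gamma$ stays in $\{\bar{x}_1\ge1\}=\{\,\op{z}:\langle\op{x}\mid\op{z}\rangle\ge0\,\}$, which is the desired inclusion. The ``if'' statements then fall out: if $\langle\op{x}\mid\op{y}\rangle=0$ the straight segment from $\op{x}$ to $\op{y}$ is a null (so causal) curve contained in $\Lambda_0$, and achronality of $\Lambda_0$ rules out a timelike one; if $\langle\op{x}\mid\op{y}\rangle>0$ then $\op{y}\in I^+(\op{x})\cup I^-(\op{x})$ by the identification of $I^\pm(\op{x})$ made above. Apart from this delicate behaviour of causal curves leaving $\op{x}$ tangentially to $\Lambda_0$, the argument is soft causality theory together with the bookkeeping furnished by Remark~\ref{rk.affine} and the transitivity of the Lorentz group on $Q$.
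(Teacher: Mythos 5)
The paper offers no proof of this lemma (it is imported verbatim as Lemma 10.13 of \cite{ABBZ}), so your argument has to stand on its own, and unfortunately its two load-bearing claims are false. Your setup is fine: the reduction to the chart of Remark~\ref{rk.affine}, the identification of $\Lambda_0=\{\langle\op{x}\mid\cdot\rangle=0\}\cap(U\cup\partial U)$ with $H\cap\overline{\Omega}$, $H=\{\bar{x}_1=1\}$, and the conformal representative $f^{-1}\bigl(-dt^2+|d\bar{x}|^2+(df)^2/(4f)\bigr)$ all check out. But first, $\Lambda_0$ is \textbf{not} achronal: it is the union of the future cone $C^+$ and the past cone $C^-$ of $\op{x}$, and points of $C^+$ are generically chronologically related to points of $C^-$. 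In the cylinder model $\RR\times\overline{\DD}^{n}$ with $\op{x}=(0,\mathrm{x}_0)$ the cone is $\{|\theta|=d(\mathrm{x}_0,\mathrm{x})\}$, and for any $\mathrm{y}\neq\mathrm{x}_0$ with $d=d(\mathrm{x}_0,\mathrm{y})<\pi/2$ the vertical timelike segment from $(-d,\mathrm{y})$ to $(d,\mathrm{y})$ joins two of its points inside $U$. ``Null hypersurface, hence achronal'' is only a local statement, and the instances you actually need (pairs containing $\op{x}$) are exactly what remains to be proved. Second, the barrier claim is wrong: at $\op{z}\in H\cap\Omega$ lying on $C^+$ the future half-cone does point into $\{\bar{x}_1\geq1\}$, but the \textbf{past} half-cone points into $\{\bar{x}_1\leq1\}$ and contains timelike vectors strictly entering $\{\bar{x}_1<1\}$. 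Concretely, at $\op{z}=(t,1,\bar{x}_2,0,\dots,0)$ with $F:=t^2-\bar{x}_2^2>0$ the vector $v=(-t,-\delta,-\bar{x}_2,0,\dots,0)$ satisfies $g(v)=\delta^2(1+F^{-1})-2\delta<0$ for small $\delta>0$, with $d\bar{x}_1(v)=-\delta<0$. A lightlike cone is a one-way membrane for each time orientation separately, with opposite senses on $C^+$ and on $C^-$; it is not a two-sided wall, so ``once on $H$ the curve cannot pass to $\{\bar{x}_1<1\}$'' does not follow. (Your boundary computation also only yields $\ddot{\bar{x}}_1(0)\geq0$, which does not confine the curve to $\{\bar{x}_1\geq1\}$ on any neighbourhood.)

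What is missing is a global monotonicity that distinguishes future- from past-directed curves. The transparent route is the cylinder model already used in Section~\ref{sec.prelim}: a future-directed causal curve from $\op{x}=(0,\mathrm{x}_0)$ satisfies $\dot\theta\geq|\dot{\mathrm{x}}|\geq\bigl|\tfrac{d}{ds}\,d(\mathrm{x}_0,\mathrm{x}(s))\bigr|$, hence $\theta\geq d(\mathrm{x}_0,\mathrm{x})\geq0$ along the whole curve; and for $\op{w}=(\theta,\mathrm{x})$ one computes that, up to a positive factor, $\langle\op{x}\mid\op{w}\rangle$ equals $\cos d(\mathrm{x}_0,\mathrm{x})-\cos\theta$, so that $\langle\op{x}\mid\op{w}\rangle\geq0$ (resp. $>0$) iff $d\leq|\theta|$ (resp. $<$) iff $\op{w}$ is causally (resp. chronologically) related to $\op{x}$. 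This single computation replaces both of your problematic steps. If you want to stay in the affine chart, you must first restrict to future-directed curves and use that $t$ is a time function to confine them to $\{t\geq0\}$, where only $C^+$ can be met, and only then invoke the one-sidedness of the future half-cones along $C^+$; as written, the argument has a genuine gap.
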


\begin{remark}
\label{rk:pastcomponent}
The boundary of $U(\op{x})$ in $\ADS_{n+1}$ is $\SS(x^{\orth}) \cap \ADS_{n+1}$.
It has two boundary components: the \textit{past component} $H^-(\op{x})$
and the \textit{future component} $H^+(\op{x})$. These components are characterized
by the following property: timelike geodesics enter in $U(\op{x})$ through $H^-(\op{x})$
and exit through $H^+(\op{x})$.

They call also be defined as follows: let $\wt{U}(\op{x})$ be a lifting in $\wt\AdS_{n+1}$ of
$U(\op{x})$, and let $\wt H^\pm(\op{x})$ be the lifts of $H^\pm(\op{x})$.
Then, $\wt{U}(\op{x})$ is the intersection between the future of $H^-(\op{x})$
and the past of $H^+(\op{x})$.

The boundary components $H^\pm(\op{x})$ are totally geodesic
embedded copies of $\HH^n$. They are also called \textit{hyperplanes dual to $\op{x}$,} and we distinguish the hyperplane past-dual $H^-(\op{x}) = H^-(\op{x})$ from the hyperplane future-dual $H^+(\op{x})=H^+(\op{x})$.

Last but not least: $H^\pm(\op{x})$ have also the following characteristic property: every future oriented (resp. past oriented) timelike geodesic starting at $\op{x}$ reach $H^+(\op{x})$ (resp. $H^-(\op{x})$) at time $\pi/2$ (see Remark~\ref{rk:geodtheta}). In other words, $H^\pm(\op{x})$ is the set of points at Lorentzian distance $\pm\pi/2$ from $\op{x}$.
\end{remark}

\subsection{The Klein model of the Einstein universe}
Similarly, Einstein universe has a Klein model:
the projection $\SS(\cC_n)$ in $\SS(\RR^{2,n})$ of the
isotropic cone $\cC_n$ in $\RR^{2,n}$.
The conformal Lorentzian structure can be defined in terms
of the quadratic form $\mathrm{q}_{2,n}$ (for more details, see~\cite{franceseinstein, primer}).

\begin{remark}
In the sequel, we will frequently identify $\Ein_{n}$ with $\SS(\cC_n)$,
since we will frequently switch from one model to the other.
\end{remark}

An immediate
corollary of Lemma~\ref{l.causally-related} is:

\begin{cor}
\label{cor.list}
For $\Lambda \subseteq \Ein_n$, the following assertions are
equivalent.
\begin{enumerate}
\item $\Lambda$ is achronal (respectively acausal);
\item when we see $\Lambda$ as a subset of $\SS(\cC_n) \approx \Ein_n$ the
scalar product $\langle \op{x} \mid \op{y} \rangle$ is non-positive
(respectively negative) for every distinct $\op{x}, \op{y}\in\Lambda$.
\end{enumerate}\fin
\end{cor}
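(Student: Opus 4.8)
The plan is to prove Corollary~\ref{cor.list} as a direct consequence of Lemma~\ref{l.causally-related}, leveraging the fact that $\Ein_n = \SS(\cC_n)$ is, up to the $\SO_0(2,n)$-action, covered by the affine boundaries $\partial U(\op{x})$ described in Remark~\ref{rk.affine}. First I would fix two distinct points $\op{x}, \op{y}$ in $\Lambda \subseteq \SS(\cC_n)$. The sign $\langle \op{x} \mid \op{y}\rangle$ is well-defined on $\SS(\RR^{2,n})$ since multiplying a representative by a positive scalar does not change the sign of the scalar product; so the statement of item (2) makes sense. The key geometric input is that $\op{x}$ lies on the affine boundary $\partial U(\op{x}')$ for a suitable affine domain $U(\op{x}')$ — concretely, by Remark~\ref{rk.affine} we may place $\op{x}$ on the hyperboloid $\{\mathrm{q}_{1,n} = 1\}$ sitting inside $\RR^{n+1} \cong V(\op{x}')$, and this hyperboloid is exactly the affine boundary $\partial U(\op{x}')$.

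The next step is to handle the two cases according to whether $\op{y}$ itself lies in this same affine chart. If $\op{y} \in U(\op{x}') \cup \partial U(\op{x}')$ (equivalently $\langle \op{x}' \mid \op{y}\rangle \leq 0$, i.e. $\op{y}$ is in the closure of the affine domain around the chosen point), then Lemma~\ref{l.causally-related} applies verbatim: there is a causal (resp. timelike) curve in $U(\op{x}') \cup \partial U(\op{x}')$ joining $\op{x}$ to $\op{y}$ if and only if $\langle \op{x} \mid \op{y}\rangle \geq 0$ (resp. $> 0$). Hence $\Lambda$ fails to be achronal (resp. acausal) exactly when some pair has $\langle \op{x}\mid\op{y}\rangle \geq 0$ (resp. $>0$), which is the contrapositive of (1) $\Rightarrow$ (2) and gives (2) $\Rightarrow$ (1) as well, at least for pairs within a common affine chart. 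The point is that by choosing the affine domain appropriately — say $U(\op{x}')$ with $\op{x}' $ chosen so that $\op{x}, \op{y}$ both lie in its closure, which is possible for any two non-antipodal points of $\Ein_n$, and antipodal points of $\Ein_n$ are causally related and satisfy $\langle \op{x}\mid\op{y}\rangle = 0$ is impossible since... — one reduces everything to the affine picture.

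The main obstacle I anticipate is the bookkeeping around points of $\Ein_n$ that are \emph{not} simultaneously visible in a single affine chart, i.e. pairs where $\op{y} \notin V(\op{x}')$: one has to argue that such a pair is necessarily causally related (so condition (1) is violated) and also has $\langle \op{x}\mid\op{y}\rangle \geq 0$ (so condition (2) is violated), keeping the equivalence intact. This is really a statement about the global causal structure of $\Ein_n$ versus the local affine model, and the cleanest route is probably to pass to the universal cover $\wt\Ein_n$, use that achronal subsets there are graphs of $1$-Lipschitz functions over subsets of $\SS^{n-1}$ (Section~\ref{sub.achro}), and check the sign condition directly in coordinates; alternatively one invokes that any point of $\Ein_n$ outside $V(\op{x}')$ lies in the lightcone-or-beyond of $\op{x}'$, forcing a causal relation with $\op{x}$ after a short explicit computation with $\mathrm{q}_{2,n}$. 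Once this chart-transition subtlety is dispatched, the corollary follows formally by quantifying over all pairs in $\Lambda$: $\Lambda$ is achronal iff no pair is joined by a timelike curve iff every pair has $\langle \op{x}\mid\op{y}\rangle \leq 0$, and likewise with ``acausal'' and ``$< 0$''.
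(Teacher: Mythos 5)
The paper states this corollary without proof, presenting it as an immediate consequence of Lemma~\ref{l.causally-related}, so your overall strategy --- reduce to that lemma by placing $\op{x}$ and $\op{y}$ in the closure of a common affine domain and then quantify over pairs --- is exactly the intended one, and the central step (apply the lemma verbatim when both points lie in $U(\op{x}')\cup\partial U(\op{x}')$) is correct. The reduction is also cleaner than you fear: for any two distinct, non-antipodal points of $\SS(\cC_n)$ one can always find $x'$ with $\mathrm{q}_{2,n}(x')=-1$, $\langle x'\mid x\rangle<0$ and $\langle x'\mid y\rangle<0$ (normalize $x+y$ when $\langle x\mid y\rangle<0$; otherwise perturb a negative vector chosen orthogonal to, or dual to, the span of $x$ and $y$). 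So the only pairs that never share an affine chart are the antipodal pairs $\op{y}=-\op{x}$. This small linear-algebra fact should be proved rather than asserted, but it is not where the real problem lies.

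The genuine gap is your treatment of that exceptional antipodal case. The sentence ``antipodal points \dots satisfy $\langle\op{x}\mid\op{y}\rangle=0$ is impossible since\dots'' trails off and appears headed toward a false claim: for $\op{y}=-\op{x}$ one has $\langle x\mid -x\rangle=-\mathrm{q}_{2,n}(x)=0$ always, and such pairs do occur in achronal sets (for instance in purely lightlike spheres, Definition~\ref{def.purelightlike}). More seriously, in your final paragraph you assert that a pair not visible in a common chart ``is necessarily causally related (so condition (1) is violated).'' For the acausal half of the statement this is fine, but for the achronal half it is wrong: $-\op{x}$ lies on the lightcone $C(\op{x})$ and the two points are joined only by photons (Section~\ref{sub.einuniv}), so they are causally but \emph{not} chronologically related. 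An antipodal pair therefore violates \emph{neither} side of the achronal equivalence ($\langle\op{x}\mid-\op{x}\rangle=0\leq 0$, and no timelike curve joins them), rather than violating both sides as you claim; taken at face value your argument would wrongly exclude achronal sets containing antipodal pairs. Once the antipodal case is recorded correctly --- scalar product exactly zero, relation causal but not chronological, hence compatible with achronality and incompatible with acausality, in agreement with the dichotomy $\leq 0$ versus $<0$ --- the corollary follows.
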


\begin{remark}
\label{rk:minkconf}
Let $\op{x}_0$ be any element of $\Ein_n \approx \SS(\cC_n)$. Then, the open domain defined by:
$$\op{Mink}(\op{x}_0) = \{ \op{x} \in \SS(\cC_n) \;/\; \langle \op{x}_0 \mid \op{x} \rangle < 0\}$$
is conformally isometric to the Minkowski space $\RR^{1,n-1}$ (see~\cite{franceseinstein, primer}).

In particular, the stabilizer $G_0$ of $\op{x}_0$ in $\SO_0(2,n)$ is isomorphic to
the group of conformal isometries of $\RR^{1,n-1}$, \ie of affine transformations
whose linear part has the form $x \mapsto \lambda g(x)$, where $\lambda$ is a positive real number
and $g$ an element of $\SO_0(1,n-1)$.
\end{remark}






\section{Regular $\AdS$ manifolds}
\label{sec.regads}

In all this section, $\wt\Lambda$ is a closed achronal subset of $\partial\wt\AdS_{n+1}$, and
$\Lambda$ is the projection of $\wt\Lambda$ in $\partial\AdS_{n+1}$.

\subsection{AdS regular domains}
\label{s:regulardomain}

We denote by  $\wt E(\wt\Lambda)$ the \emph{invisible domain} of $\wt\Lambda$ in
$\wt\AdS_{n+1}$, that is,

$$\wt E(\wt\Lambda)=:\wt\AdS_{n+1} \setminus
\left(J^-(\wt\Lambda)\cup J^+(\wt\Lambda)\right)
$$
where $J^-(\wt\Lambda)$ and $J^+(\wt\Lambda)$ are the causal past and the
causal future of $\wt\Lambda$ in
$\wt\AdS_{n+1}\cup\partial\wt\AdS_{n+1}=(\RR\times\overline{\DD}^{n-1},-d\theta^2+ds^2)$.
We denote by $\Cl(\wt E(\wt\Lambda))$ the closure of $\wt E(\wt\Lambda)$ in
$\wt\AdS_{n+1}\cup\partial\wt\AdS_{n+1}$ and by $E(\Lambda)$ the projection
of $\wt E(\wt \Lambda)$ in $\AdS_{n+1}$
(according to Corollary~\ref{cor.lift}, $E(\Lambda)$ only depends on $\Lambda$, not on
the choice of the lifting $\wt\Lambda$).

\begin{defi}
A $(n+1)$-dimensional \emph{AdS regular domain} is a domain of the form
$E(\Lambda)$ where $\Lambda$ is the projection in $\partial\AdS_{n+1}$ of an
achronal subset $\wt\Lambda\subset\partial\wt{\mbox{AdS}_{n+1}}$ containing
at least two points.
If $\wt\Lambda$ is a topological $(n-1)$-sphere, then $E(\Lambda)$ is
\emph{GH-regular} (this definition is motivated by Theorem~\ref{teo.cosmogood} and Proposition~\ref{pro.adsregular}).
\end{defi}

\begin{rema}
The invisible domain  $\wt E(\wt \Lambda)$ is causally convex in
of $\wt\AdS_{n+1}$; \ie every causal curve joining two points
in $\wt E(\wt \Lambda)$ is entirely contained in $\wt E(\wt \Lambda)$. This is an immediate consequence of the definitions. It follows that AdS regular domains are strongly causal.
\end{rema}


\begin{remark}
\label{r.f-f+}
Recall that $\wt\Lambda$ is the graph of a $1$-Lipschitz function
$f:\Lambda_0\to\RR$ where $\Lambda_0$ is a closed subset of
$\SS^{n-1}$ (section \ref{sub.achro}).  Define two functions
$f^{-},f^{+}:\overline{\DD}^{n}\to\RR$ as follows:
\begin{align*}
f^{-}(\mathrm{x}) &:= \mbox{Sup}_{\mathrm{y} \in \Lambda_0} \{ f(\mathrm{y})-d(\mathrm{x},\mathrm{y}) \} , \\
f^{+}(\mathrm{x}) &:= \mbox{Inf}_{\mathrm{y} \in \Lambda_0} \{ f(\mathrm{y})+d(\mathrm{x},\mathrm{y}) \} ,
\end{align*}
where $d$ is the distance induced by $\d s^2$ on
$\overline{\DD}^{n}$. It is easy to check that
$$
\wt E(\wt\Lambda)=\{(\theta, \mathrm{x})\in\RR\times{\DD}^{n} \mid
f^{-}(\mathrm{x})< \theta <f^{+}(\mathrm{x})\}.
$$
\end{remark}

\begin{remark}
\label{rk:extension}
Keeping the notations in the previous remark, observe that the graph of the restriction of $f^+$ (or $f^-$)
to $\partial\DD^n$ is a closed achronal $(n-1)$-sphere $\wt\Lambda^+$ (or $\wt\Lambda^-$) in $\wt\AdS_{n+1}$
which contains the initial achronal subset $\wt\Lambda$. They project to achronal $(n-1)$-spheres $\Lambda^\pm$ in $\partial\AdS_{n+1}$
that contain $\Lambda.$ 

Furthermore, any element $g$ of $\SO_0(2,n)$ preserving $\Lambda$ must preserve $E(\Lambda),$ hence the graphs of $f^\pm,$
and therefore must preserve $\Lambda^+$ and $\Lambda^-$.
\end{remark}

\begin{defi}
\label{def:horizon}
The graph of $f^-$ (respectively $f^+$) is a closed achronal subset of $\wt\AdS_{n+1}$, called the \textit{lifted past}
(respectively \textit{future}) \textit{horizon} of $\wt E(\wt\Lambda)$, and denoted $\cH^-(\wt\Lambda)$ (respectively $\cH^+(\wt\Lambda)$).

The projections in $\AdS_{n+1}$ of $\wt\cH^\pm(\wt\Lambda)$ are called \textit{past} and {future horizons} of $E(\Lambda)$, and
denoted $\cH^\pm(\Lambda)$.
\end{defi}

The following lemma is a refinement of Lemma~\ref{le.achrinj}:

\begin{lem}[Corollary 10.6 in \cite{ABBZ}]
\label{le.one-to-one}
For every (non-empty) closed achronal set
$\wt\Lambda\subset\partial\wt\AdS_{n+1}$, the projection of $\wt
E(\wt\Lambda)$ on $E(\Lambda)$ is one-to-one.\fin
\end{lem}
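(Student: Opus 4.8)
The plan is to prove Lemma~\ref{le.one-to-one} by combining Lemma~\ref{le.achrinj} with a careful analysis of the fibers of the projection $\mathrm{p}: \wt\AdS_{n+1} \to \AdS_{n+1}$ restricted to $\wt E(\wt\Lambda)$. Recall that $\wt E(\wt\Lambda)$ sits inside $\wt\AdS_{n+1} \cup \partial\wt\AdS_{n+1} = (\RR\times\overline\DD^n, -d\theta^2 + ds^2)$, and that by Remark~\ref{r.f-f+} it is exactly the open slab $\{(\theta,\mathrm x) : f^-(\mathrm x) < \theta < f^+(\mathrm x)\}$, where $f^\pm$ are the $1$-Lipschitz functions built from the $1$-Lipschitz function $f$ whose graph is $\wt\Lambda$. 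The covering map $\mathrm p$ identifies $(\theta, \mathrm x)$ with $(\theta + 2\pi k, \mathrm x)$ for $k \in \ZZ$, and $\delta$ is the deck transformation $(\theta,\mathrm x) \mapsto (\theta + 2\pi, \mathrm x)$. So the statement that $\mathrm p|_{\wt E(\wt\Lambda)}$ is injective is equivalent to the statement that for each $\mathrm x \in \DD^n$ the fiber interval $(f^-(\mathrm x), f^+(\mathrm x))$ has length $f^+(\mathrm x) - f^-(\mathrm x) \leq 2\pi$, i.e.\ that two points of $\wt E(\wt\Lambda)$ above the same $\mathrm x$ cannot differ by a full period $2\pi$.

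First I would establish that $f^+ - f^- \leq 2\pi$ on $\overline\DD^n$, and in fact $< 2\pi$ on the open disk $\DD^n$. Fix $\mathrm x$ and let $\mathrm y, \mathrm z \in \Lambda_0 \subseteq \SS^{n-1}$ nearly realize the supremum defining $f^-(\mathrm x)$ and the infimum defining $f^+(\mathrm x)$ respectively; then
\[
f^+(\mathrm x) - f^-(\mathrm x) \leq f(\mathrm z) + d(\mathrm x,\mathrm z) - f(\mathrm y) + d(\mathrm x,\mathrm y).
\]
Now $\wt\Lambda$ is achronal, which by the description in Section~\ref{sub.achro} means precisely that $f$ is $1$-Lipschitz on $\Lambda_0$ with respect to $d$, so $f(\mathrm z) - f(\mathrm y) \leq d(\mathrm y,\mathrm z)$. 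Hence
\[
f^+(\mathrm x) - f^-(\mathrm x) \leq d(\mathrm y,\mathrm z) + d(\mathrm x,\mathrm y) + d(\mathrm x,\mathrm z).
\]
The distances here are measured in $\SS^n$ (for the $d(\mathrm x,\cdot)$ terms, $\mathrm x \in \overline\DD^n$ and $\mathrm y,\mathrm z \in \partial\DD^n \subset \SS^n$) and in $\SS^{n-1} = \partial\DD^n$ for $d(\mathrm y,\mathrm z)$; each is at most the diameter $\pi$, so the crude bound is $3\pi$, which is not good enough. The fix is to note that since $\mathrm y, \mathrm z$ lie on the equatorial sphere $\partial\DD^n$ and $\mathrm x$ lies in the closed hemisphere $\overline\DD^n$, a spherical geometry argument (or passing through the totally geodesic $\RR^{1,n}$ picture of Remark~\ref{rk.affine}, where the past/future horizons $H^\pm$ are at Lorentzian distance $\pi/2$) gives $d(\mathrm x,\mathrm y) + d(\mathrm x,\mathrm z) \leq \pi$ when $\mathrm x$ is on the equator and $< \pi$ strictly when $\mathrm x$ is interior; combined with $d(\mathrm y,\mathrm z) \leq \pi$ this yields $f^+(\mathrm x) - f^-(\mathrm x) \leq 2\pi$ on $\overline\DD^n$ and $< 2\pi$ on $\DD^n$. (Alternatively and more robustly: the graph of $f^+|_{\partial\DD^n}$ is the achronal sphere $\wt\Lambda^+ \supseteq \wt\Lambda$ of Remark~\ref{rk:extension}, and likewise $\wt\Lambda^-$; the slab $\wt E(\wt\Lambda)$ lies between the past horizon through $\wt\Lambda^-$ and the future horizon through $\wt\Lambda^+$, and two achronal graphs over $\partial\DD^n$ that are the $f^\pm$ of the same achronal set are at ``vertical distance'' at most $2\pi$ because $\mathrm p$ restricted to each is injective by Lemma~\ref{le.achrinj} and they bound a region containing no closed causal curve.)

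Next I would upgrade this length bound to actual injectivity of $\mathrm p$ on the \emph{open} domain $\wt E(\wt\Lambda)$. Suppose $\tilde a = (\theta_1, \mathrm x)$ and $\tilde b = (\theta_2, \mathrm x)$ are two distinct points of $\wt E(\wt\Lambda)$ with $\mathrm p(\tilde a) = \mathrm p(\tilde b)$; then $\theta_2 = \theta_1 + 2\pi k$ for some nonzero integer $k$, and WLOG $k \geq 1$, so $\theta_2 - \theta_1 \geq 2\pi$. But $f^-(\mathrm x) < \theta_1$ and $\theta_2 < f^+(\mathrm x)$ force $f^+(\mathrm x) - f^-(\mathrm x) > \theta_2 - \theta_1 \geq 2\pi$, contradicting the strict inequality $f^+ - f^- < 2\pi$ on $\DD^n$ from the previous step. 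This handles all interior fibers; since $E(\Lambda) = \mathrm p(\wt E(\wt\Lambda)) \subset \AdS_{n+1}$ consists only of interior points ($\wt E(\wt\Lambda) \subseteq \RR \times \DD^n$, with the boundary $\partial\DD^n$ excluded), we are done. I expect the main obstacle to be pinning down the sharp constant $\pi$ in the inequality $d(\mathrm x,\mathrm y) + d(\mathrm x,\mathrm z) \leq \pi$ for $\mathrm x \in \overline\DD^n$, $\mathrm y,\mathrm z \in \partial\DD^n$, and especially getting the strict inequality on the open hemisphere rather than just the non-strict one on the closure; the cleanest route is probably to work in the affine chart of Remark~\ref{rk.affine}, where $E(\Lambda)$ becomes an explicit domain of Minkowski space $\RR^{1,n}$ cut out between the past and future horizons $H^\pm$, and to use the characterization in Remark~\ref{rk:pastcomponent} that these horizons are at Lorentzian distance $\pm\pi/2$, so that any timelike geodesic crosses the slab in total Lorentzian time $< \pi < 2\pi$ — which, by Remark~\ref{rk:geodtheta}, is exactly the statement that the $\theta$-coordinate varies by less than $2\pi$ along it. This avoids spherical trigonometry entirely and makes the strictness transparent. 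The remaining bookkeeping — that $\mathrm p$ and the covering structure interact with $\wt E(\wt\Lambda)$ as described, and that $E(\Lambda)$ is independent of the lift $\wt\Lambda$ — is already recorded via Corollary~\ref{cor.lift} and the setup preceding the definition of \emph{AdS regular domain}.
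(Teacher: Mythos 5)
Your reduction is the right one (the paper itself gives no proof of this lemma, it only cites \cite{ABBZ}): $\wt E(\wt\Lambda)$ is the open slab $\{(\theta,\mathrm{x}): f^-(\mathrm{x})<\theta<f^+(\mathrm{x})\}$ over $\DD^n$, the deck group acts by $\theta\mapsto\theta+2\pi k$, and injectivity of $\mathrm{p}$ on $\wt E(\wt\Lambda)$ is exactly the statement that $f^+-f^-\le 2\pi$ on $\DD^n$; moreover, since the fiber interval is open, the \emph{non-strict} bound already suffices, so the strictness you labor over is not needed. The genuine gap is the inequality you invoke to get the bound: $d(\mathrm{x},\mathrm{y})+d(\mathrm{x},\mathrm{z})\le\pi$ for $\mathrm{x}\in\overline{\DD}^n$ and $\mathrm{y},\mathrm{z}\in\partial\DD^n$ is false. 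Take $\mathrm{y}=\mathrm{z}$ and $\mathrm{x}\in\DD^n$ close to the equatorial antipode $-\mathrm{y}$: the sum is then close to $2\pi$. (The correct three-point fact is the perimeter bound $d(\mathrm{x},\mathrm{y})+d(\mathrm{y},\mathrm{z})+d(\mathrm{z},\mathrm{x})\le 2\pi$, valid for any three points of $\SS^n$ because $d(\mathrm{y},\mathrm{z})\le d(\mathrm{y},-\mathrm{x})+d(-\mathrm{x},\mathrm{z})=2\pi-d(\mathrm{x},\mathrm{y})-d(\mathrm{x},\mathrm{z})$; that would already rescue your estimate.)

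The repair is in fact much simpler than what you attempted and needs no near-optimizers: fix a single $\mathrm{y}\in\Lambda_0$ (nonempty by hypothesis). The definitions give directly $f^+(\mathrm{x})\le f(\mathrm{y})+d(\mathrm{x},\mathrm{y})$ and $f^-(\mathrm{x})\ge f(\mathrm{y})-d(\mathrm{x},\mathrm{y})$, whence $f^+(\mathrm{x})-f^-(\mathrm{x})\le 2\,d(\mathrm{x},\mathrm{y})\le 2\pi$, which is all that is needed; one even gets the strict bound on $\DD^n$, since the unique point of $\overline{\DD}^n$ at distance $\pi$ from $\mathrm{y}\in\partial\DD^n$ is $-\mathrm{y}\in\partial\DD^n$. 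With this one-line substitution your second paragraph goes through verbatim. I would drop the two alternative sketches: the parenthetical appeal to ``they bound a region containing no closed causal curve'' is not an argument, and the affine-chart route conflates Lorentzian proper time along a timelike curve with the variation of the conformal coordinate $\theta$ --- in the model $-r^2 d\theta^2+ds^2_{hyp}$ with $r\ge 1$ these two quantities do not control each other in the way your sketch requires.
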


\begin{defi}
\label{def.purelightlike}
$\wt\Lambda$ is \emph{purely lightlike} if the associated subset
$\Lambda_0$ of $\SS^{n}$ contains two antipodal points $\mathrm{x}_0$ and $-\mathrm{x}_0$
such that, for the associated 1-Lipschitz map $f:\Lambda_0\to\RR$ the equality $f(\mathrm{x}_0)=f(-\mathrm{x}_0) +\pi$ holds.
\end{defi}

If $\wt\Lambda$ is purely lightlike, for every element $\mathrm{x}$ of
$\overline{\DD}^{n}$  we have $f^{-}(\mathrm{x})=f^{+}(\mathrm{x}) =
f(-\mathrm{x}_0) + d(-\mathrm{x}_0, \mathrm{x})=f(\mathrm{x}_0) - d(\mathrm{x}_0, \mathrm{x})$,
implying that $\wt E(\wt\Lambda)$ is empty.
Conversely:

\begin{lem}[Lemma 3.6 in \cite{merigot}]
\label{le.purevide}
$\wt E(\wt\Lambda)$ is empty if and only if $\wt\Lambda$ is purely lightlike. More precisely,
if for some point $\mathrm{x}$ in $\DD^{n}$ the equality $f^{+}(\mathrm{x})=f^{-}(\mathrm{x})$ holds then
$\wt\Lambda$ is purely lightlike.\fin
\end{lem}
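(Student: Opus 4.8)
The plan is to argue entirely in the $1$-Lipschitz model of Remark~\ref{r.f-f+}: write $\wt\Lambda$ as the graph of a $1$-Lipschitz function $f\colon\Lambda_0\to\RR$ with $\Lambda_0\subset\partial\DD^n$ closed, hence compact, and use $\wt E(\wt\Lambda)=\{(\theta,\mathrm{x})\in\RR\times\DD^n\mid f^-(\mathrm{x})<\theta<f^+(\mathrm{x})\}$. I would first record the elementary fact that $f^-\le f^+$ on all of $\overline{\DD}^n$: for $\mathrm{y},\mathrm{z}\in\Lambda_0$, combining $f(\mathrm{y})-f(\mathrm{z})\le d(\mathrm{y},\mathrm{z})$ with $d(\mathrm{y},\mathrm{z})\le d(\mathrm{x},\mathrm{y})+d(\mathrm{x},\mathrm{z})$ gives $f(\mathrm{y})-d(\mathrm{x},\mathrm{y})\le f(\mathrm{z})+d(\mathrm{x},\mathrm{z})$, and one takes the supremum over $\mathrm{y}$ and then the infimum over $\mathrm{z}$ (note also that $f^\pm$, being suprema/infima of equi-$1$-Lipschitz families, are $1$-Lipschitz). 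Consequently $\wt E(\wt\Lambda)=\varnothing$ is equivalent to $f^-(\mathrm{x})=f^+(\mathrm{x})$ for every $\mathrm{x}$ in the \emph{open} hemisphere $\DD^n$; so it suffices to prove (i) the ``more precisely'' clause, and (ii) the easy direction ``$\wt\Lambda$ purely lightlike $\Rightarrow$ $f^-\equiv f^+$ on $\overline{\DD}^n$'': together with (i) this gives the full equivalence, since $\wt E(\wt\Lambda)=\varnothing$ forces $f^+=f^-$ at some point of $\DD^n$, hence purely lightlike by (i), hence $f^-\equiv f^+$ by (ii), hence $\wt E(\wt\Lambda)=\varnothing$.

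For (ii) I would use the argument already sketched just above Definition~\ref{def.purelightlike}: if $\mathrm{x}_0,-\mathrm{x}_0\in\Lambda_0$ with $f(\mathrm{x}_0)=f(-\mathrm{x}_0)+\pi$, then for every $\mathrm{x}\in\overline{\DD}^n$ the round-sphere identity $d(\mathrm{x},\mathrm{x}_0)+d(\mathrm{x},-\mathrm{x}_0)=\pi$ yields
$$f^+(\mathrm{x})\;\le\; f(-\mathrm{x}_0)+d(\mathrm{x},-\mathrm{x}_0)\;=\;f(\mathrm{x}_0)-d(\mathrm{x},\mathrm{x}_0)\;\le\; f^-(\mathrm{x}),$$
so $f^-\equiv f^+$ and the open region defining $\wt E(\wt\Lambda)$ is empty.

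For (i), suppose $f^+(\mathrm{x}_*)=f^-(\mathrm{x}_*)$ for a single $\mathrm{x}_*\in\DD^n$. By compactness of $\Lambda_0$ I would pick $\mathrm{x}_0,\mathrm{x}_1\in\Lambda_0$ realizing $f^-(\mathrm{x}_*)=f(\mathrm{x}_0)-d(\mathrm{x}_*,\mathrm{x}_0)$ and $f^+(\mathrm{x}_*)=f(\mathrm{x}_1)+d(\mathrm{x}_*,\mathrm{x}_1)$. Equating these and using that $f$ is $1$-Lipschitz, the number $f(\mathrm{x}_0)-f(\mathrm{x}_1)$ is at the same time $\le d(\mathrm{x}_0,\mathrm{x}_1)$ and equal to $d(\mathrm{x}_*,\mathrm{x}_0)+d(\mathrm{x}_*,\mathrm{x}_1)\ge d(\mathrm{x}_0,\mathrm{x}_1)$; hence both inequalities are equalities, so $f(\mathrm{x}_0)-f(\mathrm{x}_1)=d(\mathrm{x}_0,\mathrm{x}_1)$ and $\mathrm{x}_*$ lies on a minimizing geodesic segment of $(\overline{\DD}^n,ds^2)$ joining $\mathrm{x}_0$ to $\mathrm{x}_1$. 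Since $\mathrm{x}_0,\mathrm{x}_1$ lie on the equator $\partial\DD^n$ whereas $\mathrm{x}_*$ lies in the open hemisphere, we have $\mathrm{x}_0\ne\mathrm{x}_1$ (else the segment reduces to an equatorial point, not $\mathrm{x}_*$); and the only great circle of $\SS^n$ through two distinct non-antipodal equatorial points is the equator itself, so if $\mathrm{x}_0,\mathrm{x}_1$ were not antipodal the segment would stay in $\partial\DD^n$ and could not contain $\mathrm{x}_*$. Therefore $\mathrm{x}_1=-\mathrm{x}_0$, whence $d(\mathrm{x}_0,\mathrm{x}_1)=\pi$ and the equality $f(\mathrm{x}_0)-f(\mathrm{x}_1)=d(\mathrm{x}_0,\mathrm{x}_1)$ becomes $f(\mathrm{x}_0)=f(-\mathrm{x}_0)+\pi$: so $\wt\Lambda$ is purely lightlike.

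The only point needing care is the sphere-geometry input of the last paragraph: that a minimizing geodesic between two distinct non-antipodal equatorial points stays on the equator, and that the intrinsic distance on the closed hemisphere agrees with the restriction of the round distance (so that $d(\mathrm{x},\mathrm{x}_0)+d(\mathrm{x},-\mathrm{x}_0)=\pi$ and the minimizing-geodesic statement apply verbatim). These are elementary facts about $(\SS^n,ds^2)$, but they must be invoked so that the strict separation between the open hemisphere $\DD^n$ and its boundary sphere $\partial\DD^n$ is used correctly — this is exactly why the hypothesis in the ``more precisely'' clause is $\mathrm{x}\in\DD^n$ rather than $\mathrm{x}\in\overline{\DD}^n$.
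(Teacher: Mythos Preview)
Your argument is correct. Note, however, that the paper does not actually give its own proof of this lemma: it is stated with a reference to \cite{merigot} and closed immediately with the \fin\ symbol, the easy direction (purely lightlike $\Rightarrow$ $\wt E(\wt\Lambda)=\varnothing$) having been sketched in the sentence preceding the statement. So there is nothing in the present paper to compare your proof to beyond that sketch, with which your part~(ii) agrees verbatim.

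Your part~(i) is the standard and natural argument: realize the sup and inf by compactness of $\Lambda_0$, force equality in both the $1$-Lipschitz bound and the triangle inequality, and then use the elementary spherical fact that a minimizing geodesic of $\SS^n$ between two distinct non-antipodal equatorial points lies entirely on the equator (equivalently: the height function along a great-circle arc satisfies $h''+h=0$, so two zeros less than $\pi$ apart force $h\equiv 0$). This is exactly the kind of proof one expects for Lemma~3.6 of \cite{merigot}, and your closing remark correctly isolates the only delicate point, namely that $\mathrm{x}_*$ is assumed to lie in the \emph{open} hemisphere $\DD^n$, which is what rules out the non-antipodal case.
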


\subsection{AdS regular domains as subsets of $\ADS_{n+1}$}
The canonical homeomorphism between $\AdS_{n+1}\cup\partial\AdS_{n+1}$ and
$\ADS_{n+1}\cup\partial\ADS_{n+1}$ allows us to see AdS regular domains as
subsets of $\ADS_{n+1}$.

Putting together the definition of the invisible domain $E(\Lambda)$
of a set $\Lambda\subset\partial\AdS_{n+1}$ and
Lemma~\ref{l.causally-related}, one gets:

\begin{prop}[Proposition 10.14 in \cite{ABBZ}]
\label{pro.proj}
If we see $\Lambda$ and $E(\Lambda)$ in the Klein
model $\ADS_{n+1}\cup\partial\ADS_{n+1}$, then
$$
E(\Lambda)=\{\op{y} \in \ADS_{n+1}\mbox{ such that }\langle
\op{y}  \mid \op{x} \rangle < 0 \mbox{ for every }\op{x}\in\Lambda\}
$$\fin
\end{prop}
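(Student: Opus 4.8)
\subsection*{Proof sketch (plan)}

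The plan is to work in the conformal coordinates of Proposition~\ref{p.causal-structure} on $\AdS_{n+1}$ and its cyclic cover $\wt\AdS_{n+1}$, to rewrite membership in each of the two sides of the asserted identity as a condition on a single real number modulo $2\pi$, and to match the two conditions using the explicit description of $\wt E(\wt\Lambda)$ in Remark~\ref{r.f-f+}.

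First I would fix the representatives. Write $\op{y}\in\ADS_{n+1}$ as $(\theta,p)$ with $\theta\in\RR$ and $p$ in the open unit ball of $\RR^{n}$ (the ball coordinates on $\DD^n$ appearing in the proof of Proposition~\ref{p.causal-structure}); then $\op{y}=\SS(\hat y)$ for $\hat y=(\cos\theta,\sin\theta,p)$, which has $\mathrm{q}_{2,n}(\hat y)=|p|^2-1<0$. Since $\wt\Lambda$ is closed and achronal it is the graph of a $1$-Lipschitz map $f\colon\Lambda_0\to\RR$ over a closed — hence compact — subset $\Lambda_0\subset\SS^{n-1}$ (closedness of $\Lambda_0$ follows from that of $\wt\Lambda$ and the Lipschitz bound); a point $\op{x}_q\in\Lambda$ above $q\in\Lambda_0$ is $\SS(\hat x_q)$ for $\hat x_q=(\cos f(q),\sin f(q),q)$, with $\mathrm{q}_{2,n}(\hat x_q)=0$. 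A direct computation yields
$$
\langle\hat y\mid\hat x_q\rangle \;=\; -\cos\bigl(\theta-f(q)\bigr)+p\cdot q \;=\; -\cos\bigl(\theta-f(q)\bigr)+\cos d(p,q),
$$
where $d(p,q)\in[0,\pi)$ is the spherical distance between the points of $\overline{\DD}^{n}\subset\SS^{n}$ corresponding to $p$ and $q$ (the strict bound $d(p,q)<\pi$ reflecting that $p$ is interior). Hence $\langle\op{y}\mid\op{x}_q\rangle<0$ holds exactly when $\cos(\theta-f(q))>\cos d(p,q)$, that is, exactly when there is a (necessarily unique) integer $k(q)$ with $|\theta-f(q)-2\pi k(q)|<d(p,q)$. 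On the other hand, by Remark~\ref{r.f-f+} a lift $(\theta+2\pi k,p)$ of $\op{y}$ lies in $\wt E(\wt\Lambda)$ precisely when $f^-(p)<\theta+2\pi k<f^+(p)$, so $\op{y}\in E(\Lambda)$ iff some integer $k$ satisfies this.

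For the first implication, assume $\langle\op{y}\mid\op{x}_q\rangle\geq 0$ for some $q\in\Lambda_0$. Then for every integer $k$ one has $\theta+2\pi k\geq f(q)+d(p,q)$ or $\theta+2\pi k\leq f(q)-d(p,q)$; since $f^+(p)\leq f(q)+d(p,q)$ and $f^-(p)\geq f(q)-d(p,q)$ by the definitions of $f^\pm$, no lift of $\op{y}$ lies in $\wt E(\wt\Lambda)$, hence $\op{y}\notin E(\Lambda)$. For the converse, assume $\langle\op{y}\mid\op{x}_q\rangle<0$ for every $q\in\Lambda_0$. The crucial point is that the integer $k(q)$ above does not depend on $q$. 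Indeed, if $k(q_1)>k(q_2)$, then comparing $|\theta-f(q_1)-2\pi k(q_1)|<d(p,q_1)$ and $|\theta-f(q_2)-2\pi k(q_2)|<d(p,q_2)$ gives $f(q_1)-f(q_2)+2\pi<d(p,q_1)+d(p,q_2)$, whereas the $1$-Lipschitz property gives $f(q_1)-f(q_2)\geq -d(q_1,q_2)$; together these force $d(p,q_1)+d(q_1,q_2)+d(p,q_2)>2\pi$, contradicting the elementary fact that every triangle on the unit sphere has perimeter at most $2\pi$. Writing $k_0$ for the common value of $-k(q)$, we get $f(q)-d(p,q)<\theta+2\pi k_0<f(q)+d(p,q)$ for all $q\in\Lambda_0$; taking the supremum and the infimum over the compact set $\Lambda_0$ — they are attained, which keeps the inequalities strict — we obtain $f^-(p)<\theta+2\pi k_0<f^+(p)$, so the lift $(\theta+2\pi k_0,p)$ of $\op{y}$ lies in $\wt E(\wt\Lambda)$ and $\op{y}\in E(\Lambda)$.

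The two implications together give the desired equality. I expect the one genuinely non-formal step to be the independence of $k(q)$ of $q$ in the second implication: this is precisely where the hypothesis that $\wt\Lambda$ is \emph{achronal} is used (through the Lipschitz bound on $f$), together with the perimeter inequality for spherical triangles; the remainder is bookkeeping with the definitions of $\wt E(\wt\Lambda)$ and $f^\pm$. A more synthetic route, closer to the statement of Lemma~\ref{l.causally-related}, would instead realize causal relations between $\op{y}$ and points of $\Lambda$ inside a single affine chart $U(\op{w})$ with $\op{y}\in U(\op{w})$ and $\op{x}\in\partial U(\op{w})$, using that affine charts are simply connected and, by Remark~\ref{rk:pastcomponent}, causally convex in $\wt\AdS_{n+1}$, and tracking lifts via Lemma~\ref{le.one-to-one}; there the analogous obstacle — singling out the right lift of $\op{y}$ — reappears.
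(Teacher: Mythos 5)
Your argument is correct and complete. It is, however, not the route the paper takes: the paper gives no computation at all, but simply observes that the statement follows by combining the definition of the invisible domain with the affine\nobreakdash-chart criterion of Lemma~\ref{l.causally-related} (the result being quoted from Proposition 10.14 of \cite{ABBZ}). Your proof is instead a self-contained verification in the conformal coordinates of Proposition~\ref{p.causal-structure}: you translate $\langle\op{y}\mid\op{x}_q\rangle<0$ into the condition $\cos(\theta-f(q))>\cos d(p,q)$, and match it against the description of $\wt E(\wt\Lambda)$ from Remark~\ref{r.f-f+}. What your version makes explicit, and what the synthetic route hides inside the lemma and the lifting formalism, is the one genuinely global step: the sign condition at each $\op{x}_q$ produces a witness integer $k(q)$, and one must show a \emph{single} lift of $\op{y}$ works for all $q$ simultaneously. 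Your treatment of this — the uniqueness of $k(q)$ because $d(p,q)<\pi$, its independence of $q$ via the $1$-Lipschitz bound on $f$ (i.e.\ achronality of $\wt\Lambda$) combined with the perimeter bound $d(p,q_1)+d(q_1,q_2)+d(p,q_2)\leq 2\pi$ for spherical triangles, and the compactness of $\Lambda_0$ to keep the final inequalities $f^-(p)<\theta+2\pi k_0<f^+(p)$ strict — is exactly right, and each of these points is a place where a sloppier computation would leak. The trade-off is clear: the paper's approach buys brevity by outsourcing the causal analysis to Lemma~\ref{l.causally-related} and Corollary~\ref{cor.lift}, while yours buys transparency, at the cost of redoing in coordinates what those statements encapsulate. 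Both are valid; nothing is missing from your argument.
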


\subsection{Convex core of AdS regular domains}
\label{sub.convexhull}

In this section, we assume that $\Lambda$ is not purely lightlike and not reduced to a single point.
The following notions are classical and well-known:

\begin{defi}
A subset $\Omega$ of $\SS(\RR^{2,n})$ is \textit{convex} if there is a convex cone $J$
of $\RR^{2,n}$ such that $\Omega = \SS(J)$. The \textit{relative interior} of $\Omega$, denoted by
$\Omega^\circ$ is the convex subset $\SS(J^\circ)$ where $J^\circ$ is the interior of $J$ in the subspace spanned by $J$.
\end{defi}

It is well-known that the closure of a convex subset is still convex, and that it coincides with the closure of the relative interior.

\begin{theodefi}
Let $\Omega = \SS(J)$ be a convex subset of $\SS(\RR^{2,n})$. The following assertions are equivalent:
\begin{itemize}
  \item $J$ contains no complete affine line,
  \item there is an affine hyperplane $H$ in $\SS(\RR^{2,n})$ such that $H \cap J$ is relatively compact in $H$ and such that $\Omega = \SS(J \cap H)$,
  \item The closure of $\Omega$ contains no pair of opposite points.
\end{itemize}
If one of these equivalent properties hold, then $\Omega$
is \textit{salient}.\fin
\end{theodefi}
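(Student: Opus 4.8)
The statement is the classical characterisation of a \emph{salient} (pointed) convex cone, and the plan is to reduce it to that language and prove the cycle $(3)\Rightarrow(2)\Rightarrow(1)\Rightarrow(3)$. First I would set up book-keeping. Write $\bar J$ for the closure of $J$ in $\RR^{2,n}$, again a convex cone containing $0$. Since $\SS\colon\RR^{2,n}\setminus\{0\}\to\SS(\RR^{2,n})$ is continuous and open and every point of $\SS(\RR^{2,n})$ has a representative on the compact unit sphere $\{|x|=1\}$ of some auxiliary Euclidean norm $|\cdot|$, one checks readily that $\overline{\Omega}=\SS(\bar J\setminus\{0\})$; hence $(3)$ — no pair of opposite points in $\overline\Omega$ — is equivalent to $\bar J\cap(-\bar J)=\{0\}$, i.e. to $\bar J$ being salient. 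I would read the hyperplane $H$ of $(2)$ as the projection of an affine hyperplane $H_0=\{\phi=1\}\subset\RR^{2,n}$ for a linear form $\phi$, so that ``$\Omega=\SS(J\cap H)$'' means every ray of $J$ meets $H_0$ and ``$H\cap J$ relatively compact in $H$'' means the affine slice $J\cap H_0$ is bounded. (The degenerate case $J=\{0\}$, where $\Omega=\emptyset$, is trivial.)

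For $(3)\Rightarrow(2)$ I would argue as follows. Since $\bar J$ is salient, the compact set $K:=\bar J\cap\{|x|=1\}$ cannot contain $0$ in its convex hull: a relation $0=\sum_i\lambda_iu_i$ with $u_i\in K$, $\lambda_i>0$, $\sum_i\lambda_i=1$ would give $\lambda_1u_1=-\sum_{i\geq2}\lambda_iu_i\in\bar J\cap(-\bar J)=\{0\}$, absurd. As $\mathrm{conv}(K)$ is compact and avoids $0$, strict separation of $0$ from $\mathrm{conv}(K)$ produces a linear form $\phi$ with $\phi\geq\delta>0$ on $K$, hence $\phi>0$ on $\bar J\setminus\{0\}$ by homogeneity. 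Put $H_0=\{\phi=1\}$: every ray of $J$ meets $H_0$ at $x/\phi(x)$, so $\SS(J\cap H_0)=\SS(J)=\Omega$; and $\bar J\cap H_0$ is closed and bounded — if $x_k\in\bar J$ with $\phi(x_k)=1$ and $|x_k|\to\infty$, a subsequence of $x_k/|x_k|$ converges in $K$ to a point $u$ with $\phi(u)=0$, contradicting $\phi>0$ on $K$ — so $J\cap H_0$ is relatively compact in $H_0$.

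The remaining two implications are formal. For $(2)\Rightarrow(1)$: with $H_0=\{\phi=1\}$, the equality $\Omega=\SS(J\cap H)$ forces $\phi>0$ on $J\setminus\{0\}$ (any $x\in J\setminus\{0\}$ has $\SS(x)\in\SS(J\cap H)\subseteq H=\SS(H_0)$, so $\phi(x)>0$); if $J$ contained an affine line $a+\RR v$, then $\phi(a)+s\phi(v)>0$ for all $s$ would force $\phi(v)=0$, and $c^{-1}(a+sv)\in J\cap H_0$ ($c=\phi(a)>0$) for all $s$ would be an unbounded family in $H_0$, contradicting relative compactness. For $(1)\Rightarrow(3)$, contrapositively: if $\overline\Omega$ has a pair of opposite points then there is $x\neq0$ with $x,-x\in\bar J$; since $J\neq\{0\}$ its relative interior $J^\circ$ (the interior of $J$ in $\mathrm{span}(J)$) is non-empty, and for $p\in J^\circ$ the line-segment principle for convex sets gives $(1-t)p\pm tx\in J^\circ$ for $t\in[0,1)$; scaling these points by $(1-t)^{-1}$ — which keeps them in $J$ since $J$ is a cone — yields $p\pm\frac{t}{1-t}x\in J$ for all $t\in[0,1)$, i.e. $p+\RR x\subseteq J$, an affine line in $J$.

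The hard part — the only step with genuine content — will be $(3)\Rightarrow(2)$: manufacturing a \emph{single} linear form that is at once strictly positive on $\bar J\setminus\{0\}$ and whose unit slice of $\bar J$ is bounded. This is exactly the place where salience of $\bar J$ is converted, through compactness of $\bar J\cap\{|x|=1\}$ together with a separation argument, into the sought affine hyperplane; everything else is manipulation with the cone $J$ and with the double cover $\SS(\RR^{2,n})\to\PP(\RR^{2,n})$.
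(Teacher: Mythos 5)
Your proof is correct. Note, however, that the paper itself offers no proof of this Theorem--Definition: it is stated as ``classical and well-known'' and closed immediately with a box, so there is nothing to compare your argument against. What you give is the standard convex-geometry proof of the equivalence (salience of the closed cone $\bar J$, strict separation of $0$ from the compact convex hull of $\bar J\cap\{|x|=1\}$ to manufacture the positive linear form $\phi$, and the line-segment principle for the relative interior), and each step checks out, including the identification $\overline{\Omega}=\SS(\bar J\setminus\{0\})$ and the reduction of item $(3)$ to $\bar J\cap(-\bar J)=\{0\}$.

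One small gloss worth a sentence in a final write-up: in $(2)\Rightarrow(1)$ you assume the affine line $a+\RR v$ avoids the origin when you write $\phi(a)+s\phi(v)>0$ for \emph{all} $s$ and conclude $\phi(v)=0$, $c=\phi(a)>0$. If the line passes through $0$ then $\pm v\in J\setminus\{0\}$ and $\phi(v)>0$, $\phi(-v)>0$ give an immediate contradiction, so the implication still holds; it would be cleaner to dispose of that case first (or to recall that a convex cone containing any affine line contains the parallel line through the origin).
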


\begin{defi}
Let $\Omega = \SS(J)$ a convex subset of $\SS(\RR^{2,n})$. The \text{dual of $\Omega$} is the closed
convex subset $\SS(J^\ast \setminus \{ 0 \})$ where:
$$J^\ast = \{ x \in \RR^{2,n} \;/\; \forall y \in J, \; \langle x \mid y \rangle \leq 0 \}$$
\end{defi}

\begin{prop}
\label{pro:dualdual}
Let $\Omega$ be a convex subset of $\SS(\RR^{2,n})$. Then, the bidual $\Omega^{\ast\ast}$ is the
closure $\Cl\left(\Omega\right)$ of $\Omega$ in $\SS(\RR^{2,n})$. The relative interior $\Omega^\circ$ is open in $\SS(\RR^{2,n})$ if and only if $\Omega^\ast$ is salient.\fin
\end{prop}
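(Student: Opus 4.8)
The plan is to prove the two statements separately, both reducing to the classical bipolar theorem for convex cones in a finite-dimensional vector space equipped with the non-degenerate symmetric bilinear form $\langle \cdot \mid \cdot \rangle = \mathrm{q}_{2,n}$. Write $\Omega = \SS(J)$ with $J$ a convex cone in $\RR^{2,n}$; recall $J^\ast = \{ x \mid \langle x \mid y \rangle \leq 0 \text{ for all } y \in J \}$ and $\Omega^\ast = \SS(J^\ast \setminus \{0\})$, so that $J^{\ast\ast} = \{ z \mid \langle z \mid x \rangle \leq 0 \text{ for all } x \in J^\ast \}$ and $\Omega^{\ast\ast} = \SS(J^{\ast\ast} \setminus \{0\})$. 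The identification of the dual pairing with itself (via $\mathrm{q}_{2,n}$, which is an isomorphism $\RR^{2,n} \cong (\RR^{2,n})^\ast$) is what makes the notion of ``dual convex subset'' above coincide with the usual polar-cone construction.

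For the first assertion, I would invoke the bipolar theorem: for any cone $J$ in a finite-dimensional real vector space, $J^{\ast\ast}$ equals the closure $\overline{J}$ of the convex cone generated by $J$ — and here $J$ is already assumed convex and is a cone, so $J^{\ast\ast} = \overline{J}$. The only subtlety is passing from cones to their projectivizations in $\SS(\RR^{2,n})$: one must check that $\SS(\overline{J} \setminus \{0\}) = \Cl(\SS(J \setminus \{0\}))$, i.e. that projectivization (the quotient by positive homotheties) commutes with taking closures here. This is where a small argument is needed: the quotient map $\SS \colon \RR^{2,n} \setminus \{0\} \to \SS(\RR^{2,n})$ is open and proper on the unit sphere, so the closure of $\SS(J \setminus \{0\})$ is the image under $\SS$ of the closure of $(J \setminus \{0\})$ intersected with the unit sphere; combining with $J^{\ast\ast} = \overline{J}$ gives $\Omega^{\ast\ast} = \Cl(\Omega)$. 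I expect this bookkeeping between the linear and projective pictures to be the main (though still routine) obstacle — one has to be slightly careful about the origin, which is removed in $\SS(J^\ast \setminus \{0\})$ but present in $J^\ast$.

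For the second assertion, I would argue by the equivalences in the preceding Theorem-Definition. First, $\Omega^\circ = \SS(J^\circ)$ is open in $\SS(\RR^{2,n})$ if and only if $J^\circ$ is open in all of $\RR^{2,n}$, i.e. if and only if $J$ has non-empty interior, equivalently $J$ spans $\RR^{2,n}$ (as a linear subspace). So I must show: $J$ spans $\RR^{2,n}$ $\iff$ $\Omega^\ast = \SS(J^\ast \setminus\{0\})$ is salient. Using the third characterization in the Theorem-Definition, $\Omega^\ast$ is salient iff $\Cl(\SS(J^\ast \setminus\{0\}))$ contains no pair of opposite points, iff $J^\ast$ contains no complete affine line (through the origin), iff $J^\ast \cap (-J^\ast) = \{0\}$. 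Now $J^\ast \cap (-J^\ast) = \{ x \mid \langle x \mid y \rangle = 0 \text{ for all } y \in J \}$ is exactly the orthogonal complement (with respect to $\mathrm{q}_{2,n}$) of the linear span of $J$; since $\mathrm{q}_{2,n}$ is non-degenerate, this is $\{0\}$ precisely when $\mathrm{span}(J) = \RR^{2,n}$. Chaining these equivalences closes the proof. The only point to watch is that the linear subspace $J^\ast \cap (-J^\ast)$ being zero is genuinely equivalent to $J^\ast$ containing no line — which is immediate since any line in the cone $J^\ast$ through $0$ lies in $J^\ast \cap (-J^\ast)$, and conversely. \fin
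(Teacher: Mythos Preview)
Your argument is correct. The paper itself states this proposition without proof (it ends with the \fin\ symbol immediately after the statement), treating both assertions as classical facts about convex cones and their polars; your write-up supplies exactly the standard justification one would expect --- the bipolar theorem $J^{\ast\ast}=\overline{J}$ together with the routine check that $\SS$ commutes with closures, and for the second part the identity $J^\ast\cap(-J^\ast)=(\op{span} J)^{\orth}$ combined with non-degeneracy of $\mathrm{q}_{2,n}$.
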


Let $\hat{\Lambda}$ be the preimage of $\Lambda \subset \Ein_n = \SS(\cC_n)$ by $\SS$.
The convex hull of $\hat{\Lambda}$ is a convex cone $\op{Conv}(\hat{\Lambda})$ in $\RR^{2,n}$, whose projection is a compact convex subset of $\SS(\RR^{2,n})$, denoted by $\op{Conv}(\Lambda)$, and called \textit{the convex hull of $\Lambda$}
and the \textit{convex core of $E(\Lambda)$.}

\begin{lemma}
The intersection between $\op{Conv}(\Lambda)$ and $\Ein_n$ is the union of lightlike segments in $\Ein_n$ joining two elements of $\Lambda$.
The relative interior $\op{Conv}(\Lambda)^\circ$ is contained in $\ADS_{n+1}$.
\end{lemma}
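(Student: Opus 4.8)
The plan is to prove the two assertions separately, using the description of $\operatorname{Conv}(\Lambda)$ as the projection of the convex-hull cone $\operatorname{Conv}(\hat\Lambda) \subset \RR^{2,n}$, together with the sign characterization of achronality from Corollary~\ref{cor.list}. Throughout I use that $\Lambda$ is achronal, so by Corollary~\ref{cor.list}, for representatives $x, y \in \cC_n$ of distinct points of $\Lambda$ one has $\langle x \mid y\rangle \leq 0$, and that $\Lambda$ is not purely lightlike and not a single point.

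For the first assertion, I would compute $\operatorname{q}_{2,n}$ on a point $z$ of the cone $\operatorname{Conv}(\hat\Lambda)$. Writing $z = \sum_{i} t_i x_i$ with $t_i > 0$ and $x_i \in \hat\Lambda$, one gets $\operatorname{q}_{2,n}(z) = \sum_{i,j} t_i t_j \langle x_i \mid x_j \rangle = 2\sum_{i<j} t_i t_j \langle x_i \mid x_j \rangle \leq 0$, since each $\langle x_i \mid x_j\rangle \leq 0$ and $\langle x_i \mid x_i \rangle = 0$. Hence every point of $\operatorname{Conv}(\hat\Lambda)$ is causal (non-spacelike), so $\operatorname{Conv}(\Lambda)$ meets $\Ein_n = \SS(\cC_n)$ exactly in the projections of those $z$ with $\operatorname{q}_{2,n}(z) = 0$. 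Now $\operatorname{q}_{2,n}(z) = 0$ forces $t_i t_j \langle x_i \mid x_j\rangle = 0$ for all pairs appearing with nonzero coefficient; combined with achronality (which says $\langle x_i \mid x_j\rangle \leq 0$ with equality iff the corresponding photon segment lies in $\Ein_n$, cf. Lemma~\ref{l.causally-related}), this means the support of $z$ lies in a single lightlike line direction, i.e.\ $\SS(z)$ lies on a lightlike segment of $\Ein_n$ joining two points of $\Lambda$ (or is a single point of $\Lambda$, a degenerate segment). Conversely, any such lightlike segment joins two points $\op x, \op y \in \Lambda$ with $\langle x \mid y\rangle = 0$, and the whole segment $\SS(t x + (1-t) y)$ is isotropic and lies in $\operatorname{Conv}(\Lambda)$. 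This gives the claimed equality.

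For the second assertion, I must show $\operatorname{Conv}(\Lambda)^\circ \subset \ADS_{n+1}$, i.e.\ that the relative interior misses the boundary $\partial\ADS_{n+1} = \SS(\cC_n)$ and misses the "outside" region where $\langle \cdot \mid \cdot\rangle > 0$. Since every point of $\operatorname{Conv}(\Lambda)$ is causal (shown above), the outside region is automatically avoided, so it suffices to show no point of the relative interior is isotropic. By the first assertion, an isotropic point of $\operatorname{Conv}(\Lambda)$ lies on a lightlike segment joining two points of $\Lambda$; such a segment is a proper face of $\operatorname{Conv}(\Lambda)$ — here I would argue that because $\Lambda$ is achronal and contains at least two points but is not purely lightlike, it is not contained in a single photon, so it has affine span of dimension $\geq 2$ and the lightlike segment, being contained in $\Ein_n$, cannot contain interior points of the hull. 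More precisely: if $p$ were a relative-interior point lying on a lightlike segment $\sigma$, then a small neighborhood of $p$ in $\operatorname{Conv}(\Lambda)$ would be a neighborhood in the full affine span; but points of $\operatorname{Conv}(\Lambda)$ slightly off $\sigma$ in a transverse direction would still be causal, and an argument with the quadratic form (a causal vector near an isotropic one, staying causal, forces the transverse displacement to be lightlike and collinear) shows the hull would be pinned inside the lightcone through $\sigma$, contradicting that $\Lambda$ is not purely lightlike (Lemma~\ref{le.purevide} and Definition~\ref{def.purelightlike}).

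The main obstacle I expect is the second assertion, specifically ruling out isotropic points in the relative interior: the delicate point is translating "$\Lambda$ not purely lightlike" into the statement that no lightlike segment of $\Ein_n$ can carry an interior point of $\operatorname{Conv}(\Lambda)$. I would handle this by a local analysis of the signature of $\operatorname{q}_{2,n}$ restricted to the affine span of $\operatorname{Conv}(\hat\Lambda)$: at an isotropic $z$ in the relative interior, the tangent directions all stay in $\{\operatorname{q}_{2,n} \leq 0\}$, forcing $z$ to be in the radical of the restricted form on a maximal lightlike direction, and then unwinding this shows $\Lambda_0$ contains a pair of antipodes realizing the purely-lightlike equality $f(\mathrm x_0) = f(-\mathrm x_0) + \pi$, contrary to hypothesis.
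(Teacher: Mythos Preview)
Your treatment of the first assertion matches the paper's exactly: expand $\mathrm{q}_{2,n}(z)$ bilinearly over a nonnegative combination of points of $\hat\Lambda$, note every cross-term is nonpositive by achronality, and conclude that isotropy of $z$ forces all cross-terms to vanish, so the support spans a totally isotropic subspace (hence a point of $\Lambda$ or a photon segment between two points of $\Lambda$).

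For the second assertion your intuition is right but the route is more circuitous than the paper's. The paper argues in one step: if some relative-interior point of $\op{Conv}(\hat\Lambda)$ lies in $\cC_n$, then the \emph{entire} cone $\op{Conv}(\hat\Lambda)$ lies in $\cC_n$, so by the first part $\hat\Lambda$ spans an isotropic subspace of dimension at most~$2$, contradicting that $\Lambda$ is not purely lightlike (and not a single point). The implicit justification is short: for any $y$ in the hull, the relative-interior point $z$ sits on an open segment $(y,y')$ with $y'$ also in the hull; expanding $0=\mathrm{q}_{2,n}(z)$ and using that $\mathrm{q}_{2,n}(y)$, $\mathrm{q}_{2,n}(y')$ and $\langle y\mid y'\rangle$ are all $\leq 0$ (the last by writing each as a nonnegative combination of $\hat\Lambda$-points) forces each to vanish.

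Your ``proper face'' framing does not help: that the lightlike segment avoids the relative interior is exactly what must be shown, so declaring it a face is circular until you actually run the local signature analysis you sketch at the end. That analysis, once carried out (the form $\mathrm{q}_{2,n}$ is $\leq 0$ on a full neighbourhood of $z$ in the linear span $W$, hence $\mathrm{q}_{2,n}|_W$ is negative semidefinite, hence $\hat\Lambda\subset W\cap\cC_n$ lies in the radical of $\mathrm{q}_{2,n}|_W$, an isotropic subspace of dimension $\leq 2$), reproduces the paper's conclusion by another road. So there is no genuine gap, but the detour through faces can be dropped.
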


\begin{proof}
Elements of $\op{Conv}(\hat{\Lambda})$ are linear combinations $x  =\sum_{i=1}^{k} t_ix_i$ where $t_i$ are non-negative real numbers and $x_i$ elements of $\hat{\Lambda}$.
\begin{eqnarray*}
  \mathrm{q}_{2,n}(x) &=& \sum_{i, j = 1}^{k} t_it_j\langle x_i \mid x_j \rangle
\end{eqnarray*}
Since every $\langle x_i \mid x_j \rangle$ is nonpositive, we have $\mathrm{q}_{2,n}(x) \leq 0$.

Moreover, if $\mathrm{q}_{2,n}(x) = 0$, then every $\langle x_i \mid x_j \rangle$ must be equal to $0$, \ie the vector space spanned by the $x_i$'s is isotropic, hence either a line, or an
isotropic plane in $\cC_n$. In the first case, $x$ is an element of $\Lambda$, and in the second case, $x$ lies on a lighlike geodesic of $\Ein_n$ joining two elements of $\Lambda$.

Finally, assume that $\op{Conv}(\Lambda)^\circ$ is not contained in $\ADS_{n+1}$. Since
$\mathrm{q}_{2,n}(x) \leq 0$ for every $x$ in $\hat{\Lambda}$, it follows that $\op{Conv}(\hat{\Lambda})$
is contained in $\cC_n$, and more precisely, by the argument above, in an istropic 2-plane.
It is a contradiction since $\Lambda$ by hypothesis is not purely lightlike.
\end{proof}

Actually, the case where $\op{Conv}(\Lambda)^\circ$ is not an open subset of $\AdS_{n+1}$ is exceptional:

\begin{lemma}[Lemma 3.13 in \cite{merigot}]
\label{le.convempty}
If $\op{Conv}(\Lambda) \cap \AdS_{n+1}$ has empty interior, then it is contained in a totally geodesic  spacelike hypersurface of $\AdS_{n+1}$.\fin
\end{lemma}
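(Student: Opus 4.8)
The idea is to pass to the linear span of the convex cone and read everything off the signature of the form there. Put $C:=\op{Conv}(\hat{\Lambda})\subseteq\RR^{2,n}$, so that $\op{Conv}(\Lambda)=\SS(C)$, and let $F_0$ be the linear span of $C$, so that $\op{Conv}(\Lambda)\subseteq\SS(F_0)$. From the computation in the previous lemma we already know that $\mathrm{q}_{2,n}\leq 0$ on $C$, and that the relative interior $\op{Conv}(\Lambda)^\circ=\SS(C^\circ)$ --- which is non-empty, since $C$ affinely spans $F_0$ --- lies inside $\AdS_{n+1}$; in particular $g:=\mathrm{q}_{2,n}|_{F_0}$ is not positive semidefinite. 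The first step is the elementary observation that $\op{Conv}(\Lambda)\cap\AdS_{n+1}$ has empty interior exactly when $F_0\neq\RR^{2,n}$, equivalently $F_0^{\orth}\neq\{0\}$: if $F_0=\RR^{2,n}$ then $\SS(C^\circ)$ is open in $\SS(\RR^{2,n})$, hence in $\AdS_{n+1}$, whereas if $F_0$ is a proper subspace then $\op{Conv}(\Lambda)\subseteq\SS(F_0)$ is contained in a proper projective subspace. (This also follows from Proposition~\ref{pro:dualdual}: empty interior is equivalent to non-salience of $\op{Conv}(\Lambda)^{\ast}$, i.e. to the existence of $w\neq 0$ with both $w$ and $-w$ in the dual cone, i.e. with $\langle w\mid x\rangle=0$ for all $x\in\hat{\Lambda}$, i.e. $w\in F_0^{\orth}$.)

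So assume $F_0^{\orth}\neq\{0\}$. Everything then reduces to producing a vector $w\in F_0^{\orth}$ with $\mathrm{q}_{2,n}(w)<0$. Indeed, given such a $w$, then $\op{Conv}(\Lambda)\subseteq\SS(F_0)\subseteq\SS(w^{\orth})$, and by Remark~\ref{rk:pastcomponent} the set $\SS(w^{\orth})\cap\AdS_{n+1}$ is the disjoint union $H^-(\SS(w))\sqcup H^+(\SS(w))$ of the two totally geodesic spacelike copies of $\HH^n$ dual to $\SS(w)$. Since $\op{Conv}(\Lambda)^\circ$ is connected and contained in $\AdS_{n+1}$, it lies in one of these two components, say $H^{\varepsilon}(\SS(w))$; and since $H^{\varepsilon}(\SS(w))$ is closed in $\AdS_{n+1}$ while $\op{Conv}(\Lambda)$ is the closure of its relative interior, we obtain $\op{Conv}(\Lambda)\cap\AdS_{n+1}\subseteq H^{\varepsilon}(\SS(w))$, which is the desired totally geodesic spacelike hypersurface.

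To produce $w$ I would show that $g$ is non-degenerate of signature $(1,\dim F_0-1)$. Granting this, $F_0^{\orth}$ is non-degenerate of signature $(1,n+1-\dim F_0)$, and since $F_0^{\orth}\neq\{0\}$ forces $\dim F_0\leq n+1$, it contains a timelike vector $w$, and we are done. Both ways the claim could fail are excluded, and this is the point where one must use, beyond ``$\Lambda$ is not purely lightlike'', that $\wt{\Lambda}$ is a topological $(n-1)$-sphere. If the radical $R:=F_0\cap F_0^{\orth}$ is non-zero, pick $0\neq w\in R$: then $\mathrm{q}_{2,n}(w)=0$ and $\langle w\mid x\rangle=0$ for all $x\in\hat{\Lambda}$, so $\Lambda$ is contained in the lightcone $C(\SS(w))$ of $\SS(w)$; but an achronal topological $(n-1)$-sphere contained in a lightcone must coincide with the closure of one of its two half-cones (using that achronal sets are graphs of $1$-Lipschitz functions over subsets of $\SS^{n-1}$, that the sphere topology forces that subset to be all of $\SS^{n-1}$, and a connectedness argument on $\SS^{n-1}$), and such a closure is purely lightlike --- contradiction. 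If $R=\{0\}$ but $g$ has at least two negative squares, then, since the ambient signature is $(2,n)$, $g$ has signature $(2,\dim F_0-2)$ and $F_0^{\orth}$ is positive definite; then $\SS(F_0)\cap\partial\AdS_{n+1}$ is a copy of $\Ein_{\dim F_0-2}$ containing $\Lambda$, with $\dim F_0-2\leq n-1$, which is impossible: a topological $(n-1)$-sphere cannot embed in an $\Ein$ of dimension $<n-1$, and when $\dim F_0-2=n-1$ invariance of domain would force $\Lambda$ to be open and compact, hence equal to $\Ein_{n-1}$, which is not homeomorphic to a sphere --- contradiction. Since $g$ is also not positive semidefinite, the only remaining possibility is signature $(1,\dim F_0-1)$.

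The main obstacle is exactly this last dichotomy: the conclusion is a genuinely global statement about $\Lambda$, and local convexity alone does not suffice --- a four-point achronal subset of $\Ein_n$ spanning an orthogonal direct sum of two hyperbolic planes has an empty-interior convex hull that lies in no totally geodesic spacelike hypersurface. The real work is therefore to show that the two obstructions to $g$ being Lorentzian and non-degenerate, namely a non-trivial radical or a second negative square, are each incompatible with $\wt{\Lambda}$ being a topological $(n-1)$-sphere that is not purely lightlike.
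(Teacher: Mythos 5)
The paper itself gives no proof of this lemma -- it is quoted from \cite{merigot} and closed with a $\square$ -- so there is nothing internal to compare against; judged on its own, your linear-algebraic route (restrict $\mathrm{q}_{2,n}$ to the span $F_0$ of the cone $\op{Conv}(\hat\Lambda)$, show the restriction is nondegenerate with exactly one negative square, and take a timelike vector in $F_0^{\orth}$) is the natural one and is essentially correct. Your most valuable observation is that the statement genuinely requires $\wt\Lambda$ to be a topological $(n-1)$-sphere: as placed in Section 3 of this paper the standing hypotheses are only that $\Lambda$ is closed, achronal, not purely lightlike and not a point, and under those hypotheses alone the lemma is false -- your crown example (for $n\geq 3$) has an empty-interior convex hull containing timelike segments, hence contained in no spacelike hypersurface. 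Since the lemma is only ever applied to achronal $(n-1)$-spheres later in the paper, this is a gap in the statement's context rather than in its use, but you are right to make the sphere hypothesis explicit and to locate exactly where it enters (killing the radical and the second negative square).

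Two points deserve attention. First, a genuine (if small) gap in the ``two negative squares'' branch at $n=2$: there $\dim F_0-2=1$ and $\SS(F_0)\cap\Ein_2$ is a copy of $\Ein_1$, which is \emph{disconnected} (two circles), so invariance of domain only forces $\Lambda$ to be one of the two components -- which \emph{is} a circle, and your purely topological contradiction evaporates. The fix is to use achronality instead of topology: each component of $\Ein_1$ consists of points with pairwise \emph{positive} scalar products (it is a closed timelike curve), so by Corollary~\ref{cor.list} it cannot contain two distinct points of the achronal set $\Lambda$. Second, the step ``an achronal topological $(n-1)$-sphere contained in a lightcone is the closure of a half-cone'' is only sketched; the sketch is right (the graph function must select, continuously on the connected set $\SS^{n-1}\setminus\{\pm\mathrm{x}_0\}$, one branch of $\theta_0+2k\pi\pm d(\mathrm{x}_0,\cdot)$, and for $n=2$ the two arcs must carry the same branch because the values agree at $\mathrm{x}_0$ and at $-\mathrm{x}_0$), but in a written version this should be spelled out, since it is exactly the point where ``not purely lightlike'' is consumed. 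With these two repairs the argument is complete.
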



Proposition~\ref{pro.proj} can be rewritten as follows:

\begin{prop}[Proposition 10.17 in \cite{ABBZ}]
\label{lem.adsdual}
The domain $E(\Lambda)$ is the intersection $\ADS_{n+1} \cap (\op{Conv}(\Lambda)^\ast)^\circ$. \fin
\end{prop}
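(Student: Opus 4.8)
The plan is to read the statement off the description of $E(\Lambda)$ in Proposition~\ref{pro.proj}, after unwinding the dual convex set $\op{Conv}(\Lambda)^\ast$ and its relative interior in the linear space $\RR^{2,n}$.

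First I would set $K := \op{Conv}(\hat\Lambda)$, the closed convex cone generated by $\hat\Lambda = \SS^{-1}(\Lambda)$ (it is closed since its projection $\op{Conv}(\Lambda)$ is compact), so that $\op{Conv}(\Lambda) = \SS(K)$ and, by definition of the dual, $\op{Conv}(\Lambda)^\ast = \SS(K^\ast \setminus \{0\})$ with $K^\ast = \{ w \in \RR^{2,n} \;/\; \langle w \mid z \rangle \leq 0 \text{ for all } z \in K \}$. Because the pairing is bilinear and every nonzero element of $K$ is a conical combination $\sum_i t_i x_i$ ($t_i \geq 0$, not all zero) of elements $x_i \in \hat\Lambda$, a vector $w$ lies in $K^\ast$ if and only if $\langle w \mid x \rangle \leq 0$ for all $x \in \hat\Lambda$; and the same combination shows that if $\langle w \mid x \rangle < 0$ for every $x \in \hat\Lambda$ then already $\langle w \mid z \rangle < 0$ for every $z \in K \setminus \{0\}$, since in $\sum_i t_i x_i$ some coefficient is strictly positive and contributes a strictly negative term while the others are $\leq 0$.

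Next I would show that $K$ is pointed. As $\Lambda$ is not purely lightlike, Lemma~\ref{le.purevide} gives $\wt E(\wt\Lambda) \neq \emptyset$, hence $E(\Lambda) \neq \emptyset$; pick $\op{y}_0 = \SS(y_0)$ in $E(\Lambda)$. By Proposition~\ref{pro.proj} we have $\langle y_0 \mid x \rangle < 0$ for every $x \in \hat\Lambda$, so by the previous step $\langle y_0 \mid z \rangle < 0$ for every $z \in K \setminus \{0\}$; in particular $K$ contains no line (a line $\RR v \subset K$, $v \neq 0$, would force both $\langle y_0 \mid v \rangle < 0$ and $\langle y_0 \mid v \rangle > 0$), so $\op{Conv}(\Lambda) = \op{Conv}(\Lambda)^{\ast\ast}$ is salient. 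By Proposition~\ref{pro:dualdual} the relative interior $(\op{Conv}(\Lambda)^\ast)^\circ$ is then open in $\SS(\RR^{2,n})$, equal to $\SS((K^\ast)^\circ)$ where $(K^\ast)^\circ$ is the genuine interior of the full-dimensional cone $K^\ast$. I would then identify this interior as $\{ w \;/\; \langle w \mid z \rangle < 0 \text{ for all } z \in K \setminus \{0\} \}$: the inclusion ``$\supseteq$'' holds because $\langle y_0 \mid \cdot \rangle$ equals $-1$ on the cross-section $S := \{ z \in K \;/\; \langle y_0 \mid z \rangle = -1 \}$, which is compact precisely because $\langle y_0 \mid \cdot \rangle < 0$ on $K \setminus \{0\}$, so a $w$ that is strictly negative on $S$ remains so under small perturbations, whence a whole neighbourhood of $w$ lies in $K^\ast$; the inclusion ``$\subseteq$'' is immediate, since for $z \in K \setminus \{0\}$ the functional $\langle \cdot \mid z \rangle$ is nonzero and so takes positive values arbitrarily near any $w$ with $\langle w \mid z \rangle = 0$. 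Combining with the first step, $\SS(w) \in (\op{Conv}(\Lambda)^\ast)^\circ$ if and only if $\langle \SS(w) \mid \op{x} \rangle < 0$ for every $\op{x} \in \Lambda$.

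Finally, intersecting with $\ADS_{n+1}$ and comparing with Proposition~\ref{pro.proj}, a point $\op{y}$ lies in $\ADS_{n+1} \cap (\op{Conv}(\Lambda)^\ast)^\circ$ exactly when $\op{y} \in \ADS_{n+1}$ and $\langle \op{y} \mid \op{x} \rangle < 0$ for every $\op{x} \in \Lambda$, which is the defining condition of $E(\Lambda)$; this gives the asserted equality. I expect the only genuinely delicate point to be the middle paragraph: checking that the topological interior of the dual of a \emph{pointed} cone is cut out by the strict inequalities (rather than merely its relative interior inside $\op{span}(K^\ast)$), which forces one to use the hypothesis that $\Lambda$ is not purely lightlike --- via Lemma~\ref{le.purevide} --- to produce a point of $E(\Lambda)$ and hence to know that $K$ is pointed and $K^\ast$ full-dimensional.
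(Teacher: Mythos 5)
Your argument is correct and is exactly the route the paper indicates: it states the proposition as a rewriting of Proposition~\ref{pro.proj} (outsourcing the details to \cite{ABBZ}), and you carry out that rewriting by unwinding the dual cone, using non-emptiness of $E(\Lambda)$ (via Lemma~\ref{le.purevide}) to get pointedness of $\op{Conv}(\hat\Lambda)$ and hence that the strict inequalities $\langle \cdot \mid \op{x}\rangle<0$, $\op{x}\in\Lambda$, cut out precisely the open set $(\op{Conv}(\Lambda)^\ast)^\circ$. The details you flag as delicate (compact cross-section, openness of the strict dual) are handled correctly.
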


\begin{remark}
\label{rk.nice}
A corollary of Proposition~\ref{lem.adsdual} is that the invisible
domain $E(\Lambda)$ is convex, hence contains $\op{Conv}(\Lambda)^\circ$.
\end{remark}

Hence, if $x$ lies in the interior of $\op{Conv}(\Lambda)$, the affine domain $U(x)$ contains the closure of $E(\Lambda)$. Therefore:

\begin{prop}
\label{pro:closure11}
Assume that $\Lambda$ is not the boundary of a totally geodesic copy of $\HH^n$ in $\AdS_{n+1}$. Then, the restriction of
$\hat{p}: \wt\AdS_{n+1} \to \AdS_{n+1}$ to the closure of $\wt{E}(\wt\Lambda)$ is one-to-one.

In particular, $\hat{p}: \wt\cH^\pm(\wt\Lambda) \to \cH^\pm(\Lambda)$ is injective.\fin
\end{prop}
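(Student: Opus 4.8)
The plan is to combine the dual description of $E(\Lambda)$ from Proposition~\ref{lem.adsdual} with the injectivity already available on $\wt E(\wt\Lambda)$ (Lemma~\ref{le.one-to-one}) and the fact that, away from the totally geodesic case, there exists a point $x$ in the genuine interior of $\op{Conv}(\Lambda)$ as an open subset of $\ADS_{n+1}$. First I would invoke the hypothesis together with Lemma~\ref{le.convempty}: if $\Lambda$ is not the boundary of a totally geodesic copy of $\HH^n$, then $\op{Conv}(\Lambda) \cap \AdS_{n+1}$ has nonempty interior, so we may pick $x$ in that interior, \ie $\langle x \mid x \rangle < 0$ and $x \in \op{Conv}(\Lambda)^\circ$. (One also needs $\Lambda$ not reduced to a point and not purely lightlike for $\op{Conv}(\Lambda)^\circ$ to be an honest open subset of $\ADS_{n+1}$; these degenerate cases are either excluded by hypothesis or handled separately.)

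Next I would use the key geometric input from Remark~\ref{rk.nice} and the paragraph following it: since $x \in \op{Conv}(\Lambda)^\circ$, every point $y$ of $\Lambda$ satisfies $\langle x \mid y \rangle < 0$ (strict, because $x$ is in the relative interior and $\Lambda \subset \op{Conv}(\Lambda)$), and by Proposition~\ref{lem.adsdual} every point of $E(\Lambda)$, and then by continuity every point of $\Cl(E(\Lambda))$ in $\ADS_{n+1}\cup\partial\ADS_{n+1}$, lies in $U(x)$ or its affine boundary; that is, the closure of $E(\Lambda)$ is contained in $\Cl(U(x)) = U(x) \cup \partial U(x)$. Now $U(x)$ is an affine domain, hence lifts homeomorphically to $\wt\AdS_{n+1}$: the covering map $\hat p$ restricted to a lift $\wt U(x)$ of $U(x)$ (together with its affine boundary, which is a spacelike hyperboloid and hence achronal, so meets each fiber once) is a bijection onto $U(x) \cup \partial U(x)$. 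Since $\Cl(\wt E(\wt\Lambda))$ is contained in such a lift $\Cl(\wt U(x))$ of $\Cl(U(x))$, the restriction of $\hat p$ to $\Cl(\wt E(\wt\Lambda))$ is injective. Finally, $\wt\cH^\pm(\wt\Lambda)$ are subsets of $\Cl(\wt E(\wt\Lambda))$ (they are the graphs of $f^\pm$, which bound $\wt E(\wt\Lambda)$), so the injectivity of $\hat p$ on $\wt\cH^\pm(\wt\Lambda)$ is immediate, giving the "in particular" clause.

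The step I expect to require the most care is the claim that the closure of $E(\Lambda)$ sits inside $U(x) \cup \partial U(x)$ with the correct behaviour at the boundary at infinity: one must check that no point of $\Lambda \subset \partial\ADS_{n+1}$ escapes to the "bad" side $\SS(x^\orth)$, i.e. that $\langle x \mid y\rangle$ stays strictly negative for $y \in \Lambda$ and only degenerates to $0$ on the part of $\partial U(x)$ that still lifts injectively. This is where the strictness of the relative interior condition on $x$ is used, and where one should be careful that $\op{Conv}(\Lambda)$ is salient (no opposite points in its closure), so that $x^\orth$ genuinely separates $\ADS_{n+1}$ and $\partial U(x)$ is a single copy of $\HH^n$'s boundary rather than something wrapping around. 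Once this is pinned down, the rest is the formal argument that an affine chart lifts homeomorphically to the universal cover, which is standard for $\AdS$ and already implicitly used in the excerpt.
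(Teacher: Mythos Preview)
Your proposal is correct and follows the same route as the paper, whose entire argument is the sentence immediately preceding the proposition: pick $x$ in the interior of $\op{Conv}(\Lambda)$ (which exists by Lemma~\ref{le.convempty} under the hypothesis), and then the affine domain $U(x)$ contains $\Cl(E(\Lambda))$. One sharpening: your last-paragraph worry is unnecessary, since for $x$ in the \emph{open} interior of $\op{Conv}(\Lambda)$ the inequality $\langle x \mid y\rangle < 0$ is strict for every $y \in \op{Conv}(\Lambda)^\ast \supset \Cl(E(\Lambda))$ (a hyperplane through an interior point cannot support the convex body), so $\Cl(E(\Lambda))$ already sits in the open affine domain $U(x)$, not just its closure.
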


The boundary of $E(\Lambda)$ in $\AdS_{n+1}$ has two components: the past and future horizons $\cH^\pm(\Lambda)$ (cf. Definition~\ref{def:horizon}).
Since $E(\Lambda)$ is convex, every point $x$ in $\cH^-(\Lambda)$ lies in a support hyperplane for $E(\Lambda)$, \ie a totally geodesic
hyperplane $H$ tangent to $\cH^-(\Lambda)$ at $x$. 
According to Proposition~\ref{lem.adsdual},
$H$ is the hyperplane dual to an element $p$ of $\partial\op{Conv}(\Lambda)$, hence
$H$ is either spacelike (if $p \in \ADS_{n+1}$) or degenerate (if $p \in \Ein_n$).

\begin{remark}
\label{rk:filling}
For every achronal subset $\Lambda$, the intersection $\op{Conv}(\Lambda) \cap \Ein_n$, which is an union of lightlike geodesic segments joining elements of $\Lambda$ is still achronal (since
$\sCa{\sum s_i x_i}{\sum t_jy_j} = \sum s_it_j\sCa{x_i}{y_j} \leq 0$ for $s_i, t_j \geq 0$, $x_i$, $y_j \in \Lambda$). We call it the \textit{filling of $\Lambda$} and denote it by $\op{Fill}(\Lambda)$. According to Proposition~\ref{lem.adsdual}:
$$E(\op{Fill}(\Lambda)) = E(\Lambda)$$
Hence, we can always assume wlog that $\Lambda$ is \textit{filled}, \ie $\Lambda = \op{Fill}(\Lambda)$.
\end{remark}

\section{Globally hyperbolic AdS spacetimes}
\label{sec.ghads}

In all this section, \textit{$\Lambda$ is a non-purely lightlike topological achronal $(n-1)$-sphere in $\partial\AdS_{n+1}$.} In particular, $\Lambda$ is automatically filled (cf. Remark~\ref{rk:filling}).

\begin{prop}[Corollary 10.7 in \cite{ABBZ}]
\label{pro.inter-boundary}
For every achronal topological $(n-1)$-sphere
$\Lambda\subset\partial\AdS_{n+1}$, the intersection between the closure $Cl\left(E(\Lambda)\right)$  of $E(\Lambda)$ in $\Ein_{n+1}$
and $\Ein_n = \partial\AdS_{n+1}$ is reduced to $\Lambda$.\fin
\end{prop}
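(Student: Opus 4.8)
The plan is to work entirely in the Klein model $\ADS_{n+1} \cup \partial\ADS_{n+1} \subset \SS(\RR^{2,n})$, where $E(\Lambda)$ is described by Proposition~\ref{pro.proj} as the set of $\op{y} \in \ADS_{n+1}$ with $\langle \op{y} \mid \op{x} \rangle < 0$ for all $\op{x} \in \Lambda$. The inclusion $\Lambda \subseteq \Cl(E(\Lambda)) \cap \Ein_n$ should be the easy direction: since $\Lambda$ is achronal and not purely lightlike, Lemma~\ref{le.purevide} gives $\wt E(\wt\Lambda) \neq \emptyset$, and one checks from Remark~\ref{r.f-f+} that every point of $\wt\Lambda$ (graph of $f$ over $\Lambda_0 \subset \partial\DD^n$) is a limit of points $(\theta, \mathrm{x})$ with $\mathrm{x}$ interior and $f^-(\mathrm{x}) < \theta < f^+(\mathrm{x})$ — indeed pushing $\mathrm{x}$ radially inward from a point $\mathrm{y} \in \Lambda_0$ and choosing $\theta$ near $f(\mathrm{y})$ works. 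Projecting down via Lemma~\ref{le.one-to-one}, each point of $\Lambda$ lies in $\Cl(E(\Lambda))$.

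The substantive direction is $\Cl(E(\Lambda)) \cap \Ein_n \subseteq \Lambda$. Take $\op{z} \in \partial\ADS_{n+1} = \SS(\cC_n)$ in the closure of $E(\Lambda)$; I want to show $\op{z} \in \Lambda$. Choosing a sequence $\op{y}_k \in E(\Lambda)$ with $\op{y}_k \to \op{z}$, passing to representatives and the limit, one gets $\langle \op{z} \mid \op{x} \rangle \leq 0$ for every $\op{x} \in \Lambda$ (a weak inequality — strictness is lost in the limit). So $\op{z}$ lies in the dual $\op{Conv}(\Lambda)^\ast$, and since $\op{z} \in \Ein_n$ one even has $\op{z} \in \Cl(E(\Lambda)) \cap \Ein_n \subseteq (\op{Conv}(\Lambda)^\ast) \cap \Ein_n$. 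Now the achronal-sphere hypothesis enters: $\Lambda$ is the graph of a $1$-Lipschitz $f : \SS^{n-1} \to \RR$, so $\Lambda_0 = \SS^{n-1}$ is the \emph{full} equatorial sphere $\partial\DD^n$. If $\op{z} = (\theta_0, \mathrm{z}) \in \partial\wt\AdS_{n+1}$ with $\mathrm{z} \in \partial\DD^n = \SS^{n-1}$ were \emph{not} on the graph of $f$, then $\theta_0 \neq f(\mathrm{z})$; I claim this forces $\op{z}$ to be causally related to a point of $\wt\Lambda$, hence outside $\wt E(\wt\Lambda)$ and, by causal convexity of the invisible domain (Remark after Definition of AdS regular domain), outside $\Cl(\wt E(\wt\Lambda))$. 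Concretely: if $\theta_0 > f(\mathrm{z})$ then the vertical timelike segment from $(f(\mathrm{z}), \mathrm{z}) \in \wt\Lambda$ up to $\op{z}$ is timelike, so $\op{z} \in J^+(\wt\Lambda)$; symmetrically if $\theta_0 < f(\mathrm{z})$. Since $J^\pm(\wt\Lambda)$ are closed in $\wt\AdS_{n+1} \cup \partial\wt\AdS_{n+1}$ (they are causal futures/pasts of the compact set $\wt\Lambda$), $\op{z} \notin \Cl(\wt E(\wt\Lambda))$ — a contradiction. Hence $\theta_0 = f(\mathrm{z})$, \ie $\op{z} \in \wt\Lambda$, and projecting gives $\op{z} \in \Lambda$.

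The main obstacle I anticipate is the limiting-argument bookkeeping at the boundary: one must be careful that a convergent sequence $\op{y}_k \to \op{z}$ in $\ADS_{n+1} \cup \partial\ADS_{n+1}$ with $\op{y}_k \in E(\Lambda)$ really does land in $\wt\Lambda$ rather than merely on the graph of $f^+$ or $f^-$ restricted to $\partial\DD^n$ (the spheres $\Lambda^\pm$ of Remark~\ref{rk:extension}, which can strictly contain $\Lambda$). The point is that $f^+$ and $f^-$ agree with $f$ on $\partial\DD^n$ — since $f$ itself is already $1$-Lipschitz on all of $\SS^{n-1}$, the sup/inf defining $f^\pm(\mathrm{x})$ for $\mathrm{x} \in \partial\DD^n = \Lambda_0$ is achieved at $\mathrm{y} = \mathrm{x}$, giving $f^-(\mathrm{x}) = f^+(\mathrm{x}) = f(\mathrm{x})$; so near the boundary there is no room for $\theta$ to stray from $f(\mathrm{z})$, which is exactly the content of the causal-relatedness argument above. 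I would carry out the proof in the order: (1) reduce to the universal cover and the graph description via Lemma~\ref{le.one-to-one}; (2) establish $\Lambda \subseteq \Cl(E(\Lambda)) \cap \Ein_n$; (3) for the reverse inclusion, note $f^\pm|_{\partial\DD^n} = f$ and invoke closedness of $J^\pm(\wt\Lambda)$ together with causal convexity to conclude.
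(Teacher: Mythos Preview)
The paper does not give its own proof of this statement: it is imported from \cite{ABBZ} (Corollary~10.7) and marked with a \fin. So there is no in-paper argument to compare against, only your proposal to assess.

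Your argument is correct, and the core idea is exactly the right one: since $\Lambda$ is a full topological $(n-1)$-sphere, $\Lambda_0 = \SS^{n-1} = \partial\DD^n$, and the $1$-Lipschitz condition on $f$ forces $f^-|_{\partial\DD^n} = f^+|_{\partial\DD^n} = f$. Combined with $\Cl(\wt E(\wt\Lambda)) \subseteq \{(\theta,\mathrm{x}) : f^-(\mathrm{x}) \leq \theta \leq f^+(\mathrm{x})\}$ (continuity of $f^\pm$) and Lemma~\ref{le.purevide} (non-purely-lightlike gives $f^- < f^+$ on the interior, so the easy inclusion holds), this finishes both directions cleanly.

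Two cosmetic remarks. First, the Klein-model opening (deriving $\langle \op{z}\mid\op{x}\rangle \leq 0$ and landing in $\op{Conv}(\Lambda)^\ast$) is never used; you immediately switch to the conformal model and the $f^\pm$ description, so you can drop that paragraph. Second, the appeal to causal convexity is also unnecessary: what you actually use is that $I^\pm(\wt\Lambda)$ is open (or, equivalently here, that $J^\pm(\wt\Lambda) = \{\theta \geq f^+\}$ resp.\ $\{\theta \leq f^-\}$ is closed because $f^\pm$ are Lipschitz). Your parenthetical justification ``causal futures/pasts of a compact set are closed'' is not valid in arbitrary spacetimes, but it is in this globally hyperbolic cylinder $\RR\times\overline{\DD}^n$; it would be cleaner to simply write out $J^+(\wt\Lambda) = \{(\theta,\mathrm{x}) : \theta \geq f^+(\mathrm{x})\}$ and invoke continuity of $f^+$.
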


The meaning of Proposition~\ref{pro.inter-boundary} is that $(\op{Conv}(\Lambda)^\ast)^\circ$ is already contained in $\AdS_{n+1}$, so that
the expression $E(\Lambda) = \ADS_{n+1} \cap (\op{Conv}(\Lambda)^\ast)^\circ$
is reduced to $E(\Lambda) = (\op{Conv}(\Lambda)^\ast)^\circ$ when $\Lambda$ is a topological sphere.

\begin{remark}
It follows from Proposition~\ref{pro.inter-boundary} that the GH-regular domain
$E(\Lambda)$ characterizes $\Lambda$, \ie invisible domains
of different achronal $(n-1)$-spheres are different. We call
$\Lambda$ the \textit{limit set of $E(\Lambda)$.}
\end{remark}

\subsection{More on the convex hull of achronal topological $(n-1)$-spheres}
\label{sec:moreconvex}
Recall that there are two maps $f^-$, $f^+$ such that
$\wt{E}(\wt{\Lambda})=\{ (\theta, \mathrm{x}) / f^-(\mathrm{x}) < \theta < f^+(\mathrm{x}) \}$
(cf. Definition~\ref{r.f-f+}).

\begin{prop}
\label{pro.F-F+}
The complement of $\Lambda$ in the boundary $\partial\op{Conv}(\Lambda)$ has
two connected components. Both are closed edgeless achronal subsets of $\AdS_{n+1}$. More precisely,
in the conformal model their liftings in $\wt{\AdS}_{n+1}$ are graphs
of 1-Lipschitz maps $F^+$, $F^-$ from $\DD^n$
into $\RR$ such that
\begin{equation}
\label{eq:fF}
f^- \leq F^- \leq F^+ \leq f^+
\end{equation}
\end{prop}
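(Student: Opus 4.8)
The plan is to exploit the duality $E(\Lambda) = (\op{Conv}(\Lambda)^\ast)^\circ$ (Proposition~\ref{lem.adsdual}, simplified via Proposition~\ref{pro.inter-boundary}) to transfer information between the convex core $\op{Conv}(\Lambda)$ and the invisible domain $\wt E(\wt\Lambda) = \{(\theta,\mathrm{x}) : f^-(\mathrm{x}) < \theta < f^+(\mathrm{x})\}$. Since $\Lambda$ is an achronal $(n-1)$-sphere which is not purely lightlike, $\op{Conv}(\Lambda)$ has nonempty interior in $\AdS_{n+1}$ (Lemma~\ref{le.convempty}), so $\partial\op{Conv}(\Lambda) \cap \AdS_{n+1}$ is a genuine topological hypersurface. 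The first step is to identify $\partial\op{Conv}(\Lambda)$: the support hyperplanes of $\op{Conv}(\Lambda)$ at a boundary point $x$ are, by duality, the hyperplanes dual to points $p \in \partial(\op{Conv}(\Lambda)^\ast) = \partial\op{Cl}(E(\Lambda))$ — i.e.\ to points of $\cH^\pm(\Lambda) \cup \Lambda$. A point whose support hyperplanes are dual to $\cH^+$-points lies in the \emph{past} part of $\partial\op{Conv}(\Lambda)$, and vice versa; so the two natural candidate components are $F^- := \partial\op{Conv}(\Lambda)$ seen from below and $F^+ := \partial\op{Conv}(\Lambda)$ from above, with $\Lambda$ itself the ``edge'' where they meet (this uses Proposition~\ref{pro.inter-boundary} to know the only boundary-at-infinity points are those of $\Lambda$).

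\textbf{Achronality and the graph description.} Next I would show each component is achronal. Convexity is the key: if two points $x,y \in \partial\op{Conv}(\Lambda)$ with, say, $x$ in the past and $y$ in the future part were causally related by a future curve from $x$ to $y$, one extends this into the interior of $\op{Conv}(\Lambda)$; but every interior timelike geodesic enters through a past support hyperplane and exits through a future one, and a careful use of Lemma~\ref{l.causally-related} in an affine chart $U(z)$ with $z$ interior to $\op{Conv}(\Lambda)$ (which contains $\op{Cl}(E(\Lambda))$ by Remark~\ref{rk.nice}) forces the sign condition $\langle x \mid y\rangle$ to be the wrong sign, contradiction. Once achronal, Section~\ref{sub.achro} guarantees that the lift of each component in $\wt\AdS_{n+1}$ is the graph of a $1$-Lipschitz map $F^\pm : \DD^n \to \RR$ (one must check each component projects onto all of $\DD^n$, which follows because $\op{Conv}(\Lambda)$ is compact with the same ``shadow'' $\Lambda_0$ as $\wt\Lambda$, and a convex body over $\DD^n$ has full-width upper and lower boundaries). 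Edgelessness then follows from the local graph structure away from $\Lambda$.

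\textbf{The inequalities~\eqref{eq:fF}.} Finally, $f^- \le F^- \le F^+ \le f^+$: the middle inequality is immediate since $F^-$ is the lower and $F^+$ the upper boundary of the same convex set. For $F^+ \le f^+$, note $\op{Conv}(\Lambda)^\circ \subseteq E(\Lambda)$ (Remark~\ref{rk.nice}), so any point $(\theta,\mathrm{x})$ on the graph of $F^+$ is a limit of points with $\theta' < f^+(\mathrm{x})$, giving $F^+(\mathrm{x}) \le f^+(\mathrm{x})$; symmetrically $f^- \le F^-$. One should double-check the boundary behaviour is compatible, i.e.\ that $F^\pm$ and $f^\pm$ all restrict to the same $1$-Lipschitz function defining $\Lambda$ over $\Lambda_0 = \partial\DD^n$ — this is exactly Proposition~\ref{pro.inter-boundary}.

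\textbf{Main obstacle.} I expect the delicate point to be proving that each component of $\partial\op{Conv}(\Lambda)\setminus\Lambda$ is \emph{achronal} (not merely that the whole boundary is ``spacelike-ish''), and in particular ruling out causal relations \emph{between} the past and future components through their common closure near $\Lambda$ — degenerate support hyperplanes (dual to points $p \in \Lambda \subset \Ein_n$, which are lightlike rather than spacelike, per the discussion after Proposition~\ref{pro:closure11}) are the danger, and handling them requires the lightlike-geodesic-segment structure of $\op{Fill}(\Lambda)$ from Remark~\ref{rk:filling} rather than a naive ``strictly convex'' argument.
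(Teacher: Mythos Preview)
The paper's own ``proof'' is merely a pointer to Proposition~3.14 of \cite{merigot} (proved there under the stronger hypothesis that $\Lambda$ is acausal), with the remark that the argument adapts. So you have in fact supplied more than the paper does, and your overall architecture --- duality, support hyperplanes dual to points of $\cH^\pm(\Lambda)\cup\Lambda$, the inclusion $\op{Conv}(\Lambda)^\circ\subset E(\Lambda)$ for the inequalities~\eqref{eq:fF}, and the identification of achronality as the crux --- is the right one.

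There is, however, a genuine slip in your achronality paragraph. You try to derive a contradiction from a timelike curve joining a point $x$ of the \emph{past} component to a point $y$ of the \emph{future} component. But such curves exist in abundance: any future timelike geodesic from $S^-(\Lambda)$ through $\op{Conv}(\Lambda)^\circ$ exits through $S^+(\Lambda)$. What must be excluded is a timelike curve between two points of the \emph{same} component. The clean argument runs as follows. Fix $p\in S^+(\Lambda)$. Any support hyperplane $H$ at $p$ is non-timelike (being dual to a point of $\op{Cl}(E(\Lambda))\subset\op{Cl}(\ADS_{n+1})$), so $I^+(p)$ and $I^-(p)$ lie in opposite open half-spaces of $H$. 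Since $p$ is a future exit point, some interior point of $\op{Conv}(\Lambda)$ lies in $I^-(p)$; hence $\op{Conv}(\Lambda)$ lies in the closed half-space on the $I^-(p)$ side, and therefore $I^+(p)\cap\op{Conv}(\Lambda)=\emptyset$. In particular no point of $S^+(\Lambda)\subset\op{Conv}(\Lambda)$ lies in $I^+(p)$, which is exactly achronality of $S^+(\Lambda)$. The symmetric argument handles $S^-(\Lambda)$. Your ``main obstacle'' paragraph correctly anticipates that degenerate (lightlike) support hyperplanes need care, but the observation that $I^+(p)$ is \emph{strictly} on one side of any non-timelike hyperplane through $p$ disposes of that case as well; no separate treatment via $\op{Fill}(\Lambda)$ is needed here.
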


\begin{proof}
See Proposition 3.14 in \cite{merigot}. Observe that in \cite{merigot}, Proposition 3.14 is proved in the case where $\Lambda$ is acausal, and not Fuchsian
(the Fuchsian case being the case where $\Lambda$ is the boundary of a totally geodesic hypersurface in $\wt{\AdS}_{n+1}$).
Inequalities in equation (\ref{eq:fF}) are then all strict inequalities, which is false in the general case, as we will see later\footnote{Anyway, one can already observe that in the Fuchsian case $F^-=F^+$.}.
Nevertheless, the proof of Proposition 3.14 in \cite{merigot} can easily adapted, providing
a proof of Proposition~\ref{pro.F-F+}.
\end{proof}

We have already observed that $\partial E(\Lambda) \setminus \Lambda$ is the union of two achronal connected components $\cH^\pm(\Lambda)$; in a similar
way, $\partial\op{Conv}(\Lambda) \setminus \Lambda$ is the union of two achronal $n$-dimensional topological disks: the \textit{past component} $S^-(\Lambda)$ (the graph of $F^-$)
and the \textit{future component} $S^+(\Lambda)$. Since $E(\Lambda)$ and $\op{Conv}(\Lambda)$ are convex and dual one to the other, for every element $x$ in $S^-(\Lambda)$
(respectively $S^+(\Lambda)$) there is an element $p$ of $\Lambda$ or $\cH^+(\Lambda)$ (respectively $\cH^+(\Lambda)$) such that $H^-(p)$ (respectively $H^+(p)$) is a support hyperplane for $S^-(\Lambda)$
(respectively $S^+(\Lambda)$) at $x$: these support hyperplanes are either totally geodesic copies of $\HH^n$ (if $p \in \AdS_{n+1}$) or degenerate (if $p \in \Lambda$).

Similarly, at every element $x$ of $\cH^-(\Lambda)$ (respectively $\cH^+(\Lambda)$) there is a support hyperplane $H^-(p)$ (respectively $H^+(p)$) where $p$ is an element of
$S^+(\Lambda) \cup \Lambda$ (respectively $S^-(\Lambda) \cup \Lambda$) (see Figure~\ref{fig:convexhull}).

 \begin{figure}[ht]
  \centering
  \includegraphics{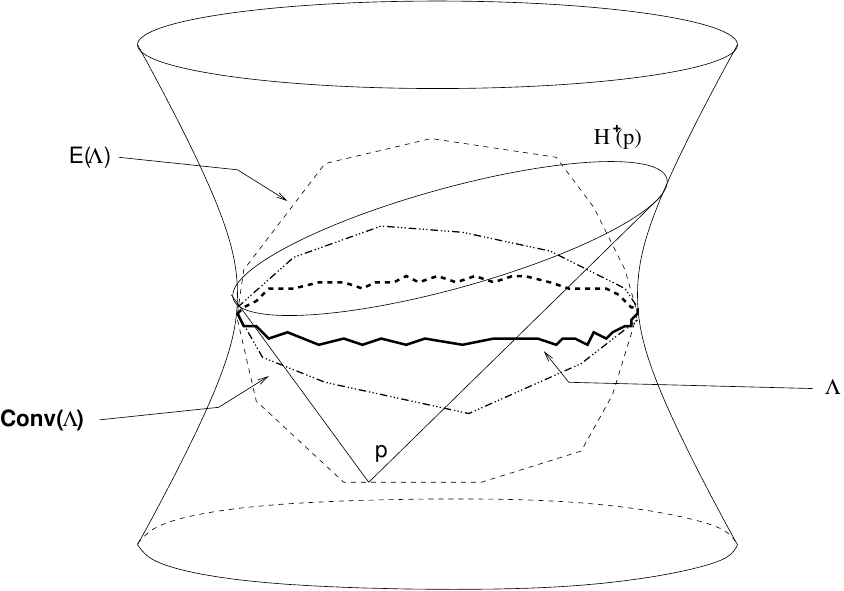}
\centering  \caption{The global situation. The hyperboloid represents the boundary of an affine domain of
$\AdS_{n+1}$ containing the invisible domain. The limit set $\Lambda$ is represented by a topological circle turning around
the hyperboloid, and $\op{Conv}(\Lambda)^\circ$ is a convex subset inside the (dual) convex subset $E(\Lambda)$. The
future-dual plane $H^+(p)$ for $p$ in the past boundary component $\cH^-(\Lambda)$ is a support hyperplane of $S^+(\Lambda)$.}
  \label{fig:convexhull}
\end{figure}

\begin{remark}
For every $p$ in $\cH^-(\Lambda)$, $H^+(p)$ is a support hyperplane for $\op{Conv}(\Lambda)$, but it could be at a point in $\Lambda$.
Elements of $H^-(\Lambda)$ that are support hyperplanes for $\op{Conv}(\Lambda)$ at a point inside $\AdS_{n+1}$, \ie in $\S^+(\Lambda)$
form an interesting subset of $\cH^-(\Lambda)$, the \textit{initial singularity set} (cf. \cite{benbon}).
\end{remark}

\subsection{Global hyperbolicity}

\begin{defi}
\label{def:gh}
A spacetime $(M,g)$ is \textit{globally hyperbolic} (abbreviation GH) if:
\begin{itemize}
\item $(M, g)$ is \textit{causal}, \ie contains no timelike loop,
\item for every $p$, $q$ in $M$, the intersection $J^+(p) \cap J^-(q)$ is empty or compact.
\end{itemize}
\end{defi}

\begin{defi}
Let $(M, g)$ be a spacetime.
A \textit{Cauchy hypersurface} is a closed acausal subset $S \subset M$ that intersects
every inextendible causal curve in $(M, g)$ in one and only one point.

A \textit{Cauchy time function} is a time function $T: M \to \RR$ such that every level set $T^{-1}(a)$ is a Cauchy hypersurface
in $(M, g)$.
\end{defi}

\begin{theo}[\cite{choquet}, \cite{sanchez, sanchez2, sanchezcausal}]
Let $(M, g)$ be a spacetime. The following assertions are equivalent:
\begin{enumerate}
\item $(M, g)$ is globally hyperbolic,
\item $(M, g)$ contains a Cauchy hypersurface,
\item $(M, g)$ admits a Cauchy time function,
\item $(M, g)$ admits a smooth Cauchy time function.
\end{enumerate}
\end{theo}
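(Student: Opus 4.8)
The plan is to establish the cycle $(4)\Rightarrow(3)\Rightarrow(2)\Rightarrow(1)$ together with $(1)\Rightarrow(3)$ and $(3)\Rightarrow(4)$, so that all four assertions become equivalent; the substance lies in $(1)\Rightarrow(3)$, which is Geroch's construction of a Cauchy time function, and in $(3)\Rightarrow(4)$, the smoothing procedure. The two implications $(4)\Rightarrow(3)$ and $(3)\Rightarrow(2)$ are immediate from the definitions: a smooth Cauchy time function is \emph{a fortiori} a Cauchy time function, and each level set of a Cauchy time function is by definition a Cauchy hypersurface.

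For $(2)\Rightarrow(1)$, suppose $S$ is a Cauchy hypersurface. Causality holds, since a closed causal curve would, once run infinitely around itself, yield an inextendible causal curve meeting the achronal set $S$ more than once, contradicting the defining property of $S$. To see that $J^+(p)\cap J^-(q)$ is compact, one first checks that every past- (resp. future-) inextendible causal curve issued from a point of $M$ meets $S$, and then runs the standard limit-curve argument: were the diamond non-compact, a sequence escaping every compact set would admit a limit causal curve that is inextendible yet fails to cross $S$ correctly, a contradiction. I would follow the treatment in \cite{oneill} here.

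For $(1)\Rightarrow(3)$, I would fix a finite Borel measure $\mu$ on $M$, normalized to $\mu(M)=1$ and positive on nonempty open sets, and set $t^-(p)=\mu(J^-(p))$ and $t^+(p)=\mu(J^+(p))$. Both functions are nondecreasing along future-directed causal curves, and strictly increasing by strong causality. The essential use of global hyperbolicity is the continuity of $t^\pm$: one semicontinuity follows from monotone convergence, the opposite one from compactness of causal diamonds together with a limit-curve argument. One then verifies that $t^-\to 0$ along past-inextendible causal curves and $t^+\to 0$ along future-inextendible ones, so that $\tau:=\log(t^-/t^+)$ is a continuous function, strictly increasing along causal curves, and surjective onto $\RR$ on every inextendible causal curve; hence each $\tau^{-1}(a)$ meets every inextendible causal curve exactly once, i.e. $\tau$ is a Cauchy time function.

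For $(3)\Rightarrow(4)$, starting from a continuous Cauchy time function $\tau$, the goal is to manufacture a smooth function $T$ with everywhere past-directed timelike gradient whose level sets are still Cauchy. The approach is local-to-global: cover $M$ by charts, in each chart regularize $\tau$ by convolution so that the resulting differentials are timelike and uniformly close to $d\tau$, then patch by a partition of unity, using that a convex combination of future-directed timelike covectors is again future-directed timelike. One then argues that, because $T$ can be kept uniformly close to $\tau$ with spacelike level sets, every inextendible causal curve still crosses each level set exactly once. The hard part will be precisely this smoothing step --- simultaneously preserving timelikeness of the gradient across the patching and the global Cauchy property --- and for the full technical details I would defer to \cite{sanchez, sanchez2, sanchezcausal}.
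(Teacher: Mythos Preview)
The paper does not prove this theorem: it is stated with citations to \cite{choquet} and \cite{sanchez, sanchez2, sanchezcausal} and then used as a black box, with no proof or sketch provided. Your outline is a faithful summary of the classical arguments found in those references --- Geroch's volume-function construction for $(1)\Rightarrow(3)$ and the Bernal--S\'anchez smoothing for $(3)\Rightarrow(4)$ --- so there is nothing in the paper to compare it against; as a sketch of the literature proof it is correct in spirit, with the expected deferrals at the technically delicate points.
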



In a GH spacetime, the Cauchy hypersurfaces are homeomorphic one to the other. In particular, if one of them is
compact, all of them are compact.

\begin{defi}
A spacetime $(M ,g)$ is \textit{globally hyperbolic spatially compact} (abbrev. GHC) if it contains a closed
Cauchy hypersurface.
\end{defi}

\begin{prop}
A spacetime $(M, g)$ is GHC if and only if it contains a time function $T: M \to \RR$ such that
every level set $T^{-1}(a)$ is compact.\fin
\end{prop}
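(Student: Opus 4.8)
The plan is to prove the two implications separately; the first is essentially formal, and the second carries the real content.

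\medskip
\noindent\textbf{``GHC $\Rightarrow$ existence of a time function with compact level sets''.} I would start from a closed Cauchy hypersurface $S_0$, which exists by the definition of GHC used in this section. Then $(M,g)$ is globally hyperbolic by the equivalence (1)$\Leftrightarrow$(2) of the Theorem quoted above, and hence admits a smooth Cauchy time function $T$ by the implication (1)$\Rightarrow$(4). By definition every level set $T^{-1}(a)$ is then a Cauchy hypersurface, hence homeomorphic to $S_0$ by the remark recalled just before the Proposition that all Cauchy hypersurfaces of a globally hyperbolic spacetime are mutually homeomorphic; since $S_0$ is compact, so is each $T^{-1}(a)$. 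Thus $T$ is a time function with compact level sets.

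\medskip
\noindent\textbf{The converse.} Let $T\colon M\to\RR$ be a time function all of whose level sets are compact. First, $(M,g)$ is causal: a causal loop would make $T$ fail to be strictly increasing along it. Fix a value $c$ in the interior of the interval $T(M)$ (this interval is open, since through every point there pass both future- and past-directed timelike curves, so $T$ attains neither of its bounds), and set $S:=T^{-1}(c)$. It is closed, acausal (two distinct points of $S$ joined by a causal curve would be distinct values of $T$ along it), and edgeless — a timelike curve through a point of $S$ crosses $S$ by the intermediate value theorem — hence a locally Lipschitz topological hypersurface (see \cite[\S14]{oneill}). It remains to show that $S$ is a Cauchy hypersurface, for then $(M,g)$ is GHC, $S$ being compact.

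\medskip
\noindent\textbf{Main point, and the expected obstacle.} One must show $S$ meets every inextendible causal curve. That it meets such a curve at \emph{most} once is immediate from the strict monotonicity of $T$ along it. For ``at least once'', suppose a future-inextendible causal curve $\gamma$ never meets $S$; by monotonicity we may assume $T\circ\gamma<c$ throughout, and set $\ell:=\sup_s T(\gamma(s))\le c$. The tail $\gamma([s_0,\beta))$ then lies in $J^+(\gamma(s_0))\cap T^{-1}([T(\gamma(s_0)),\ell])$, and the step I expect to be the main obstacle is precisely that this region is compact; this is where compactness of \emph{all} the level sets of $T$ is genuinely used (it fails for $\wt\AdS_{n+1}$, whose level sets $\{\theta=\mathrm{const}\}\cong\DD^n$ are noncompact). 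Granting this, $\gamma$ is eventually imprisoned in a compact set, hence future-extendible — a spacetime carrying a time function is stably causal, hence strongly causal, so no inextendible causal curve can remain in a compact set — contradicting inextendibility; the past-inextendible case is symmetric. Thus $S$ is a compact Cauchy hypersurface. For the compactness of $J^+(p)\cap T^{-1}([a,b])$ and the imprisonment lemma I would invoke the classical causality theory of \cite[\S14]{oneill} and \cite{sanchez,sanchezcausal}, which is exactly the body of results this Proposition is meant to repackage.
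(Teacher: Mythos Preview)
The paper offers no proof of this Proposition: it is stated with the end-of-proof symbol \verb|\fin| immediately after the statement, as a repackaging of the classical results (Geroch, Bernal--S\'anchez) quoted just above. So there is nothing to compare your argument against; what follows is an assessment of your sketch on its own merits.

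Your forward direction is correct and complete.

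For the converse, the imprisonment strategy is the right one, and you have correctly isolated the crux. There is, however, a wrinkle in the specific compactness you invoke. The set $J^+(\gamma(s_0))\cap T^{-1}([T(\gamma(s_0)),\ell])$ need not be compact: without already knowing causal simplicity (a consequence of global hyperbolicity), $J^+(p)$ is not closed in general, so you risk circularity. What you actually need is weaker --- that the tail of $\gamma$ lies in \emph{some} compact set --- and for this it suffices that $T^{-1}([a,\ell])$ itself be compact, i.e.\ that $T$ be proper. That statement does follow from the hypothesis that every level set is compact, but it is not entirely immediate (compact fibers do not imply properness for arbitrary continuous maps); one route is to flow along a complete timelike vector field to a fixed compact level set. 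Alternatively, bypass properness altogether and argue with Cauchy horizons: if $S=T^{-1}(c)$ were not Cauchy, $H^+(S)$ would carry a past-inextendible null generator (using that $S$ is edgeless) along which $T$ is bounded, and one runs the same imprisonment contradiction on that generator. Either way, the references you cite (notably the Bernal--S\'anchez papers) do contain the ingredients, but you should be precise about which statement you are importing rather than the $J^+(p)$-version you wrote.
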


\subsection{Cosmological time functions}
\label{s.cosmological-time}

In any spacetime $(M,g)$, one can define the \textit{cosmological time
  function\/} as follows (see~\cite{cosmic}):

\begin{defi}
The cosmological time function of a spacetime $(M,g)$ is the function
$\tau:M\rightarrow [0,+\infty]$ defined by
$$\tau(x):=\mbox{Sup}\{ L(c) \mid c \in {\mathcal R}^-(x) \},$$  where
${\mathcal R}^-(x)$ is the set of past-oriented  causal curves starting at
$x$, and $L(c)$ is  the Lorentzian length of the causal curve $c$.
\end{defi}


\begin{defi}
\label{d.regular}
A spacetime $(M,g)$ is \textit{CT-regular} with cosmological time
function $\tau$ if
\begin{enumerate}
\item $M$ has \textit{finite existence time,\/}  $\tau(x) < \infty$ for
   every $x$ in $M$,
\item for every past-oriented inextendible causal curve $c: [0, +\infty)
  \rightarrow M$, $\lim_{t \to \infty} \tau(c(t))=0$.
\end{enumerate}
\end{defi}

\begin{teo}[\cite{cosmic}]
\label{teo.cosmogood}
If a spacetime $(M,g)$ has is CT-regular, then
\begin{enumerate}
\item $M$ is globally hyperbolic,
\item $\tau$ is a time function, i.e. $\tau$ is continuous and is strictly
  increasing along future-oriented causal curves,
\item for each $x$ in $M$, there is at least one realizing  geodesic, \ie a future-oriented timelike geodesic $c:
  (0, \tau(x)] \rightarrow M$ realizing the distance from the "initial
  singularity", that is, $c$ has unit speed, is geodesic, and
  satisfies:
$$ c(\tau(x))) = x \mbox{ and } \tau(c(t)) = t \mbox{ for every }t
$$
\item $\tau$ is locally Lipschitz, and admits first and second derivative
almost everywhere.
\end{enumerate}\fin
\end{teo}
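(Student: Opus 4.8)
This is the Andersson--Galloway--Howard theorem on cosmological time functions (\cite{cosmic}), and the proof I would give follows their scheme: first produce, at every point $x$, a past-directed unit-speed timelike geodesic that \emph{realizes} $\tau(x)$, and then read off all four conclusions from the existence of these rays together with the reverse triangle (concatenation) inequality. Throughout write $d$ for the Lorentzian time-separation and recall the elementary fact obtained by concatenating curves: if there is a future-directed causal curve from $x$ to $y$ of Lorentzian length $\ell$, then $\tau(y)\ge\tau(x)+\ell$; in particular $\tau(y)\ge\tau(x)+d(x,y)$, so $\tau$ is non-decreasing along future-directed causal curves. The hard part, flagged below, will be converting the purely asymptotic hypothesis (2) of Definition~\ref{d.regular} into enough local compactness to run limit-curve arguments.

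\emph{Causality and realizing rays (item (3)).} A closed causal curve through $p$ of positive length can be iterated to give past curves of unbounded length ending at $p$, so $\tau(p)=\infty$, contradicting condition (1) of Definition~\ref{d.regular}; a closed causal curve of zero length, wound around infinitely, gives a past-inextendible causal curve staying in the compact image of the loop, on which $\tau$ is positive and lower semicontinuous hence bounded below, contradicting condition (2). A refinement rules out almost-closed causal curves and yields strong causality, so the limit curve theorem applies. Fix $x$ and a maximizing sequence $c_k$ of past-directed causal curves from $x$, $L(c_k)\to\tau(x)>0$; the limit curve theorem produces a past-directed causal curve $\gamma$ from $x$ (a priori possibly past-inextendible), and upper semicontinuity of Lorentzian length together with the concatenation inequality forces $L(\gamma)=\tau(x)$ and $\tau(\gamma(t))=\tau(x)-L(\gamma|_{[0,t]})$ along $\gamma$. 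Maximality forces $\gamma$ to be an unbroken timelike geodesic (a broken timelike curve can always be lengthened by smoothing the corner); reparametrized by arc length it is a map $(0,\tau(x)]\to M$ with $\gamma(\tau(x))=x$ and $\tau(\gamma(t))=t$, which cannot be extended to $t=0$ since $\tau>0$ everywhere, nor beyond by maximality. This is item (3).

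\emph{Time function (item (2)) and global hyperbolicity (item (1)).} Monotonicity is the concatenation inequality, and strictness along a future-directed timelike curve from $x$ to $y$ follows from $\tau(y)\ge\tau(x)+d(x,y)>\tau(x)$. Lower semicontinuity of $\tau$ is immediate from the definition; upper semicontinuity follows from the limit-curve argument applied to the realizing rays of a sequence $x_k\to x$ (if $\tau(x_k)\to A>\tau(x)$, the limit ray from $x$ would have length $\ge A$, contradicting the definition of $\tau(x)$). Hence $\tau$ is a continuous time function. For global hyperbolicity it remains to prove each causal diamond $J^+(p)\cap J^-(q)$ compact: a point $z$ of the diamond has $\tau(p)\le\tau(z)\le\tau(q)$ and lies on a past-directed causal curve from $q$, which we prolong by the realizing ray of $z$; the resulting past curves from $q$ have total length $\le\tau(q)$ and their traces in $\{\tau\ge\tau(p)\}$ cannot escape to infinity because condition (2) forces $\tau\to0$ near the initial singularity. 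Applying the limit curve theorem to this family and reparametrizing by $\tau$ shows the points $z$ subconverge inside $J^+(p)\cap J^-(q)$, which is closed by strong causality; hence the diamond is compact.

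\emph{Regularity of $\tau$ (item (4)).} The key point is that $\tau$ has a \emph{smooth lower support function at every point}: for $x\in M$, slide the top end of the realizing ray $\gamma_x$ from item (3) slightly back to $q:=\gamma_x(\tau(x)-\delta)$ and set $u_x(y):=(\tau(x)-\delta)+d(q,y)$; for $y$ in a fixed neighbourhood of $x$ the point $y$ lies in the normal timelike future of $q$, so $u_x$ is smooth there, satisfies $u_x(x)=\tau(x)$, and satisfies $u_x\le\tau$ by the concatenation inequality. Continuity of $\tau$ lets one choose $\delta$ and the neighbourhood locally uniformly, and Lorentzian Hessian comparison bounds the Hessians of the $u_x$ from below uniformly on compact sets, so $\tau$ is locally semiconcave; such a function is automatically locally Lipschitz, and by Alexandrov's theorem it is twice differentiable almost everywhere. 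Finally $\tau(\gamma_x(t))=t$ with $\gamma_x$ unit-speed gives the eikonal identity $g(\nabla\tau,\nabla\tau)=-1$ wherever $\tau$ is differentiable. The main obstacle throughout is the one already signalled: getting strong causality and, above all, the non-escape of the near-maximal past curves used in items (1) and (3), so that the limit curve theorem genuinely yields the realizing rays; everything else then follows from the reverse triangle inequality and standard comparison estimates.
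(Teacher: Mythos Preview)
The paper does not prove this theorem at all: it is stated with the citation \cite{cosmic} and closed immediately by the \fin\ box, with no proof environment; the result is used as a black box imported from Andersson--Galloway--Howard. Your sketch is a faithful outline of the original argument in that reference (realizing rays via limit curves, monotonicity and continuity of $\tau$ from the concatenation inequality, compactness of diamonds, and local semiconcavity via smooth lower support functions plus Alexandrov's theorem), so there is nothing in the paper to compare it against and nothing to correct.
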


However, $\tau$ is not always a Cauchy time function (see the comment after Corollary 2.6 in \cite{cosmic}).

A very nice feature of CT-regularity is that is is preserved by isometries (and thus, by Galois automorphisms):

\begin{prop}[Proposition 4.4 in \cite{merigot}]
\label{pro.cosmolift}
Let  $(\wt{M}, \tilde{g})$ be a CT-regular spacetime. Let $\Gamma$ be a torsion-free discrete group of isometries of
$(\wt{M},\tilde{g})$ preserving the time orientation. Then, the action of $\Gamma$ on  $(\wt{M}, \tilde{g})$
is properly discontinuous. Furthermore, the quotient spacetime $(M,g)$ is CT-regular. More precisely,
if $\mathrm{p}: \wt{M} \to M$ denote the quotient map, the cosmological times $\tilde{\tau}: \wt{M} \to [0, +\infty)$
and $\tau: M \to [0, +\infty)$ satisfy:
\[
\tilde{\tau}=\tau \circ \mathrm{p} \]
\end{prop}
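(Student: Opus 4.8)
\textbf{Proof plan for Proposition~\ref{pro.cosmolift}.}
The plan is to verify the three assertions in turn, exploiting the isometry-invariance of the cosmological time. First I would record the basic observation that if $\phi$ is an isometry of $(\wt M, \tilde g)$ preserving the time orientation, then $\phi$ sends past-oriented causal curves to past-oriented causal curves and preserves Lorentzian length, so $\phi$ maps $\mathcal R^-(x)$ bijectively onto $\mathcal R^-(\phi(x))$ and hence $\tilde\tau(\phi(x)) = \tilde\tau(x)$. In particular $\tilde\tau$ is $\Gamma$-invariant, so it descends to a function $\bar\tau: M \to [0,+\infty]$ with $\tilde\tau = \bar\tau\circ\mathrm p$; the content of the last formula in the statement will then be the identification $\bar\tau = \tau$, which I postpone to the end.

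Next I would prove that the $\Gamma$-action is properly discontinuous. Since $(\wt M,\tilde g)$ is CT-regular, Theorem~\ref{teo.cosmogood} tells us $\wt M$ is globally hyperbolic; fix a smooth Cauchy time function and, more usefully, use that $\tilde\tau$ itself is a (continuous, proper-on-the-right-geometry) time function whose level sets $\tilde\tau^{-1}(a)$ are achronal. The key point is that $\Gamma$ preserves each level set $\Sigma_a := \tilde\tau^{-1}(a)$ because $\tilde\tau$ is $\Gamma$-invariant. On a fixed Cauchy hypersurface $\Sigma$ of $\wt M$ the induced Riemannian metric (from a smooth spacelike Cauchy hypersurface, or the length metric on $\Sigma_a$) is $\Gamma$-invariant, and $\Gamma$ acts by isometries of this metric; since $\Gamma$ is discrete in the isometry group of $(\wt M,\tilde g)$ and torsion-free, I would argue that the action on $\Sigma$ is properly discontinuous, and then propagate this to all of $\wt M$ using that $\wt M$ retracts onto $\Sigma$ along the integral curves of a $\Gamma$-invariant timelike vector field, or simply by the product structure $\wt M \approx \Sigma \times \RR$ coming from global hyperbolicity with the $\RR$-factor $\Gamma$-equivariantly trivial (because $\Gamma$ preserves the $\tilde\tau$-levels). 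From proper discontinuity plus torsion-freeness, the action is free and the quotient $M = \Gamma\backslash\wt M$ is a spacetime (Lorentzian manifold, and one checks time-orientability descends since $\Gamma$ preserves the time orientation).

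Then I would establish that $(M,g)$ is CT-regular, \ie verify conditions (1) and (2) of Definition~\ref{d.regular} for its cosmological time $\tau$, and simultaneously identify $\tau$ with $\bar\tau$. For any $x\in M$ pick a lift $\tilde x\in\wt M$; the covering map $\mathrm p$ is a local isometry, so it sends past-oriented causal curves based at $\tilde x$ to past-oriented causal curves based at $x$ with the same Lorentzian length, giving $\tau(x)\geq \tilde\tau(\tilde x)$. Conversely, given a past-oriented causal curve $c$ in $M$ starting at $x$, lift it (uniquely) to a past-oriented causal curve $\tilde c$ in $\wt M$ starting at $\tilde x$ with $L(\tilde c)=L(c)$, so $\tilde\tau(\tilde x)\geq L(c)$; taking the supremum gives $\tilde\tau(\tilde x)\geq \tau(x)$. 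Hence $\tau(x)=\tilde\tau(\tilde x)=\bar\tau(x)$, which is finite because $\wt M$ has finite existence time — this is condition (1). For condition (2), let $c:[0,+\infty)\to M$ be past-oriented inextendible; its lift $\tilde c$ to $\wt M$ is again past-oriented and inextendible (a lift of an inextendible causal curve through a covering local isometry is inextendible), so by CT-regularity of $\wt M$ we get $\tilde\tau(\tilde c(t))\to 0$, and since $\tau(c(t))=\tilde\tau(\tilde c(t))$ we conclude $\tau(c(t))\to 0$. This finishes CT-regularity of $(M,g)$ and gives the claimed formula $\tilde\tau = \tau\circ\mathrm p$.

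The main obstacle is the proper discontinuity statement: \textbf{a priori} a discrete torsion-free group of isometries of a Lorentzian manifold need not act properly discontinuously (unlike the Riemannian case), so one genuinely needs to use the global hyperbolicity of $(\wt M,\tilde g)$ together with the $\Gamma$-invariance of $\tilde\tau$. The cleanest route, which I would follow, is to reduce to a $\Gamma$-invariant Cauchy hypersurface: global hyperbolicity gives a $\Gamma$-equivariant splitting $\wt M\cong \Sigma\times\RR$ with $\Gamma$ acting trivially on the $\RR$-factor (using that $\Gamma$ preserves the $\tilde\tau$-level sets), the induced metric on $\Sigma$ is Riemannian and $\Gamma$-invariant, and a discrete group of isometries of a Riemannian manifold acts properly discontinuously; properness on $\wt M$ then follows from the product structure. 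One should be a little careful that $\tilde\tau$ need not be smooth — it is only locally Lipschitz — so for the equivariant splitting one should use a smooth Cauchy time function to build the product structure and only use $\Gamma$-invariance of $\tilde\tau$ to see that $\Gamma$ preserves a compact "time slab", which already suffices for the properness argument via exhaustion of $\wt M$ by $\Gamma$-invariant sets of the form $\tilde\tau^{-1}([a,b])$.
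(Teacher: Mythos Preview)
The paper does not give a proof of this proposition; it is simply quoted as Proposition~4.4 of \cite{merigot}. So there is no in-paper argument to compare your proposal against, and I can only comment on the proposal itself.

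Your treatment of the last two assertions is correct and is the standard argument: lifting and projecting past-directed causal curves through the local isometry $\mathrm p$ preserves Lorentzian length and inextendibility, which yields both $\tau=\bar\tau$ (hence $\tilde\tau=\tau\circ\mathrm p$) and CT-regularity of the quotient.

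The proper-discontinuity step, however, has a genuine gap. Your reduction hinges on a $\Gamma$-invariant Cauchy hypersurface carrying a $\Gamma$-invariant \emph{complete Riemannian} metric, so that the Riemannian fact ``discrete groups of isometries act properly discontinuously'' applies. Neither ingredient comes for free here: the $\Gamma$-invariant level sets $\tilde\tau^{-1}(a)$ are only locally Lipschitz (item~(4) of Theorem~\ref{teo.cosmogood}), so the induced quadratic form is defined only almost everywhere, may degenerate along lightlike directions, and there is no obvious completeness; moreover, as the paper itself notes just after Theorem~\ref{teo.cosmogood}, $\tilde\tau$ need not be a Cauchy time function, so $\tilde\tau^{-1}(a)$ need not even be a Cauchy hypersurface. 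On the other hand, the smooth Cauchy hypersurfaces supplied by \cite{sanchez, sanchez2} are not $\Gamma$-invariant. Your attempted fix in the last paragraph does not close the gap: a ``time slab'' $\tilde\tau^{-1}([a,b])$ is \emph{not} compact (the spacetime is not assumed spatially compact), so the fact that every $\gamma K$ stays in a fixed slab gives no control whatsoever on the orbit in the spatial directions, and the exhaustion argument stalls. What is actually needed is either a direct argument that the full time-orientation-preserving isometry group of a CT-regular spacetime acts properly (so that discrete subgroups act properly discontinuously), or the construction of a $\Gamma$-invariant timelike line field allowing a $\Gamma$-invariant Wick rotation to a Riemannian metric; both require more than your outline provides.
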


Recall that in this section $\Lambda$ denotes a non-purely lightlike topological achronal $(n-1)$-sphere in $\partial\AdS_{n+1}$.

\begin{prop}[Proposition 11.1 in \cite{ABBZ}]
\label{pro.adsregular}
The GH-regular AdS domain $E(\Lambda)$ is CT-regular.\fin
\end{prop}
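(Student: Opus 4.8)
The plan is to verify directly the two clauses of Definition~\ref{d.regular} for the spacetime $M=E(\Lambda)$, using throughout the identification (Lemma~\ref{le.one-to-one} and Proposition~\ref{pro:closure11}) of $E(\Lambda)$ with the slab $\wt E(\wt\Lambda)=\{(\theta,\mathrm x)\in\RR\times\DD^n: f^-(\mathrm x)<\theta<f^+(\mathrm x)\}$ of Remark~\ref{r.f-f+}, which sits inside $\wt\AdS_{n+1}$, is convex (Remark~\ref{rk.nice}), and is causally convex in $\wt\AdS_{n+1}$. Since $\Lambda$ is an achronal $(n-1)$-sphere, the functions $f^-\le f^+$ are continuous on the compact $\overline{\DD}^n$ and coincide on $\partial\DD^n$ with the $1$-Lipschitz map $f$ defining $\wt\Lambda$; hence $K:=\Cl(\wt E(\wt\Lambda))=\{(\theta,\mathrm x)\in\RR\times\overline{\DD}^n: f^-(\mathrm x)\le\theta\le f^+(\mathrm x)\}$ is compact, $K\cap\partial\wt\AdS_{n+1}=\wt\Lambda$ (Proposition~\ref{pro.inter-boundary}), and the ``bottom face'' $K^-:=\{\theta=f^-(\mathrm x)\}=\Cl(\wt\cH^-(\wt\Lambda))$ is the graph of the $1$-Lipschitz $f^-$ over $\overline{\DD}^n$, hence an achronal subset of $\wt\AdS_{n+1}\cup\partial\wt\AdS_{n+1}$ (Section~\ref{sub.achro}).

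For the finite existence time, recall that $\wt\AdS_{n+1}$ --- hence $\wt\AdS_{n+1}\cup\partial\wt\AdS_{n+1}$ --- is strongly causal (noted after Proposition~\ref{p.causal-structure}). In a strongly causal spacetime the Lorentzian lengths of causal curves confined to a fixed compact set are uniformly bounded: cover the compact set by finitely many causally convex open sets, so a causal curve meets each of them in a single parameter interval and its length is at most a sum of finitely many ``local'' bounds. Applying this to $K$, there is $C<\infty$ bounding the Lorentzian length of every causal curve contained in $K$. Given $x\in E(\Lambda)$, any past-directed causal curve issued from $x$ stays in $\wt E(\wt\Lambda)\subset K$ by causal convexity of the invisible domain; therefore $\tau(x)\le C<\infty$.

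For the limit condition, let $c:[0,+\infty)\to E(\Lambda)$ be a past-directed inextendible causal curve. Along $c$ the conformal-model coordinate $\theta$ decreases strictly and stays $\ge\inf_{\overline{\DD}^n}f^->-\infty$, so $\theta(c(t))\downarrow\theta_\infty$; since a causal vector satisfies $\dot\theta^2\ge|\dot{\mathrm x}|^2$, the path $t\mapsto\mathrm x(c(t))$ has finite $ds^2$-length in the compact $\overline{\DD}^n$ and thus converges, so $c(t)\to q=(\theta_\infty,\mathrm x_\infty)\in K$. Inextendibility excludes $q\in\wt E(\wt\Lambda)$; the curve $c$ joins $c(0)\in\wt E(\wt\Lambda)$ to $q$ by a past causal curve while $\wt E(\wt\Lambda)$ is disjoint from the causal future of $\wt\cH^+(\wt\Lambda)$ (immediate from the $1$-Lipschitz descriptions of $f^-\le f^+$), so $q\notin\wt\cH^+(\wt\Lambda)$; and if $\mathrm x_\infty\in\partial\DD^n$ then $q\in\wt\Lambda$. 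In every case $q\in K^-$. Moreover $\tau(c(t))$ is non-increasing in $t$, since prepending $c|_{[t_0,t_1]}$ to a past causal curve from $c(t_1)$ produces such a curve from $c(t_0)$; hence $\tau(c(t))\downarrow L\ge0$, and it remains to exclude $L>0$. If $L>0$, choose for each $t$ a past causal curve $\gamma_t\subset E(\Lambda)$ issued from $c(t)$ with $L(\gamma_t)\ge L/2$; all $\gamma_t$ lie in the compact $K$ with Lorentzian length bounded below, so a standard limit-curve argument (with $t_j\to\infty$, $c(t_j)\to q$) produces a non-degenerate past causal curve $\gamma_\infty$ issued from $q$, contained in $K$, with $L(\gamma_\infty)\ge L/2>0$ by upper semicontinuity of Lorentzian length under $C^0$-limits of causal curves. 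Writing $\gamma_\infty=(\theta(s),\mathrm x(s))$, parametrized future-directed with $\gamma_\infty(S)=q$ (so $\theta(S)=f^-(\mathrm x(S))$): for every $s\le S$, causality gives $\theta(S)-\theta(s)\ge d(\mathrm x(S),\mathrm x(s))\ge f^-(\mathrm x(S))-f^-(\mathrm x(s))$, while $\gamma_\infty\subset K$ gives $\theta(s)\ge f^-(\mathrm x(s))$; together these force $\theta(s)=f^-(\mathrm x(s))$ for all $s$, i.e. $\gamma_\infty\subset K^-$. But a causal curve contained in the achronal set $K^-$ is everywhere lightlike, hence of Lorentzian length $0$ --- contradicting $L(\gamma_\infty)>0$. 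Therefore $L=0$, that is $\tau(c(t))\to0$, and $E(\Lambda)$ is CT-regular.

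The delicate step is the last one: extracting the limiting past ray $\gamma_\infty$ from $q$ and showing it is imprisoned in the achronal closed past horizon $K^-$, where everything hinges on the explicit slab description of $\wt E(\wt\Lambda)$, on upper semicontinuity of Lorentzian length, and on the elementary fact that a causal curve lying in a $1$-Lipschitz graph is null. By contrast, the finiteness clause reduces to the general principle that in a strongly causal spacetime the causal curves confined to a fixed compact set have uniformly bounded length, applied to the compact closure of the invisible domain.
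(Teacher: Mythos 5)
The paper gives no proof of this statement (it is imported verbatim from \cite{ABBZ}), so there is no in-paper argument to compare yours against; I can only assess your proof on its own terms. The overall architecture is sensible, and the second half --- locating the past endpoint $q$ on the closed past horizon, trapping the limiting past causal curve $\gamma_\infty$ in the $1$-Lipschitz graph of $f^-$ by playing the causality inequality $\theta(S)-\theta(s)\ge d(\mathrm{x}(S),\mathrm{x}(s))$ against the Lipschitz bound for $f^-$, and concluding that a causal curve inside an achronal set is null --- is a correct and rather elegant way to get clause $(2)$ of Definition~\ref{d.regular}. (You could even simplify it: $q\in\wt\Lambda$ is impossible outright, since $c(0)\in\wt E(\wt\Lambda)$ would then lie in $J^+(\wt\Lambda)$, exactly as in the proof of Proposition~\ref{pro:taupi}; so $q\in\wt\cH^-(\wt\Lambda)\subset\wt\AdS_{n+1}$.)

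The genuine gap is in the finiteness clause. The principle ``in a strongly causal spacetime, causal curves confined to a fixed compact set have uniformly bounded Lorentzian length'' requires the set to be compact \emph{in the spacetime carrying the metric that computes the length}. Your $K=\Cl(\wt E(\wt\Lambda))$ is compact only in the conformal compactification $\wt\AdS_{n+1}\cup\partial\wt\AdS_{n+1}$: it meets the conformal boundary along $\wt\Lambda$, and the anti-de Sitter metric --- the one defining $L(c)$, hence $\tau$ --- is $r^2(-d\theta^2+ds^2)$ with $r\to\infty$ there. An open set of your cover that contains a point of $\wt\Lambda$ therefore admits \emph{no} finite local bound on AdS-lengths of the causal curves it contains (a timelike arc of fixed $\theta$-extent at spatial position $\mathrm{x}$ has length of order $r(\mathrm{x})\,\Delta\theta$, which is unbounded on such a set), so the finite subcover yields no global bound; what the covering argument actually controls is the length for the conformal metric $-d\theta^2+ds^2$, which says nothing about $\tau$. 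The conclusion is nevertheless true --- in fact $\tau\le\pi$ --- but it must be extracted from the geometry rather than from general causality: $E(\Lambda)$ is contained in an affine domain $U(z)$ for any $z\in\op{Conv}(\Lambda)^\circ$ (see the discussion preceding Proposition~\ref{pro:closure11}), the cosmological time of $\wt U(z)$ takes values in $(0,\pi)$ (as used in the proof of Lemma~\ref{l:tauC1}), and the reverse triangle inequality then bounds the length of every causal curve in $E(\Lambda)$ by $\pi$. The same degeneracy of the metric along $\wt\Lambda$ also shadows your appeal to upper semicontinuity of the AdS length for the limit curve $\gamma_\infty$, whose image may touch $\wt\Lambda$; that step should likewise be localized inside an affine domain before the standard limit-curve machinery is invoked.
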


Hence, according to Theorem~\ref{teo.cosmogood}, GH-regular domains are globally hyperbolic. Furthermore:

\begin{defi}
The region $\{ \tau < \pi/2 \}$ is denoted $E_0^-(\Lambda)$ and called the {\it past tight} region of
$E(\Lambda)$.
\end{defi}

\begin{prop}[Proposition 11.5 in \cite{ABBZ}]
\label{pro.toutads}
Let $x$ be an element of the past tight region $E_0^-(\Lambda)$. Then, there is an unique realizing geodesic for $x$. More
precisely, there is one and only one element $r(x)$ in the past horizon $\cH^-(\Lambda)$ - called the cosmological retract of $x$ -
such that the segment $(r(x), x]$ is a timelike geodesic whose Lorentzian length is precisely the cosmological time $\tau(x)$.\fin
\end{prop}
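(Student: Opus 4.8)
The plan is to obtain a realizing geodesic from the theory of cosmological time (Theorem~\ref{teo.cosmogood}), and to deduce both that its past endpoint lies on $\cH^-(\Lambda)$ and that it is unique from two short computations in $\RR^{2,n}$ together with the convexity of $E(\Lambda)$. For existence: by Proposition~\ref{pro.adsregular} the spacetime $E(\Lambda)$ is CT-regular, so Theorem~\ref{teo.cosmogood}(3) provides a unit-speed, future-oriented timelike geodesic $c\colon(0,\tau(x)]\to E(\Lambda)$ with $c(\tau(x))=x$ and $\tau(c(t))=t$; lift it to $\wt E(\wt\Lambda)$. Since $\wt\AdS_{n+1}$ is geodesically complete and, by Remark~\ref{rk:geodtheta}, timelike geodesics of $\AdS_{n+1}$ remain on $\{\mathrm q_{2,n}=-1\}$ and never reach $\partial\AdS_{n+1}$, the limit $\tilde r:=\lim_{t\to 0^+}c(t)$ exists in $\wt\AdS_{n+1}$. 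From $\tilde\tau(c(t))=t\to0$ and the continuity of $\tilde\tau$ we get $\tilde\tau(\tilde r)=0$, hence $\tilde r$ cannot be interior to $\wt E(\wt\Lambda)$; and since $c$ is future-oriented near $0$, $\tilde r$ lies on the past component, \ie $\tilde r\in\cH^-(\wt\Lambda)$. Projecting (Proposition~\ref{pro:closure11}) gives $r(x)\in\cH^-(\Lambda)$ with $(r(x),x]$ a timelike geodesic of Lorentzian length $\tau(x)$: a realizing geodesic.

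For uniqueness, work in the Klein model, representing each point of $\Cl(E(\Lambda))$ by its unit representative in $\AdS_{n+1}=\{\mathrm q_{2,n}=-1\}\subset\RR^{2,n}$ (legitimate because $\mathrm p$ is injective on $\Cl(\wt E(\wt\Lambda))$ by Proposition~\ref{pro:closure11}); recall $\Cl(E(\Lambda))$ is convex (Remark~\ref{rk.nice}). Set $\ell:=\tau(x)$; since $x\in E_0^-(\Lambda)$ is interior, $0<\ell<\pi/2$, so $\cos\ell>0$. If $r\in\cH^-(\Lambda)$ is any point with $(r,x]$ a timelike geodesic of length $\ell$, Remark~\ref{rk:geodtheta} (based at $x$) lets one write $r=\cos\ell\cdot x+\sin\ell\cdot v$, where $v\in x^\orth$ is a past-directed unit timelike vector. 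As $x$ is unit timelike, $x^\orth$ has signature $(1,n)$, so its past-directed unit timelike vectors form a copy of $\HH^{n}$, on which $\sCa{v}{v'}=-\cosh d_{\HH^{n}}(v,v')\le -1$, with equality only if $v=v'$. Consequently, for two realizers $r_i=\cos\ell\cdot x+\sin\ell\cdot v_i$ ($i=1,2$) with $r_1\ne r_2$ — equivalently $v_1\ne v_2$, since $\sin\ell\ne0$ — one has
$$\sCa{r_1}{r_2}=-\cos^2\ell+\sin^2\ell\,\sCa{v_1}{v_2}<-1 .$$

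Assume $r_1\ne r_2$ and put $m:=\SS(r_1+r_2)$. From $\sCa{r_1}{r_2}<1$ we get $\mathrm q_{2,n}(r_1+r_2)=2(\sCa{r_1}{r_2}-1)<0$, so $m\in\ADS_{n+1}$, and $m\in\Cl(E(\Lambda))$ by convexity. Writing $r_i=\cos\ell\cdot x+\sin\ell\cdot v_i$ and setting $\beta^2:=2(1-\sCa{v_1}{v_2})>4$, so that $\mathrm q_{2,n}(v_1+v_2)=-\beta^2$, a direct computation gives, for the unit representative $\tilde m$ of $m$,
$$-\sCa{\tilde m}{x}=\frac{2\cos\ell}{\sqrt{4\cos^2\ell+\sin^2\ell\,\beta^2}}\,,$$
and $-\sCa{\tilde m}{x}<\cos\ell$ is equivalent, after dividing by $\cos\ell>0$ and by $\sin^2\ell>0$, to $\beta^2>4$. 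Hence $\sCa{\tilde m}{x}\in(-1,0)$: the restriction of $\mathrm q_{2,n}$ to the plane spanned by $\tilde m$ and $x$ is negative definite, so the projective line through $m$ and $x$ lies entirely in $\ADS_{n+1}$ and $[m,x]$ is a timelike geodesic segment; it lies in $\Cl(E(\Lambda))$ by convexity, it is past-directed from $x$ (being the midpoint of $r_1$ and $r_2$, both in the timelike past of $x$), and its Lorentzian length is $\arccos(-\sCa{\tilde m}{x})>\ell$. Lifting to $\wt E(\wt\Lambda)$, where $\tilde\tau=\tau\circ\mathrm p$ (Proposition~\ref{pro.cosmolift}), this exhibits a past-oriented causal curve issued from $x$ of length $>\ell=\tau(x)$, contradicting the definition of $\tau$. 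Therefore $r_1=r_2$, and since the timelike geodesic of length $<\pi$ joining $r(x)$ to $x$ is unique, the realizing geodesic is unique as well.

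The existence half is soft, a direct appeal to the cosmological time machinery; the real content is uniqueness. The single step that genuinely uses the hypothesis $\tau(x)<\pi/2$ is the inequality $-\sCa{\tilde m}{x}<\cos\ell$: dividing by $\cos\ell>0$ is exactly what encodes that the distance-$\ell$ past sphere through $x$ is convex as seen from $x$, so that its Klein midpoint lands strictly beyond it; for $\tau(x)\ge\pi/2$ one is past the first conjugate locus $H^-(x)$ and this fails. The only other delicate point is the bookkeeping between $\AdS_{n+1}$ and $\wt\AdS_{n+1}$ (handled via Propositions~\ref{pro:closure11} and~\ref{pro.cosmolift}) and the verification that $[m,x]$ is genuinely past-directed.
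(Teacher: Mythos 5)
Your proof is correct. Note first that the paper itself offers no argument for this statement: it is imported verbatim from \cite{ABBZ} (Proposition 11.5 there) and closed with a $\square$, so there is no in-paper proof to compare against; what you have written is a self-contained substitute. The existence half is indeed soft (CT-regularity via Proposition~\ref{pro.adsregular} plus Theorem~\ref{teo.cosmogood}(3), and the fact that a finite-length timelike geodesic of $\AdS_{n+1}$ has its limit point in $\AdS_{n+1}$, not on the conformal boundary), and your uniqueness argument --- the strict concavity, seen from $x$, of the past sphere of radius $\ell<\pi/2$, so that the Klein midpoint $m=\SS(r_1+r_2)$ of two distinct realizers lies in $\Cl(E(\Lambda))$ by convexity but at past timelike distance $\arccos(-\sCa{\tilde m}{x})>\ell$ from $x$, contradicting the definition of $\tau$ --- is sound; I checked the computation $-\sCa{\tilde m}{x}=2\cos\ell/\sqrt{4\cos^2\ell+\sin^2\ell\,\beta^2}\in(0,\cos\ell)$ and it is exactly the point where $\tau(x)<\pi/2$ enters. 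Two places deserve one more line each. First, ``$\tilde r$ lies on the past component because $c$ is future-oriented'' should be justified: if $\tilde r$ were on the graph of $f^+$, the $1$-Lipschitz property of $f^+$ shows $I^+(\tilde r)\cap \wt E(\wt\Lambda)=\emptyset$, contradicting $c(t)\in I^+(\tilde r)\cap \wt E(\wt\Lambda)$. Second, the assertion that $[x,m]$ is past-directed is best made explicit by writing $\tilde m=\cos\theta_0\,x+\sin\theta_0\,w$ with $w=(v_1+v_2)/\beta$ a past-directed unit timelike vector of $x^\orth$ and $\sin\theta_0>0$; Remark~\ref{rk:geodtheta} then identifies $[x,m]$ as the past-directed arc of length $\theta_0$ and simultaneously re-proves $\cos\theta_0<\cos\ell$. (Also, the line ``From $\sCa{r_1}{r_2}<1$'' should read $\sCa{r_1}{r_2}<-1$, which is what you established; the weaker bound already suffices for $m\in\ADS_{n+1}$, so nothing breaks.) None of these affects the validity of the argument.
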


\begin{prop}[Proposition 11.6 in \cite{ABBZ}]
\label{pro.tightads}
Let $c: (0, T] \rightarrow E_0^-(\Lambda)$ be a future
oriented timelike geodesic whose initial extremity  $p:=\lim_{t\to 0}c(t)$ is in the past
horizon $\cH^-(\Lambda)$. Then the following assertions are equivalent.
\begin{enumerate}
\item For every $t\in(0,T]$, $c_{\mid[0,t]}$ is a realizing geodesic for the
  point $c(t)$.
\item There exists $t\in (0, T]$ such that $c_{\mid[0,t]}$ is a realizing
  geodesic for the point $c(t)$.
\item $c$ is orthogonal to a support hyperplane of $E(\Lambda)$ at
  $p:=\lim_{t\to 0}c(t)$.
\end{enumerate}
\end{prop}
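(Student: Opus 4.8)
The plan is to establish the cyclic implications $(1)\Rightarrow(2)\Rightarrow(3)\Rightarrow(1)$, the first of which is trivial. The two substantial steps are $(2)\Rightarrow(3)$, obtained from the uniqueness statement of Proposition~\ref{pro.toutads} together with the fact that a length-maximizing timelike geodesic meets an achronal hypersurface orthogonally, and $(3)\Rightarrow(1)$, obtained by a barrier argument based on the support hyperplane and anti-de Sitter trigonometry. Throughout I would work inside $E(\Lambda)$, which is globally hyperbolic and CT-regular by Proposition~\ref{pro.adsregular} and Theorem~\ref{teo.cosmogood}, lifting to $\wt E(\wt\Lambda)$ via Proposition~\ref{pro:closure11} whenever a clean causal ordering is needed (and treating the Fuchsian case $\Lambda=\partial\HH^n$ apart, where $F^-=F^+$, $E_0^-(\Lambda)$ is a tubular neighbourhood of a totally geodesic $\HH^n$, and the statement is immediate).

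For $(2)\Rightarrow(3)$: assume $c_{\mid[0,t_0]}$ is a realizing geodesic for $x:=c(t_0)$, so that $\tau(x)=t_0$ and the past endpoint of $c$ is $p$. Since $x\in E_0^-(\Lambda)$, Proposition~\ref{pro.toutads} says the realizing geodesic of $x$ is unique and its past endpoint is the cosmological retract $r(x)$; hence $p=r(x)$ and $\tau(x)=d(p,x)$. By definition of the cosmological time and convexity of $E(\Lambda)$ (Remark~\ref{rk.nice}, which keeps the connecting geodesic inside $\overline{E(\Lambda)}$), for every $y\in\cH^-(\Lambda)$ in the causal past of $x$ one has $d(y,x)\le\tau(x)$, with equality at $y=p$; thus $c_{\mid[0,t_0]}$ is a timelike segment from the achronal Lipschitz hypersurface $\cH^-(\Lambda)$ to $x$ of maximal Lorentzian length. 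By the first variation formula (the initial velocity of a length-maximizing timelike segment from a hypersurface is $\mathrm{q}_{2,n}$-orthogonal to a support hyperplane $H$ of that hypersurface at its foot), we get $c'(0)\perp H$. Since a timelike vector is never orthogonal to a degenerate hyperplane, $H$ is spacelike; this is exactly $(3)$, and as a by-product $p$ lies in the spacelike-support part of $\cH^-(\Lambda)$ (the initial singularity set).

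For $(3)\Rightarrow(1)$: let $H$ be a support hyperplane of $E(\Lambda)$ at $p$ with $c'(0)\perp H$; necessarily $H$ is spacelike, so by Proposition~\ref{lem.adsdual} it is the past-dual hyperplane $H=H^-(q)$ of a point $q\in S^+(\Lambda)$ interior to $\AdS_{n+1}$. By Remark~\ref{rk:pastcomponent}, $H^-(q)$ is the set of points at Lorentzian distance $\pi/2$ in the past of $q$, hence the future-directed timelike geodesics orthogonal to $H^-(q)$ are exactly the radial geodesics towards $q$ (each reaching $q$ at distance $\pi/2$); therefore $c$ is a subsegment of the radial geodesic from $p$ to $q$, with $d(p,c(t))=t$ for $t\in(0,T]$ and $T\le\pi/2$. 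As $p$ lies on the past boundary $\cH^-(\Lambda)$ of the convex domain $E(\Lambda)$, the whole of $E(\Lambda)$ lies in the closed causal future of $H^-(q)$ (using $E(\Lambda)\subseteq U(q)$, which follows from $q\in\op{Conv}(\hat\Lambda)$ and Proposition~\ref{pro.proj}). Now the future focal locus of the totally geodesic spacelike hypersurface $H^-(q)$ in $\AdS_{n+1}$ is the single point $q$, reached at distance $\pi/2$; hence for every $t<\pi/2$ the orthogonal segment $c_{\mid[0,t]}$ is globally maximizing, i.e.\ $d(H^-(q),c(t))=t$. Any past-directed causal curve $\gamma$ from $c(t)$ stays in $E(\Lambda)\subseteq J^+(H^-(q))$, so its past endpoint $w$ lies causally between $H^-(q)$ and $c(t)$, and the reverse triangle inequality gives $L(\gamma)\le d(w,c(t))\le d(H^-(q),c(t))=t$; therefore $\tau(c(t))\le t$. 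Since $c_{\mid[0,t]}$ itself has Lorentzian length $t$, we conclude $\tau(c(t))=t$ for every $t\in(0,T]$, so each $c_{\mid[0,t]}$ is a realizing geodesic for $c(t)$, which is $(1)$.

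The degenerate case (all support hyperplanes at $p$ lightlike) is absorbed by the cycle: then $(3)$ fails for our timelike $c$, and by $(2)\Rightarrow(3)$ so does $(2)$, hence also $(1)$, so the three assertions are simultaneously false. The main obstacle is the anti-de Sitter comparison used in $(3)\Rightarrow(1)$: that the future focal set of a totally geodesic spacelike hyperplane of $\AdS_{n+1}$ is a single point at distance $\pi/2$ (so that the orthogonal geodesics remain maximizing below that distance), and that a causal curve confined to the future of a spacelike support hyperplane cannot exceed the orthogonal distance to it. Both are cleanest to check in the affine model of Remark~\ref{rk.affine}, where $\AdS_{n+1}$ is an explicit region of Minkowski space and $H^-(q)$ an affine spacelike hyperplane, reducing the matter to elementary Minkowski causality.
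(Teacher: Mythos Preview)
The paper does not supply its own proof of this proposition: it is quoted as Proposition~11.6 of \cite{ABBZ} and left unproved here. So there is no detailed argument to compare against, but the strategy you outline---the cycle $(1)\Rightarrow(2)\Rightarrow(3)\Rightarrow(1)$, with a barrier argument for the last implication---is the standard one, and the barrier inequality $\tau\leq\tau_1$ (with $\tau_1$ the Lorentzian distance to the spacelike support hyperplane $H^-(q)$) is exactly what the paper itself invokes later in the proof of Lemma~\ref{l:tauC1}.

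Your argument is essentially correct. The one step that deserves a cleaner justification is the ``first variation formula'' in $(2)\Rightarrow(3)$: since $\cH^-(\Lambda)$ is only Lipschitz, the smooth first-variation formula does not literally apply, and you should make explicit how convexity substitutes for it. The clean version is a separating-hyperplane argument in an affine chart. The closed convex set $\overline{E(\Lambda)}$ and the closed convex ``past ball'' $B:=\{y\in\overline{I^-(x)}:d_L(y,x)\ge t_0\}$ have disjoint interiors (any $y\in E(\Lambda)\cap I^-(x)$ with $d_L(y,x)>t_0$ would, after extending the timelike geodesic $[y,x]$ backwards until it exits $E(\Lambda)$ through $\cH^-(\Lambda)$, produce a point of $\cH^-(\Lambda)$ at Lorentzian distance $>t_0$ from $x$, contradicting $\tau(x)=t_0$), and $p$ lies on both boundaries. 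A separating hyperplane $H$ through $p$ is then simultaneously a support hyperplane for $E(\Lambda)$ and for $B$; since $\partial B$ is smooth at $p$ with tangent hyperplane equal to the totally geodesic hyperplane orthogonal to $c'(0)$, this forces $H=H^-(q)$ with $q=c(\pi/2)\in\AdS_{n+1}$, hence $H$ is spacelike. This is presumably what you intend, but it is worth spelling out, since convexity of $E(\Lambda)$---not differentiability of $\cH^-(\Lambda)$---is what makes the step work.
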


The following Proposition was known in the case $n=2$
(\cite{mess1, benbon}, and was implicitly admitted in
the few previous papers devoted to the higher dimensional case
(for example, \cite{ABBZ, merigot}):

\begin{prop}
\label{pro:taupi}
The past tight region $E^-_0(\Lambda)$ is the past in $E(\Lambda)$ of the future component $S^+(\Lambda)$ of the convex core
(in particular, it contains $\op{Conv}(\Lambda)^\circ$).
The restriction of the cosmological time to $E^-_0(\Lambda)$ is a Cauchy time,
taking all values in $(0, \pi/2)$.
\end{prop}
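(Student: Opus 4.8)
The strategy is to prove the two asserted descriptions of $E^-_0(\Lambda)$ in turn and then deduce the Cauchy property, relying heavily on the structural results of Section~\ref{sec:moreconvex} (the functions $F^\pm$ bounding $\op{Conv}(\Lambda)$) and the cosmological-time machinery of Propositions~\ref{pro.toutads}, \ref{pro.tightads}. First I would establish the inclusion $\op{Conv}(\Lambda)^\circ \subset E^-_0(\Lambda)$. Take $x$ in the relative interior of the convex core. By Proposition~\ref{pro.adsregular} and Theorem~\ref{teo.cosmogood}(3) there is a realizing geodesic from $x$ back to a point $r$ on $\cH^-(\Lambda)$; I claim its length is $<\pi/2$. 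Indeed, the realizing geodesic is timelike and meets $\cH^-(\Lambda)$ orthogonally (cf. Proposition~\ref{pro.tightads}), so by Remark~\ref{rk:pastcomponent} it would, if prolonged to length exactly $\pi/2$, reach the hyperplane $H^+(r)$ dual to $r$; but $H^+(r)$ is a support hyperplane of $\op{Conv}(\Lambda)$ (this is the content of the discussion and Figure~\ref{fig:convexhull} preceding the statement), so it cannot separate the interior point $x$ from $r$. Hence $\tau(x)<\pi/2$.

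Next I would identify $E^-_0(\Lambda)$ with the past of $S^+(\Lambda)$ in $E(\Lambda)$. For the inclusion ``$\subseteq$'': if $\tau(x)<\pi/2$, Proposition~\ref{pro.toutads} gives the cosmological retract $r(x)\in\cH^-(\Lambda)$, and by Proposition~\ref{pro.tightads} the realizing geodesic $c:(0,\tau(x)]\to E(\Lambda)$ from $r(x)$ to $x$ is orthogonal to a support hyperplane of $E(\Lambda)$ at $r(x)$. That support hyperplane is, by Proposition~\ref{lem.adsdual} and the duality discussion, of the form $H^+(p)$ for some $p\in S^+(\Lambda)\cup\Lambda$; continuing $c$ past $x$ it reaches $H^+(p)$, i.e.\ it reaches $S^+(\Lambda)\cup\Lambda$, at parameter $\pi/2>\tau(x)$. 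Since $c$ is future-oriented and $x=c(\tau(x))$ lies strictly before this endpoint, $x$ is in the causal (indeed chronological) past of $S^+(\Lambda)$, and since $x\in E(\Lambda)$ and $E(\Lambda)$ is causally convex, $x$ lies in the past \emph{within} $E(\Lambda)$. For the reverse inclusion ``$\supseteq$'': if $x\in E(\Lambda)$ lies in the past of some $q\in S^+(\Lambda)$, then a past-directed causal curve from $x$ cannot have Lorentzian length $\geq\pi/2$ without leaving $E(\Lambda)$ — a past-directed timelike segment of length $\pi/2$ emanating from $q$ already lands on $\cH^-(\Lambda)=\partial E(\Lambda)$ (Remark~\ref{rk:pastcomponent}), and any realizing geodesic from $x$ extended backwards would, by the support-hyperplane orthogonality of Proposition~\ref{pro.tightads}, have to reach $\partial E(\Lambda)$ before time $\pi/2$; hence $\tau(x)<\pi/2$.

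Finally, for the Cauchy-time assertion: $\tau$ restricted to $E^-_0(\Lambda)$ is continuous and strictly increasing along future-directed causal curves by Theorem~\ref{teo.cosmogood}(2), so it only remains to see that each level $\{\tau=a\}$, $0<a<\pi/2$, is a Cauchy hypersurface of $E^-_0(\Lambda)$ and that $\tau$ attains all of $(0,\pi/2)$. Surjectivity onto $(0,\pi/2)$ follows by moving along a realizing geodesic $c$ with $\tau(c(t))=t$ and noting that such a geodesic stays in $E^-_0(\Lambda)$ exactly until $t=\pi/2$ (the argument of the previous paragraph shows it leaves $E(\Lambda)$, or reaches $S^+(\Lambda)$, precisely then). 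For the Cauchy property, take an inextendible causal curve in $E^-_0(\Lambda)$: by causal convexity of $E(\Lambda)$ and the fact that $E^-_0(\Lambda)$ is the past of $S^+(\Lambda)$ in $E(\Lambda)$, its future endpoint limits to $S^+(\Lambda)$ and its past endpoint limits to $\cH^-(\Lambda)$ (cosmological time $\to 0$, Definition~\ref{d.regular}(2) and Proposition~\ref{pro.adsregular}); then monotonicity of $\tau$ along the curve, together with the intermediate value theorem, forces it to cross each level set exactly once.

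\textbf{Main obstacle.} The delicate point is the identification of the boundary behaviour that makes $\tau$ genuinely a \emph{Cauchy} time on $E^-_0(\Lambda)$, i.e.\ proving that an inextendible causal curve in the tight region actually accumulates on $S^+(\Lambda)$ in the future and has cosmological time tending to $0$ in the past, rather than ``escaping'' through the part of $\partial E(\Lambda)$ lying over $\Lambda$. This is exactly the subtlety flagged in the remark that $\tau$ is \emph{not} a Cauchy time on all of $E(\Lambda)$, so the whole force of the proposition is that restricting to $\{\tau<\pi/2\}$ cuts the spacetime off precisely along $S^+(\Lambda)$. I expect this to require a careful use of the support-hyperplane structure (Proposition~\ref{pro.tightads}(3)) together with compactness of $\Lambda$ — treating the degenerate support hyperplanes ($p\in\Lambda$) on an equal footing with the spacelike ones — and this is the step where I would be most careful to match the $n=2$ picture of \cite{mess1, benbon} against the general-dimensional statements imported from \cite{ABBZ, merigot}.
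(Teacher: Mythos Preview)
Your overall approach matches the paper's, and your argument for the inclusion $E^-_0(\Lambda)\subset I^-(S^+(\Lambda))\cap E(\Lambda)$ is essentially what the paper does: extend the unique realizing geodesic from $r(x)$ along the direction orthogonal to the spacelike support hyperplane $H^-(p)$, $p\in S^+(\Lambda)$, until time $\pi/2$, where it hits $p$. (A small correction: the support hyperplane at $r(x)\in\cH^-(\Lambda)$ is $H^-(p)$, not $H^+(p)$, and the case $p\in\Lambda$ is excluded precisely because the realizing geodesic is timelike and a degenerate hyperplane has no timelike normal.)

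For the reverse inclusion your argument is vague and does not quite close. The paper's device is cleaner: take any realizing geodesic $c$ for $x$ (Theorem~\ref{teo.cosmogood}(3)), with past endpoint $c(0)\in\cH^-(\Lambda)$. If $\tau(x)\geq\pi/2$, then $c(\pi/2)$ is simultaneously in $I^-(S^+(\Lambda))$ (it is in the past of $x$, which is in the past of $S^+(\Lambda)$) and in $S^+(\Lambda)$ itself (since $\langle c(\pi/2)\mid c(0)\rangle=0$ and the realizing geodesic is orthogonal to a spacelike support hyperplane $H^-(p)$, so $c(\pi/2)=p\in S^+(\Lambda)$). This contradicts achronality of $S^+(\Lambda)$.

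The real gap is the Cauchy step, and the resolution is not ``compactness of $\Lambda$ plus support hyperplanes'' in the way you suggest, but a limiting argument on realizing geodesics with a quantitative estimate. Ruling out escape to $\Lambda$ is easy (a point of $E(\Lambda)$ would then be causally related to $\Lambda$). The hard part is: once you know the future limit $c(b)$ lies in $S^+(\Lambda)$, why does $\tau$ actually tend to $\pi/2$ along the curve, given that $\tau$ is not a priori continuous up to the boundary? Assume $\beta:=\sup\tau\circ c_0<\pi/2$. For each $t$ write $c_0(t)=\cos\tau(c_0(t))\,r(t)+\sin\tau(c_0(t))\,p(t)$ with $r(t)\in\cH^-(\Lambda)$ the cosmological retract; then $\langle r(t)\mid c_0(t)\rangle=-\cos\tau(c_0(t))$. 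Passing to a subsequential limit gives $\langle r_0\mid c(b)\rangle=-\cos\beta<0$, so the limit segment $[r_0,c(b)]$ is genuinely timelike, $r_0$ stays in $\cH^-(\Lambda)$ (not $\Lambda$), and by continuity of the orthogonality condition it is a realizing geodesic. Extending it to length $\pi/2$ reaches some $p_0\in S^+(\Lambda)$ strictly in the future of $c(b)\in S^+(\Lambda)$: a timelike segment inside the achronal set $S^+(\Lambda)$, contradiction. This estimate $\langle r(t)\mid c_0(t)\rangle=-\cos\tau(c_0(t))$ is the missing ingredient in your plan; without it you cannot prevent the retracts $r(t)$ from drifting off to $\Lambda$ and the limiting segment from degenerating to a lightlike one.
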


\begin{proof}
Let $x$ be an element of $E^-_0(\Lambda)$. According to Propositions~\ref{pro.toutads}, \ref{pro.tightads} there is a realizing geodesic $(r(x), x]$ orthogonal to a spacelike support hyperplane
$H$ tangent to $\cH^-(\Lambda)$ at $r(x)$. As described in Sect.~\ref{sec:moreconvex}, this support hyperplane is the hyperplane $H^-(p)$ past-dual to an element $p$ of $S^+(\Lambda)$.
The realizing geodesic is contained in the geodesic $\theta \mapsto c(\theta) = \cos(\theta)r(x) + \sin(\theta)p(x)$ (cf. Remark~\ref{rk:geodtheta}). For $\theta$ in $(0, \pi/2)$ sufficiently closed to
$\pi/2$, $c(\theta)$ belongs to $\op{Conv}(\Lambda) \subset E(\Lambda)$, and since $E(\Lambda)$ is convex, every $c(\theta) \;\;\; (\theta \in (0, \pi/2)$ lies in $E(\Lambda)$. Moreover,
according to Proposition~\ref{pro.tightads}, for every $\theta_0$ in $(0, \pi/2)$, the restriction of $c$ to $(0, \theta_0)$ is a realizing geodesic. Hence:
$$\forall \theta \in (0, \pi/2), \;\;\; \tau(c(\theta)) = \theta$$
Hence, every value in $(0, \pi/2)$ is attained by $\tau$. Moreover, $x$ lies in the past of $p(x)$, hence of $S^+(\Lambda)$. We have:
$$E^-_0(\Lambda) \subset I^-(S^+(\Lambda)) \cap E(\Lambda)$$
Inversely, for every $p$ in $I^-(S^+(\Lambda)) \cap E(\Lambda)$, there is a (not necessarily unique) realizing geodesic $c: (0, \tau(x)) \to E(\Lambda)$ such that $c(\tau(x)) = x$ (cf. item $(3)$
in Theorem~\ref{teo.cosmogood}). Then, the curve $c$ being a timelike geodesic inextendible (in $E(\Lambda)$) in past, for $t \to 0$ the points $c(t)$ converge to a limit point $c(0)$ in $\cH^-(\Lambda)$.
If $\tau(x) \geq \pi/2$, on the one hand we observe that $c(\pi/2)$ lies in the past of $x = c(\tau(x))$, hence in $I^-(S^+(\Lambda))$. On the other hand:
$$\langle c(\pi/2) \mid c(0) \rangle = 0$$
Therefore, $c(\pi/2)$ is dual to an element of $\cH^-(\Lambda)$ and belongs to $S^+(\Lambda)$. But it is a contraction since $S^+(\Lambda)$ is achronal and $c(\pi/2) \in I^-(S^+(\Lambda))$.
Hence $\tau(x) < \pi/2$, \ie:
$$I^-(S^+(\Lambda)) \cap E(\Lambda) \subset E^-_0(\Lambda)$$

In order to conclude, we have to prove that $\tau$ is a Cauchy time function.
Let $c_0: (a,b) \to E^-_0(\Lambda)$ be an inextendible future oriented causal curve. The image of $\tau \circ c_0$ is an interval $(\alpha, \beta)$. According to item $(2)$ of Definition~\ref{d.regular}, $\alpha=0$.
We aim to prove $\beta = \pi/2$, hence we assume by contradiction that $\beta < \pi/2$.
The curve $c$ is contained in the compact subset $Cl\left(E(\Lambda)\right)$ of $\AdS_{n+1} \cup \partial\AdS_{n+1} \subset \Ein_{n+1}$, hence admits a future limit point $c(b)$ in $\AdS_{n+1} \cup \partial\AdS_{n+1}$.
If $c(b)$ lies in $\Ein_n = \partial\AdS_{n+1}$, then it is in $\Lambda$ (cf. Proposition~\ref{pro.inter-boundary}). Some element of $E(\Lambda)$ (for example, $c(\frac{a+b}2)$) would be causally related to an element of $\Lambda$.
This contradiction shows that $c(b)$ lies in $\AdS_{n+1}$; more precisely, in the boundary of $E^-_0(\Lambda)$ in $\AdS_{n+1}$. Since $c$ is future oriented, it follows that $c(b)$ has to be an element of the future boundary $S^+(\Lambda)$.

For every $t$ in $(a,b)$, we denote by $r(t)$ the cosmological retract $r(c(t))$ of $c(t)$, and we consider the unique realizing geodesic segment $\delta_t := (r(t), c(t))$. We extract a subsequence $t_n$ converging to $b$ such that $r(t_n)$ converges to an element
$r_0$ of $Cl\left(\cH^-(\Lambda)\right) = \cH^-(\Lambda) \cup \Lambda$. Then, $\delta_{t_n}$ converge to a geodesic segment $\delta_0 = (r_0, c(b))$. Since every $\delta_{t_n}$ is timelike, $\delta_0$ is non-spacelike.

For every $t$ in $(a,b)$ we have $c(t) = \cos\tau(c(t))r(t) + \sin\tau(c(t))p(t)$ (where $p(t)$ is the dual of the hyperplane orthogonal to the realizing geodesic at $r(t)$, see above). Hence:
$$\langle r(t_n) \mid c(t_n) \rangle = -\cos\tau(c(t_n))$$
At the limit:
$$\langle r_0 \mid c(b) \rangle = -\cos(\beta) < 0 \;\; \op{ (since } \beta < \pi/2 \op{)}$$

It follows that $\delta_0$ is not lightlike, but timelike. Since timelike geodesics in $\AdS_{n+1}$ remain far away from $\partial\AdS_{n+1}$, it follows that $r_0$ lies in $\cH^-(\Lambda)$.

Finally, every $\delta_{t_n}$ is orthogonal to a support hyperplane at $r(t_n)$, hence at the limit $\delta_0$ is orthogonal to a support hyperplane, which is spacelike since $\delta_0$ is timelike.
According to Proposition~\ref{pro.tightads}, $\delta_0$ is a realizing geodesic. At the beginning of the proof, we have shown that every realizing geodesic can be extended to a timelike geodesic of length $\pi/2$ entirely
contained in $E^-_0(\Lambda)$, hence there is an element $p_0$ in $S^+(\Lambda) \cap H^+(r_0)$ such that the geodesic $(r_0, p_0)$ contains $\delta_0$, in particular $c(b)$. Hence $[c(b), p_0]$ is a non-trivial timelike geodesic
segment joining two elements of the achronal subset $S^+(\Lambda)$, contradiction.

This contradiction proves $\beta = \pi/2$, \ie that the restriction of $\tau$ to every inextendible causal curve is surjective. In other words, $\tau$ is a Cauchy time function. The Proposition is proved.
\end{proof}

\begin{lemma}
\label{l:tauC1}
The restriction of $\tau$ to $E_0^-(\Lambda)$ is $C^{1,1}$ (\ie differentiable with locally Lipschitz derivative), and the realizing geodesics are orthogonal to the level sets of $\tau$.
\end{lemma}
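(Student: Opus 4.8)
The plan is to pinch $\tau$, near every point of $E_0^-(\Lambda)$, between a smooth upper barrier and a smooth lower barrier which touch it to first order and whose Hessians are locally uniformly bounded; by the standard lemma that a function lying between two $C^2$ functions of controlled second derivative and agreeing with both at each point is $C^{1,1}$, this yields $\tau\in C^{1,1}$, and the orthogonality statement will fall out of the first-order agreement. I work in the hyperboloid model, normalizing the relevant points to $\{\mathrm{q}_{2,n}=-1\}$. Fix $x\in E_0^-(\Lambda)$; by Proposition~\ref{pro:taupi}, $t_0:=\tau(x)\in(0,\pi/2)$. Let $r:=r(x)\in\cH^-(\Lambda)$ be the cosmological retract of $x$ (Proposition~\ref{pro.toutads}); by Propositions~\ref{pro.tightads} and~\ref{pro:taupi}, the realizing geodesic $(r,x]$ is orthogonal at $r$ to a \emph{spacelike} support hyperplane $H^-(p)$ of $E(\Lambda)$ with $p:=p(x)\in S^+(\Lambda)\subset\AdS_{n+1}$, and by Remarks~\ref{rk:geodtheta} and~\ref{rk:pastcomponent} one has $\langle r\mid p\rangle=0$, $x=\cos(t_0)\,r+\sin(t_0)\,p$, hence $\langle x\mid p\rangle=-\sin t_0$ and $\langle x\mid r\rangle=-\cos t_0$.

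\emph{Upper barrier.} Since $p\in\op{Conv}(\Lambda)$, Proposition~\ref{pro.proj} gives $E(\Lambda)\subset U(p)\subset I^+(H^-(p))$ (Remark~\ref{rk:pastcomponent}); therefore any past causal curve of $E(\Lambda)$ issued from a point $y$ is also a past causal curve in $I^+(H^-(p))$, whence $\tau(y)\le\tau_p(y)$, where $\tau_p$ is the Lorentzian distance to $H^-(p)$. On the part of $U(p)$ lying before the focal point $p$ — a neighborhood of $x$, since $t_0<\pi/2$ — one has $\tau_p(y)=\arcsin(-\langle y\mid p\rangle)$, which is smooth with locally bounded Hessian wherever $-\langle y\mid p\rangle\in(0,1)$; moreover the realizing geodesic gives $\tau_p(x)=t_0=\tau(x)$. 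Thus $\tau_p$ is a smooth upper barrier for $\tau$ at $x$, with Hessian bounded near $x$ in terms of $t_0$ alone; by continuity of $x\mapsto(\tau(x),p(x))$ the bound is uniform on a neighborhood, so $\tau$ is locally semiconcave on $E_0^-(\Lambda)$.

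\emph{Lower barrier.} Here I use the Lorentzian distance $d_r$ from the single point $r$. Since $E(\Lambda)$ is convex (Remark~\ref{rk.nice}) and $r\in\Cl(E(\Lambda))$, the projective segment $[r,y]$ lies in $\Cl(E(\Lambda))$ with relative interior in $E(\Lambda)$ for every $y\in E(\Lambda)$, and for $y$ close to $x$ it is timelike (an open condition, satisfied by $[r,x]$); it is then a past-inextendible timelike segment of $E(\Lambda)$ through $y$, so $\tau(y)\ge d_r(y)$. Near $x$ — before the conjugate point $-r$, reached at distance $\pi$ — one has $d_r(y)=\arccos(-\langle y\mid r\rangle)$, smooth with locally bounded Hessian wherever $-\langle y\mid r\rangle\in(-1,1)$, and $d_r(x)=t_0=\tau(x)$ since $(r,x]$ is the maximizing geodesic from $r$ to $x$. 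So $d_r$ is a smooth lower barrier for $\tau$ at $x$, with Hessian bounded near $x$ uniformly for $x$ in a neighborhood; this uses the continuity of $x\mapsto r(x)$, which follows from the uniqueness in Proposition~\ref{pro.toutads} together with a compactness argument (a subsequential limit of realizing geodesics whose lengths stay in a fixed compact subinterval of $(0,\pi/2)$ is again a realizing geodesic, as in the proof of Proposition~\ref{pro:taupi}). Hence $\tau$ is locally semiconvex.

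Being locally both semiconcave and semiconvex with the same constant, $\tau$ is $C^{1,1}$ on $E_0^-(\Lambda)$. For the orthogonality, $\tau$ is now differentiable and pinched as $d_r\le\tau\le\tau_p$ with $d_r(x)=\tau(x)=\tau_p(x)$, so $d(d_r)(x)=d\tau(x)$; by the Lorentzian Gauss lemma $\nabla d_r(x)$ is a unit timelike vector tangent at $x$ to the geodesic $(r,x]$, hence $\nabla\tau(x)$ is unit timelike, the level sets of $\tau$ are spacelike hypersurfaces, and the realizing geodesic through $x$ meets $\{\tau=\tau(x)\}$ orthogonally at $x$. The main points to be careful about are: the validity of the inequalities $\tau\le\tau_p$ and $\tau\ge d_r$ on a genuine neighborhood of $x$ — resting on the convexity of $E(\Lambda)$ and on staying, near $x$, inside the regions where the distance functions $\tau_p$ and $d_r$ are realized by the expected geodesics — and the local uniformity of the Hessian bounds, which needs the continuity of $x\mapsto r(x)$ and $x\mapsto p(x)$ and the fact that $\tau$ remains in a compact subinterval of $(0,\pi/2)$ over compact subsets of $E_0^-(\Lambda)$.
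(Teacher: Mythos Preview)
Your proof is correct, and the first-order part (the two barriers $d_r\le\tau\le\tau_p$, equality at $x$, differentiability, identification of $\nabla\tau(x)$ with the unit tangent to the realizing geodesic) is exactly the paper's argument: the paper's $\tau_0$ and $\tau_1$ are your $d_r$ and $\tau_p$, and it also cites the standard sandwich lemma (Caffarelli) for differentiability.

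Where you diverge is in the $C^{1,1}$ step. You invoke the general fact that locally semiconcave $+$ locally semiconvex with uniform constants implies $C^{1,1}$, and you obtain the uniform Hessian bounds on the barriers from the explicit formulas $\arcsin(-\langle y\mid p\rangle)$, $\arccos(-\langle y\mid r\rangle)$ together with continuity of $r(x)$ (hence of $p(x)=(x-\cos\tau(x)\,r(x))/\sin\tau(x)$) and the fact that $\tau$ stays in a compact subinterval of $(0,\pi/2)$ on compacta. The paper instead does a direct computation on the level set $S_{\pi/4}$: writing $x=(r+p)/\sqrt{2}$ and $\nu=(p-r)/\sqrt{2}$, it shows $\mathrm{q}_{2,n}(\dot\nu)\le\mathrm{q}_{2,n}(\dot c)$ along any curve $c$ in $S_{\pi/4}$, using that local convexity of $\cH^-(\Lambda)$ forces $\langle\dot r\mid\dot p\rangle\ge0$; it then transports to other level sets by the flow along realizing geodesics. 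Your route is cleaner and more portable (it only needs the barriers, not the geometric convexity inequality), while the paper's computation yields the sharper quantitative statement that $\nu$ is $1$-Lipschitz along $S_{\pi/4}$. One organizational nit: you appeal to continuity of $p(x)$ in the upper-barrier paragraph before establishing continuity of $r(x)$ in the lower-barrier paragraph; since $p(x)$ is determined by $x$ and $r(x)$, it would be tidier to note continuity of $r$ first.
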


\begin{proof}
Let $x$ be an element of $E_0^-(\Lambda)$, and let $(r(x), x]$ be the unique realizing geodesic for $x$. As proven during the proof of Proposition~\ref{pro:taupi}, there is an element $p(x)$ of $S^+(\Lambda)$ such
that $(r(x), p(x))$ is a timelike geodesic containing $x = \cos(\tau(x))r(x) + \sin(\tau(x))p(x)$ and entirely contained in $E^-_0(\Lambda)$.

Let $U$ be the affine domain $U(p(x))$; the past component $H$ of $U$ is a support hyperplane of $\cH^-(\Lambda)$ at $r(x)$ (see Definition~\ref{def.affine}, Remark~\ref{rk:pastcomponent}).
Let $\tau_0: U \to (0, \pi)$ the cosmological time function of $U$: for every $y$ in $U$,
$\tau_1(y)$ is the Lorentzian distance between $y$ and $H$. Let $W$ be the future of $r(x)$ in $U$,
and let $\tau_0$ be the cosmological time function in $W$: for every $y$ in $W$ $\tau_0(y)$
is the the Lorentzian length of the timelike geodesic $[r(x), y]$. We have:
$$\tau_0(x) = \tau(x) = \tau_1(x)$$
Moreover:
$$\forall y \in W, \;\;\; \tau_0(y) \leq \tau(y) \leq \tau_1(y)$$
A direct computation shows that $\tau_0$ and $\tau_1$ have the same derivative at $x$: by a standard
argument (see for example \cite[Proposition~1.1]{caffarelli}) it follows that $\tau$ is differentiable at $x$, with derivative $d_x\tau = d_x\tau_0 = d_x\tau_1$. Furthermore, the gradient of $\tau_0$ and $\tau_1$ at $x$ is $-\nu(x)$
where $\nu(x)$ is the future-oriented timelike vector tangent at $x$ to the realizing geodesic $[x, r(x))$ of Lorentzian norm $-1$, \ie:
$$\forall v \in T_xW, \;\;\; -\langle v \mid \nu(x) \rangle = d_x\tau_0(v)x = d_x\tau(v)$$

Therefore, $-\nu(x)$ is also the Lorentzian gradient of $\tau$. It follows that realizing geodesics are orthogonal to the level sets of $\tau$.

In order to prove that $\tau$ is $C^{1,1}$, \ie that $\nu$ is locally Lipschitz, we adapt the argument used in the flat case in \cite{barflat}. We consider first the restriction of $\nu$ to the level set $S_{\pi/4} = \tau^{-1}(\pi/4)$ equipped with the induced Riemannian metric. For every $x$ in $S_{\pi/4}$ we have $x = \frac{r(x)+p(x)}{\sqrt{2}}$.
Observe that $\frac{p(x)-r(x)}{\sqrt{2}}$ is then an element of $\RR^{2,n}$
of norm $-1$, orthogonal to $x$, hence representing an element of $T_x\AdS_{n+1}$.
This tangent vector is future-oriented and orthogonal to $S_{\pi/4}$: hence
$\frac{p(x)-r(x)}{\sqrt{2}}$ represents $\nu(x)$.

Let $c: (-1, 1) \to S_{\pi/4}$ be a $C^1$ curve in $S_{\pi/4}$. Since $r$ is the projection on $\cH^-(\Lambda)$, and since $\cH^-(\Lambda)$ is locally Lipschitz, the path $r\circ c$ is differentiable almost everywhere in $(-1,1)$. We denote by $\dot{r}$, $\dot{p}$, $\dot{\nu}$ the derivatives of $r$, $p$, $\nu = \frac{p-r}{\sqrt{2}}$ along $c$.
Almost everywhere, we have:
\begin{eqnarray*}
  \mathrm{q}_{2,n}(\dot{\nu}) &=& \mathrm{q}_{2,n}(\frac{\dot{p}-\dot{r}}{\sqrt{2}}) \\
   &=& \frac12(\mathrm{q}_{2,n}(\dot{p}) + \mathrm{q}_{2,n}(\dot{r}) -2\langle \dot{r} \mid \dot{p}\rangle)
\end{eqnarray*}

But the derivative of $c$ is:
\begin{eqnarray*}
  \mathrm{q}_{2,n}(\dot{c}) &=& \mathrm{q}_{2,n}(\frac{\dot{r}+\dot{p}}{\sqrt{2}}) \\
   &=& \frac12(\mathrm{q}_{2,n}(\dot{r}) + \mathrm{q}_{2,n}(\dot{p}) +2\langle \dot{r} \mid \dot{p}\rangle)
\end{eqnarray*}

Now, since $\cH^-(\Lambda)$ is locally convex, the quantity $\langle \dot{r} \mid \dot{p}\rangle$, wherever it is defined, is nonnegative. Therefore:
$$\mathrm{q}_{2,n}(\dot{\nu}) \leq \mathrm{q}_{2,n}(\dot{c})$$

It follows that $\nu$ is $1$-Lipschitz along $S_{\pi/4}$.

On other level sets $S_t=\tau^{-1}(t)$ with $t \in (0, \pi/2)$, every element is of the form
$x = \cos(t)r(x) + \sin(t)p(x)$, and $x_{\pi/4} = \frac{r(x)+p(x)}{\sqrt{2}}$
is a point in $S_{\pi/4}$. Geometrically, $x_{\pi/4}$ is the unique point in the realizing geodesic for $x$ at cosmological time $\pi/4$. The unit normal vectors $\nu(x)$ and $\nu(x_{\pi/4})$ are parallel one to the other along the realizing geodesic $(r(x), p(x))$, hence, the variation of $\nu(x)$ along $S_t$ is controlled by the distortion of the map $x \to x_{\pi/4}$ and the variation
of $\nu$ along $S_{\pi/4}$. The lemma follows.
\end{proof}



\subsection{GH-regular and quasi-Fuchsian representations}

Let $\Gamma$ be a fini\-tely generated torsion-free group, and let $\rho: \Gamma \to \SO_0(2,n)$ be a faithful, discrete representation, such that
$\rho(\Gamma)$ preserves $\Lambda$. According to Proposition~\ref{pro.cosmolift},
the quotient space $M_\rho(\Lambda):=\rho(\Gamma)\backslash{E}(\Lambda)$ is globally hyperbolic. Observe that moreover
Cauchy hypersurfaces of $M_\rho(\Lambda)$ are quotients of Cauchy hypersurfaces in
$E(\Lambda)$, which are contractible (since graphs of maps from $\DD^n$ into $\RR$). Hence,
if $\Gamma$ has cohomological dimension $\geq n$, the Cauchy hypersurfaces are compact, \ie $M_\rho(\Lambda)$ is spatially compact - and
the cohomological dimension of $\Gamma$ is eventually precisely $n$.

Inversely, in his celebrated preprint~\cite{mess1, mess2}, G. Mess\footnote{Mess only deals with the case where $n=2$, but his arguments also
apply in higher dimension. For a detailed proof see~\cite[Corollary
11.2]{barbtz1}} proved that any
globally hyperbolic spatially compact AdS spacetime embeds isometrically in such a quotient space $\Gamma\backslash{E}(\Lambda)$.

\begin{defi}
\label{def.gh}
Let $\Gamma$ be a torsion-free discrete group. A representation $\rho: \Gamma \rightarrow \SO_0(2,n)$
is \textit{GH-regular} if it is faithfull, discrete and preserves a non-empty GH-regular domain $E(\Lambda)$ in $\partial\AdS_{n+1}$.
If moreover the $(n-1)$-sphere $\Lambda$ is acausal, then the representation is \textit{strictly GH.}
\end{defi}

\begin{defi}
\label{def.ghc}
A (strictly) GH-regular representation $\rho: \Gamma \to \SO_0(2,n)$ is  (strictly) \textit{GHC-regular}
if the quotient space $\rho(\Gamma)\backslash{E}(\Lambda)$ is spatially compact.
\end{defi}

Hence a reformulation of Mess result is:
\begin{prop}
\label{pro.qf=ghc}
A representation $\rho: \Gamma \to \SO_{0}(2,n)$ is GHC-regular if and only if it is the holonomy of
a GHC AdS spacetime.\fin
\end{prop}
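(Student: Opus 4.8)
The plan is to read Proposition~\ref{pro.qf=ghc} as a dictionary between the geometric object (a GHC AdS spacetime) and the algebraic object (a GHC-regular representation), and to prove it by assembling the results gathered so far in Sections~\ref{sec.regads} and~\ref{sec.ghads} together with the theorem of Mess; strictly speaking the content is classical, so the ``proof'' consists mainly in checking that the two definitions match under this correspondence.

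For the implication ``GHC-regular $\Rightarrow$ holonomy of a GHC AdS spacetime'', suppose $\rho:\Gamma\to\SO_0(2,n)$ is GHC-regular, so that it is faithful and discrete, preserves a GH-regular domain $E(\Lambda)$ with $\Lambda$ an achronal topological $(n-1)$-sphere, and $M_\rho(\Lambda)=\rho(\Gamma)\backslash E(\Lambda)$ is spatially compact. I would argue as follows: by Proposition~\ref{pro.adsregular} the domain $E(\Lambda)$ is CT-regular, hence globally hyperbolic by Theorem~\ref{teo.cosmogood}; since $\rho(\Gamma)$ is a torsion-free discrete group of time-orientation preserving isometries, Proposition~\ref{pro.cosmolift} shows that the action on $E(\Lambda)$ is free and properly discontinuous and that $M_\rho(\Lambda)$ is CT-regular, in particular globally hyperbolic, and the GHC hypothesis makes it moreover spatially compact. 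As $E(\Lambda)$ is an open subset of $\AdS_{n+1}$ with the induced metric, $M_\rho(\Lambda)$ is locally modeled on $\AdS_{n+1}$, and by construction its holonomy is $\rho$ (the developing map being induced by the inclusion $E(\Lambda)\hookrightarrow\AdS_{n+1}$). Thus $\rho$ is the holonomy of the GHC AdS spacetime $M_\rho(\Lambda)$.

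For the converse, let $M$ be a GHC AdS spacetime with $\pi_1(M)=\Gamma$ and holonomy $\rho$. Here I would invoke Mess's theorem (\cite{mess1, mess2}; see \cite[Corollary 11.2]{barbtz1} for the higher-dimensional case): $M$ embeds isometrically into a quotient $\rho(\Gamma)\backslash E(\Lambda)$, where $\Lambda\subset\partial\AdS_{n+1}$ is an achronal topological $(n-1)$-sphere preserved by $\rho(\Gamma)$, and this embedding is a Cauchy embedding (equivalently, $\rho(\Gamma)\backslash E(\Lambda)$ is the maximal globally hyperbolic extension of $M$). From this: $\rho$ preserves $E(\Lambda)$; it is faithful and discrete because the $\rho(\Gamma)$-action on $E(\Lambda)$ is free and properly discontinuous (Proposition~\ref{pro.cosmolift} again, or directly from Mess's construction); and since $M$ is spatially compact, its compact Cauchy hypersurface maps under a Cauchy embedding to a compact Cauchy hypersurface of $\rho(\Gamma)\backslash E(\Lambda)$, so this quotient is spatially compact too. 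By Definition~\ref{def.ghc}, $\rho$ is GHC-regular.

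I expect the only genuinely delicate point to be the one hidden in the appeal to Mess's theorem: that the isometric embedding $M\hookrightarrow\rho(\Gamma)\backslash E(\Lambda)$ is a Cauchy embedding, i.e.\ that the regular domain realizes the maximal globally hyperbolic extension, which is precisely what lets spatial compactness of $M$ be transported to $\rho(\Gamma)\backslash E(\Lambda)$. This is where the AdS-specific analysis of invisible domains is really used; everything else is bookkeeping with the structure theory already recalled above.
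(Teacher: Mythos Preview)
Your proposal is correct and follows exactly the approach taken in the paper: the proposition is stated there without formal proof (it is marked with $\square$), being presented simply as a reformulation of Mess's result after the two explanatory paragraphs that precede it, and your write-up is a faithful expansion of that discussion --- including the forward direction via Propositions~\ref{pro.adsregular} and~\ref{pro.cosmolift}, the converse via Mess's theorem (with the reference \cite[Corollary~11.2]{barbtz1} for $n\geq 3$), and the observation that the Cauchy-embedding property is what transfers spatial compactness.
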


There is an interesting special case of strictly GHC-regular representations: the case of \textit{quasi-Fuchsian representations.}

\begin{defi}
\label{def.quasifuchs}
A strictly GHC-regular representation $\rho: \Gamma \to \SO_0(2,n)$ is \textit{quasi-Fuchsian} if $\Gamma$ is isomorphic to
a uniform lattice in $\SO_0(1,n)$.
\end{defi}

This terminology is motivated by the analogy with the hyperbolic case.

There is a particular case: the case where  $\Lambda$ is a ``round sphere'' in $\partial\AdS_{n+1}$, \ie the boundary of a totally
geodesic spacelike hypersurface $\SS(v^\perp) \cap \AdS_{n+1}$:

\begin{defi}
\label{def.fuchs}
A \textit{Fuchsian} representation  $\rho: \Gamma \to \SO_0(2,n)$ is the composition of the natural inclusions $\Gamma \subset \SO_0(1,n)$
and $\SO_0(1,n) \subset \SO_0(2,n)$, where in the latter $\SO_0(1,n)$ is considered as the stabilizer in $\SO_0(2,n)$ of a point in $\AdS_{n+1}$.
\end{defi}

In other words, a quasi-Fuchsian representation is Fuchsian if and only if it admits a global fixed point in $\AdS_{n+1}$.

\subsection{The space of timelike geodesics}
\label{sub.timelikegeodesic}
Timelike geodesics in $\AdS_{n+1}$ are intersections between $\AdS_{n+1} \subset \RR^{2,n}$ and $2$-planes $P$ in $\RR^{2,n}$
such that the restriction of $\mathrm{q}_{2,n}$ to $P$ is negative definite. The action of $\SO_0(2,n)$ on negative $2$-planes is transitive, and the stabilizer of the $(u, v)$-plane is $\SO(2) \times \SO(n)$. Therefore, the space of timelike geodesics is the symmetric space:
$$\cT_{2n} := \SO_0(2,n)/\SO(2) \times \SO(n)$$

$\cT_{2n}$ has dimension $2n$. We equip it by the Riemannian metric $g_\cT$ induced by the Killing form of $\SO_0(2,n)$. It is well known
that $\cT_{2n}$ has nonpositive curvature, and rank $2$: the maximal flats (\ie totally geodesic embedded Euclidean subspaces) have dimension $2$. It is also naturally hermitian. More precisely: let $\mathcal G = \mathfrak{so}(2,n)$ be the Lie  algebra of $G=\SO_0(2,n)$, and let $\cK$ be the Lie algebra of the maximal compact subgroup $K:=\SO(2) \times \SO(n)$. We have the Cartan decomposition:
$$\mathcal G = \cK \oplus \cK^\perp$$
where $\cK^\perp$ is the orthogonal of $\cK$ for the Killing form. Then, $\cK^\perp$ is naturally identified with the tangent space at the origin of $G/K$. The adjoint action of the $\SO(2)$ term in the stabilizer defines a $K$-invariant complex structure on $\cK^\perp \approx T_K(G/K)$ that propagates through left translations to an integrable complex structure $J$ on $\cT_{2n} = G/K$. Therefore, $\cT_{2n}$
is naturally equipped with a structure of $n$-dimensional complex manifold, together with a $J$-invariant Riemannian metric, \ie an hermitian structure.

Let us consider once more the achronal $(n-1)$-dimensional topological sphere $\Lambda$.
Then, it is easy to prove that every timelike geodesic in $\AdS_{n+1}$ intersects $E(\Lambda)$ (cf. Lemma 3.5 in \cite{merigot}), and since $E(\Lambda)$ is convex, this intersection is connected, \ie is a single inextendible timelike geodesic of $E(\Lambda)$. In other words,
one can consider $\cT_{2n}$ as the space of timelike geodesics of $E(\Lambda)$.

Let $\rho: \Gamma \to \SO_0(2,n)$ be a GH-regular representation preserving $\Lambda$. The (isometric) action of
$\rho(\Gamma)$ on $\cT_{2n}$ is free and proper, and the quotient $\cT_{2n}(\rho):=\rho(\Gamma)\backslash\cT_{2n}$ is naturally identified with the
space of inextendible timelike geodesics of $M_\rho(\Lambda)=\rho(\Gamma)\backslash E(\Lambda)$.

\begin{defi}
\label{def:gaussmap}
Let $S$ be a differentiable Cauchy hypersurface in a GH-regular spacetime $M_\rho(\Lambda)$ of dimension $n+1$. The \textit{Gauss map} of $S$
is the map $\nu: S \to \cT_{2n}(\rho)$ that maps every element $x$ of $M_\rho(\Lambda)$ to the unique timelike geodesic of $M_\rho(\Lambda)$ orthogonal to $S$ at $x$.

When $S$ is $C^{1,1}$ (for example, a level set $\tau^{-1}(t)$ of the cosmological time for $t<\pi/2$), then
one can define for every $C^1$ curve $c$ in $S$ the \textit{Gauss length} as the length in $\cT_{2n}(\rho)$ of the Lipschitz curve
$\nu \circ c$. It defines on $S$ a length metric, called the \textit{Gauss metric} (of course, if
$S$ is $C^r$ with $r\geq2$, then $\nu$ is $C^{r-1}$, and the Gauss metric is a $C^{r-1}$ Riemannian metric).
\end{defi}

Since every timelike geodesic intersects $S$ at most once, the Gauss map is always injective. The image of the Gauss map
is actually the set of timelike geodesics that are orthogonal to $S$. Since every timelike geodesic intersects $S$, it follows easily that
the image of the Gauss map is closed, and that the Gauss map is actually an embedding.

\begin{remark}
\label{r:defgausstau}
For every $t<\pi/2$, let $\Sigma_t(\tau)$ be the image by the Gauss map of the cosmological level set $\tau^{-1}(t)$.
According to Lemma~\ref{l:tauC1}, $\Sigma_t(\tau)$ is the space of realizing geodesics. In particular, it does not depend on $t$.
We will denote by $\Sigma(\tau)$ this closed embedded submanifold, and call it the \textit{space of cosmological geodesics.}
\end{remark}

\subsection{Split $\AdS$ spacetimes}
\label{s:split}
Let $(p,q)$ be a pair of positive integers such that $p+q=n$.
Let $(x_0, x_1, \ldots , x_p, y_0, y_1, \ldots , y_q)$ be a coordinate system such that the quadratic form is:
$$ - x_0^2 + x_1^2 + ... + x_p^2 - y_0^2 + y_1^2 + \ldots + y_q^2$$

Let $G_{p,q} \approx \SO_{0}(1,p) \times \SO_{0}(1,q)$ be the subgroup of $\SO_{0}(2,n)$ preserving the splitting
$\RR^{2,n}=\RR^{1,p} \oplus \RR^{1,q}$ where $\RR^{1,q}$ is the subspace $\{x_0 = x_1 = \ldots = x_p = 0 \}$ and  $\RR^{1,p}$
the subspace $\{ y_0 = y_1 = \ldots = y_q = 0\}$.

Let $\Lambda_p$ (respectively $\Lambda_q$) be the subset $\SS(\cC^+_p)$ (respectively $\SS(\cC^+_q)$) of (the Klein model of) $\Ein_n$ where:
$$\cC^+_p := \{   - x_0^2 + x_1^2 + ... + x_p^2 = 0, x_0 > 0, y_0 = y_1 = \ldots = y_q = 0 \}$$
and
$$\cC^+_q := \{  - y_0^2 + y_1^2 + \ldots + y_q^2 = 0, y_0 > 0, x_0 = x_1 = \ldots = x_p = 0 \}$$

Observe that $\Lambda_p$, $\Lambda_q$ are topological spheres of dimension respectively $p-1$, $q-1$. Moreover, for every pair of elements $\op{x}$, $\op{y}$ in
$\Lambda_p \cup \Lambda_q$ the scalar product $\langle \op{x} \mid \op{y} \rangle$ is nonpositive. Hence, according to Corollary~\ref{cor.list},
$\Lambda_p \cup \Lambda_q$ is achronal. Moreover, every point in $\Lambda_p$ is linked to every point in $\Lambda_q$ by a unique lightlike geodesic segment contained in $\Ein_n$.

\begin{lemma}
\label{l:EestConv}
The invisible domain $E(\Lambda_p \cup \Lambda_q)$ is the interior of the convex hull of $\Lambda_p \cup \Lambda_q$.
\end{lemma}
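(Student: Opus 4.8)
The strategy is to pit the general description $E(\Lambda)=\{\op{y}\in\ADS_{n+1}:\langle\op{y}\mid\op{x}\rangle<0\text{ for all }\op{x}\in\Lambda\}$ of Proposition~\ref{pro.proj} against the fact that, for the split limit set $\Lambda:=\Lambda_p\cup\Lambda_q$, the convex cone $\op{Conv}(\hat\Lambda)$ generated by $\hat\Lambda=\cC^+_p\cup\cC^+_q$ is \emph{self-dual}. Throughout I would use the block notation $(x_0,\mathrm{x},y_0,\mathrm{y})$ with $\mathrm{x}=(x_1,\dots,x_p)\in\RR^p$, $\mathrm{y}=(y_1,\dots,y_q)\in\RR^q$, writing $|\cdot|$ for the relevant Euclidean norm, so that $\mathrm{q}_{2,n}(x_0,\mathrm{x},y_0,\mathrm{y})=-(x_0^2-|\mathrm{x}|^2)-(y_0^2-|\mathrm{y}|^2)$, while $\cC^+_p=\{x_0=|\mathrm{x}|>0,\ y_0=0,\ \mathrm{y}=0\}$ and symmetrically for $\cC^+_q$.

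First I would identify $\op{Conv}(\hat\Lambda)$ explicitly. The convex hull of the family of null rays $\cC^+_p$ inside the Lorentzian factor $\RR^{1,p}$ is the solid forward cone $\{x_0\ge|\mathrm{x}|\}$ (here $p\ge1$ is used so this hull is non-degenerate), likewise $\cC^+_q$ inside $\RR^{1,q}$; since $\RR^{1,p}$ and $\RR^{1,q}$ span complementary coordinate subspaces of $\RR^{2,n}$ and the convex hull of a union of convex cones is their Minkowski sum, this gives $\op{Conv}(\hat\Lambda)=\{(x_0,\mathrm{x},y_0,\mathrm{y}):x_0\ge|\mathrm{x}|,\ y_0\ge|\mathrm{y}|\}$, a closed pointed cone whose interior $\{x_0>|\mathrm{x}|,\ y_0>|\mathrm{y}|\}$ is non-empty and on which $\mathrm{q}_{2,n}<0$. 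Hence $\op{Conv}(\Lambda)^\circ=\SS(\op{int}\op{Conv}(\hat\Lambda))$ is open in $\SS(\RR^{2,n})$ and contained in $\ADS_{n+1}$, so the interior of the convex hull is unambiguous. The crux is then the elementary claim: a vector $(x_0',\mathrm{x}',y_0',\mathrm{y}')$ satisfies $\langle(x_0',\mathrm{x}',y_0',\mathrm{y}')\mid x\rangle<0$ for every $x\in\hat\Lambda$ iff it lies in $\op{int}\op{Conv}(\hat\Lambda)$. For one direction, pairing such an interior vector with a generator $(x_0,\mathrm{x},0,0)\in\cC^+_p$ gives $-x_0'x_0+\mathrm{x}'\!\cdot\!\mathrm{x}\le|\mathrm{x}|\,(|\mathrm{x}'|-x_0')<0$ by Cauchy--Schwarz, symmetrically against $\cC^+_q$; for the converse, testing against the generator of $\cC^+_p$ pointing along $\mathrm{x}'$ (when $\mathrm{x}'=0$, against any generator) forces $x_0'>|\mathrm{x}'|$, and symmetrically $y_0'>|\mathrm{y}'|$.

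Combining these, Proposition~\ref{pro.proj} reads $E(\Lambda)=\SS\bigl(\{y\in\RR^{2,n}:\mathrm{q}_{2,n}(y)<0\text{ and }\langle y\mid x\rangle<0\ \text{for all}\ x\in\hat\Lambda\}\bigr)$; by the claim the second condition already cuts out $\op{int}\op{Conv}(\hat\Lambda)$, on which $\mathrm{q}_{2,n}<0$ holds automatically, so $E(\Lambda)=\SS(\op{int}\op{Conv}(\hat\Lambda))$, which is exactly the interior of $\op{Conv}(\Lambda)$. The points requiring genuine care --- the closest thing to an obstacle --- are the honest identification of $\op{Conv}(\hat\Lambda)$ with the Minkowski sum of the two solid Lorentzian cones (that each $\cC^+_i$ spans precisely the forward cone of its factor, and that the product description is legitimate because the two factors are orthogonal and span complementary coordinate subspaces of $\RR^{2,n}$) together with the routine bookkeeping that $\SS(\op{int}\op{Conv}(\hat\Lambda))$ is the interior of $\op{Conv}(\Lambda)$ (immediate since $\op{Conv}(\hat\Lambda)$ is full-dimensional with $0\notin\op{int}\op{Conv}(\hat\Lambda)$). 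The substance of the lemma is the self-duality of $\op{Conv}(\hat\Lambda)$: this is what singles out the split configuration, and it is precisely why here the invisible domain coincides with the convex core, in contrast with the generic situation where $E(\Lambda)$ strictly contains $\op{Conv}(\Lambda)^\circ$.
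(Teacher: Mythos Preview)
Your proposal is correct and follows essentially the same route as the paper: identify $\op{Conv}(\hat\Lambda)$ explicitly as the product of the two forward Lorentz cones, then match this against the scalar-product description of $E(\Lambda)$ from Proposition~\ref{pro.proj}. The paper invokes the general inclusion $\op{Conv}(\Lambda)^\circ\subset E(\Lambda)$ (Remark~\ref{rk.nice}) for one direction where you argue directly via Cauchy--Schwarz, but the substance is identical; your framing in terms of self-duality of $\op{Conv}(\hat\Lambda)$ is a pleasant conceptual gloss on the same computation.
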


\begin{proof}
Clearly:
$$\op{Conv}(\cC^+_p) =  \{   - x_0^2 + x_1^2 + ... + x_p^2 \leq 0, x_0> 0, y_0 = y_1 = \ldots = y_q = 0 \} $$
Similarly:
$$\op{Conv}(\cC^+_q) =  \{    - y_0^2 + y_1^2 + \ldots + y_q^2 \leq 0, y_0 > 0, x_0 = x_1 = \ldots = x_p = 0  \} $$

Therefore, $\op{Conv}(\Lambda_p \cup \Lambda_q)$ is the projection by $\SS$ of the set of points $(x_0, x_1, \ldots , x_p, y_0, y_1, \ldots , y_q)$ satisfying the following inequalities:
\begin{eqnarray*}
 - x_0^2 + x_1^2 + ... + x_p^2 & \leq & 0 \\
- y_0^2 + y_1^2 + \ldots + y_q^2 & \leq & 0 \\
 x_0 & \geq & 0 \\
y_0 & \geq & 0
\end{eqnarray*}

According to Remark~\ref{rk.nice} $\op{Conv}(\Lambda_p \cup \Lambda_q)^\circ$ is contained in $E(\Lambda_p \cup \Lambda_q)$.
Inversely, let $\op{z} = (x_0, x_1, \ldots , x_p, y_0, y_1, \ldots , y_q)$ be an element of $\RR^{2,n}$ representing an element $z_0$ of $E(\Lambda_p \cup \Lambda_q)$. Then, by definition of
$E(\Lambda_p \cup \Lambda_q)$, the scalar product $\langle \op{z} \mid \op{x} \rangle$ is negative for every element $\op{x}$ of $\cC^+_p$. It follows
that $(x_0, x_1, \ldots , x_p)$ must lie in the future cone of $\RR^{1,p}$, \ie :
\begin{eqnarray*}
 - x_0^2 + x_1^2 + ... + x_p^2 & < & 0 \\
 x_0 &> & 0
\end{eqnarray*}

Similarly, since $\langle \op{z} \mid \op{x} \rangle < 0$ for every element $\op{x}$ of $\cC^+_q$:
\begin{eqnarray*}
- y_0^2 + y_1^2 + \ldots + y_q^2 & < & 0 \\
y_0 & > & 0
\end{eqnarray*}

The lemma follows.
\end{proof}

Let $\Lambda_{p,q}$ be the intersection in $\SS(\RR^{2,n})$ between $\op{Conv}(\Lambda_p \cup \Lambda_q)$ and $\SS(\cC_n) \approx \Ein_n$. Let $(x_0, x_1, \ldots , x_p, y_0, y_1, \ldots , y_q)$ be an element of $\RR^{2,n}$ representing an element of $\Lambda_{p,q}$.
According to the proof of Lemma~\ref{l:EestConv} we must have $ - x_0^2 + x_1^2 + ... + x_p^2 \leq 0$ and $- y_0^2 + y_1^2 + \ldots + y_q^2 \leq 0$, and since  $(x_0, x_1, \ldots , x_p, y_0, y_1, \ldots , y_q)$ lies in $\cC_n$, these quantities must vanish.
Hence, the inequalities defining $\Lambda_{p, q}$ are:
\begin{eqnarray*}
 - x_0^2 + x_1^2 + ... + x_p^2 & = & 0 \\
- y_0^2 + y_1^2 + \ldots + y_q^2 & = & 0 \\
 x_0 & \geq & 0 \\
y_0 & \geq & 0
\end{eqnarray*}

Therefore, $\Lambda_{p,q}$ is the union of $\Lambda_p$, $\Lambda_q$, and every lightlike segment joining a point of $\Lambda_p$ to a point of $\Lambda_q$: it is achronal, but not acausal! Topologically, $\Lambda_{p,q}$ is the join of two spheres, therefore, its a sphere of dimension $1 + (p-1) + (q-1) = n-1$. It is not an easy task to figure out how it fits inside $\Ein_n =\partial\AdS_{n+1}$.

For that purpose, we consider the coordinates $(r, \theta, a_1, \ldots , a_p, b_1, \ldots , b_p)$ on $\AdS_{n+1}$ such that $x_0 = r\cos\theta$, $y_0=r\sin\theta$, $x_i=ra_i$, $y_i = rb_i$. According to Proposition~\ref{p.causal-structure}, the $n+1$-uple
$(a_1, \ldots , a_p, b_1, \ldots, b_q, 1/r)$ describes the upper hemisphere  $\DD^n = \{ a^2_1 + \ldots + a_p^2 + b_1^2 + \ldots + b_q^2 + 1/r^2 = 1 \}$ in the Euclidean sphere of $\RR^{n+1}$ of radius $1$, and $\AdS_{n+1}$ is conformally
isometric to the product $\SS^1 \times \DD^n$ with the metric $-\op{d\theta}^{2} + \op{ds}^{2}$, where $\op{ds}^2$ is the round
metric on $\DD^n $.

In these coordinates, the inequalities defining $E(\Lambda_p \cup \Lambda_q)$ established in the proof of Lemma~\ref{l:EestConv} become:
\begin{eqnarray}
 & 0 < \theta < \pi/2 \\
& a_1^2 + \ldots + a_p^2  <  \cos^2\theta &\\
& b_1^2 + \ldots + b^2_q < \sin^2\theta &
\end{eqnarray}

Let $\DD_0^q$ be the subdisk of $\DD^n$ defined by $a_1 = \ldots = a_p =0$, and let $\DD^p_0$ be the subdisk defined by $b_1 = \ldots = b_q = 0$. For every $\op{x}$ in $\DD^n$, let $d_p(\op{x})$ be the distance of $\op{x}$ to $\DD^q_0$, and define similarly the "distance to $\DD^p_0$" function
$d_q: \DD^n \to [0, +\infty)$. Observe that since $\DD^p_0$ and $\DD^q_0$ both contain the North pole $(0, \ldots, 0, 1)$ of $\DD^n$, and since every point in $\DD^n$ is at distance at most $\pi/2$ of the North pole, $d_p$ and $d_q$ takes value in $[0, \pi/2)$.
Now, observe that the following identities hold:
\begin{eqnarray}
a_1^2 + \ldots + a_p^2  & = &  \sin^2d_p(a_1, \ldots , a_p, b_1, \ldots, b_q, 1/r) \\
b_1^2 + \ldots + b_q^2  & = &  \sin^2d_q(a_1, \ldots , a_p, b_1, \ldots, b_q, 1/r)
\end{eqnarray}

It follows that $E(\Lambda_p \cup \Lambda_q)$ is the domain in $\AdS_{n+1} \approx \SS^1 \times \DD^n$ comprising points
$(\theta, \op{x})$ such that:
$$ d_q(\op{x}) < \theta < \pi/2 - d_p(\op{x})$$
In the terminology of Definition~\ref{r.f-f+}, it means that the lifting $\widetilde{E}(\widetilde{\Lambda_p \cup \Lambda_q})$ is defined by the functions $f^- = d_q$ and $f^+ = \pi/2 - d_p$. These functions extend uniquely as $1$-Lipschitz maps $f^\pm: \overline{\DD}^n \to [0, \pi/2]$.

The boundary $\partial\DD^n = \SS^{n-1}$ is totally geodesic in $\overline{\DD}^n$, and $\partial\DD^q_0$, $\partial\DD^p_0$ are totally geodesic spheres of dimensions $p$, $q$, respectively.
Let $\delta_p: \partial\DD^n \to [0, \pi/2]$ (respectively $\delta_q: \partial\DD^n \to [0, \pi/2]$) be the function "distance to $\partial\DD^q_0$" (respectively "distance to $\partial\DD^p_0$").
It follows from equations\footnote{These equations naturally extend to the boundary $\partial\DD^n$.} $(4)$, $(5)$ that every point of $\partial\DD^q_0$ is at distance $\pi/2$ of $\partial\DD^p_0$.
Hence:
$$\delta_p + \delta_q = \pi/2$$

In other words, the restrictions of $f^-$ and $f^-$ to $\partial\DD^n$ coincide and are equal to $\delta_q = \pi/2 - \delta_p$.
The restriction of $f^-=f^+$ to $\partial\DD^p_0$ vanishes, and the graph of this resctriction is $\Lambda_p$. The restriction
of $f^-=f^+$ to $\partial\DD^q_0$ is the constant map of value $\pi/2$, and the graph is $\Lambda_q$.
The graph of $f^\pm: \partial\DD^n \to \SS^1$ is $\Lambda_{p,q}$, which is therefore an achronal sphere in $\Ein_n$.

Clearly, $\Lambda_{p,q}$ is preserved by $G_{p,q}$.
Let $\Gamma$ be a cocompact lattice of $G_{p,q} \approx \SO_{0}(1,p) \times \SO_{0}(1,q)$.
The inclusion $\Gamma \subset G_{p,q} \subset \SO_{0}(2,n)$  is a GH-regular representation, but non-strictly since the invariant achronal limit set
$\Lambda_{p,q}$ is not acausal.
According to Proposition~\ref{pro.cosmolift}, the quotient space $M_{p,q}(\Gamma) := \Gamma\backslash E(\Lambda_{p,q})$ is a GH spacetime. Actually, the Cauchy surfaces of $M_{p,q}(\Gamma)$ are quotients by $\Gamma$ of the graph of a $1$-Lipschitz map $f: \DD^n \to \SS^1$, hence they are $K(\Gamma, 1)$ (since $\DD^n$ is contractible). On the other hand, the quotient of $\HH^p \times \HH^q$ is a $K(\Gamma, 1)$ too. Since $\Gamma$ is a cocompact lattice, it follows that every $K(\Gamma, 1)$ - in particular, the Cauchy hypersurfaces in $M_{p,q}(\Gamma)$ - are compact. The inclusion $\Gamma \subset \SO_0(2,n)$ is therefore GHC-regular.

\begin{defi}
\label{def:split}
The quotient space $M_{p,q}(\Gamma)$ is a \textit{split $\AdS$ spacetime.} The representation $\rho: \Gamma \to \SO_0(2,n)$
is a \textit{split GHC-regular representation of type $(p,q)$.}
\end{defi}

\begin{remark}
The split $\AdS$ spacetimes of dimension $2+1$ are precisely the \textit{Torus universes} studied in \cite{carlip}. Observe indeed that the lattice in $\SO_0(1,1) \times \SO_0(1,1) \approx \RR^2$ is isomorphic to $\ZZ^2$, and the Cauchy surfaces are indeed tori.
\end{remark}

\subsection{Crowns}
A particular case of split $\AdS$ spacetime is the case $p=q=1$ (and therefore, $n=2$). Then, the topological spheres $\Lambda_p$ and $\Lambda_q$ have dimension $0$, \ie, are pair of points $\Lambda_p = \{ \op{x}^-, \op{y}^- \}$ and $\Lambda_q = \{ \op{x}^+, \op{y}^+\}$. The topological circle $\Lambda_{p,q}$ is then piecewise linear; more precisely, it is the union of the four lightlike segments $[\op{x}^-, \op{x}^+]$, $[\op{x}^+, \op{y}^-]$, $[\op{y}^-, \op{y}^+]$, $[\op{y}^+, \op{x}^-]$.
The invisible domain $E(\Lambda_{p,q})$ is then an ideal tetrahedron, interior of the  convex hull of the four ideal points $\{\op{x}^-, \op{y}^-, \op{x}^+, \op{y}^+\}$. This tetrahedron has six edges; four of them as the lightlike segments forming $\Lambda_{p,q}$, and the two others are the spacelike geodesics $(\op{x}^-, \op{y}^-)$ and $(\op{x}^+, \op{y}^+)$ of $\AdS_{n+1}$  (see Figure~\ref{fig:crown}). Observe that  $[\op{x}^-, \op{x}^+]$ and $[\op{y}^-, \op{y}^+]$ are future oriented, whereas $[\op{x}^+, \op{y}^-]$ and $[\op{y}^+, \op{x}^-]$ are past oriented.

 \begin{figure}
  \centering
  \includegraphics{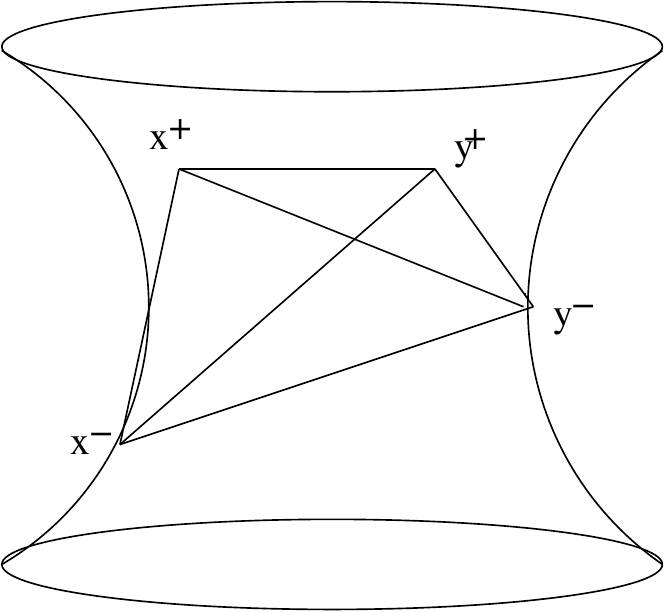}
\centering  \caption{Picture of a crown. The hyperboloid represent the boundary of an affine domain of
$\AdS_{n+1}$ containing the realm of the crown.}
  \label{fig:crown}
\end{figure}

More generally:

\begin{defi}
For every integer $n\geq2$, a \textit{crown} of $\Ein_n$ is $4$-uple $\mathfrak{C} = (\op{x}^-, \op{y}^-, \op{x}^+, \op{y}^+)$ in $\Ein_n$
such that:
\begin{itemize}
\item $\langle \op{x}^- \mid \op{x}^+\rangle = \langle \op{x}^- \mid \op{y}^+\rangle = 0$
\item $\langle \op{y}^- \mid \op{x}^+\rangle = \langle \op{y}^- \mid \op{y}^+\rangle = 0$
\item $\langle \op{x}^- \mid \op{y}^-\rangle < 0$
\item $\langle \op{x}^+ \mid \op{y}^+\rangle < 0$
\item the lightlike segment $[\op{x}^-, \op{x}^+]$ is future oriented.
\end{itemize}
The subset $\{\op{x}^-, \op{y}^-, \op{x}^+, \op{y}^+\}$ is then an achronal subset of $\Ein_n$. The invisible domain
$E(\{\op{x}^-, \op{y}^-, \op{x}^+, \op{y}^+\})$ is called the \textit{realm of the crown}, and denoted by $E(\sC)$.
The convex hull of $\{\op{x}^-, \op{y}^-, \op{x}^+, \op{y}^+\}$ is denoted by $\op{Conv}(\sC)$.
\end{defi}

\begin{remark}
Let $\sC =  (\op{x}^-, \op{y}^-, \op{x}^+, \op{y}^+)$ be a crown in $\Ein_n$, and let $x^-$, $y^-$, $x^+$, $y^+$ be elements
of $\RR^{2,n}$ representing the vertices of the crown. Let $V(\sC)$ be the linear space spanned by $x^+$, $x^-$, $y^-$, $y^+$.
The restriction of $\mathrm{q}_{2,n}$ to $V(\sC)$ has signature $(2,2)$, and $\SS(V(\sC))$ is the unique totally geodesic copy of $\Ein_2$ in $\Ein_n$ containing $\sC$.
\end{remark}

\begin{remark}
\label{rk:realsplit}
Let $Z$ be the stabilizer in $\SO_0(2,n)$ of a crown. It preserves the orthogonal sum $V(\sC) \oplus V(\sC)^\perp$. It is isomorphic to
the product $A \times \SO(n-2)$, where $A$ is a maximal $\RR$-split abelian subgroup of $\SO_0(2,2)$, hence of $\SO_0(2,n)$.
Therefore, $Z$ is the centralizer of $A$, and it has finite index in the normalizer $N$ of $A$. It follows that
 the space of crowns is naturally a finite covering over the space $G/N$ of maximal flats in the symmetric space $\cT_{2n}=\SO_0(2,n)/SO(2)\times SO(n)$.
\end{remark}

\begin{remark}
Up to an isometry, one can assume that the crown $\sC = (\op{x}^-, \op{y}^-, \op{x}^+, \op{y}^+) $ is represented by:

\begin{eqnarray*}
x^+ & = & (1, 0, 1, 0, 0, \ldots , 0)\\
y^+ & = & (1, 0, -1, 0, 0, \ldots , 0)\\
x^-  & = & (0, 1, 0, 1, 0, \ldots , 0)\\
y^-  & = & (0, 1, 0, -1, 0, \ldots , 0)
\end{eqnarray*}

According to Proposition~\ref{pro.proj}, the realm $E(\sC)$ is defined by the inequalities:
\begin{eqnarray*}
& x_1 - u < 0& \\
& -x_1-u < 0 & \\
& x_2 - v <0 & \\
& - x_2 - v < 0 & \\
&-u^2 - v^2 + x_1^2 + \ldots + x_n^2<0&
\end{eqnarray*}

Hence, by:
$$\vert x_1 \vert < u, \;\;\; \vert x_2 \vert < v, \;\;\;  -u^2 - v^2 + x_1^2 + \ldots + x_n^2<0$$

Observe that the last inequality is implied by the two previous inequations when $n=2$.
\end{remark}

If $n=2$, the realm of a crown $\sC$ coincide with the interior of $\op{Conv}(\sC)$ (Lemma~\ref{l:EestConv}), but this is obviously not true for $n>2$ since $\op{Conv}(\sC)$ is always $3$-dimensional.



\section{Acausality of limit sets of Gromov hyperbolic groups}
In all this section, $\Gamma$ is a torsion-free Gromov hyperbolic group, and
$\rho: \Gamma \to \SO_0(2,n)$ a GHC-regular representation, with limit set $\Lambda$.
By hypothesis, $E(\Lambda)$ is not empty, therefore $\Lambda$ is not purely lightlike.

\subsection{Non-existence of crowns}

\begin{prop}
\label{pro:nocrown}
The limit set $\Lambda$ contains no crown.
\end{prop}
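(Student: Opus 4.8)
The plan is to argue by contradiction: suppose $\sC = (\op{x}^-, \op{y}^-, \op{x}^+, \op{y}^+)$ is a crown contained in $\Lambda$. Since $\Lambda$ is filled (cf. Remark~\ref{rk:filling}), the whole filling $\op{Fill}(\sC)$ — which for a crown is the piecewise-lightlike sphere $\Lambda_{1,1}$ sitting inside the join structure described in Sect.~\ref{s:split} — is contained in $\Lambda$, and by monotonicity of the invisible domain the realm $E(\sC)$ is contained in $E(\Lambda)$. I would then locate inside $E(\sC)$ the two spacelike edges $(\op{x}^-,\op{y}^-)$ and $(\op{x}^+,\op{y}^+)$ of the ideal tetrahedron (for $n=2$) or, in general, the corresponding totally geodesic copies of $\HH^p \times \HH^q$-type subconfigurations inside $\op{Conv}(\sC)$. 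The key geometric point is that a crown forces $E(\Lambda)$, and hence the Cauchy hypersurfaces of $M_\rho(\Lambda)$, to contain a flat or a product-of-hyperbolic-factors piece that is incompatible with $\Gamma$ being Gromov hyperbolic.

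The cleanest route I see is via the symmetric-space picture of Sect.~\ref{sub.timelikegeodesic} and Remark~\ref{rk:realsplit}: the stabilizer $Z$ of the crown is $A \times \SO(n-2)$ with $A$ a maximal $\RR$-split torus, so the space of crowns is a finite cover of the space $G/N$ of maximal flats in $\cT_{2n}$. A crown in $\Lambda$ should therefore produce a $\rho(\Gamma)$-invariant behaviour along a maximal flat $F \cong \RR^2$ in $\cT_{2n}$: concretely, the four timelike geodesics joining the four vertices of $\sC$ in pairs (the ones realizing the relations $\langle\op{x}^-\mid\op{x}^+\rangle = 0$ etc.) span such a flat, and the achronal sphere $\Lambda$ being preserved constrains the projection of $\Gamma$ into $A$. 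Then I would use the cocompactness part of GHC-regularity — via Proposition~\ref{pro:taupi}, the cosmological level set $\tau^{-1}(\pi/4)$ is a compact Cauchy hypersurface $S$, and its Gauss map embeds $S$ into $\cT_{2n}(\rho)$ — to argue that $S$ must come arbitrarily close to, or actually contain, a quotient of a $2$-flat. A Gromov hyperbolic group cannot act cocompactly on a space containing such quasi-flats (this is the standard "no $\ZZ^2$ subgroup / flat plane" obstruction), and one gets the desired contradiction. Alternatively, and perhaps more elementarily, one can restrict attention to the totally geodesic $\Ein_2$ copy $\SS(V(\sC))$ containing $\sC$ and analyze $E(\sC)$ directly inside the $2+1$-dimensional sub-AdS, where the realm is the ideal tetrahedron with two spacelike edges $(\op{x}^-,\op{y}^-)$ and $(\op{x}^+,\op{y}^+)$; a neighbourhood of these edges in a Cauchy hypersurface of $M_\rho(\Lambda)$ carries an almost-flat strip, again contradicting hyperbolicity of $\Gamma$.

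The main obstacle, I expect, is making precise the passage from "$\Lambda$ contains a crown" to "a Cauchy hypersurface of $M_\rho(\Lambda)$ contains a genuine flat (or isometrically embedded $\RR^2$), not merely a sequence of longer-and-longer quasi-geodesics that stay bounded distance from a flat in $\cT_{2n}$." One has to rule out the possibility that the flat direction is "used up" by the part of $\Lambda$ outside $\op{Fill}(\sC)$ — i.e. that $E(\Lambda)$ is strictly smaller than $E(\sC)$ in the relevant directions and the flat gets pinched off. I would handle this by working with the convex core $\op{Conv}(\Lambda)$: since $\op{Conv}(\sC) \subset \op{Conv}(\Lambda)$, the flat pieces of $\partial\op{Conv}(\sC)$ (the two spacelike edges, together with the degenerate support hyperplanes $H^\pm$ dual to the crown vertices) persist as flat pieces of the boundary $S^\pm(\Lambda)$ of the convex core, and these are $\rho(\Gamma)$-equivariantly embedded in the compact Cauchy surface of $M_\rho(\Lambda)$ after noting that $S^\pm(\Lambda)$ are themselves Cauchy hypersurfaces (achronal topological disks, edgeless, by Proposition~\ref{pro.F-F+}). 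Once the flat is genuinely embedded and $\Gamma$-invariant, the contradiction with Gromov hyperbolicity is immediate.
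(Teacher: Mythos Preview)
Your overall strategy matches the paper's: assume a crown $\sC \subset \Lambda$, produce from it a $2$-flat in $\cT_{2n}$, show this flat lies in the image of the Gauss map of a cosmological level set, and contradict Gromov hyperbolicity of $\Gamma$ via cocompactness. That is exactly the architecture of the paper's proof.

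However, you have not identified the precise mechanism that makes the argument go through, and your proposed workarounds for the ``main obstacle'' do not work. The paper's flat $F(\sC)$ consists of the timelike geodesics $(\op{p}^-, \op{p}^+)$ with $\op{p}^\pm$ ranging over the \emph{spacelike edges} $(\op{x}^\pm, \op{y}^\pm)$ (not, as you wrote, ``timelike geodesics joining the four vertices'' --- the relations $\langle \op{x}^- \mid \op{x}^+\rangle = 0$ are lightlike, not timelike). The key claim is that $F(\sC) \subset \Sigma(\tau)$, i.e.\ each such $(\op{p}^-, \op{p}^+)$ is a \emph{realizing} geodesic. The paper proves this directly: since $\op{p}^+ \in \op{Conv}(\sC) \subset \op{Conv}(\Lambda)$, the dual hyperplane $H^-(\op{p}^+)$ is a support hyperplane of $E(\Lambda)$; since $\langle \op{p}^- \mid \op{p}^+\rangle = 0$ and $\langle \op{p}^- \mid \op{x}^+\rangle = 0$, the point $\op{p}^-$ lies on $\cH^-(\Lambda)$ and on $H^-(\op{p}^+)$; and the geodesic $(\op{p}^-, \op{p}^+)$ is orthogonal to $H^-(\op{p}^+)$. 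Proposition~\ref{pro.tightads} then gives that this is a realizing geodesic. Once $F(\sC) \subset \Sigma(\tau)$, it is totally geodesic there (being totally geodesic in the ambient $\cT_{2n}$), and the quasi-isometry to $\Gamma$ finishes it.

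Your alternative route through $S^\pm(\Lambda)$ does not produce the contradiction: the spacelike edges $(\op{x}^\pm, \op{y}^\pm)$ are one-dimensional, so even if they sit in $S^\pm(\Lambda)$ they do not give a $2$-flat in any Cauchy hypersurface with its induced metric. Likewise, the remark that ``$\Lambda$ being preserved constrains the projection of $\Gamma$ into $A$'' is a red herring: the crown need not be $\rho(\Gamma)$-invariant, and no such constraint is used. The whole point is purely geometric --- the crown forces the flat to sit inside $\Sigma(\tau)$, and cocompactness of the $\Gamma$-action on $\Sigma(\tau)$ (not any invariance of $\sC$) yields the contradiction.
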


\begin{proof}
Let $\sC = (\op{x}^-, \op{y}^-, \op{x}^+, \op{y}^+)$ be a crown contained in $\Lambda$.
Let $F(\sC)$ be the subset of $\cT_{2n}$ comprising timelike geodesics containing a segment $[\op{p}^-, \op{p}^+]$ with $\op{p}^\pm \in (\op{x}^\pm, \op{y}^\pm)$.
Let $A$ be the  maximal $\RR$-split abelian subgroup stabilizing $\sC$, \ie the subgroup of the stabilizer $Z$ of $\sC$ acting trivially on $V(\sC)^\perp$ (cf. Remark \ref{rk:realsplit}).
Then, $F(\sC)$ is an orbit of the action of $A$ in $\cT_{2n}$. Therefore, $F(\sC)$ is a flat in the symmetric space $\cT_{2n}$.

Let $\Sigma(\tau)$ be the space of cosmological geodesics in $E^-_0(\Lambda)$
(cf. Remark~\ref{r:defgausstau}).

\textit{Claim: $\Sigma(\tau)$ contains $F(\sC)$.}

Let $\op{p}^+$, $\op{p}^-$ be elements of $(\op{x}^+, \op{y}^+)$, $(\op{x}^-, \op{y}^-)$. The closure of $E(\Lambda)$ contains $\op{Conv}(\Lambda)$, in particular, it contains $\op{p}^\pm$. On the other hand, $\langle \op{x}^+ \mid \op{p}^- \rangle = 0$, hence $\op{p}^+$ does not lie in $E(\Lambda)$. Therefore, $\op{p}^-$ is an element of $\cH^-(\Lambda)$.

Observe that $\langle \op{p}^- \mid \op{p}^+ \rangle = 0$. Hence, $\op{p}^-$ lies in the hyperplane $H^-(\op{p}^+)$ past-dual to $\op{p}^+$. Now, since $\op{p}^+$ lies in $\op{Conv}(\Lambda)$, we have $\langle \op{p}^+ \mid \op{y} \rangle \leq 0$ for every $\op{y}$ in  $E(\Lambda)$. Therefore,  $H^-(\op{p}^+)$ is a support hyperplane of $\cH^-(\Lambda)$ at $\op{p}^-$, orthogonal to the timelike geodesic $[\op{p}^-, \op{p}^+]$. According to Proposition~\ref{pro.tightads}, $(\op{p}^-, \op{p}^+)$ is a realizing geodesic, hence an element of $\Sigma(\tau)$. The claim follows.

Consider now the Gauss metric on $\Sigma(\tau)$ (cf. Definition~\ref{def:gaussmap}).
According to the claim, $\Sigma(\tau)$ contains the Euclidean plane $F(\sC)$. Since $F(\sC)$ is totally geodesic in $\cT_{2n}$, it is also totally geodesic in $\Sigma(\tau)$.

On the other hand, the group $\Gamma$ acts on $\Sigma(\tau)$, and the quotient of this action
is compact, since this quotient is the image by the Gauss map of compact surface in $M_\rho(\Lambda)$. Hence, $\Sigma(\tau)$ is quasi-isometric to $\Gamma$, and therefore, Gromov hyperbolic.
It is a contradiction since a Gromov hyperbolic metric space cannot contain a $2$-dimensional flat.
\end{proof}

\subsection{Compactness of the convex core}

In Sect.~\ref{sub.convexhull}, we have seen that, up to a lifting in $\wt\AdS_{n+1}$, the convex core $\op{Conv}(\Lambda)$ (respectively the invisible domain $E(\Lambda)$) can be defined as the region between the graphs of functions $F^\pm: \DD^n \to \RR$ (respectively $f^\pm: \DD^n \to \RR$) such that (cf. Proposition~\ref{pro.F-F+}):
\begin{equation}
f^- \leq F^- \leq F^+ \leq f^+
\end{equation}
where the inequality $F^- \leq F^+$ is strict as soon as $\rho: \Gamma \to \SO_0(2,n)$ is not Fuchsian.

\begin{prop}
\label{pro:convexcore}
The left and right inequalities in $(6)$ are strict, \ie for every $\op{x}$ in $\DD^n$, we have:
$$f^-(\op{x}) < F^-(\op{x}) \leq F^+(\op{x}) < f^+(\op{x})$$
\end{prop}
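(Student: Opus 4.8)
The plan is to show that a failure of either outer inequality in $(6)$ forces $\Lambda$ to contain a crown, contradicting Proposition~\ref{pro:nocrown}.

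\emph{Reduction.} The time reversal of $\wt\AdS_{n+1}$ (the map $\theta\mapsto-\theta$ on $\RR\times\DD^n$) conjugates $\rho$ to another GHC-regular representation of the same, still Gromov hyperbolic, group $\Gamma$; it exchanges the past and future horizons and the two components of the convex core, and it turns the pair $(f^-,F^-)$ attached to $\Lambda$ into $(-f^+,-F^+)$ attached to the reversed sphere. Hence it is enough to prove the left inequality $f^-(\op{x})<F^-(\op{x})$ for every $\op{x}\in\DD^n$. Since $f^-\leq F^-$ on $\DD^n$ by $(6)$ and $\cH^-(\Lambda)$, $S^-(\Lambda)$ are the graphs of $f^-|_{\DD^n}$ and $F^-$, this amounts to $\cH^-(\Lambda)\cap S^-(\Lambda)=\emptyset$. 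Assume, for a contradiction, that some point $z_0$ lies in $\cH^-(\Lambda)\cap S^-(\Lambda)$.

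\emph{A degenerate support hyperplane.} Because $f^-(\op{x})=\sup_{\mathrm{y}\in\SS^{n-1}}\{f(\mathrm{y})-d(\op{x},\mathrm{y})\}$ and $\SS^{n-1}$ is compact, the supremum at the base point of $z_0$ is attained at some $\mathrm{y}_1$; setting $\lambda_1:=(f(\mathrm{y}_1),\mathrm{y}_1)\in\Lambda$, the $1$-Lipschitz estimate for $f^-$ forces the null segment $[\lambda_1,z_0]$ to lie in $\cH^-(\Lambda)\cup\{\lambda_1\}$, so a null generator of the horizon passes through $z_0$. On the other hand $z_0\in S^-(\Lambda)\subset\partial\op{Conv}(\Lambda)$ admits a support hyperplane; since $\op{Conv}(\Lambda)\subset\overline{E(\Lambda)}$ (Remark~\ref{rk.nice}), $z_0\in\partial E(\Lambda)$ and $E(\Lambda)$ is convex, one may pick a support hyperplane $\Pi$ of $E(\Lambda)$ at $z_0$ that also supports $\op{Conv}(\Lambda)$; by the support structure of $\cH^-(\Lambda)$ recalled in Section~\ref{sec:moreconvex}, $\Pi=H^-(p)$ with $p\in S^+(\Lambda)\cup\Lambda$. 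A totally geodesic spacelike hyperplane (a copy of $\HH^n$) contains no null line, whereas on the null generator through $z_0$ the affine function $y\mapsto\langle y\mid p\rangle$ is $\leq 0$ on $\overline{E(\Lambda)}$ and vanishes at $z_0$; choosing the generator and $\Pi$ so that $z_0$ is interior to it, this function vanishes identically along it, so $p\in\Lambda$ (i.e.\ $\Pi$ is degenerate) and $\langle\lambda_1\mid p\rangle=0$.

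\emph{Extracting the crown.} The contact set $\op{Conv}(\Lambda)\cap H^-(p)$ is the face $\op{Conv}(\Lambda_p)$ with $\Lambda_p:=\Lambda\cap p^\orth$, and it contains $z_0$; since moreover $\langle z_0\mid\lambda_1\rangle=0$ and $\lambda_1\in\Lambda$, we get $z_0\in\op{Conv}(\Lambda_p)\cap\lambda_1^\orth=\op{Conv}(\Lambda_p\cap\lambda_1^\orth)$. As $\langle z_0\mid z_0\rangle<0$ while all pairwise products of points of $\Lambda$ are $\leq0$ and all self-products vanish, the set $\Lambda_p\cap\lambda_1^\orth\subset\Lambda$ must contain two distinct points $\op{x}^-$, $\op{y}^-$ with $\langle\op{x}^-\mid\op{y}^-\rangle<0$. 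Then some ordering of $\{\op{x}^-,\op{y}^-\}\sqcup\{p,\lambda_1\}$ is a crown in $\Lambda$: all four cross products $\langle\op{x}^-\mid p\rangle,\langle\op{x}^-\mid\lambda_1\rangle,\langle\op{y}^-\mid p\rangle,\langle\op{y}^-\mid\lambda_1\rangle$ vanish by construction, $\langle\op{x}^-\mid\op{y}^-\rangle<0$, and $\langle p\mid\lambda_1\rangle<0$ — otherwise $p$ and $\lambda_1$ would be joined by a lightlike geodesic of $\Ein_n$ and the null generator through $z_0$ would join the antipodal points $p$ and $-p$, contradicting that $\Lambda$ is not purely lightlike; the time orientation condition is then achieved by relabelling the two pairs. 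This contradicts Proposition~\ref{pro:nocrown}, proving the left inequality, hence (by the reduction) the proposition.

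\emph{Main obstacle.} The delicate step is the middle one: producing a degenerate support hyperplane, and with it the face $\op{Conv}(\Lambda_p)$. One must dispose of the "crease" case, where $z_0$ is only the past endpoint of its null generator and the support hyperplane at $z_0$ may be spacelike, as well as the degenerate sub-case $p=\lambda_1$. In those cases one iterates the face argument, intersecting $\op{Conv}(\Lambda)$ successively with the hyperplanes dual to the null generators of $\cH^-(\Lambda)$ meeting at $z_0$, to recover a spacelike pair orthogonal to two points of $\Lambda$; this is precisely where the $C^{1,1}$-regularity of the cosmological time on $E_0^-(\Lambda)$ (Lemma~\ref{l:tauC1}) and the realizing-geodesic description of Proposition~\ref{pro:taupi} enter.
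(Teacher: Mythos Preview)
Your overall plan --- assume $f^-(\op{x})=F^-(\op{x})$ at some interior point, and derive from this a crown contained in $\Lambda$, contradicting Proposition~\ref{pro:nocrown} --- is exactly the paper's. The initial algebra is also the same: from $z_0\in\op{Conv}(\Lambda)\cap\cH^-(\Lambda)$ one extracts $\op{x}^-,\op{y}^-\in\Lambda$ with $\langle\op{x}^-\mid\op{y}^-\rangle<0$ and a third point $\lambda_1\in\Lambda\cap z_0^\orth$ orthogonal to both. The divergence comes at the fourth vertex.

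The gap is precisely where you locate it, and your ``Main obstacle'' paragraph does not close it. To build a crown you need a \emph{second} element $p\in\Lambda$ with $\langle p\mid\op{x}^-\rangle=\langle p\mid\op{y}^-\rangle=0$ and $\langle p\mid\lambda_1\rangle<0$. Your candidate $p$ comes from a support hyperplane $H^-(p)$ of $E(\Lambda)$ at $z_0$, but there is no reason this $p$ lies in $\Lambda$ rather than in $S^+(\Lambda)\cap\AdS_{n+1}$: the argument that a spacelike support hyperplane would have to contain a null line only works if $z_0$ is interior to a null generator of $\cH^-(\Lambda)$, which you cannot arrange in general. Even if $p\in\Lambda$, nothing forces $p\neq\lambda_1$ or $\langle p\mid\lambda_1\rangle<0$; the set $\Lambda\cap z_0^\orth$ could perfectly well reduce to $\{\lambda_1\}$, in which case every support hyperplane of $E(\Lambda)$ at $z_0$ is $H^-(\lambda_1)$ and your construction yields only three of the four vertices. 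The proposed iteration of the face argument and the appeal to Lemma~\ref{l:tauC1} and Proposition~\ref{pro:taupi} are not made precise and do not visibly produce the missing transverse direction.

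The paper supplies the fourth vertex by a completely different, dynamical mechanism that you do not use: it exploits the cocompactness of the $\rho(\Gamma)$-action on a cosmological level set. One picks $y_0\in E_0^-(\Lambda)$ in the future of $z_0$, lets $y_n\to x_0(=\lambda_1)$ along the cosmological level through $y_0$ inside a totally geodesic $\AdS_2$, pulls back by elements $g_n\in\rho(\Gamma)$ so that $g_ny_n$ stays in a fixed compact, and passes to limits. The translates $g_nx_a,\,g_nx_b,\,g_nx_0$ subconverge to $\bar a,\bar b,\bar q\in\Lambda$, and the crucial fourth point $\bar p$ arises as the limit of $g_ny_0$, which escapes to $\Lambda$ because $d_0(y_n,y_0)\to\infty$. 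One then checks $\langle\bar p\mid\bar a\rangle=\langle\bar p\mid\bar b\rangle=0$ (preserved from the construction), $\bar p\neq\bar q$ (they lie on opposite sides of a separating timelike geodesic in the $\AdS_2$ slice), and $\langle\bar p\mid\bar q\rangle<0$ (both lie in the same component of the isotropic cone of the signature $(1,n-1)$ space $\bar a^\orth\cap\bar b^\orth$). This is the step your attempt is missing; without some use of cocompactness, the local picture at $z_0$ does not by itself manufacture the second transverse vertex.
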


\begin{proof}
Assume by contradiction that $f^\pm(\op{x}) = F^\pm(\op{x})$ for some $\op{x}$ in $\DD^n$.
It means that some element $x$ of $\op{Conv}(\Lambda) \cap \AdS_{n+1}$ is on the boundary of $E(\Lambda)$.
This element is a linear combination $x = t_1x_1 + \ldots + t_kx_k$ where $k \geq 2$, $t_i$ are positive real numbers and $x_i$ elements of $\cC_n \subset \RR^{2,n}$ such that the projections $\SS(x_i)$ belong to $\Lambda$. Moreover, since $x$ lies in $\AdS_n$, we have $\langle x_a \mid x_b \rangle < 0$ for some integers $a$, $b$. Since $x$ lies in the boundary of $E(\Lambda)$, there is an element $x_0$ of $\Lambda$ such that:
\begin{eqnarray*}
0 & = & \langle x_0 \mid x \rangle\\
  & = & t_1\langle x_0 \mid x_1 \rangle + \ldots + t_k\langle x_0 \mid x_k \rangle
\end{eqnarray*}
Since $\Lambda$ is achronal, each $\langle x_0 \mid x_i \rangle$ is nonpositive, therefore vanishes.
In particular:
\begin{itemize}
\item $\langle x_0 \mid x_a\rangle = \langle x_0 \mid x_b\rangle = 0$
\item $\langle x_a \mid x_b\rangle < 0$
\end{itemize}

Reverting the time orientation if necessary, we can assume that $x$ lies in the past horizon $\cH^-(\Lambda)$. Moreover, we can assume without loss of generality that $x$ is actually equal
to $x_a+x_b$, after rescaling if necessary $x_a$, $x_b$ so that $x_a + x_b$ has norm $-1$, \ie
lies in $\AdS_{n+1}$.

Consider now any element $y_0$ of $E^-_0(\Lambda)$ in the future of $x$, \ie such that $(x, y_0)$ is
a future oriented timelike segment. More precisely, we can select $y_0$ such that the timelike segment $[x, y_0]$ is orthogonal to the segment $[x_a, x_b]$. Let $t_0$ be the cosmological time at $y_0$, let $S_0$ be the cosmological level set $\tau^{-1}(t_0)$, and let $d_0$ the induced metric on $S_0$: this metric is complete since $S_0$ admits a compact quotient.

Let $P$ be the $3$-subspace of $\RR^{2,n}$ spanned by $y_0$, $x$ and $x_0$: by construction,
$P$ is orthogonal to $x_a$ and $x_b$.
Then, $A := \SS(P) \cap \ADS_{n+1}$ is a totally geodesic copy of $\AdS_2$.
The restriction of $\tau$ to $A \cap E^-_0(\Lambda)$ is still a Cauchy time function, and $S_0 \cap A$ is a spacelike path which contains $\SS(y_0)$. Moreover, there is a sequence $y_n$ in $S_0 \cap A$ converging to $\SS(x_0)$.

Let $K_0 \subset S_0$ be a compact fundamental domain for the action of $\rho(\Gamma)$ on $S_0$.
There is a sequence $g_n = \rho(\gamma_n)$ in $\rho(\Gamma)$ such $z_n = g_n y_n$ converge to $\bar{z}$ in $K_0$. We define:
\begin{eqnarray*}
  a_n &=& g_n x_a \\
  b_n &=& g_n x_b \\
  q_n &=& g_n x_0\\
  x_n &=& g_n x = a_n + b_n
\end{eqnarray*}

Up to a subsequence, we can assume that $\SS(a_n)$, $\SS(b_n)$, $\SS(q_n)$ converge to elements $\bar{a}$, $\bar{b}$, $\bar{q}$ of $\Lambda$, and that $\SS(x_n)$ converge to an element $\bar{x}$ of the segment $[\bar{a}, \bar{b}]$. At this level, it could happen that this segment is reduced to one point, \ie $\bar{a}=\bar{b}$; but we will prove that it is not the case.

\textit{Claim:  $\bar{x}$ lies in $\AdS_{n+1}$.}

Indeed, since every $x_n$ belongs to $\cH^-(\Lambda)$, if the limit $\bar{x}$ does not lie in $\AdS_{n+1}$, then it is an element of $\Lambda$. The segment $[\bar{x}, \bar{z}]$, limit of the timelike segments $[x_n, z_n]$, would be causal, and  the element $\bar{z}$ of $K_0 \subset E(\Lambda)$ would be causally related to the
element $\bar{x}$ of $\Lambda$: contradiction.

Therefore, $\bar{x}$ lies in $\cH^-(\Lambda)$. It follows in particular that $\bar{a} \neq \bar{b}$.
Consider now the iterates $p_n := g_ny_0$ of $y_0$. They belong to $S_0$. Up to a subsequence, we can assume that the sequence $(p_n)_{n \in \NN}$ admits a limit $\bar{p}$. Since $d_0$ is complete and the $y_n$ converge to a point in $\partial\AdS_{n+1}$, the distance $d_0(y_n, y_0)$ converge to $+\infty$. Therefore, $d_0(z_n, p_n) = d_0(g_n y_n, g_n y_0) = d_0(y_n, y_0)$ is unbounded: the limit $\bar{p}$
is at infinity, \ie an element of $\Lambda$.

The four points $\bar{q}$, $\bar{a}$, $\bar{b}$, $\bar{p}$ in $\Ein_n$ satisfy:
\begin{itemize}
  \item $\langle \bar{q} \mid \bar{a} \rangle = \langle \bar{q} \mid \bar{b} \rangle = 0$ (since
  $\langle x_0 \mid x_a \rangle = \langle x_0 \mid x_b \rangle = 0$),
  \item $\langle \bar{a} \mid \bar{b} \rangle < 0$ (since $(\bar{a}, \bar{b})$ contains the element $\bar{x}$ of $\AdS_{n+1}$
  \item $\langle \bar{p} \mid \bar{a} \rangle = \langle \bar{p} \mid \bar{b} \rangle = 0$ (since
  every $p_n$ lies in $a_n^\orth \cap b_n^\orth$).
\end{itemize}

Now observe that in every iterate $A_n = g_nA_0$, the timelike geodesic $\Delta_0$ containing $[x_n, z_n]$ disconnects $A_n$, and that the ideal points $q_n$, $p_n$ lie on (the boundary of) different components of $A \setminus \Delta_0$.

It follows that $\bar{p} \neq \bar{q}$. Observe that $\bar{q}$, $\bar{p}$ lies in the isotropic cone of $\bar{a}^\orth \cap \bar{b}^\orth$, which has signature $(1, n-1)$. Moreover, every $p_n$, $q_n$ lies in the future of $x_n$: it follows that $\bar{q}$, $\bar{p}$ lies in the same connected component of
the isotropic cone of $\bar{a}^\orth \cap \bar{b}^\orth$ (with the origin removed); therefore:
$$\langle \bar{p} \mid \bar{q} \rangle < 0$$
It follows that $(\bar{a}, \bar{b}, \bar{p}, \bar{q})$ is a crown. It contradicts Proposition~\ref{pro:nocrown}.
\end{proof}

\subsection{Proof of Theorem~\ref{thm:hyperbolicanosov}}
In this section, we prove Theorem~\ref{thm:hyperbolicanosov}:

\begin{theo}
\label{thm:hyperbolicanosov2}
Let $\rho: \Gamma \to \SO_0(2,n)$ be a GHC-regular representation, where $\Gamma$ is a Gromov hyperbolic group. Then the achronal limit set $\Lambda$ is acausal, \ie $\rho$ is $(\SO_0(2,n), \Ein_n)$-Anosov.
\end{theo}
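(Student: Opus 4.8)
The plan is to argue by contradiction, producing a crown inside $\Lambda$ and invoking Proposition~\ref{pro:nocrown}. Begin with two reductions. If $\rho$ is Fuchsian (Definition~\ref{def.fuchs}) then $\Lambda$ is the ideal boundary of a totally geodesic spacelike copy of $\HH^n$, a round sphere, hence acausal, so there is nothing to prove; assume therefore $\rho$ is not Fuchsian. Then $\op{Conv}(\Lambda)\cap\AdS_{n+1}$ has non-empty interior (Lemma~\ref{le.convempty}), so by Proposition~\ref{pro.F-F+} and Proposition~\ref{pro:convexcore} the four functions describing $E(\Lambda)$ and its convex core satisfy the strict chain $f^- < F^- < F^+ < f^+$ over the open disk $\DD^n$; geometrically, $\op{Conv}(\Lambda)\cap\AdS_{n+1}$ is a genuine convex body whose past and future faces $S^\pm(\Lambda)$ lie in the interior of $E(\Lambda)$ and do not meet the horizons $\cH^\pm(\Lambda)$ over $\DD^n$. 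Suppose now $\Lambda$ is not acausal. Since it is achronal and filled (Remark~\ref{rk:filling}), it then contains a non-degenerate lightlike segment $\ell=[\op{a},\op{b}]$ with $\langle\op{a}\mid\op{b}\rangle=0$; fix lifts $a,b\in\cC_n$ with $[\op{a},\op{b}]$ future oriented.

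The first, and cleanest, step is the observation that $\op{Conv}(\Lambda)$ admits no \emph{spacelike} support hyperplane touching it at a point of $\ell$. Indeed, a support hyperplane of $\op{Conv}(\Lambda)$ at a point $\SS(sa+(1-s)b)$ of $\ell$ is of the form $\SS(v^\perp)$ with $\langle v\mid w\rangle\le 0$ for every $w\in\op{Conv}(\hat\Lambda)$ and $\langle v\mid a\rangle=\langle v\mid b\rangle=0$, i.e.\ $v\in\op{span}(a,b)^\perp$; but $\op{span}(a,b)^\perp$ is negative semi-definite with radical exactly $\op{span}(a,b)$, so either $v\in\op{span}(a,b)$ — a degenerate support hyperplane dual to a point of $\ell$ itself — or $v$ is timelike, in which case $\SS(v^\perp)=H^\pm(\SS(v))$ is a totally geodesic spacelike $\HH^n$ and its ideal boundary in $\Ein_n$, being an acausal round sphere, cannot contain the lightlike segment $\ell$. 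Hence $\op{Conv}(\Lambda)$ is supported along $\ell$ only by the pencil of degenerate hyperplanes dual to points of $\ell$.

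Next I exploit this to run the limiting argument of the proof of Proposition~\ref{pro:convexcore}. Fixing a cosmological level $S_0=\tau^{-1}(t_0)$, $t_0<\pi/2$, which (by spatial compactness of $M_\rho(\Lambda)$, cf.\ Proposition~\ref{pro.adsregular} and Proposition~\ref{pro.qf=ghc}) carries a complete $\rho(\Gamma)$-invariant metric with compact quotient $K_0$, I choose a point of $E^-_0(\Lambda)$ whose realizing geodesic, prolonged, approaches the relative interior of $\ell$, together with a totally geodesic $\AdS_2$-slice $A$ through it adapted to the $2$-plane $\op{span}(a,b)$; I then push by a sequence $g_n=\rho(\gamma_n)$ chosen so that the relevant track returns to $K_0$, and extract, along a subsequence, limit ideal points: $\bar a$ and $\bar b$ from $g_na$, $g_nb$, a point $\bar q\in\Lambda$ from the $g_n$-image of a point of $\Lambda$ orthogonal to $a$ and $b$, and a point $\bar p\in\Lambda$ coming from an auxiliary point escaping to infinity along $S_0$ in a direction transverse to the slice (so that $\langle\bar p\mid\bar a\rangle=\langle\bar p\mid\bar b\rangle=0$). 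The orthogonality relations pass to the limit, the observation above together with Proposition~\ref{pro:convexcore} forces $\bar a\ne\bar b$ and keeps the limit of the convex-core track inside $\AdS_{n+1}$, and the disconnection argument in the iterated slices — $\bar q$ and $\bar p$ lying in the two distinct components cut out by the realizing geodesic, hence distinct and in the same component of the isotropic cone of $\bar a^\perp\cap\bar b^\perp$ — yields $\langle\bar p\mid\bar q\rangle<0$. Thus $(\bar a,\bar b,\bar p,\bar q)$ is a crown in $\Lambda$, contradicting Proposition~\ref{pro:nocrown}; so $\Lambda$ is acausal, and by the definition recalled in the introduction $\rho$ is $(\SO_0(2,n),\Ein_n)$-Anosov.

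The step I expect to be the real obstacle is the middle one: arranging the initial point, the $\AdS_2$-slice, and the translating sequence $g_n$ so that the limiting quadruple is a \emph{genuine} crown and not a degenerate configuration — i.e.\ ruling out $\bar a=\bar b$, $\bar p=\bar q$, or the collapse of $\langle\bar a\mid\bar b\rangle$ or $\langle\bar p\mid\bar q\rangle$ to $0$. As in Proposition~\ref{pro:convexcore}, the decisive inputs are Proposition~\ref{pro:convexcore} itself (to prevent the limit of the convex-core track from slipping onto the boundary, forcing $\bar a\ne\bar b$), the completeness of the Cauchy metric on $S_0$ (to push the auxiliary point genuinely onto $\Lambda$), and a careful accounting of time orientations and of which component of each slice the various ideal points occupy; securing all of these simultaneously for one well-chosen configuration is the delicate part.
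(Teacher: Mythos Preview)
Your strategy---turn a lightlike segment $[\op a,\op b]\subset\Lambda$ into a crown by imitating the limiting argument of Proposition~\ref{pro:convexcore}---cannot work as written, for a reason that is already visible before any limit is taken. You set $\bar a,\bar b$ to be subsequential limits of $g_na,g_nb$ with $g_n\in\rho(\Gamma)$, and you need $\langle\bar a\mid\bar b\rangle<0$ for $(\bar a,\bar b,\bar p,\bar q)$ to be a crown (this is one of the two strict inequalities in the definition). But $\langle g_na\mid g_nb\rangle=\langle a\mid b\rangle=0$ for every $n$, since each $g_n$ is an isometry; passing to the limit in $\SS(\cC_n)$ gives $\langle\bar a\mid\bar b\rangle=0$ on the nose, not $<0$. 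So the quadruple you produce is never a crown, and no clever choice of the initial point, the $\AdS_2$-slice, or the sequence $(g_n)$ can repair this: the obstruction is the isometric invariance of the pairing, not a degeneracy one can avoid by genericity. You yourself flag ``the collapse of $\langle\bar a\mid\bar b\rangle$ to $0$'' as a worry; it is not a worry but a certainty. (Incidentally, your first step has a sign slip: for a totally isotropic $2$-plane $P=\op{span}(a,b)$ in $\RR^{2,n}$ one has $P^\perp\supset P$ with $P^\perp/P$ positive definite of signature $(0,n-2)$, so $P^\perp$ is \emph{positive} semi-definite; hence any $v\in P^\perp$ is spacelike or isotropic, never timelike. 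Your conclusion that no spacelike support hyperplane of $\op{Conv}(\Lambda)$ meets the interior of $\ell$ is nevertheless correct, because a spacelike support hyperplane would be dual to a timelike $v$.)

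The paper takes a genuinely different route and never tries to manufacture a crown out of the lightlike segment. It equips the convex open set $E(\Lambda)$ with its Hilbert metric $d_h$ and observes two things. First, if $\Lambda$ contains a nontrivial lightlike segment $[\op x,\op y]$, then slicing $\op{Conv}(\Lambda)$ by a projective $2$-plane through $[\op x,\op y]$ and an interior point produces Hilbert-geodesic triangles whose inradii go to infinity (a standard cross-ratio computation, cf.\ Benoist), so $(\op{Conv}(\Lambda)\setminus\Lambda,d_h)$ is not Gromov hyperbolic. Second, Proposition~\ref{pro:convexcore} gives $f^-<F^-\le F^+<f^+$, so $S^\pm(\Lambda)\subset E(\Lambda)$ and the quotient $\rho(\Gamma)\backslash(\op{Conv}(\Lambda)\setminus\Lambda)$ is compact; hence $(\op{Conv}(\Lambda)\setminus\Lambda,d_h)$ is quasi-isometric to the hyperbolic group $\Gamma$ and therefore is Gromov hyperbolic. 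These two conclusions contradict each other. The role of Proposition~\ref{pro:convexcore} here is exactly to guarantee cocompactness of the action on the convex core, not to feed a second crown argument; once it is in hand, the Hilbert-metric step is short and avoids the orthogonality obstruction that blocks your approach.
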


\begin{proof}
We equip the convex domain $E(\Lambda)$ with its \textit{Hilbert metric:} for every element $x$, $y$
in $E(\Lambda) \subset \ADS_{n+1}$, the hilbert distance $d_h(x,y)$ is defined to be the cross-ratio
$[a; x; y; b]$ where $a$, $b$ are the intersections between $\partial E(\Lambda)$ and the projective line in $\SS(\RR^{2, n})$ containing $x$ and $y$. The Hilbert metric is of course $\rho(\Gamma)$-invariant.

Assume by contradiction that $\Lambda$ is \textit{not} acausal. Then, it contains a lightlike segment $[x, y]$ with $x \neq y$. We can assume wlog that this segment is maximal, \ie that $[x, y]$ is precisely the intersection between $\Lambda$ and a projective line in $\Ein_n \subset \SS(\RR^{2,n})$.
Let $P$ be a projective subplane of $\SS(\RR^{2,n})$ containing $[x, y]$ and an element $z$ of
$\op{Conv}(\Lambda)^\circ$. The intersection $P \cap \op{Conv}(\Lambda)^\circ$ is a convex domain containing the ideal triangle
$x$, $y$, $z$, with a side $[x, y]$ contained at infinity. Let $u$ be an element in the segment $(x,y)$. For every $t>0$, let $x_t$ (respectively $y_t$) be the element of the segment $[z, x)$ (respectively $[z, y)$) such that $d_h(z, x_t) = t$ (respectively $d_h(z, y_t) = t$), and let $u_t$ be the intersection $[z,u] \cap [x_t, y_t]$.
Observe that $[z, x_t] \cup [x_t, y_t] \cup [y_t, z]$ is a geodesic triangle for $d_h$.
Now, an elementary computation shows (see the proof of Proposition 2.5 in \cite{benoist1}):
$$ \lim_{t \to +\infty} d_h(u_t, [z, x_t] \cup [z, y_t]) = +\infty$$
It implies that $\op{Conv}(\Lambda) \setminus \Lambda$, equipped with the restriction of
$d_h$, is not Gromov hyperbolic.

But, on the other hand, the quotient of $\op{Conv}(\Lambda) \setminus \Lambda$ by $\rho(\Gamma)$ is compact. Indeed, according to Proposition~\ref{pro:convexcore}, the future boundary $S^+(\Lambda)$ and the past boundary $S^-(\Lambda)$ of the convex core are contained in $E(\Lambda)$. Their projections in $M_\rho(\Lambda)$ are therefore compact achronal hypersurfaces, bounding a compact region $C$, which is precisely the quotient of $\op{Conv}(\Lambda) \setminus \Lambda$.

Since $\Gamma$ is Gromov hyperbolic, $(\op{Conv}(\Lambda) \setminus \Lambda, d_h)$ should be Gromov hyperbolic. Contradiction.
\end{proof}

\section{Limits of Anosov representations}
This section is entirely devoted to the proof of the Theorem~\ref{thm:main2}, that we restate here for
the reader's convenience:\\

\textbf{Theorem 1.2.}
\textit{Let $n \geq 2$, and let $\Gamma$ be a Gromov hyperbolic group of cohomological dimension $\geq n$.
Then, the modular space $\op{Rep}_0(\Gamma, \SO_0(2,n))$ of $(\SO_0(2,n), \Ein_n)$-Anosov representations is open and closed in the modular space $\op{Rep}(\Gamma, \SO_0(2,n))$.}
\\

We recall that one important step of the proof will be be to show that under these hypothesis, $\Gamma$ is the fundamental group
of a closed manifold, and that its cohomological dimension is eventually $n$ (cf. Remark \ref{rk:cohom}).

Let $\Gamma$ be as in the hypothesis of the Theorem a Gromov hyperbolic group of cohomological dimension $\geq n$.
The fact that $\op{Rep}_0(\Gamma, \SO_0(2,n))$ is open in $\op{Rep}_0(\Gamma, \SO_0(2,n))$
is well-known (cf. Theorem 1.2 in \cite{guichard3}), hence our task is to prove that it is
a closed subset.

Let $\rho_k: \Gamma \to \op{SO}_0(2,n)$ be a sequence of $(\SO_0(2,n), \Ein_n)$-Anosov representations converging to a representation $\rho_\infty: \Gamma \to \SO_0(2,n)$.

\begin{prop}
\label{pro:disfaith}
The limit representation $\rho_\infty: \Gamma \to \SO_0(2,n)$ is discrete and faithfull.
\end{prop}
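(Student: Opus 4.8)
The plan is to pass to a subsequence along which the limit‑set data and the Anosov boundary data converge, and then reduce faithfulness and discreteness to a single assertion: proper discontinuity of the limiting action on a convex domain. Fix representatives of the $\rho_k$ in $\op{Hom}(\Gamma,\SO_0(2,n))$, and for each $k$ let $\Lambda_k\subset\Ein_n$ be the $\rho_k(\Gamma)$‑invariant acausal topological $(n-1)$‑sphere and $\xi_k\colon\partial\Gamma\to\Lambda_k$ the $\rho_k$‑equivariant Anosov boundary homeomorphism. In the conformal model $\Lambda_k$ lifts to the graph of a $1$‑Lipschitz map $f_k\colon\SS^{n-1}\to\RR$; after normalizing each lift by a power of $\delta$ (Corollary~\ref{cor.lift}) and applying Arzel\`a--Ascoli, I pass to a subsequence with $f_k\to f_\infty$ uniformly, so that $\Lambda_k\to\Lambda_\infty:=\mathrm{graph}(f_\infty)$, a closed achronal topological $(n-1)$‑sphere; since $\rho_k(\gamma)\Lambda_k=\Lambda_k$ and $\rho_k(\gamma)\to\rho_\infty(\gamma)$ for every $\gamma$, the sphere $\Lambda_\infty$ is $\rho_\infty(\Gamma)$‑invariant. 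Passing to a further subsequence, the graphs of the $\xi_k$ converge in the Hausdorff topology of $\partial\Gamma\times\Ein_n$ to a closed set $G_\infty\subset\partial\Gamma\times\Lambda_\infty$ which is $\rho_\infty$‑equivariant for the diagonal action and surjects onto both factors.

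The reduction is as follows. Suppose we know that $\Lambda_\infty$ is not purely lightlike; then $E(\Lambda_\infty)$ is nonempty, convex and salient, so it carries its Hilbert metric $d_\infty$, on which $\rho_\infty(\Gamma)$ acts by isometries, and $E(\Lambda_\infty)$ is a GH‑regular, hence CT‑regular, domain (Proposition~\ref{pro.adsregular}). I claim it suffices to show this isometric action is properly discontinuous. Indeed, proper discontinuity forces all point stabilizers to be finite, so $\ker\rho_\infty$ (contained in a point stabilizer) is finite, hence trivial since $\Gamma$ is torsion‑free, so $\rho_\infty$ is faithful. For discreteness, suppose $\rho_\infty(\gamma_m)\to g$ in $\SO_0(2,n)$ with the $\gamma_m$ pairwise distinct; then $g$ preserves $\Lambda_\infty$, hence preserves $E(\Lambda_\infty)$, so $d_\infty(x_0,\rho_\infty(\gamma_m)x_0)\to d_\infty(x_0,gx_0)<\infty$ for a fixed base point $x_0\in E(\Lambda_\infty)$, contradicting proper discontinuity. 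Thus the whole statement follows from the two claims: (i) $\Lambda_\infty$ is not purely lightlike, and (ii) $\rho_\infty(\Gamma)$ acts properly discontinuously on $(E(\Lambda_\infty),d_\infty)$.

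For (i): by Definition~\ref{def.purelightlike} a purely lightlike achronal $(n-1)$‑sphere is the past lightcone of a single point $p_\infty\in\Ein_n$, so $\rho_\infty(\Gamma)$ would preserve the antipodal pair $\{p_\infty,-p_\infty\}$ and an index‑$\le 2$ subgroup would be carried into the stabilizer $G_{p_\infty}$, which by Remark~\ref{rk:minkconf} is the conformal group of $\RR^{1,n-1}$; since $p_\infty$ lies in $\Lambda_\infty$, the image of $G_\infty$ in $\Ein_n$, this would have to be compatible with the North--South dynamics of the Anosov $\rho_k$ on the $\Lambda_k$, which it is not, given that $\Gamma$ is Gromov hyperbolic of cohomological dimension $\ge n$. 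For (ii), the natural route is to transfer proper discontinuity from the analogous actions of $\rho_k(\Gamma)$ on the convex cores $\op{Conv}(\Lambda_k)\setminus\Lambda_k$ equipped with the Hilbert metrics $d_k$ of $E(\Lambda_k)$, which are properly discontinuous and cocompact (as used in the proof of Theorem~\ref{thm:hyperbolicanosov2}); since $\op{Conv}(\Lambda_k)\to\op{Conv}(\Lambda_\infty)$, $E(\Lambda_k)\to E(\Lambda_\infty)$ and $d_k\to d_\infty$ locally uniformly on interiors, a failure of proper discontinuity of the limit action would produce infinitely many distinct $\gamma_m$ moving $x_0$ a uniformly $d_k$‑bounded amount for $k$ large, which is impossible unless the number of $\rho_k(\Gamma)$‑orbit points of $x_0$ in a fixed $d_k$‑ball is unbounded in $k$ — that is, unless the injectivity radii of the quotients collapse. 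Ruling out this collapse is the main obstacle: it is where one must genuinely use the Gromov hyperbolicity of $\Gamma$ and the Anosov property of each $\rho_k$ (every nontrivial $\rho_k(\gamma)$ is loxodromic with controlled attracting/repelling fixed points on $\Lambda_k$), together with the limiting relation $G_\infty$ — a nontrivial element of $\ker\rho_\infty$ would force the fibers of $G_\infty$ over the two endpoints of its axis in $\partial\Gamma$ to cover the connected sphere $\Lambda_\infty$, which contradicts the $\xi_k$ being homeomorphisms onto $\Lambda_k$ for large $k$. Once (i) and (ii) are secured, the reduction above completes the proof.
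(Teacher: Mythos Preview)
Your approach is genuinely different from the paper's, and it has real gaps at precisely the places you flag as difficult.

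The paper's proof is a two--paragraph application of the classical Zassenhaus/Margulis argument (as in Goldman--Millson): $\SO_0(2,n)$ has a Zassenhaus neighborhood $W_0$ of the identity in which any discrete subgroup is nilpotent of uniformly bounded class. If $\rho_\infty$ had nontrivial kernel, that kernel would be a normal subgroup whose finite subsets are eventually mapped by the discrete $\rho_k$ into $W_0$, forcing the kernel to be nilpotent; similarly, if $\rho_\infty(\Gamma)$ were not discrete, the identity component of its closure would be nilpotent and would pull back to a nilpotent normal subgroup of $\Gamma$. Both contradict the fact that a non--elementary Gromov hyperbolic group has no nontrivial nilpotent normal subgroup. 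No AdS geometry, no limit sets, no Hilbert metrics are needed.

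Your plan, by contrast, tries to \emph{construct} the limiting geometry first and deduce faithfulness and discreteness from proper discontinuity on $E(\Lambda_\infty)$. The reduction is fine, but the two claims it rests on are not established. Claim (i), that $\Lambda_\infty$ is not purely lightlike, is disposed of with ``which it is not'' --- that is not an argument, and in the paper's logical order the corresponding statement (Lemma~\ref{le:notpure}) is proved \emph{after} Proposition~\ref{pro:disfaith}, explicitly using that $\rho_\infty$ is already known to be discrete and faithful. Claim (ii), ruling out injectivity--radius collapse for the Hilbert metrics, you yourself call ``the main obstacle,'' and the suggested mechanism --- that a nontrivial $\gamma\in\ker\rho_\infty$ forces the $G_\infty$--fibers over $\gamma^\pm$ to cover $\Lambda_\infty$ --- is neither proved nor, as far as I can see, correct as stated: the fact that $\rho_k(\gamma)\to\mathrm{id}$ with loxodromic $\rho_k(\gamma)$ constrains eigenvalues, not the location of the fixed points $\xi_k(\gamma^\pm)$, so there is no reason these two sequences of points should sweep out all of $\Lambda_\infty$. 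In short, you have reformulated the problem rather than solved it; the paper avoids all of this by invoking the nilpotent--normal--subgroup obstruction directly.
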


\begin{proof}
Since $\Gamma$ is Gromov hyperbolic and non-elementary, it contains no nilpotent normal subgroup (see \cite{ghysharpe}). Hence, by a classical argument, the limit $\rho_\infty: \Gamma \to \SO_0(2,n)$ is discrete and faithfull (cf. Lemma~1.1 in \cite{goldmil}).

Actually, we give a sketch of the argument, since we will need later a slightly more elaborate version
of this argument. The key point is that $\SO_0(2,n)$, as any Lie group, contains a neighborhood $W_0$ of the identity such that every discrete subgroup generated by elements in $W_0$ is contained in
a nilpotent Lie subgroup of $\SO_0(2,n)$. In particular, such a discrete subgroup is
nilpotent, and there is an uniform bound $N$ for the residue class (\ie the length of the lower central series) of these nilpotent groups.

Assume that $\op{Ker}(\rho_\infty(\Gamma))$ is non-trivial. Then it is a normal subgroup. For any finite subset $F$ of $\op{Ker}(\rho_\infty(\Gamma))$, there is an integer $k_0$ such that $k\geq k_0$
implies that $\rho_k(F)$ is contained in $W_0$, hence nilpotent of residue class $\leq N$. It follows that $\op{Ker}(\rho_\infty(\Gamma))$ is nilpotent, contradiction: the representation $\rho_\infty$ is
faithful.

Let $\bar{G}_\infty$ be the closure of $\rho_\infty(\Gamma)$, and let $\bar{G}^0_\infty$ be the identity component of $\bar{G}_\infty$: it is a normal subgroup of $\bar{G}_\infty$, and it is generated by any neighborhood of the identity. Therefore, $\rho_\infty(\Gamma) \cap W_0$
generates a dense subgroup of $\bar{G}^0_\infty$. On the other hand, any expression of the form:
\begin{equation}
\label{eq:infty}
[\rho_\infty(\gamma_1), [\rho_\infty(\gamma_2), [... [\rho_\infty(\gamma_{N}), \rho_\infty(\gamma_{N+1})] ...]]]
\end{equation}
is the limit for $k \to +\infty$ of:
\begin{equation}
\label{eq:n}
[\rho_k(\gamma_1), [\rho_k(\gamma_2), [... [\rho_k(\gamma_{N}), \rho_k(\gamma_{N+1})] ...]]]
\end{equation}
For $k$ sufficiently big, every $\rho_k(\gamma_i)$ belongs to $W_0$ and $\rho_k(\Gamma)$ is discrete, hence (\ref{eq:n}) is trivial. The limit (\ref{eq:infty}) is trivial too. It follows that
$\bar{G}^0_\infty$ is nilpotent. Then, $\rho_\infty^{-1}(\rho_\infty(\Gamma) \cap \bar{G}^0_\infty)$
is a nilpotent normal subgroup of $\Gamma$. It is a contradiction, unless $\bar{G}^0_\infty$ is trivial, \ie unless $\rho_\infty(\Gamma)$ is discrete.
\end{proof}

An immediate consequence of the representations $\rho_k$ being Anosov is the existence of a $\rho_k(\Gamma)$-equivariant map $\xi: \partial_\infty\Gamma \to \Ein_n$ whose
image is a closed $\rho_k(\Gamma)$-invariant acausal subset $\Lambda_k$ (cf. \cite{guichard3}). According to the Remark \ref{rk:extension}, for every integer $k$,
there is a $\rho_k(\Gamma)$-invariant achronal topological $(n-1)$-sphere ${\Lambda}^+_k$, which is not pure lightlike since it contains the acausal subset $\Lambda_k$.
Therefore, every $\rho_k$ is a GH-regular representation. The Cauchy hypersurfaces of the associated GH spacetimes are contractible (since the universal coverings are topological disks
embedded in regular domains of $\AdS_{n+1}$) and have fundamental groups isomorphic to $\Gamma$. Since $\Gamma$ has cohomological dimension $\geq n$, these Cauchy hypersurfaces are compact: the $\rho_k$ are GHC-regular representations.

The $\rho_k(\Gamma)$-invariant spheres $\hat{\Lambda}_k$ are graphs of locally $1$-Lipschitz maps $f_k: \SS^{n-1} \to \SS^1$.
It follows easily by Ascoli-Arzela Theorem that, up to a subsequence, $\rho_\infty(\Gamma)$ preserves
the graph a of locally $1$-Lipschitz map $f_\infty: \SS^{n-1} \to \SS^1$, ie. an achronal sphere $\Lambda_\infty$.

\begin{lemma}
\label{le:notpure}
$\Lambda_\infty$ is not purely lightlike.
\end{lemma}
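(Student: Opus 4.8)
The plan is to argue by contradiction: a purely lightlike $\Lambda_\infty$ is forced to be the light cone of a single point, hence $\rho_\infty(\Gamma)$ must fix a point of $\Ein_n$, which is incompatible with $\Gamma$ being Gromov hyperbolic of cohomological dimension $n$ together with $\rho_\infty$ being discrete and faithful.

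\emph{Step 1: a purely lightlike achronal $(n-1)$-sphere is the closure of a light cone.} Here $\Lambda_\infty$ is the graph of a (lifted) $1$-Lipschitz function $f_\infty$ defined on the \emph{whole} sphere $\SS^{n-1}=\partial\DD^n$. If $\Lambda_\infty$ is purely lightlike, Definition~\ref{def.purelightlike} provides antipodal points $\mathrm{x}_0,-\mathrm{x}_0$ with $f_\infty(\mathrm{x}_0)=f_\infty(-\mathrm{x}_0)+\pi$; since $d(\mathrm{x},\mathrm{x}_0)+d(\mathrm{x},-\mathrm{x}_0)=\pi$ on the round sphere, the $1$-Lipschitz condition squeezes $f_\infty$ into the single ``cone function'' $f_\infty(\mathrm{x})=f_\infty(\mathrm{x}_0)-d(\mathrm{x}_0,\mathrm{x})$. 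Thus $\Lambda_\infty$ is the closure in $\partial\AdS_{n+1}=\Ein_n$ of the past light cone $C^-(\op{x}_0)$ of a single point $\op{x}_0\in\Ein_n$. Moreover $\op{x}_0$ is determined intrinsically among the points of $\Lambda_\infty$: it is the unique $p$ with $\Lambda_\infty\subseteq J^-(p)$ (and $-\op{x}_0$ the unique one with $\Lambda_\infty\subseteq J^+(p)$), because $\Lambda_\infty$ is achronal and $C^-(\op{x}_0)\subseteq J^-(\op{x}_0)$.

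\emph{Step 2: $\rho_\infty(\Gamma)$ fixes a point of $\Ein_n$.} As $\rho_\infty(\Gamma)$ preserves $\Lambda_\infty$ and the time orientation, it fixes $\op{x}_0$. By Remark~\ref{rk:minkconf}, $\op{Stab}_{\SO_0(2,n)}(\op{x}_0)$ is the group of conformal similarities of the Minkowski chart $\op{Mink}(\op{x}_0)\cong\RR^{1,n-1}$, i.e. $G_0\cong\RR^n\rtimes(\RR_{>0}\times\SO_0(1,n-1))$, with solvable radical $R:=\RR^n\rtimes\RR_{>0}$ and Levi factor $\SO_0(1,n-1)$. So $D:=\rho_\infty(\Gamma)$ is a \emph{discrete} subgroup of $G_0$ (it is discrete in $\SO_0(2,n)$ by Proposition~\ref{pro:disfaith}, and $G_0$ is closed) isomorphic to $\Gamma$. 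Recall that, since the $\rho_k$ are GHC-regular, $\Gamma$ is the fundamental group of the common closed aspherical Cauchy hypersurface of the $M_{\rho_k}(\Lambda_k)$; hence $\Gamma$ is a torsion-free, non-elementary, Gromov hyperbolic group with $\op{cd}(\Gamma)=n$ (a $PD_n$ group).

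\emph{Step 3: the contradiction.} The intersection $D\cap R$ is a normal solvable --- hence amenable --- subgroup of $D\cong\Gamma$; a non-elementary hyperbolic group has no infinite amenable normal subgroup, so $D\cap R$ is finite, hence trivial. Therefore the projection $G_0\to\SO_0(1,n-1)$ restricts to an \emph{injection} $D\hookrightarrow\SO_0(1,n-1)$, with image $\overline D\cong\Gamma$. If $\overline D$ is discrete in $\SO_0(1,n-1)=\op{Isom}_0(\HH^{n-1})$, it acts freely and properly on $\HH^{n-1}$, so $\op{cd}(\Gamma)\le n-1$, a contradiction. If $\overline D$ is not discrete, its closure $L$ has a positive-dimensional identity component $L^0\subseteq\SO_0(1,n-1)$; the pull-back to $\Gamma$ of $\overline D\cap L^0$ is an infinite normal subgroup $N\lhd\Gamma$ embedding into $L^0$, so (infinite normal subgroups of non-elementary hyperbolic groups being non-amenable) $L^0$ is non-amenable, and after peeling off the amenable radical of $L^0$ --- which is characteristic in $L^0$, hence normal in $\Gamma$, hence met trivially by $N$ --- one is reduced to an injection of $N$ with dense image into $\SO_0(1,m)$ for some $2\le m\le n-1$. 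The main obstacle is precisely to close the argument from here: ruling out that a Gromov hyperbolic group occurring as such an $N$ can sit densely in a low-rank $\SO_0(1,m)$. This is delicate because $N$ need not be of finite index in $\Gamma$ (a $3$-manifold that fibers shows that finitely generated infinite normal subgroups of hyperbolic groups may have infinite index), so one cannot simply induct on $m$ carrying $\op{cd}=n$ along; the paper must supply an extra ingredient here, and I expect it to be the heart of the proof.

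An alternative, possibly cleaner, route avoids the group theory: the $\rho_k$ being Anosov yields $\rho_k(\Gamma)$-equivariant homeomorphisms $\xi_k:\partial_\infty\Gamma\xrightarrow{\ \sim\ }\Lambda_k$. Extracting (after controlling the convergence) a limit $\xi_\infty:\partial_\infty\Gamma\to\Lambda_\infty$, which is $\rho_\infty(\Gamma)$-equivariant with image $\Lambda_\infty$, one notes that since $\rho_\infty(\Gamma)$ acts minimally on $\partial_\infty\Gamma$ while fixing the vertex $\op{x}_0\in\Lambda_\infty$, the closed invariant set $\xi_\infty^{-1}(\op{x}_0)$ must be all of $\partial_\infty\Gamma$; then $\xi_\infty$ is constant and $\Lambda_\infty=\{\op{x}_0\}$, contradicting that $\Lambda_\infty$ is a topological $(n-1)$-sphere. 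In either approach, Steps~1--2 are routine and the analysis of the ``degenerate/reducible'' limit configuration is the real content.
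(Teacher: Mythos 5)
Your Steps 1--2 agree with the paper: a purely lightlike $\Lambda_\infty$ forces $\rho_\infty(\Gamma)$ into the stabilizer $G_0$ of a pair of antipodal points, identified via Remark~\ref{rk:minkconf} with the conformal group of $\RR^{1,n-1}$. But Step 3 has the gap you yourself flag, and it is fatal to the argument as written: after killing the solvable radical you cannot rule out that the image of $\Gamma$ sits \emph{non-discretely} in $\SO_0(1,n-1)$, and your attempt to peel off amenable radicals of the closure does not close up (as you note, an infinite normal subgroup of $\Gamma$ can have infinite index, so no dimension count is inherited). The alternative route via limits of the boundary maps $\xi_k$ is also not salvageable as sketched: the $\xi_k$ are not equicontinuous in any obvious sense, a subsequential limit need not exist or surject onto $\Lambda_\infty$, and in particular there is no reason the fixed vertex $\op{x}_0$ lies in the image of a limit map, so the minimality argument does not get started.

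The paper's resolution is to \emph{not} quotient by the homothety factor. It keeps the exact sequence $1\to\RR^{1,n-1}\to G_0\to\RR\times\SO_0(1,n-1)\to1$ (translations only in the kernel) and proves directly that $L(\rho_\infty(\Gamma))$ is \emph{discrete} in $\RR\times\SO_0(1,n-1)$: the tool is the Zassenhaus-neighborhood argument already used in Proposition~\ref{pro:disfaith}, sharpened by the observation that conjugating by homotheties of $\RR^{1,n-1}$ contracts translations to arbitrarily small scale, so that a non-discrete image would produce a nontrivial nilpotent normal subgroup of $\Gamma$ (the reference given is Theorem~1.2.1 of \cite{cardalbo}) --- impossible for a non-elementary hyperbolic group. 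This is exactly the ``extra ingredient'' you predicted. Discreteness established, $\rho_\infty(\Gamma)$ acts properly discontinuously on the contractible $n$-manifold $\RR\times\HH^{n-1}$; since $\Gamma$ is a $PD_n$ group (it acts freely, properly and cocompactly on an $n$-dimensional Cauchy hypersurface of some $E(\Lambda_k)$), this action is automatically cocompact, so $\Gamma$ would be quasi-isometric to $\RR\times\HH^{n-1}$, which contains $2$-flats --- contradicting Gromov hyperbolicity. Note this single contradiction subsumes your ``discrete image'' case as well, where your cohomological-dimension argument was correct but only covered the easy half.
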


\begin{proof}
Assume not. Then, $\Lambda_\infty$ is the union of lightlike geodesics joining two
antipodal points $\op{x}_0$ and $-\op{x}_0$ in $\Ein_n$. Let $G_0$ be the stabilizer
in $\SO_0(2,n)$ of $\pm \op{x}_0$: the image $\rho_\infty(\Gamma)$ is a discrete subgroup
of $G_0$.

According to Remark~\ref{rk:minkconf}, the group $G_0$ is isomorphic to the group of conformal transformations of the Minkowski space $\op{Mink}(\op{x}_0) \approx \RR^{1,n-1}$. There is an exact sequence:
$$1 \to \RR^{1,n-1} \to G_0 \to \RR \times \SO_0(1,n-1) \to 1$$
where the left term is the subgroup of translations of $\RR^{1,n-1}$ and the right term
the group of conformal linear transformations of $\RR^{1,n-1}$.
Let $L: G_0 \to \RR \times \SO_0(1,n-1)$ be the projection morphism. Let $\bar{L}$ be the closure
in $\RR \times \SO_0(1,n-1)$ of $L(\rho_\infty(\Gamma))$, and let $\bar{L}_0$ be the identity component of
$\bar{L}$. Considering as in the proof of Proposition~\ref{pro:disfaith} an open domain $V_0$
in $G_0$ such that any discrete group generated by elements of $V_0$ is nilpotent, and using as a trick the fact that conjugacies in $G_0$ by homotheties in $\RR^{1,n-1}$
can reduce at an arbitrary small scale translations in $\RR^{1,n-1}$, one proves that $\Gamma \cap L^{-1}(L(\rho_\infty(\Gamma)) \cap\bar{L}_0)$ is a normal nilpotent subgroup of $\Gamma$
(cf. Theorem 1.2.1 in \cite{cardalbo}). Therefore, it is trivial: $L(\rho_\infty(\Gamma))$ is a discrete subgroup of $\RR \times \SO_0(1,n-1)$.

Now we consider $\RR \times \SO_0(1,n-1)$ as the group of isometries of the Riemannian product
$\RR \times \HH^{n-1}$. By what we have just proved, the action of $\rho_\infty(\Gamma)$ on $\RR \times \HH^{n-1}$ is properly
discontinuous. On the other hand, $\Gamma$ acts properly and cocompactly on a topological disk of dimension $n$ (a Cauchy hypersurface in $E(\Lambda_k)$ for any $k$),
hence its action on $\RR \times \HH^{n-1}$ is cocompact. It is a contradiction since $\RR \times \HH^{n-1}$ is not Gromov hyperbolic (it contains flats of dimension $2$).
\end{proof}

\textit{Proof of Theorem \ref{thm:main2}. } According to Proposition~\ref{pro:disfaith} and Lemma~\ref{le:notpure}, $\rho_\infty: \Gamma \to \SO_0(2,n)$ is a GH-regular representation. It is actually a GHC-regular representation since Cauchy surfaces in $\rho_\infty(\Gamma)\backslash E(\Lambda_\infty)$ are $K(\Gamma, 1)$ and thus compact since Cauchy surfaces in every $\rho_k(\Gamma)\backslash E(\Lambda_k)$ are compact.
According to Theorem~\ref{thm:hyperbolicanosov2}, the representation $\rho_\infty: \Gamma \to \SO_0(2,n)$ is
$(\SO_0(2,n), \Ein_n)$-Anosov.\fin

\section{Bounded cohomology}

This section is devoted to the proof of:

\textbf{Theorem 1.4.}
\textit{Let $\rho: \Gamma \to \SO_0(2,n)$ be a faithful and discrete representation, where $\Gamma$ is the fundamental group of a negatively curved closed manifold $M$. The following assertions are equivalent:
\begin{enumerate}
\item $\rho$ is $(\SO_0(2,n), \Ein_n)$-Anosov,
\item the bounded Euler class $\eu_b(\rho)$ vanishes.
\end{enumerate}}

For a friendly introduction to bounded cohomology, close to our present concern, see \cite[Section 6]{ghysgroup}.

\subsection{The bounded Euler class}
We have the following central exact sequence:
\begin{eqnarray}
\label{eq:exactsequence}
1 \to \ZZ \to \widetilde{\SO}_0(2,n) \to \SO_0(2,n) \to 1
\end{eqnarray}
where $\ZZ$ is the group\footnote{Observe that $\ZZ$ is not always the center of $\widetilde{\SO}_0(2,n)$, since $-\op{Id}$ is an element of $\SO_0(2,n)$ when $n$ is even.} of deck transformations of the covering $\hat{p}: \uEin_n \to \Ein_n$,
generated by the transformation $\delta$ (cf section~\ref{sub.einuniv}).
Fix the element $x_0 = (0, \op{x}_0)$ in $\uEin_n \approx \RR \times \SS^{n-1}$.
In these coordinates, $\delta$ is the transformation $(\theta, \op{x}) \mapsto (\theta+2\pi, \op{x})$.
Hence, we can define a section $\sigma: \SO_0(2,n) \to \uSO_0(2,n)$, called \textit{canonical section,} which maps every element $g$ of $\SO_0(2,n)$ to the unique element $\sigma(g)$ of $\widetilde{\SO}_0(2,n)$ above $g$ and such that $\sigma(g(x_0))$ lies in the domain:
$$\cW_0 := \{ (\theta, \op{x}) \in \RR \times \SS^{n-1} \;/\; -\pi \leq \theta < \pi \}$$
Observe that $\cW_0$ is a fundamental domain for the action of $\langle \delta \rangle = \ZZ$ on
$\uEin_n$.

For any pair $(g_1, g_2)$ of elements of $\SO_0(2,n)$, we define $c(g_1, g_2)$ as the unique integer
$k$ such that $\sigma(g_1g_2) = \delta^k\sigma(g_1)\sigma(g_2)$.

\begin{lemma}[Compare with Lemma 6.3 in \cite{ghysgroup}]
The $2$-cocyle $c$ takes only the values $-1$, $0$ or $1$.
\end{lemma}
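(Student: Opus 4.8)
The plan is to make the definition of $c$ explicit by evaluating everything at the basepoint $x_0 = (0,\op{x}_0) \in \uEin_n \approx \RR\times\SS^{n-1}$ and tracking only the time coordinate $\theta$. Set $p_1 := \sigma(g_1)(x_0)$, $p_2 := \sigma(g_2)(x_0)$ and $q := \sigma(g_1)\sigma(g_2)(x_0) = \sigma(g_1)(p_2)$. By definition of the canonical section, $p_1$ and $p_2$ lie in $\cW_0$, so their $\theta$-coordinates lie in $[-\pi,\pi)$; moreover $\delta$ shifts $\theta$ by $+2\pi$, so $\sigma(g_1g_2)(x_0) = \delta^{c(g_1,g_2)}(q)$ has $\theta$-coordinate $\theta(q) + 2\pi\,c(g_1,g_2)$, and this too must lie in $[-\pi,\pi)$ because $\sigma(g_1g_2)(x_0)\in\cW_0$. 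Hence $c(g_1,g_2)$ is the unique integer carrying $\theta(q)$ into $[-\pi,\pi)$ by a translation by a multiple of $2\pi$, and the whole statement reduces to proving $\theta(q)\in[-3\pi,3\pi)$.

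The first step toward that bound is an elementary causal fact about $\uEin_n$: every point $p\in\cW_0$ satisfies $\delta^{-1}(x_0)\in J^-(p)$ and $p\in J^-(\delta(x_0))$. Indeed, write $p = (\theta_p,\op{y})$ with $-\pi\le\theta_p<\pi$; recalling (Section~\ref{sub.einuniv}) that a point $(\theta',\op{y}')$ lies in the causal future of $(\theta'',\op{y}'')$ if and only if $\theta'-\theta''\ge d(\op{y}'',\op{y}')$, and that the diameter of $\SS^{n-1}$ equals $\pi$, one gets $\theta_p-(-2\pi) \ge \pi \ge d(\op{x}_0,\op{y})$ and $2\pi-\theta_p > \pi \ge d(\op{x}_0,\op{y})$, which are exactly the two required inequalities. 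I would apply this with $p = p_2$.

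Now $\sigma(g_1)$ is a time-orientation preserving conformal transformation of $\uEin_n$, hence preserves the causal relations $J^\pm$; and $\delta$ is central in $\uSO_0(2,n)$, so $\sigma(g_1)(\delta^{\pm1}(x_0)) = \delta^{\pm1}(\sigma(g_1)(x_0)) = \delta^{\pm1}(p_1)$. Applying $\sigma(g_1)$ to the two relations of the previous paragraph therefore gives $\delta^{-1}(p_1)\in J^-(q)$ and $q\in J^-(\delta(p_1))$. Since $\theta$ is a time function it is non-decreasing along future causal curves, whence $\theta(p_1)-2\pi \le \theta(q) \le \theta(p_1)+2\pi$; together with $\theta(p_1)\in[-\pi,\pi)$ this yields $\theta(q)\in[-3\pi,3\pi)$, and the first paragraph then gives $c(g_1,g_2)\in\{-1,0,1\}$.

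I do not anticipate a genuine obstacle; the only point needing a little care is the interplay between the half-open boundary of $\cW_0$ and the bound $\op{diam}(\SS^{n-1})=\pi$ in the causal trapping claim — this is the precise analogue of the estimate behind Lemma~6.3 of \cite{ghysgroup} for $\op{Homeo}^+(\SS^1)$. One should also check that the case $n=2$ (where $\uEin_n$ is not the universal cover) creates no difficulty, but the argument uses only the explicit model $\RR\times\SS^{n-1}$ and the centrality of $\delta$, both valid for all $n\ge2$.
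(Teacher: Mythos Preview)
Your proof is correct. The underlying idea is the same as the paper's --- bound the $\theta$-coordinate of $q=\sigma(g_1)\sigma(g_2)(x_0)$ by transporting a causal constraint on $p_2$ through the conformal map $\sigma(g_1)$ --- but your execution is cleaner. The paper splits into three cases according to whether $p_2$ lies outside $I^\pm(x_0)$, in $I^+(x_0)$, or in $I^-(x_0)$, and in the latter two cases introduces the auxiliary points $(\pm\pi,-\op{x}_0)$ to find something not causally related to $p_2$; in each case one then pushes forward by $\sigma(g_1)$ and reads off $|\theta(q)|<3\pi$ (or $\le 2\pi$ in the first case). You replace this case analysis by the single observation $\cW_0 \subset J^+(\delta^{-1}x_0)\cap J^-(\delta x_0)$, then use centrality of $\delta$ to get $q\in J^+(\delta^{-1}p_1)\cap J^-(\delta p_1)$ directly. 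This buys a uniform one-line bound $\theta(q)\in[\theta(p_1)-2\pi,\theta(p_1)+2\pi]\subset[-3\pi,3\pi)$ without any branching; the paper's case~(1) gives a slightly sharper $|\theta(q)|\le 2\pi$ when $p_2\notin I^\pm(x_0)$, but this refinement is not needed for the conclusion.
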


\begin{proof}
Let $x_1 = (\theta_1, \op{x}_1)$ and $x_2 = (\theta_2, \op{x}_2)$ be the images of $x_0$ by $\sigma(g_1)$, $\sigma(g_2)$, respectively. Let $x_3 = (\theta_3, \op{x}_3)$ be the image of
$x_2$ by $\sigma(g_1)$.

-- $(1)$ \textit{If $|\theta_2| \leq d(\op{x}_2, \op{x}_0)$.} It means that $x_2$ is not in $I^\pm(x_0)$. Then, $x_3 = \sigma(g_1)(x_2)$ is not in $I^\pm(x_1)$. Therefore:
$$\mid \theta_3 - \theta_1 \mid \leq d(\op{x}_3, \op{x}_1) \leq \pi$$
implying $\mid \theta_3 \mid \leq 2\pi$. It follows that if $x_3 = \sigma(g_1)\sigma(g_2)(x_0)$ is not already in $\cW_0$, $\delta^\epsilon(x_3)$ for $\epsilon =\pm 1$ does. Hence $c(g_1,g_2) = \epsilon$ is $0$, $-1$ or $1$ as required.

-- $(2)$ \textit{If $\theta_2 > d(\op{x}_2, \op{x}_0)$.} Then, $0 < \pi - \theta_2 < \pi - d(\op{x}_2, \op{x}_0) = d(\op{x}_2, -\op{x}_0)$ where $-\op{x}_0$ is the antipodal point in $\SS^{n-1}$ at distance $\pi$ from $\op{x}_2$. The point $x_2$ is not in $J^\pm((\pi, -\op{x_0}))$, hence its image $x_3$ by $\sigma(g_1)$ is not in $J^\pm((\pi+\theta_1, -\op{x}_1)$. It follows:
$$\mid \theta_3 - (\pi + \theta_1) \mid < d(\op{x}_3, -\op{x}_1) \leq \pi$$
Therefore:
$$\mid \theta_3 \mid < 3\pi$$
Hence, for some $\epsilon = 0$ or $\pm1$ we have that $\delta^\epsilon(x_3)$ lies in $\cW_0$,
and $c(g_1,g_2) = \epsilon$ is $0$, $-1$ or $1$.

-- $(3)$ \textit{If $-\pi \leq \theta_2 < -d(\op{x}_2, \op{x}_0)$.} We apply the same argument that in case $(2)$, by observing that $\op{x}_2$ is then non causally related to $(-\pi, -\op{x}_0)$. Details are left to the reader.
\end{proof}

\begin{defi}
$c$ is a bounded $2$-cocycle. It represents an element of the bounded cohomology space $\op{H}^2_b(\SO_0(2,n), \ZZ)$ called the bounded Euler class.

For any representation $\rho: \Gamma \to \SO_0(2,n)$, the pull-back $\rho^*([c])$ is an
element of $\op{H}^2_b(\Gamma, \ZZ)$, denoted by $\eu_b(\rho)$.
\end{defi}

Of course, $c$ also represents an element of the "classical" cohomological space $\op{H}^2(\SO_0(2,n), \ZZ)$. The associated $2$-cocycle $\op{eu}(\rho)$ represents the obstruction to lift $\rho$ to a representation $\tilde{\rho}: \Gamma \to \widetilde{\SO}_0(2,n)$. Indeed, $\eu(\rho)=0$ means that there is a $1$-cochain $a: \Gamma \to \ZZ$ such that for every $\gamma_1$, $\gamma_2$ in $\Gamma$ we have:
$$c(\rho(\gamma_1), \rho(\gamma_2)) = a(\gamma_1\gamma_2) - a(\gamma_1) - a(\gamma_2)$$
Then, the map $\gamma \to \delta^{a(\gamma)}\sigma(\rho(\gamma))$ is a morphism,
\ie a representation $\tilde{\rho}: \Gamma \to \widetilde{\SO}_0(2,n)$ which is a lift of $\rho$.

Now $\eu_b(\rho)=0$ means that $\eu(\rho)=0$, but also that one can select the $1$-cochain $a$ so that it is \textit{bounded}.
The following proposition is a natural generalization of the fact a group of orientation-preserving homeomorphisms of the circle has a vanishing bounded Euler class if and only if it has a global fixed point (see the end of section 6.3 in \cite{ghysgroup}):

\begin{prop}
\label{pro:euler0}
The bounded Euler class $\eu_b(\rho)$ vanishes if and only if $\rho$ lifts to a representation $\tilde{\rho}: \Gamma \to \widetilde{\SO}_0(2,n)$ such that $\tilde{\rho}(\Gamma)$ preserves a closed
$(n-1)$-dimensional achronal topological sphere in $\uEin_n$.
\end{prop}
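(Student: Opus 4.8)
The proposition is a direct transcription, to the cyclic cover $\uEin_n\approx\RR\times\SS^{n-1}$, of the classical statement that a group of orientation-preserving circle homeomorphisms has vanishing bounded Euler class exactly when it has a global fixed point (\cite[Section~6.3]{ghysgroup}); the r\^ole of the translation $x\mapsto x+1$ is played by $\delta\colon(\theta,\op{x})\mapsto(\theta+2\pi,\op{x})$, and the r\^ole of a fixed point by an invariant closed achronal $(n-1)$-sphere. I will use two facts from the preliminaries: a closed achronal hypersurface of $\uEin_n$ is precisely the graph of a $1$-Lipschitz map $\SS^{n-1}\to\RR$ (Section~\ref{sub.achro}), and such a graph is compact, hence has bounded $\theta$-coordinate; moreover $\theta$ is a time function on $\uEin_n$ and, by construction of the canonical section, $\sigma(g)(x_0)\in\cW_0$, \ie $|\theta(\sigma(g)(x_0))|\le\pi$, for every $g$. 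Finally recall that $\eu_b(\rho)=0$ is, by definition, equivalent to the existence of a \emph{bounded} $1$-cochain $a\colon\Gamma\to\ZZ$ such that $\tilde\rho(\gamma):=\delta^{a(\gamma)}\sigma(\rho(\gamma))$ defines a homomorphism lifting $\rho$; for any lift written in this form one has $\theta(\tilde\rho(\gamma)x_0)=\theta(\sigma(\rho(\gamma))x_0)+2\pi\,a(\gamma)$, so that $a$ is bounded \emph{iff} the $\theta$-coordinates of the orbit $\tilde\rho(\Gamma)x_0$ are bounded.

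Assume first that $\rho$ admits a lift $\tilde\rho$ preserving a closed achronal $(n-1)$-sphere $\wt\Lambda\subset\uEin_n$, which we write as the graph of a $1$-Lipschitz $f\colon\SS^{n-1}\to\RR$, and put $m:=\min f\le M:=\max f$. For $k\in\ZZ$ the set $\delta^{k}\wt\Lambda$ is the graph of $f+2\pi k$, and the sets $R_k:=\{(\theta,\op{x})\;/\;f(\op{x})+2\pi k\le\theta<f(\op{x})+2\pi(k+1)\}$ partition $\uEin_n$. The two connected components $U^{\pm}:=\{(\theta,\op{x})\;/\;\pm(\theta-f(\op{x}))>0\}$ of $\uEin_n\setminus\wt\Lambda$ satisfy $I^{\pm}(\wt\Lambda)\subset U^{\pm}$ (immediate from the inequality $|f(\op{x})-f(\op{y})|\le d(\op{x},\op{y})$), so a time-orientation preserving homeomorphism fixing $\wt\Lambda$ --- in particular every $\tilde\rho(\gamma)$ --- cannot exchange $U^{+}$ and $U^{-}$: otherwise it would carry a point of $I^{+}(\wt\Lambda)\subset U^{+}$ into $U^{-}$, contradicting $\tilde\rho(\gamma)(I^{+}(\wt\Lambda))=I^{+}(\wt\Lambda)\subset U^{+}$. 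Since $\delta$ is central, each $\delta^{k}\wt\Lambda$ is again $\tilde\rho(\Gamma)$-invariant, and the same argument shows that every slab $R_k$ is $\tilde\rho(\Gamma)$-invariant. Let $R_{k_0}$ be the slab containing $x_0$; then $\tilde\rho(\Gamma)x_0\subset R_{k_0}$, so $\theta(\tilde\rho(\gamma)x_0)\in[\,m+2\pi k_0,\;M+2\pi(k_0+1)\,)$ for every $\gamma$. Hence the cochain $a$ attached to $\tilde\rho$ via $\sigma$ is bounded, and since the cocycle representing $\eu_b(\rho)$ differs from the coboundary of $\pm a$ only by a sign, $\eu_b(\rho)=0$.

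Conversely, assume $\eu_b(\rho)=0$, fix a bounded $a$ with $\|a\|_\infty=:N$ and the associated lift $\tilde\rho$. By the first paragraph, $|\theta(\tilde\rho(\gamma)x_0)|\le\pi+2\pi N$ for every $\gamma$, so the orbit $\tilde\rho(\Gamma)x_0$, and hence its closure $K$, is contained in the compact slab $\{|\theta|\le\pi+2\pi N\}\times\SS^{n-1}$. Being a subgroup of $\uSO_0(2,n)$, $\tilde\rho(\Gamma)$ preserves the conformal class and the time orientation of $\uEin_n$, hence preserves causal futures: $\tilde\rho(\gamma)(J^{+}(S))=J^{+}(\tilde\rho(\gamma)S)$ for all $S$. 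As $K$ is $\tilde\rho(\Gamma)$-invariant, so are $J^{+}(K)$ and its topological boundary. Since $K$ is a nonempty compact subset of a bounded slab, $J^{+}(K)=\{(\theta,\op{x})\;/\;\theta\ge g^{+}(\op{x})\}$, where $g^{+}(\op{x}):=\min_{(\theta_0,\op{x}_0)\in K}\bigl(\theta_0+d(\op{x}_0,\op{x})\bigr)$ is a bounded $1$-Lipschitz function on $\SS^{n-1}$ (compare Remark~\ref{r.f-f+}); its graph $\partial J^{+}(K)$ is therefore a closed achronal topological $(n-1)$-sphere, invariant under $\tilde\rho(\Gamma)$, which is the sphere we sought.

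The whole proof is bookkeeping once this dictionary with the circle case is set up; the one step needing genuine care is the $\tilde\rho(\Gamma)$-invariance of the slabs $R_k$ in the first implication, which is exactly where the hypothesis that the transformations preserve the \emph{time} orientation (not merely the conformal structure) is used --- without it $\tilde\rho(\gamma)$ could swap the two sides of $\wt\Lambda$ and no invariant slab would survive. I do not anticipate any other real obstacle.
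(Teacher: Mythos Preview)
Your proof is correct and follows the same overall strategy as the paper---both directions hinge on the equivalence between boundedness of the cochain $a$ and boundedness of the $\theta$-coordinate of the orbit $\tilde\rho(\Gamma)x_0$---but the implementations differ in instructive ways.

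For the implication \emph{invariant sphere $\Rightarrow$ $\eu_b(\rho)=0$}, the paper argues by locating $\wt\Lambda$ inside a union $\cZ_q=\delta^{q-1}\cW_0\cup\delta^q\cW_0$ of two translates of the fundamental domain, then compares $\sigma(\rho(\gamma))\cZ_q$ with $\cZ_q$ to conclude $|a(\gamma)|\le 2$. Your slab argument is more conceptual: you exploit directly that each $\delta^k\wt\Lambda$ is $\tilde\rho(\Gamma)$-invariant (centrality of $\delta$) and that time-orientation preservation forbids swapping the two sides, so the orbit of $x_0$ is trapped in a single invariant slab. This buys you a cleaner geometric picture at the cost of not producing an explicit numerical bound on $|a|$.

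For the converse, the paper starts from the graph of the zero map $f_0$, pushes it forward by each $\tilde\rho(\gamma)$ to get a family of $1$-Lipschitz maps $f_\gamma$, and takes $f_\infty=\sup_\gamma f_\gamma$. You instead take the closure $K$ of the orbit of the single point $x_0$ and form $\partial J^{+}(K)$, which is the graph of $g^{+}(\op{x})=\min_{(\theta_0,\op{y})\in K}(\theta_0+d(\op{y},\op{x}))$. Both constructions produce a $\tilde\rho(\Gamma)$-invariant $1$-Lipschitz graph from an invariant bounded set; yours is perhaps more natural from the causal-geometry viewpoint (it is literally a horizon), while the paper's stays closer to the bare $1$-Lipschitz formalism. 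Either way the essential content is the same.
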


\begin{proof}

\textit{Invariant achronal sphere $\Rightarrow$ $\eu_b(\rho)=0$.}

Assume that $\rho$ lifts to a representation $\tilde{\rho}: \Gamma \to \widetilde{\SO}_0(2,n)$
(\ie that $\eu(\rho) = 0$) and that $\tilde{\rho}(\Gamma)$ preserves a closed
$(n-1)$-dimensional achronal topological sphere $\Lambda$ in $\uEin_n$, \ie the graph of a $1$-Lipschitz map $f: \SS^{n-1} \to \RR$. Let $a: \Gamma \to \ZZ$
the map associating to $\gamma$ the unique integer $k$ such that:
$$\tilde{\rho}(\gamma) = \delta^k\sigma(\rho(\gamma))$$
$a$ is the $1$-cochain whose coboundary represents the Euler class of $\rho$, the point is to prove
that $a$ is bounded.

The invariant achronal sphere $\Lambda$ is contained in the closure of an affine domain of $\uEin_n$ (cf. Lemma~\ref{le.achrinj}), \ie in a domain of the form $\{ \theta_0 - \pi \leq \theta \leq \theta_0 + \pi \}$. More precisely, either it is contained in a domain $\delta^q\cW_0$ for some integer $q$, or it contains a point $(q\pi, \op{x})$, in which case $\Lambda$ is contained in the domain $\{ (q-1)\pi \leq \theta < (q+1)\pi \}$. In both cases, there is an integer $q$ such that $\Lambda$ is contained in the union $\cZ_q := \delta^{q-1}\cW_0 \cup \delta^q\cW_0$.

For every $\gamma$ in $\Gamma$, the image of $x_0 = (0, \op{x}_0)$ by $\sigma(\rho(\gamma))$ is a point $(\theta, \op{y}_0)$ with $|\theta| \leq \pi$, hence the intersection between $\cW_0$ and
$\sigma(\rho(\gamma))(\cW_0)$ is non-trivial. Since $\delta$ commutes with $\sigma(\rho(\gamma))$,
the intersection $\cW_q \cap \sigma(\rho(\gamma))(\cW_q)$ is non-empty. \textit{A fortiori,} the same is true for the intersection $\cZ_q \cap \sigma(\rho(\gamma))(\cZ_q)$. However, since $\delta$ acts by adding $2\pi$ on the coordinate $\theta$, the intersection $\cZ_q \cap \delta^r\sigma(\rho(\gamma))(\cZ_q)$ is empty as soon as $r$ is an integer of absolute value $>2$.

On the other hand, we know that $\cZ_q \cap \tilde{\rho}(\gamma)\cZ_q$ is non-empty since $\cZ_q$ contains the invariant sphere $\Lambda$. It follows that the integer $a(\gamma)$ has absolute value at most $2$.

\textit{$\eu_b(\rho)=0$ $\Rightarrow$ Invariant achronal sphere}

Assume now that $eu_b(\rho)$ vanishes, \ie that there is a bounded map $a: \Gamma \to \ZZ$ such that $\gamma \to \delta^{a(\gamma)}\sigma(\rho(\gamma))$ is a representation $\tilde{\rho}: \Gamma \to \widetilde{\SO}_0(2,n)$. Let $\alpha$ be an upper bound for $|a(\gamma)| \; (\gamma \in \Gamma)$. Let $f_{id}: \SS^n \to \RR$ be the null map, and for every element $\gamma$ of $\Gamma$, let $f_\gamma: \SS^n \to \RR$ be the $1$-Lipschitz map whose graph is the image by $\tilde{\rho}(\gamma)$ of the graph of $f_0$.
The graph of $f_\gamma$ contains $\delta^{a(\gamma)}\sigma(\rho(\gamma))(0, \op{x}_0)$, hence a point
of $\theta$-coordinate of absolute value bounded from above by $|a(\Gamma)|+\pi$. Since every $f_\gamma$ is $1$-Lipschitz and since the sphere has diameter $\pi$, there is an uniform upper bound for all the $f_\gamma$. For every $\op{x}$ in $\SS^n$ define:
$$f_\infty(\op{x}) := \op{Sup}_{\gamma \in \Gamma} f_\gamma(\op{x})$$
Then $f_\infty$ is a $1$-Lipschitz map, whose graph is clearly $\rho(\Gamma)$-invariant.
\end{proof}

\subsection{Proof of Theorem~\ref{thm:euler}}
Let $\rho: \Gamma \to \SO_0(2,n)$ be a faithful and discrete representation, where $\Gamma$ is the fundamental group of a negatively curved closed manifold $M$.

According to the Proposition~\ref{pro:euler0}, the bounded Euler class $\eu_b(\rho)$ vanishes if and only if $\rho$ lifts to a representation $\tilde{\rho}: \Gamma \to \widetilde{\SO}_0(2,n)$ such that $\tilde{\rho}(\Gamma)$ preserves a closed $(n-1)$-dimensional achronal topological sphere in $\uEin_n$. According to Theorem~\ref{thm:hyperbolicanosov}, such a sphere, if it exists, must be acausal. The equivalence between items $(1)$ and $(2)$ follows.

\subsection{The case $n=2$}
In this last section, we explain in which way one can deduce from Proposition~\ref{pro:euler0} the following classical result:

\begin{prop}
\label{pro:euPSL}
Let $\rho_1$, $\rho_2$ be two representations of $\Gamma$ into $\op{PSL}(2,\RR)$ such that $\op{eu}_b(\rho_1) = \op{eu}_b(\rho_2).$
Then, $\rho_1$ and $\rho_2$ are semi-conjugated, \ie there is a monotone map $f: \RR\PP^1 \to \RR\PP^1$ such that:
$$\forall \gamma \in \Gamma, \rho_1(\gamma) \circ f = f \circ \rho_2(\gamma)$$
\end{prop}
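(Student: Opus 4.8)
The plan is to reduce the statement to the embedding $\op{PSL}(2,\RR) \hookrightarrow \SO_0(2,2)$ and exploit Proposition~\ref{pro:euler0} together with the well-known fact that $\Ein_1$ is (two copies of) the circle. First I would recall the exceptional isogeny $\SO_0(2,2) \approx (\op{PSL}(2,\RR) \times \op{PSL}(2,\RR))/\{\pm 1\}$, and the corresponding picture at the level of the boundary: $\Ein_1 \approx \SS^1 \times \SS^0$ sits inside the product of two circles, each circle being a copy of $\RR\PP^1$ with its natural $\op{PSL}(2,\RR)$-action. An achronal subset of $\wt\Ein_1 \approx \RR \times \SS^0$ is the graph of a $1$-Lipschitz function on (a subset of) $\SS^0$, \ie essentially the graph of a monotone relation between the two circles. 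Thus a $\Gamma$-invariant achronal \emph{topological sphere} (here just a pair of points, or rather a $0$-sphere's worth of graph-data) in $\wt\Ein_1$ is exactly the graph of a monotone map intertwining the two $\op{PSL}(2,\RR)$-actions. The plan is to make this dictionary precise, since it is what converts ``invariant achronal subset'' into ``semi-conjugacy''.

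Next I would set up the cohomological input. Consider the diagonal-type representation $\rho := (\rho_1, \rho_2): \Gamma \to \op{PSL}(2,\RR) \times \op{PSL}(2,\RR) \to \SO_0(2,2)$. The bounded Euler class of $\SO_0(2,n)$ restricted along the first (resp. second) $\op{PSL}(2,\RR)$ factor is the usual bounded Euler class of $\op{PSL}(2,\RR)$ (up to the normalization coming from the isogeny); more precisely, under the identification above, $\op{eu}_b$ of $\SO_0(2,2)$ pulls back to $\op{eu}_b(\rho_1) - \op{eu}_b(\rho_2)$ on $\Gamma$ (the sign reflecting the opposite orientations of the two $\RR\PP^1$ factors inside $\Ein_1$; one should check this sign carefully against the convention fixing $\delta$ in Section~\ref{sub.einuniv}). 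Hence the hypothesis $\op{eu}_b(\rho_1) = \op{eu}_b(\rho_2)$ is exactly $\op{eu}_b(\rho) = 0$. By Proposition~\ref{pro:euler0}, $\rho$ lifts to $\tilde\rho: \Gamma \to \wt\SO_0(2,2)$ preserving a closed achronal topological circle $\Lambda$ in $\wt\Ein_1$ — equivalently, as just discussed, a monotone $\Gamma$-equivariant relation, hence a monotone map $f$, between the two copies of $\RR\PP^1$, which is the desired semi-conjugacy.

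Two technical points will need care, and I expect the \emph{orientation/normalization bookkeeping} to be the main obstacle. First, one must verify that the restriction of the bounded Euler cocycle $c$ of Section~\ref{sub.einuniv} to each $\op{PSL}(2,\RR)$-factor genuinely recovers (a fixed integer multiple of, and with a definite sign) the classical $\op{PSL}(2,\RR)$ bounded Euler class; this amounts to tracking how $\cW_0$ and $\delta$ behave under the product decomposition of $\wt\Ein_1$, and how the $n=1$ Einstein universe (where $\wt\Ein_1$ is \emph{not} the universal cover) differs from the $n \geq 2$ case flagged in the text. Second, one must turn ``$\Gamma$-invariant achronal $0$-sphere in $\wt\Ein_1$'' into an honest monotone map $f: \RR\PP^1 \to \RR\PP^1$: a $1$-Lipschitz graph over $\SS^0$ is a priori only a monotone \emph{relation}, so one passes to the (well-defined up to the usual ambiguity at countably many jumps) monotone map, and checks equivariance $\rho_1(\gamma) \circ f = f \circ \rho_2(\gamma)$ directly from invariance of the graph. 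Modulo these normalizations, the argument is a direct translation of Proposition~\ref{pro:euler0} into the two-dimensional dictionary, which is precisely the ``classical result'' we set out to recover.
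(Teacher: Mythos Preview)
Your overall strategy is exactly the paper's: form the product representation $(\rho_1,\rho_2):\Gamma\to\SO_0(2,2)$, show its bounded Euler class vanishes, invoke Proposition~\ref{pro:euler0} to get an invariant achronal circle, and read off a monotone equivariant map via the identification of the boundary with $\RR\PP^1\times\RR\PP^1$. However, your write-up has a persistent indexing error that breaks the argument as stated: $\SO_0(2,2)$ acts on $\AdS_3$ with conformal boundary $\Ein_2$ (not $\Ein_1$), and the achronal topological sphere of Proposition~\ref{pro:euler0} is a \emph{circle}, not a $0$-sphere. Your description ``graph of a $1$-Lipschitz function on $\SS^0$'' gives two points, which cannot encode a monotone map between circles. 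The correct dictionary is the one the paper spells out: the projectivized $\overline{\Ein}_2$ identifies with $\RR\PP^1\times\RR\PP^1$, lightlike leaves are the horizontal and vertical circles, and an achronal circle is the filling of the graph of a monotone map $f:\RR\PP^1\to\RR\PP^1$; invariance under $(\rho_1,\rho_2)$ is then literally the semi-conjugacy relation.

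On step~2, the paper and you diverge slightly in method. You want to assert the pullback identity $\eu_b(\rho_1,\rho_2)=\eu_b(\rho_1)-\eu_b(\rho_2)$ directly in bounded cohomology and then apply Proposition~\ref{pro:euler0}. The paper instead works at the cocycle level: from $c_2=c_1+\partial a$ with $a$ bounded it writes down the explicit action $\gamma*\tilde g := \tau^{-a(\gamma)}\sigma(\rho_2(\gamma))\,\tilde g\,\sigma(\rho_1(\gamma))^{-1}$ on $\widetilde{\op{SL}}(2,\RR)\approx\wt\AdS_3$, checks by hand that this is a genuine action, and observes that boundedness of $a$ forces the resulting $\tilde\rho:\Gamma\to\wt\SO_0(2,2)$ to have trivial bounded Euler class. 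Your route is cleaner conceptually, but the pullback identity is precisely the ``orientation/normalization bookkeeping'' you flag as the main obstacle, and it is not free: one must match the canonical sections and the generator $\delta$ on the $\SO_0(2,2)$ side with the sections $\sigma$ and the generator $\tau$ on each $\op{PSL}(2,\RR)$ factor. The paper's explicit construction sidesteps this by never leaving the cocycle level.
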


Let us first recall the definition of the bounded Euler class
for a representation $\rho: \Gamma \to \op{PSL}(2,\RR)$: it is completely similar
to definition we have presented above.

Let $\op{p}: \widetilde{\op{SL}}(2, \RR) \to \op{PSL}(2,\RR)$ be the universal covering.
It acts naturally on the universal covering $\wt{\RR\PP}^1$ of the projective line $\RR\PP^1,$
so that the kernel of $\op{p}$ is the center of $\widetilde{\op{SL}}(2, \RR)$ and
also the Galois group of $\wt{\RR\PP}^1$. We fix a total order $<$ on $\wt{\RR\PP}^1 \approx \RR$
and a generator $\tau$ of $\ker\op{p}$ so that $\tau(x)>x$ for every $x$ in $\wt{\RR\PP}^1$.
Once fixed an element $x_0$ of $\wt{\RR\PP}^1$,
there is still a canonical
section $\sigma: \op{PSL}(2,\RR) \to \widetilde{\op{SL}}(2, \RR)$, which is not a homomorphism,
which associates to any element $g$ of ${\RR\PP}^1$ the unique element $\tilde{g}$ such that:
$$x_0 \leq \tilde{g}x_0 < \tau(x_0)$$

Then, the Euler class of the representation $\rho$ is the bounded cohomology class
represented by the cocycle $c$ defined by:
$$\sigma(\rho(\gamma_1\gamma_2) = \tau^{c(\gamma_1, \gamma_2)}\sigma(\rho(\gamma_1))\sigma(\rho(\gamma_2))$$

Now let $\rho_1$, $\rho_2$ be two representations of $\Gamma$ into $\op{PSL}(2,\RR)$ satisfying the statement of
Proposition \ref{pro:euPSL}: they have the same bounded cohomology class, meaning that, if $c_1$, $c_2$
are the two cocyles defined as above representing the bounded Euler class, we have:
\begin{equation}\label{eq:a}
    c_2(\gamma_1, \gamma_2) = c_1(\gamma_1, \gamma_2) + a(\gamma_1\gamma_2) - a(\gamma_1) - a(\gamma_2)
\end{equation}
where $a: \Gamma \to \ZZ$ is some bounded map.

It has the following consequence: consider the map
$\Gamma \times \widetilde{\op{SL}}(2, \RR) \to \widetilde{\op{SL}}(2, \RR)$
which associates to $(\gamma, \tilde{g})$ the element: 
$$\gamma*\tilde{g} := \tau^{-a(\gamma)}\sigma(\rho_2(\gamma))\tilde{g}\sigma(\rho_1(\gamma))^{-1}$$ Then:
\begin{eqnarray*}
  (\gamma_1\gamma_2)*\tilde{g} &=& \tau^{-a(\gamma_1\gamma_2)}\sigma(\rho_2(\gamma_1\gamma_2))\tilde{g}[\sigma(\rho_1(\gamma_1\gamma_2))]^{-1} \\
    &=& \tau^{-a(\gamma_1\gamma_2) + c_2(\gamma_1, \gamma_2)}\rho_2(\gamma_1)\rho_2(\gamma_2)\tilde{g}[\tau^{c_1(\gamma_1, \gamma_2)}\sigma(\rho_1(\gamma_1))\sigma(\rho_1(\gamma_2))]^{-1}\\
    &=& \tau^{- a(\gamma_1) - a(\gamma_2)}\tilde{g}[\sigma(\rho_1(\gamma_1))\sigma(\rho_1(\gamma_2))]^{-1}\mbox{(see \eqref{eq:a})}\\
    &=&\gamma_1*(\gamma_2*\tilde{g})
\end{eqnarray*}

Now the key point is that $\widetilde{\op{SL}}(2, \RR)$ is a model for the universal
anti-de Sitter space $\wt\AdS_3$. Indeed, $-\op{det}$ defines
on the space $\op{Mat}(2, \RR)$ of two-by-two matrices a quadratic form of signature $(2,2)$, which is preserved by the following action of $\op{SL}(2, \RR) \times \op{SL}(2, \RR)$:
$$\forall g_1, g_2 \in \op{SL}(2, \RR), \forall A \in \op{Mat}(2,\RR), (g_1, g_2).A := g_1Ag_2^{-1}$$
The kernel of this action is the group $I$ of order two generated by $(-\op{Id}, -\op{Id})$,
where $\op{Id}$ denote the identity matrix.
Hence there is a natural isomorphism between $\SO_0(2,2)$ and $\op{SL}(2, \RR) \times \op{SL}(2, \RR)/I.$

Therefore, the action $*$ we have defined is an isometric action of $\Gamma$ on $\wt\AdS_3$,
hence induces a representation $\tilde{\rho}: \Gamma \to \wt\SO_0(2,2)$
Furthermore, the fact that the map $a$ involved in the coboundary is bounded implies that this representation $\tilde{\rho}$ is the lifting of a representation into $\SO_0(2,2)$ whose bounded Euler class vanishes,
\ie that the group $\tilde{\rho}(\Gamma)$ preserves a closed achronal circle in $\wt\Ein_2$.

We claim that the existence of such an invariant achronal circle is equivalent to
the existence of a semi-conjugacy between $\rho_1$ and $\rho_2$ as stated in the conclusion
of Proposition \ref{pro:euPSL}. 

For the proof of this claim, it is convenient to consider the \textit{projectivized} anti-de Sitter and Einstein spaces,
\ie the quotients of $\AdS_3$ and $\Ein_2$ by $-\op{Id}$. The projectivized anti-de Sitter space is then
naturally identified with $\op{PSL}(2,\RR).$ According to
the identification between $(\op{Mat}(2, \RR), -\op{det})$ and $(\RR^{2,2}, \mathrm{q}_{2,2})$,
we obtain an identification between the projectivized Klein model $\overline{\Ein}_2$ and the
space of non-zero non-invertible 2-by-2 matrices up to a non-zero factor.
Such a class is characterized by the kernel and the image of its elements, \ie two lines in $\RR^2$.
In other words, $\overline{\Ein}_2$ is naturally isomorphic to the product $\RR\PP^1 \times \RR\PP^1$.
The conformal action of $\op{PO}(2,2) \approx \op{PSL}(2,\RR) \times \op{PSL}(2,\RR)$ on
$\RR\PP^1 \times \RR\PP^1$ is the obvious one:
$$(g_1, g_2).(x,y) = (g_1x, g_2y)$$
since the image of $g_1Ag_2^{-1}$ is the image by $g_1$ of the image of $A$, and
its kernel is the image under $g_2$ of the kernel of $A$.
The isotropic circles in $\overline{\Ein}_2 \approx \RR\PP^1 \times \RR\PP^1$ 
are the circles $\{ * \} \times \RR\PP^1$
and $\RR\PP^1 \times \{ * \}$. It follows quite easily that \textbf{acausal} circles in $\overline{\Ein}_2$
are graphs in $\RR\PP^1 \times \RR\PP^1$ of homeomorphisms from $\RR\PP^1 \to \RR\PP^1$.
\textbf{Achronal} circles are allowed to follow during some time one segment in 
$\{ * \} \times \RR\PP^1$ or $\RR\PP^1 \times \{ * \}$. It follows that they are fillings 
(cf. Remark~\ref{rk:filling}) of graphs of maps $f: \RR\PP^1 \to \RR\PP^1$ which are \textit{monotone,} \ie of degree $1$, preserving the cyclic order on $\RR\PP^1$, but which can be constant on some intervals and which can be non-continuous at certain points. In other words, $f$ lifts to
a non-decreasing map $\tilde{f}: \wt{\RR\PP}^1 \to \wt{\RR\PP}^1$. For more details on
this well-known geometric feature, we refer to \cite{mess1} or \cite{BBZ}.

In summary, we have proved that the representation 
$(\rho_1, \rho_2): \Gamma \to \op{PSL}(2,\RR) \times \op{PSL}(2,\RR) \approx \op{PO}(2,2)$
preserves a closed achronal circle $\Lambda$ in $\overline{\Ein}_2 \approx \RR\PP^1 \times \RR\PP^1$,
which is the filling of the graph of a monotone map $f: \RR\PP^1 \to \RR\PP^1$.
The invariance of $\Lambda$ means precisely that $f$ is $\Gamma$-equivariant: Proposition \ref{pro:euPSL} is proved.

\bibliography{quasfuchs}
\bibliographystyle{alpha}

\end{document}